\def\doubleunderline#1{\underline{\underline{#1}}}
\def\doubletilde#1{\widetilde{\widetilde{#1}}}
\def\doubleunderline#1{\underline{\underline{#1}}}
\def\doubletilde#1{\widetilde{\widetilde{#1}}}
\DeclareFontFamily{U}{mathx}{\hyphenchar\font45}
\DeclareFontShape{U}{mathx}{m}{n}{
      <5> <6> <7> <8> <9> <10>
      <10.95> <12> <14.4> <17.28> <20.74> <24.88>
      mathx10
      }{}
\DeclareSymbolFont{mathx}{U}{mathx}{m}{n}
\DeclareMathAccent{\widecheck}{0}{mathx}{"71}
\newtheorem{theorem}{Theorem}
\newtheorem{proposition}[theorem]{Proposition}
\newtheorem{lemma}[theorem]{Lemma}
\newtheorem{corollary}[theorem]{Corollary}
\newtheorem{definition}[theorem]{Definition}
\newtheorem{example}[theorem]{Example}
\newtheorem{remark}[theorem]{Remark}
\newcommand{\gap}{\vspace{0.1in}}
\newcommand{\epc}{\hspace{1pc}}
\newcommand{\onebld}{{\bf 1}}
\newcommand{\wt}{\widetilde}
\newcommand{\wh}{\widehat}
\title{
Nonconvex and Nonsmooth Approaches for \\
Affine Chance-Constrained Stochastic Programs}
\author{
Ying Cui\footnote{Department of Industrial and Systems Engineering, University
of Minnesota, Minneapolis, U.S.A.\ 55455. {\tt Email: yingcui@umn.edu.}} \and
Junyi Liu\footnote{Department of Industrial Engineering, Tsinghua University,
Beijing, China 100084.  The work of this author was initiated when she was affiliated
with the Daniel J.\ Epstein
Department of Industrial and Systems Engineering at the University of Southern
California.  {\tt Email: junyiliu@tsinghua.edu.cn.}}
\and Jong-Shi Pang\footnote{The
Daniel J.\ Epstein Department of Industrial and Systems Engineering, University of
Southern California, Los Angeles, U.S.A.\ 90089.
This work was based on research supported by the  U.S.\ Air Force Office of
Sponsored Research under grant FA9550-18-1-0382. {\tt Email: jongship@usc.edu.}}
}
\date{Original: May 2021; previous revision January 2022; second revision \today}
\begin{document}

\maketitle

\begin{abstract}
\noindent Chance-constrained programs (CCPs) constitute a difficult class of
stochastic programs due to its {possible} nondifferentiability and nonconvexity
even with simple linear random functionals. Existing approaches for solving the CCPs
mainly deal with convex random functionals within the probability function.
In the present paper, we consider two generalizations of the class of chance constraints
commonly studied in the literature;
one generalization involves probabilities of disjunctive nonconvex functional events
and the other generalization
involves mixed-signed affine combinations of the resulting probabilities; together,
we coin the term
{\sl affine chance constraint (ACC) system} for these generalized chance constraints.
Our proposed treatment of such an ACC system involves the fusion of several individually
known ideas: (a) parameterized upper and lower
approximations of the indicator function in the
expectation formulation of probability; (b) external (i.e., fixed) versus internal
(i.e.,\ sequential) sampling-based approximation of the expectation operator;
(c) constraint penalization as relaxations of feasibility; and (d) convexification
of nonconvexity and nondifferentiability via surrogation.
The integration of these techniques for solving the affine chance-constrained
stochastic program (ACC-SP) is the main contribution of this paper.  Indeed,
combined together, these ideas lead to several algorithmic strategies with various
degrees of practicality and computational efforts for the nonconvex ACC-SP.
In an external sampling scheme, a given sample batch (presumably large) is applied to
a penalty formulation of a fixed-accuracy approximation
of the chance constraints of the problem via their expectation formulation.
This results in a sample average approximation scheme, whose almost-sure convergence
under a directional derivative condition to a Clarke stationary solution of the
expectation constrained-SP as the sample sizes tend to infinity is established.
In contrast, sequential sampling, along with surrogation leads to a sequential
convex programming based algorithm whose asymptotic convergence for fixed- and
diminishing-accuracy approximations of the indicator function can be established
under prescribed increments of the sample sizes.

\end{abstract}

{\bf Keywords:} chance constraints, nonconvex, nonsmooth, continuous approximations,
sampling, exact penalization.

\section{Introduction}

Chance constrained programs (CCPs) are a class of stochastic optimization problems that
restrict the likelihood of undesirable outcomes
from a system within a prescribed tolerance.
\textcolor{black}{The focus of our study is the following stochastic program
with {\sl affine chance constraints} (ACCs):
\begin{equation} \label{eq:focus sp_dc_constraint}
\begin{array}{ll}
\displaystyle{
\operatornamewithlimits{\mbox{\bf minimize}}_{x \, \in \, X}} \quad &
\bar c_0(x) \, \triangleq \, \mathbb{E}[ \,c_0(x,\tilde{z} )\, ] \\ [0.1in]
\mbox{\bf subject to} \quad & \displaystyle{
\sum_{\ell=1}^L
} \, e_{k \ell} \, \mathbb{P}\left( \, {\cal Z}_{\ell}(x,\tilde{z} ) \, \geq \, 0 \,
\right) \, \leq \, \zeta_k, \epc k \, \in \, [ \, K \, ] \, \triangleq \,
\{ \, 1, \cdots, K \, \},
\end{array}
\end{equation}
where $X$ is a deterministic constraint set contained in the
open set ${\cal O} \subseteq \mathbb{R}^n$;
$c_0:{\cal O} \times \Xi \to \mathbb{R}$ is a random functional,
$\tilde{z} : \Omega \to \Xi$ is
a random vector (i.e., a measurable function) defined on the sample space $\Omega$
with values in $\Xi \subseteq \mathbb{R}^d$
whose realizations we write without the tilde (i.e.,
$z = \tilde{z}(\omega) \in \Xi$ for
$\omega \in \Omega$); $\mathbb{P}$ is the probability measure
defined on the sigma algebra ${\cal F}$ that is generated by subsets of $\Omega$;
each $e_{k\ell}$ is a scalar with the signed decomposition
$e_{k\ell} = e_{k\ell}^+ - e_{k\ell}^-$, where $e_{k\ell}^{\pm} \geq 0$ are
the nonnegative and nonpositive parts of $e_{k\ell}$, respectively;
each ${\cal Z}_{\ell} :{\cal O} \times \Xi \to \mathbb{R}$ for
$\ell \in [ L ] \triangleq \{ \, 1, \cdots, L \, \}$ is a bivariate function to be
specified in Section~\ref{sec:piecewise probabilistic constraints}; and each
$\zeta_k$ is a given threshold.}
A special case of (\ref{eq:focus sp_dc_constraint}) is the simplified form
(with $\zeta_k \in ( 0,1 ]$)
\begin{equation}\label{general ccp}
\displaystyle{
\operatornamewithlimits{\mbox{\bf minimize}}_{x\in X}
} \ \bar{c}_0(x) \epc
\mbox{\bf subject to } \ \mathbb{P}({\cal Z}_k (x,\tilde{z} ) \geq 0) \, \leq \,
\zeta_k, \epc k \in [ K ],
\end{equation}
that is the focus of study in much of the literature on chance-constrained SP
\cite{Henrion05}.  There are two prominent departures
of (\ref{eq:focus sp_dc_constraint}) from the traditional case (\ref{general ccp}):
(a) some coefficients $e_{k\ell}$ may be negative, and (b) each functional
${\cal Z}_{\ell}(\bullet,z)$ is nonconvex and nondifferentiable.
We postpone the detailed discussion of these features until the next section.
Here, we simply note that with these two distinguished features, the formulation
(\ref{eq:focus sp_dc_constraint}) covers much
broader applications and requires non-traditional treatment with novel theoretical
tools and computational methods.
The latter constitutes the main contribution of our work.


\gap

It is well known that the feasible regions of the CCPs are usually nonconvex even for
the linear random functionals and the resulting
optimization problems are NP-hard \cite{Luedtke10,nemirovski12}.
This nonconvexity partially makes the CCPs one of the most challenging
stochastic programs to solve.
With over half a century of research, there is an extensive literature on the
methodologies and applications of the CCPs.
Interested readers are referred to the review papers \cite{AhmedXie18,GengXie19},
book chapters \cite{Prekopa70, Dentcheva06}, the monograph
\cite{ShapiroDentchevaRuszczynski09}, and the lecture notes \cite{Henrion05} for
detailed discussion.

\gap

One direction of developing numerical algorithms for the CCPs focuses on
special probability distributions and random functionals, where the
multi-dimensional probability function and its subdifferential can be evaluated
either directly \cite{Henrion07}
or efficiently via numerical integration \cite{vanAckooij18,Hantoute19}.
However, this direct approach does not work for  random functionals with general
and possibly unknown probability distributions.
Numerous methods dealing with general probability distributions include the
scenario approximation approach \cite{CalafioreCampi05, NemirovskiShapiro},
the $p$-efficient point \cite{Dentcheva00, Dentcheva04}
and the sample average approximation (SAA) \cite{Sen92, LuedtkeAhmed08,PAhmedShapiro09}.
In fact,  for any random variable $Z$, it holds that
\begin{equation} \label{eq:probability and expectation}
\mathbb{P}(Z > 0 ) = \mathbb{E}\left[ \, \mathbf{1}_{( 0, \infty)}(Z) \, \right]
\epc \mbox{and similarly} \epc
\mathbb{P}(Z \geq 0 ) = \mathbb{E}\left[ \, \mathbf{1}_{[ \, 0, \infty)}(Z) \, \right],
\end{equation}
where $\mathbf{1}_{( 0, \infty )}( \bullet )$ is the indicator function of the interval
$( 0,\infty )$; i.e.,
$\mathbf{1}_{(0, \infty)}(t) \triangleq \left\{ \begin{array}{ll}
1 & \mbox{if $t > 0$} \\
0 & \mbox{otherwise;}
\end{array} \right.$  Similarly for $\mathbf{1}_{[  0, \infty )}( \bullet )$.
We call these indicator functions the open and closed Heaviside functions, respectively.
The above equalities indicate that the CCPs under discrete or discretized distributions
are in principle mixed integer programs (MIPs) that can be solved either by
mixed integer algorithms or continuous approximation methods.  The former approach
leverages auxiliary binary variables and adds
the big-M constraints into the lifted feasible set \cite{Luedtke10}.   Strong
formulations can also be derived based on specific
forms of the random functionals in the constraints.  One may consult
\cite{SimgeJiang21} for a recent survey of the MIP approach
for solving the linear CCPs.  Although the MIP approach has the advantage of
yielding globally optimal solutions, it may not work
efficiently in practice when the sample size is large or when nonlinear random
functionals are present;   {its appeal for general distributions
may diminish when considering inherent discretization and its effect};
the choice of the scalar $M$ is potentially a serious bottleneck.
Conservative convex approximations of the CCPs \cite{nemirovski2007convex} are proposed
to resolve this scalability issue. To tighten
these convex approximations, recent research \cite{CaoZavala20,GHKLi17} has proposed
using nonconvex smooth functions  as surrogates of the Heaviside functions
in the expectation \eqref{eq:probability and expectation}; the references
\cite{hong2014conditional,hong2011sequential, POrdieresLuedtkeWachter20}
further proposed a sample-based scheme using difference-of-convex or other nonconvex
smooth functions to deal with general probability distributions.

\gap

In this paper, we consider the continuous nonconvex approximation methods to solve
the generalized class of CCPs (\ref{eq:focus sp_dc_constraint}).  It is worth
mentioning that the primary goal of the present
paper is neither about proposing new approximation schemes of the CCPs,
nor about the comparison of which approximation scheme for the chance constraints is
more effective; but rather, we aim to provide a systematic
and rigorous mathematical treatment of the nonseparable co-existence of nonconvexity
and nondifferentiability in a class of CCPs
that extend broadly beyond the settings commonly studied in the literature.
Below, we give an overview of the distinguished features
of our model and highlight the prevalence of nonconvexity and nondifferentiability:

\gap

\noindent (a) we treat affine combinations of probability functions in the constraints,
which are central to the first-order stochastic dominance of random variables, but
cannot be written in the form
(\ref{general ccp}) due to the mixed signs of the coefficients in the combinations;

\gap

\noindent (b) we approximate the discontinuous Heaviside functions within the expectation
formulation of the probability by nonsmooth and nonconvex functions and treat them
faithfully; this double ``non''-approach enriches the traditional family of convex
and/or smooth approximations;

\gap

\noindent (c) pointwise maximum and/or minimum operators are present within the
probabilities; these operators provide algebraic descriptions of conjunctive and/or
disjunctive random functional inequalities, and thus logical relations among these
inequalities  whose probabilities are constrained; and

\gap

\noindent (d) the resulting random functionals ${\cal Z}_{\ell}(\bullet,\tilde{z})$ within
the probabilities are specially structured dc (for difference-of-convex)
functions; more precisely, each can be expressed as a difference of two pointwise
maxima of finitely many differentiable convex functions.

\gap

Mathematical details of this framework and realistic examples of the random functionals
are presented in the next section.  Throughout this paper, the class of functions in
point (d) above plays a central role, \textcolor{black}{although the probability function
$\mathbb{P}({\cal Z}_{\ell}(\bullet,\tilde{z}) \geq 0)$ may be discontinuous in general, and
not of the dc kind even if it is continuous.}
By a result of \cite{Scholtes02}, piecewise affine
functions, which constitute the most basic class of difference-of-convex
functions, can be expressed as the difference of two pointwise maxima of finitely many
affine functions; see \cite[Subsection~4.4.1]{CuiPang2020} for details.  Extending this
basic result, a related development is the paper \cite{Royset20} which
shows that every upper semicontinuous function is the limit of a hypo-convergent sequence
of piecewise affine functions.  Compared with the linear or convex random functionals
considered in the existing literature of CCPs, our overall modeling framework
with nonconvex and nondifferentiable random functionals together with probabilities of
conjunctive and/or disjunctive random functional
inequalities accommodates broader applications in operations
research and statistics such as piecewise statistical estimation models
\cite{CuiPang2020,LiuPang20} and in optimal control such as optimal
path planning models \cite{BlackmoreOnoWilliams11,LopezLudivigetal20}.

\gap

When nonconvexity is present, one needs to be mindful of the fact that globally
(or even locally) optimal solutions can rarely be provably computed; thus for
practical reasons, it is of paramount
importance to design computationally solvable optimization subproblems and to study
the computable solutions (instead of minimizers that cannot be computed).
In the case of the CCP with nonconvex and nondifferentiabilty features in the
constraints, this computational issue becomes more pronounced
and challenging.   With this in mind, convex programming based sampling methods
are desirable for the former task and stationary solutions for the latter.

\gap

  {The locally Lipschitz continuity of the probability distribution function
is an important requirement for the applicability of Clarke's nonsmooth analysis \cite{Clarke83}.
There are a few results about this property; for instance, in \cite[Section~2.6]{vanAckooij13},
the random function ${\cal Z}_{\ell}(x,z)$ is separable in
its arguments and additional conditions on the vector random variable $\tilde{z}$ are
in place; the paper \cite{Hantoute19} analyzed in detail the subdifferentiability (including
the locally Lipschitzian property) of the probability function in Banach space under Gaussian distribution.
Even if the Clarke subdifferential of the probability function is well defined, its calculation is usually a nontrivial
task except in special cases; thus hindering its practical use.  In the event
when the probability function fails to be locally Lipschitz continuous, the smoothing-based stochastic
approximation methods as in \cite{KannanLuedtke18} are not applicable.}
Therefore, instead of a stochastic (sub)gradient-type method,
we consider two sampling schemes.
One is the external sampling, or SAA \cite{Shapiro03}, where samples of a fixed
(presumably large) size are generated to define
an empirical optimization problem. The major focus of the external sampling is the
statistical analysis of the solution(s) to the empirical optimization
problem; such an analysis aims to establish asymptotic properties of the SAA solution(s)
in relation to the given expectation problem
when the sample size tends to infinity.  While computability remains a main concern,
the actual computation of the solution is not
for the external sampling scheme.  In contrast, in an internal, or sequential sampling
method \cite{ZiegelGhoshSen91,HigleSen91, BayraksanMorton11,LiuCuiPang20}, samples are
gradually accumulated as the iteration proceeds
in order to potentially improve the approximation of the expectation operator.
By taking advantage of the early stage of the algorithm,
the computational cost of subsequent iterations can be reduced.  Thus practical
computation is an important concern in an internal sampling method.

\gap

In order to deal with the expectation constraints and their approximations,
we embed the exact penalty approach into the two sampling schemes.
Different from the majority of the literature of the exact penalty theory on the
asymptotic analysis of the globally optimal solutions
whose computation is practically elusive,
we focus on the asymptotic  behavior of the stationary solutions that are computable
by a convex programming based surrogation method.
Thus, besides the modeling extensions and the synthesis of various computational
schemes, our main contributions pertaining to the sampling methods are twofold:

\gap

\noindent $\bullet $  The SAA scheme: for the \textcolor{black}{stochastic program} (SP)
with expectation constraints, we establish the almost sure convergence of the Clarke
stationary solutions of penalized SAA subproblems to a Clarke stationary solution of
the expectation constrained SP problem when the sample size increases to infinity while
the penalty parameter remains finite.  Furthermore, we establish that the directional
stationary points of the SAA problems are local minima when the random functionals have
a ``convex-like'' property.

\gap

\noindent $\bullet $  The sequential sampling scheme: we propose a one-loop algorithm
that allows for the simultaneous variations of the penalty parameters, either fixed or
diminishing approximation accuracy of the Heaviside functions, with the suitable choice
of an incremental sequence of sample sizes.  This is in contrast to the recent work
\cite{vanAckooijetal20} on solving the nonconvex and nonsmooth CCPs under the fixed
sample size and the fixed approximation accuracy, where the convergence of the bundle
method is derived for the approximation problem of the CCP; this framework is more
restrictive than ours.

\gap

The rest of the paper is organized as follows.  Section~\ref{sec:piecewise probabilistic
constraints} presents the structural assumptions of the bivariate function
${\cal Z}_{\ell}$ and illustrates the sources of the
nonsmoothness and nonconvexity by several examples.  In Section~\ref{sec:ACCSP},
we provide the approximations of the Heaviside functions
composite with the nonconvex random functionals and summarize their properties.
Section~\ref{sec:parameterized exp SP} is devoted to the study of the stationary
solutions of the approximated CCPs and their relationship
with the local minima.  In Section~\ref{sec:exact penalization}, we establish the
uniform exact penalty theory for the external sampling scheme of the CCPs
in terms of the Clarke stationary solutions.  Following that we discuss the internal
sampling scheme under both fixed and diminishing parametric
approximations of the Heaviside functions in Section~\ref{sec:MM}.  The paper ends with
a concluding section.  Two appendices
provide details of some omitted derivations in the main text.

\section{Sources of Nonsmoothness and Nonconvexity of the CCP}
\label{sec:piecewise probabilistic constraints}

In this section, we present the structural assumptions of the CCP and provide the
sources of nonsmoothness and nonconvexity.
Let $X$ be a closed convex set in $\mathbb{R}^n$ and
$c_0 : \mathbb{R}^{n+d} \to \mathbb{R}$
 be a given bivariate Carath\'eodory function;
i.e., $c_0(x,\bullet)$ is a measurable function for all $x \in X$ and $c_0(\bullet,z)$
is continuous on $X$ for all $z \in \Xi$ with more
properties on the latter function to be assumed subsequently.
We consider the stochastic program (\ref{eq:focus sp_dc_constraint}) with affine
chance constraints (ACCs) at levels $\{\zeta_k\}_{k \in [K]} $ with
$\zeta_k \in \mathbb{R}$ for $k \in [ K ]$.
The following blanket assumption is made throughout the paper:

\gap

%
%

{($\boldsymbol{\cal Z}$)} for $\ell = 1, \cdots, L$, the bivariate function
${\cal Z}_{\ell} : {\cal O} \times \Xi \to \mathbb{R}$ is a specially structured,
nondifferentiable, difference-of-convex (dc), function given by:
for some positive integers $I_{\ell}$ and $J_{\ell}$,
\begin{equation} \label{eq:g and h}
{\cal Z}_\ell(x,z) \, \triangleq \, \underbrace{\displaystyle{
\max_{1 \leq i \leq I_{\ell}}
} \, g_{i\ell}(x,z)}_{\mbox{denoted $g_{\ell}(x,z)$}} - \underbrace{\displaystyle{
\max_{1 \leq j \leq J_{\ell}}
} \, h_{j\ell}(x,z)}_{\mbox{denoted $h_{\ell}(x,z)$}},
\end{equation}
where each $g_{i\ell} : {\cal O} \times \Xi  \to \mathbb{R}$ and
$h_{j\ell} : {\cal O} \times \Xi  \to \mathbb{R}$ are such that

\gap

--  the functions $g_{i\ell}(\bullet,z)$ and $h_{j\ell}(\bullet,z)$ are convex,
differentiable, and Lipschitz continuous with
constant $\mbox{Lip}_{\rm c}(z) > 0$ satisfying $\displaystyle{
\sup_{z \in \Xi}
} \, \mbox{ Lip}_{\rm c}(z) \, < \, \infty$, and
$g_{i\ell}(x,\bullet)$ and $h_{j\ell}(x,\bullet)$ are measurable with
\[
\displaystyle{
\max_{1 \leq i \leq I_{\ell}}
} \, \mathbb{E}\left[ \, \left| \, g_{i\ell}(x,\tilde{z}) \, \right| \, \right]
\, < \, \infty \ \mbox{ and } \
\displaystyle{
\max_{1 \leq j \leq J_{\ell}}
} \, \mathbb{E}\left[ \, \left| \, h_{j\ell}(x,\tilde{z}) \, \right| \, \right]
\, < \, \infty, \epc \forall \, x \, \in \, X,
\]
where $\mathbb{E}$ is the expectation operator;
in particular, $g_{i\ell}$ and $h_{j\ell}$ are Carath\'eodory functions.

\gap

Thus, the gradients $\nabla g_{i\ell}(\bullet,z)$ and
$\nabla h_{j\ell}(\bullet,z)$ are   {globally} bounded on $X$
uniformly in $z \in \Xi$; that is,
\[ \displaystyle{
\sup_{(x,z) \, \in \, X \times \Xi}
} \quad \max\left\{ \, \displaystyle{
\max_{1 \leq i \leq I_{\ell}}
} \, \| \, \nabla_x g_{i\ell}(x,z) \, \|, \, \displaystyle{
\max_{1 \leq j \leq J_{\ell}}
} \, \| \, \nabla_x h_{j\ell}(x,z) \, \| \, \right\} \, < \, \infty.
\]
\textcolor{black}{Throughout the paper, the two pointwise maxima in (\ref{eq:g and h})
are treated as stated without smoothing.}
In the following, we explain the role of each component in the constraints of
\eqref{eq:focus sp_dc_constraint} with some examples.

\gap

\underline{\bf $e_{k\ell}$: affine combinations of probabilities}.
Mixed-signed affine combinations of probabilities are useful for the modeling of linear
relations among probabilities.
For example, given a random variable $Z$, a simple inequality like
$\mathbb{P}( f_1(x,Z) \geq 0 ) \leq \mathbb{P}( f_2(x,Z) \geq 0 )$ stipulates that
the probability of the event $f_1(x,Z) \geq 0$ does not exceed that of
the event $f_2(x,Z) \geq 0$.  Another example of an affine combination of probability
functions is the discrete relaxation of the first-order stochastic
dominance constraint \cite[Chapter 4]{ShapiroDentchevaRuszczynski09} in the form of
$\mathbb{P}(f_1(x,Z)\leq \eta_1) \leq \mathbb{P}(f_2(x,Z)\leq \eta_2)$
at given levels $\eta_1$ and $\eta_2$.  A third example of a negative coefficient
$e_{k\ell}$ is derived from the formula
$\mathbb{P}( A \setminus B ) = \mathbb{P}(A) - \mathbb{P}(A \cap B)$ to model
the probability of event $A$ and the negation of event $B$.
To illustrate: suppose that $A$ is the event $f_1(x,Z) \geq 0$ and $B$ is the event
$f_2(x,Z) \geq 0$.  Then $A \setminus B$ is the event
that $f_1(x,Z) \geq 0$ and $f_2(x,Z) < 0$.  Using the formula for the probability of
the latter joint event, we obtain
\[
\mathbb{P}( f_1(x,Z) \geq 0 \mbox{ and } f_2(x,Z) < 0 ) \, = \,
\mathbb{P}( f_1(x,Z) \geq 0 ) - \mathbb{P}( g(x,Z) \geq 0 )
\]
where $g(x,Z) \, \triangleq \, \min( f_1(x,Z),f_2(x,Z) )$.  Lastly, a conditional probability constraint
also leads to an affine combination of probabilities.  For example,
\[ \begin{array}{l}
\mathbb{P}\left( \, f_1(x,Z) \geq 0 \ | \ f_2(x,Z) \geq 0 \, \right) \, \leq \, b \\ [0.1in]
\epc \Longleftrightarrow \ \mathbb{P}\left[ \, \min\left( \, f_1(x,Z), \ f_2(x,Z) \, \right) \geq 0 \, \right] -
b \, \mathbb{P}( f_2(x,Z) \geq 0 ) \, \leq \, 0
\end{array}
\]
\underline{\bf ${\cal Z}_\ell$: \textcolor{black}{conjunctive} and disjunctive
combinations of random inequalities}.  It is clear that the probability of
joint random inequalities $\mathbb{P}(\,f_i(x,Z) \geq 0, \; i\in [I]\,)$ is equal to
$\mathbb{P}\left(\displaystyle{
\min_{1\leq i\leq I}
} \, f_i(x,Z) \geq 0\right)$.  Similarly, one can reformulate the
probability of disjunctive functional inequalities using the pointwise max operator.
\textcolor{black}{Most generally, combinations
$\displaystyle{
\bigwedge_{p=1}^{P_{\ell}}
} \, \left[ \, \wh{\cal Z}_{\ell p}(x,\tilde{z}) \leq 0 \, \right]$ \mbox{ and/or }
$\displaystyle{
\bigvee_{q=1}^{Q_{\ell}}
} \, \left[ \, \wt{\cal Z}_{\ell q}(x,\tilde{z}) \leq 0 \, \right]$ for arbitrary nonnegative integers
$P_{\ell}$ and $Q_{\ell}$ can be modelled by pointwise min/max functions to define ${\cal Z}_{\ell}(x,\tilde{z})$.}
As a simple example, let
$f_{1} : \mathbb{R}^{n+d} \to \mathbb{R}$ and $f_{2} : \mathbb{R}^{n+d} \to \mathbb{R}$ and
scalars $\{a_i\}_{i=1, 2}$ and $\{b_i\}_{i=1, 2}$  satisfying
$a_1 < b_1$ and $a_2 < b_2$ be given.  Then,
 \begin{equation*}
 \begin{array}{ll}
& \mathbb{P}\left( a_1 \leq f_{1}(x,Z) \leq b_1 \ \mbox{ or } \ a_2 \leq f_{2}(x,Z) \leq b_2 \, \right)\, \leq \, \zeta, \\[5pt]
\Longleftrightarrow
& \mathbb{P}\left(   \max\left\{  \min\{ \, b_1 - f_{1}(x,Z) , \, f_{1}(x,Z)  - a_1 \}, \, \min\{ \, b_2 - f_{2}(x,Z) , \, f_{2}(x,Z)  - a_2 \} \, \right\}
\, \geq \, 0 \, \right)
\, \leq \, \zeta.
\end{array}
\end{equation*}  
Notice that the composition of maximum and minimum of the above kind is a piecewise
linear function and can be written as the difference of two pointwise maxima as follows:
\[ \begin{array}{l}
\max\left\{ \, \min\{ \, b_1-t_1,t_1-a_1 \}, \, \min\{ \, b_2 -t_2,t_2-a_2 \} \, \right\}
 \\ [5pt]
\epc = \,  \max\left\{   \, -b_1 +t_1, \, a_1-t_1, \, -b_2+t_2, \, a_2-t_2 \, \right\}
\\[0.1in]
\epc - \, \max\left\{ \, -b_1+t_1-b_2 +t_2, \, -b_1+t_1-t_2+a_2, \,
-t_1+a_1-b_2+t_2, \,-t_1+a_1-t_2+a_2\, \right\}.
\end{array}
\]
When each $f_i(\bullet,z)$ is of the kind (\ref{eq:g and h}), then so is the above
difference of two pointwise maxima.  More generally, the following result provides the
basis to obtain the difference-of-convex representation of a piecewise affine function
composite with a function that is
the difference of two convex functions each being the pointwise maximum of finitely
many convex differentiable functions.
\textcolor{black}{While the difference-of-convexity property of such composite functions
is addressed by the so-called mixture property in the dc literature
(see e.g., \cite{BBorwein11,Hartman59}), the result shows how the explicit dc
representation is defined in terms of the element functions.}

\begin{lemma} \label{lm:PA of PA} \rm
Let each $\psi_{\ell}(x) = g_{\ell}(x) - h_{\ell}(x)$ with
$g_{\ell}, h_{\ell} : \mathbb{R}^n \to \mathbb{R}$ being convex differentiable
functions for $\ell = 1, \cdots, L$.  Let
$\varphi : \mathbb{R}^L \to \mathbb{R}$ be a piecewise affine function written as:
\[
\varphi(y) \, = \, \displaystyle{
\max_{1 \leq i \leq I}
} \, \left( \, y^{\top}a^i + \alpha_i \, \right) - \displaystyle{
\max_{1 \leq j \leq J}
} \, \left( \, y^{\top}b^{\, j} + \beta_j \, \right), \epc y \, \in \, \mathbb{R}^L,
\]
for some $L$-vectors $\{ a^i \}_{i=1}^I$ and $\{ b^{\, j} \}_{j=1}^J$ and scalars
$\{ \alpha_i\}_{i=1}^I$ and $\{ \beta_j\}_{j=1}^J$.  With
$\Psi(x) \triangleq \left( \psi_{\ell}(x) \right)_{\ell=1}^L$, the composite function
$\varphi \circ \Psi$ can be written as
\[
\varphi \circ \Psi(x) \, = \, \displaystyle{
\max_{1 \leq i \leq \wh{I}}
} \, \wh{g}_i(x) - \displaystyle{
\max_{1 \leq j \leq \wh{J}}
} \, \wh{h}_j(x)
\]
for some positive integers $\wh{I}$ and $\wh{J}$ and convex differentiable functions
$\wh{g}_i$ and $\wh{h}_j$.  A similar expression can
be derived when $g_{\ell}$ and $h_{\ell}$ are each the pointwise maximum of finitely
many convex differentiable functions.
\end{lemma}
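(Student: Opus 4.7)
The plan is to expand $\varphi \circ \Psi$ explicitly, use the signed decomposition of the outer piecewise-affine coefficients to reduce each inner linear combination $\sum_\ell a^i_\ell \psi_\ell(x) + \alpha_i$ back to dc form, and then apply the standard identity that converts a pointwise maximum of differences of convex functions into a single difference of two convex functions. The argument is essentially symbolic bookkeeping, with no real analytical obstacle.

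Concretely, I would first write
\[
\varphi \circ \Psi(x) \, = \, \max_{1 \leq i \leq I} \Big( \, \textstyle\sum_\ell a^i_\ell \, \psi_\ell(x) + \alpha_i \, \Big) - \max_{1 \leq j \leq J} \Big( \, \textstyle\sum_\ell b^j_\ell \, \psi_\ell(x) + \beta_j \, \Big),
\]
decompose each scalar as $a^i_\ell = (a^i_\ell)_+ - (a^i_\ell)_-$ into nonnegative and nonpositive parts (similarly for $b^j_\ell$), and substitute $\psi_\ell = g_\ell - h_\ell$. Regrouping by sign rewrites the $i$-th inner expression as $p_i(x) - q_i(x)$ with
\[
p_i(x) \, \triangleq \, \sum_\ell \big[ \, (a^i_\ell)_+ g_\ell(x) + (a^i_\ell)_- h_\ell(x) \, \big] + \alpha_i, \quad q_i(x) \, \triangleq \, \sum_\ell \big[ \, (a^i_\ell)_- g_\ell(x) + (a^i_\ell)_+ h_\ell(x) \, \big],
\]
both of which are nonnegative combinations of convex differentiable functions, hence convex differentiable. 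The identical rewrite gives $\sum_\ell b^j_\ell \psi_\ell(x) + \beta_j = r_j(x) - s_j(x)$ with $r_j, s_j$ convex differentiable.

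The next step is to reduce each outer max-of-differences to a single difference via the elementary identity
\[
\max_{1 \leq i \leq I} \big[ \, p_i - q_i \, \big] \, = \, \max_{1 \leq i \leq I} \Big[ \, p_i + \sum_{k \neq i} q_k \, \Big] - \sum_{k=1}^I q_k,
\]
which follows by factoring $-\sum_k q_k$ out of the max. Applying this identity to both outer maxima and combining yields
\[
\varphi \circ \Psi(x) \, = \, \max_{1 \leq i \leq I} \big[ \, P_i(x) + S(x) \, \big] - \max_{1 \leq j \leq J} \big[ \, R_j(x) + Q(x) \, \big],
\]
where $P_i \triangleq p_i + \sum_{k \neq i} q_k$, $R_j \triangleq r_j + \sum_{m \neq j} s_m$, $Q \triangleq \sum_k q_k$, and $S \triangleq \sum_m s_m$ are convex differentiable by construction. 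Setting $\wh{g}_i \triangleq P_i + S$ and $\wh{h}_j \triangleq R_j + Q$, with $\wh{I} = I$ and $\wh{J} = J$, produces the claimed representation.

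For the extension in which each $g_\ell, h_\ell$ is itself a pointwise maximum of finitely many convex differentiable functions, the only additional ingredient is the tuple identity
\[
\sum_\ell c_\ell \, \max_m g_{\ell,m}(x) \, = \, \max_{(m_\ell)} \sum_\ell c_\ell \, g_{\ell,m_\ell}(x) \epc \mbox{whenever } c_\ell \, \geq \, 0,
\]
which again expresses each of $p_i, q_i, r_j, s_j$ as a pointwise maximum of convex differentiable functions; the construction above then applies unchanged, with $\wh{I}$ and $\wh{J}$ growing combinatorially in the number of elementary pieces but remaining finite. The mildly delicate point is the final grouping $\wh{g}_i = P_i + S$ (rather than $P_i$ alone), which is exactly what ensures that both sides of the resulting dc decomposition retain the pointwise-maximum form demanded by the lemma.
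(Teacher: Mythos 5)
Your proof is correct and follows essentially the same route as the paper's: sign-decompose the outer coefficients so that each inner affine combination of the $\psi_\ell$ becomes a difference $p_i - q_i$ of convex differentiable functions, then absorb the subtracted pieces into the two pointwise maxima via the identity $\max_i(p_i - q_i) = \max_i\bigl(p_i + \sum_{k\neq i} q_k\bigr) - \sum_k q_k$. You additionally sketch the extension to the case where $g_\ell$ and $h_\ell$ are themselves pointwise maxima (via the tuple identity for nonnegative combinations of maxima), which the paper omits and defers to an appendix; that sketch is also sound.
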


\begin{proof} Write $a^i_{\ell} = a^i_{\ell+} - a^i_{\ell-}$ where $a^i_{\ell\pm}$
are the nonnegative $(+)$ and nonpositive $(-)$ parts of $a^i_{\ell}$, we have
\[ \begin{array}{rll}
\displaystyle{
\max_{1 \leq i \leq I}
} \, \left\{ \, \displaystyle{
\sum_{\ell=1}^L
} \, a^i_{\ell} \, \left[ g_{\ell}(x) - h_{\ell}(x) \, \right] + \alpha_i \, \right\}
& = & \displaystyle{
\max_{1 \leq i \leq I}
} \, \left\{ \, \varphi_{1+}^{\, i}(x) - \varphi_{1-}^{\, i}(x) + \alpha_i \, \right\}
\\ [0.1in]
\mbox{where} \epc \varphi_{1\pm}^{\, i}(x) & = & \displaystyle{
\sum_{\ell=1}^L
} \, \left[ \, a^i_{\ell\pm} \, g_{\ell}(x) + a^i_{\ell\mp} \, h_{\ell}(x) \, \right],
\end{array}
\]
are both convex and differentiable.  Thus,
\[
\varphi \circ \Psi(x) \, = \, \displaystyle{
\max_{1 \leq i \leq I}
} \, \left( \, \varphi_{1+}^{\, i}(x) - \varphi_{1-}^{\, i}(x) + \alpha_i \, \right) -  \displaystyle{
\max_{1 \leq j \leq J}
} \, \left( \, \varphi_{2+}^{\, j}(x) - \varphi_{2-}^{\, j}(x) + \beta_j \, \right)
\]
for some similarly defined convex and differentiable functions $\varphi_{2\pm}^j$.
Finally, one more manipulation yields
{\small
\[ 
\varphi \circ \Psi(x) = \displaystyle{
\max_{1 \leq i \leq I}
} \, \underbrace{\left( \, \wh{\varphi}_{1+}^{\, i}(x) + \displaystyle{
\sum_{i^{\, \prime} \neq i}
} \, \varphi_{1-}^{\, i^{\, \prime}}(x) + \displaystyle{
\sum_{j=1}^J
} \, \varphi_{2-}^{\, j}(x) \, \right)}_{\mbox{convex and differentiable in $x$}} -
\displaystyle{
\max_{1 \leq j \leq J}
} \, \underbrace{\left( \, \wh{\varphi}_{2+}^{\, j}(x) + \displaystyle{
\sum_{j^{\, \prime} \neq j}
} \, \varphi_{2-}^{\, j^{\, \prime}}(x) + \displaystyle{
\sum_{i=1}^I
} \, \varphi_{1-}^{\, i}(x) \, \right)}_{\mbox{convex and differentiable in $x$}},
\]
}

\noindent where $\wh{\varphi}_{1+}^{\, i}(x) \triangleq \varphi_{1+}^{\, i}(x)
+ \alpha_i$ and
$\wh{\varphi}_{2+}^{\, j}(x) \triangleq \varphi_{2+}^{\, j}(x) + \beta_j$.
Thus the claimed representation of $\varphi \circ \Psi$ follows.  We omit the proof
of the last statement of the lemma; see Appendix~1.
\end{proof}

Consequently, the structure (\ref{eq:g and h}) of ${\cal Z}_{\ell}(\bullet,z)$ allows
us to model the probability of disjunctive and conjunctive
inequalities of random functionals.  Probabilities of conjunctive functional
inequalities are fairly common in the literature on chance constraints and their
treatment using the min function (in our setting) is standard;
see e.g.\ \cite{hong2011sequential,PAhmedShapiro09,POrdieresLuedtkeWachter20}.
Nevertheless, it appears that the corresponding literature about
probabilities of disjunctive inequalities is scarce; applications of the latter
probabilities can be found in optimal path planning to avoid obstacles
in robot control \cite{BlackmoreOnoWilliams11,LopezLudivigetal20}.  The  latter
references treat the resulting probability constraint by utilizing the bound
$\mathbb{P}( \mbox{A or B} ) \leq \mathbb{P}(A) + \mathbb{P}(B)$ which provides a
very loose approximation of the resulting chance constraint.  Thus one contribution
of our work is to give a tighter treatment of chance constraints in the presence of
conjunctive and disjunctive functional events by modeling them faithfully within
the probability operator.

\gap

In addition, the piecewise dc structure (\ref{eq:g and h})
is central to piecewise statistical models, e.g., in
piecewise affine regression \cite{CuiPangSen18} and
deep neural networks with piecewise affine activation functions \cite{CuiHePang20}.
Nonsmooth structures such as these, not only cover more general applications,
but also provide computational tractability in terms of directional stationary points.
See \cite{CuiPangSen18,PangRazaAlvarado16,LuZhouSun19} for algorithms to
solve deterministic (composite) optimization problems involving such functions.
Furthermore, adding to the large body of literature, the paper
\cite{nouiehed2017pervasiveness}
has highlighted the fundamental role of the class of difference-of-convex functions
in statistics and optimization.

\gap

In summary, the class of nonconvex and nondifferentiable random functions
${\cal Z}_{\ell}(\bullet,z)$ given by (\ref{eq:g and h})
arises in different ways in statistical modeling and optimization under
uncertainty.  Their composition with the discontinuous Heaviside functions within the
expectation operator makes the exact evaluation of multi-dimensional integrations
impossible. Hence, the variational analysis and numerical computation of the overall
CCP in \eqref{eq:focus sp_dc_constraint} are much more involved than a linear or convex
random functional that is usually considered in the existing literature, thus
necessitating an in-depth treatment that goes beyond a smooth convex programming
approach.

\section{Computable Approximations of the CCP}\label{sec:ACCSP}

In order to design implementable and scalable computational methods to solve the CCP
in \eqref{eq:focus sp_dc_constraint}, we consider a family of
computationally tractable continuous approximations of the indicator functions of the
difference-of-convex type.  We denote the feasible set
of \eqref{eq:focus sp_dc_constraint} as
\begin{equation}\label{cc ori}
{X}_{\rm cc} \triangleq  \Big\{ \, x \, \in \, X \ \big| \ \displaystyle{
\sum_{\ell=1}^L
} \, e_{k\ell} \, \mathbb{P}\left( \, {\cal Z}_{\ell}(x,\tilde{z} ) \, \geq \, 0 \, \right)  - \zeta_k \, \leq \, 0, \epc k=1, \cdots, K\,\Big\}.
\end{equation}
One major difficulty of the above constraint is that for each $k\in [K]$, the
constraint function
\[
x \mapsto \displaystyle{
\sum_{\ell=1}^L
} \, e_{k\ell} \, \mathbb{P}\left( \, {\cal Z}_{\ell}(x,\tilde{z} )
\, \geq \, 0 \, \right) \, = \, \displaystyle{
\sum_{\ell=1}^L
}  e_{k\ell} \, \mathbb{E} \left[\,{\bf 1}_{[\,0,\infty)}
\left( \, {\cal Z}_{\ell}(x,\tilde{z} ) \, \right)\,\right]
\]
is not necessarily continuous.
A classical treatment of the continuity of probability functions can be found in
\cite{Raik71}; see \cite[Section 2.3]{vanAckooij13} for a more recent summary
of this continuity issue.
\textcolor{black}{Even if such constraint functions are Lipschitz continuous
(see \cite[Section~2.6]{vanAckooij13} for some conditions),
their generalized subdifferentials are impossible to evaluate but their
elements can be useful as conceptual targets for computation.}  Our treatment of
the feasible set $X_{\rm cc}$ begins with approximations of the Heaviside functions.

\subsection{Approximations of the discontinuous indicator functions}
\label{subsec:approx Heaviside}

Notice that the  function ${\bf 1}_{(\,0, \,\infty)}(\bullet)$ within the expectation
function in \eqref{eq:probability and expectation}
is lower semicontinuous while the function ${\bf 1}_{[\,0, \,\infty)}(\bullet)$ is
upper semicontinuous.  In general, there are three steps in obtaining an approximation
of these Heaviside functions: i) approximate the indicator
functions; ii) parameterize the approximation; and iii) control the parameterization.
One way to control the parameterization is to rely on the perspective function and
minimize over the parameter, resulting in the conditional
value-at-risk approximation of the chance constraint \cite{nemirovski2007convex}.
For complex random functionals, one needs to be careful about the minimization of
this parameter over the positive reals and to
ensure that a zero value will not be encountered during the solution process.
An alternative way to control the parameter is either to take a diminishing sequence
of positive parameters and study the limiting process,
or to fix a sufficiently small parameter and study the problem with the fixed parameter.
We will study both cases in the subsequent sections.
As one can expect, the analysis of the former is more challenging.

\gap

In what follows, we introduce the unified nonconvex relaxation and restriction of
the general affine chance constraints in  (\ref{eq:focus sp_dc_constraint}) where
the coefficients $\{e_{k\ell}\}$ have mixed signs.  Specifically, we employ

\gap

$(\boldsymbol{\Theta})$  a convex (thus continuous) function
$\wh{\theta}_{\rm cvx} : \mathbb{R} \to
\mathbb{R}$ and a concave (thus continuous) function
$\wh{\theta}_{\rm cve} : \mathbb{R} \to \mathbb{R}$ satisfying
\[
\wh{\theta}_{\rm cvx}(0) \, = \, 0 \, = \, \wh{\theta}_{\rm cve}(0) \epc \mbox{and} \epc
\wh{\theta}_{\rm cvx}(1) \, = \, 1 \, = \, \wh{\theta}_{\rm cve}(1),
\]
and with both functions being increasing in the interval $[ 0,1 ]$ and nondecreasing outside.

\gap

Truncating these two functions to the range $[\, 0, \,1 \,]$, we obtain the upper and
lower bounds of the two indicator functions
$\onebld_{[ \, 0,\, \infty \, )}(t)$ and $\onebld_{( \, 0,\, \infty \, )}(t)$
as follows: for any $( t,\gamma ) \, \in \, \mathbb{R} \, \times \, \mathbb{R}_{++}$,
\begin{equation}
\label{eq:lower_upper_approx}
 \begin{array}{lll}
\phi_{\rm ub}(t,\gamma) & \triangleq & \min\left\{ \, \max\left( \,
\wh{\theta}_{\rm cvx}\left( 1 + \displaystyle{
\frac{t}{\gamma}
} \right), \, 0 \, \right), \, 1 \, \right\} \\ [0.3in]
& \geq & \onebld_{[ \, 0,\, \infty \, )}(t) \, \geq \, \onebld_{( \, 0,\, \infty \, )}(t)
\\ [0.1in]
& \geq & \max\left\{ \, \min\left( \, \wh{\theta}_{\rm cve}\left( \displaystyle{
\frac{t}{\gamma}
} \right), \, 1 \, \right), \ 0 \, \right\} \, \triangleq \, \phi_{\rm lb}(t,\gamma).
\end{array}
\end{equation}
One can easily verify that the functions $\phi_{\rm ub}(\bullet,\gamma)$ and
$\phi_{\rm lb}(\bullet,\gamma)$ are difference-of-convex functions.
When $\wh{\theta}_{\rm cvx}$ reduces to the identity function, we obtain
$\phi_{\rm ub}(t,\gamma) = \min\Big\{\max\Big(1+\displaystyle\frac{t}{\gamma}, \, 0
\Big), \, 1\Big\}$.
This function is used  as an approximation of the indicator function in
\cite{hong2014conditional,hong2011sequential}, in which
the authors made several restrictive assumptions in deriving their analytical results
and fixed the scalar $\gamma$ at a prescribed (small) value in their computations.
Compared with the conservative convex approximations in \cite{NemirovskiShapiro},
the difference-of-convex approximation can provide tighter bounds of the indicator
functions. 

\gap
Illustrated by Figure \ref{fig:out_inner} with $\gamma = 1$, the two bivariate
functions $\phi_{\rm ub}$ and $\phi_{\rm lb}$ have important properties that we
summarize in the result below; these include connections with the Heaviside functions.

\begin{figure}[h]
\centering
\fbox{
		\begin{minipage}{.3\textwidth}
			\centering
			\includegraphics[width = \textwidth]{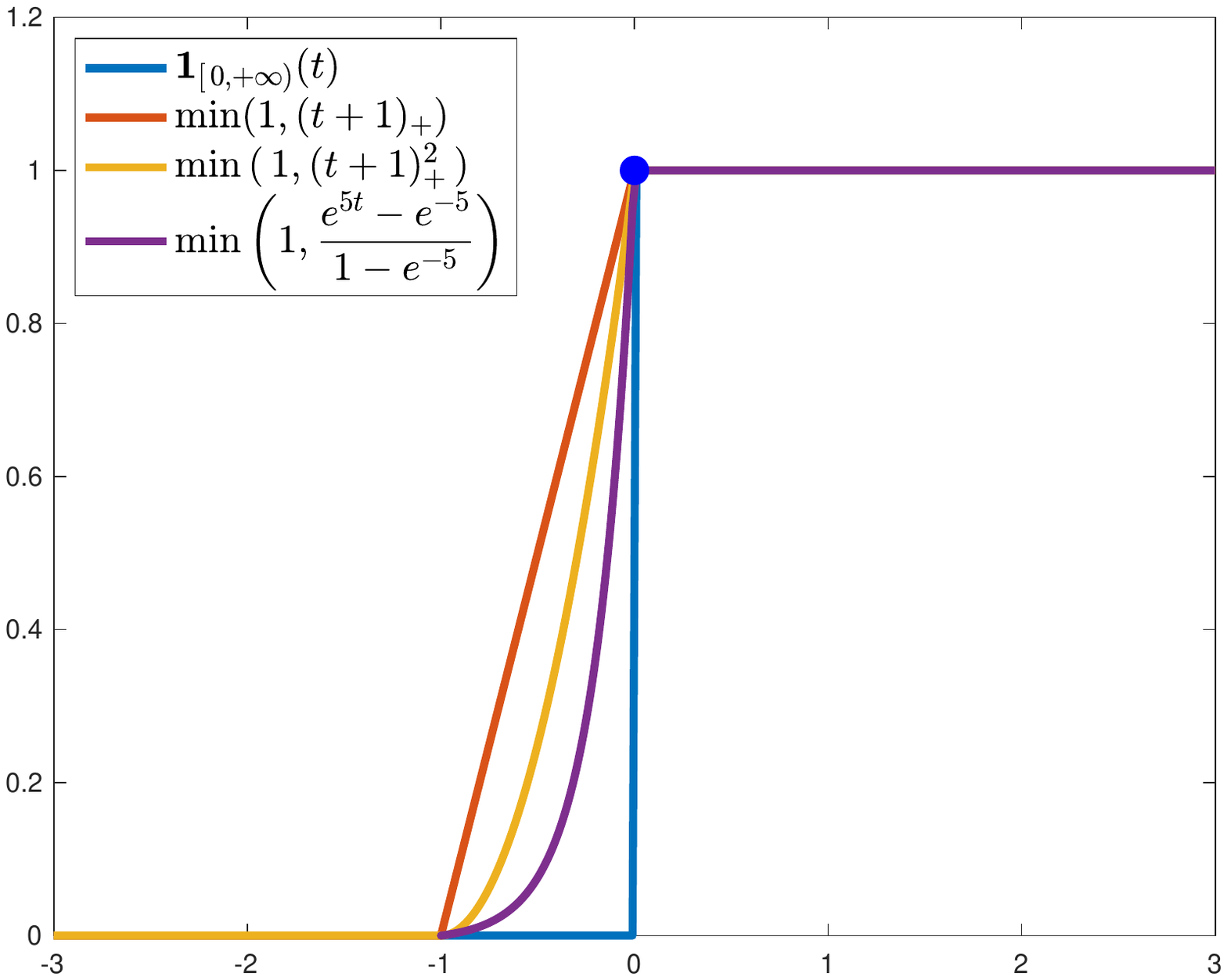}
			$\phi_{\rm ub}(t,1)$
		\end{minipage}
		\qquad
		\begin{minipage}{.3\textwidth}
			\centering
			\includegraphics[width = \textwidth]{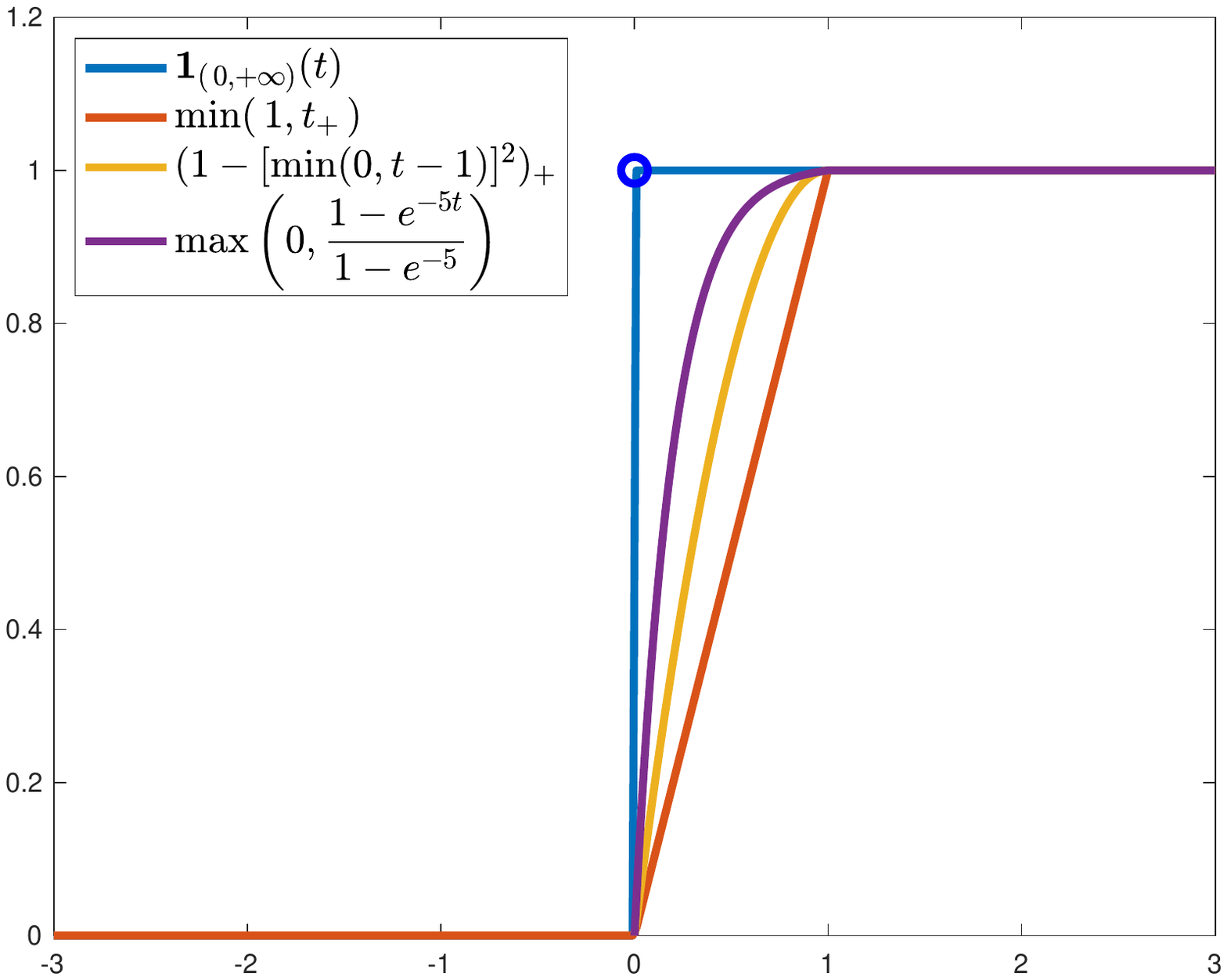}
			$\phi_{\rm lb}(t,1)$
		\end{minipage}
	}
\caption{Upper bound $\phi_{\rm ub}(t,\gamma)$ and lower bound
$\phi_{\rm lb}(t,\gamma)$ of ${\bf 1}_{[0, \infty)}$ and  ${\bf 1}_{(0, \infty)}$ with
$\gamma = 1$.}
\label{fig:out_inner}
\end{figure}	

\begin{proposition} \rm \label{pr:inner_outer_approximations}
The bivariate functions $\phi_{\rm ub}$ and $\phi_{\rm lb}$ defined above have the
following properties:

\gap

(a) For any $t\in \mathbb{R}$, $\phi_{\rm ub}(t,\gamma)$ is a nondecreasing function
in $\gamma$ on $\mathbb{R}_{++}$ and
$\phi_{\rm lb}(t,\gamma)$ is a nonincreasing function in $\gamma$ on $\mathbb{R}_{++}$.
Both functions $\phi_{\rm ub}$ and $\phi_{\rm lb}$
are Lipschitz continuous on every compact set
$T \times \Gamma \subseteq \mathbb{R} \times \mathbb{R}_{++}$.

\gap

(b) The following equalities hold:
\begin{equation} \label{eq:deterministic inf}
\begin{array}{rll}
\mathbf{1}_{[ \, 0, \infty \, )}(t) & = & \displaystyle{
\operatornamewithlimits{\mbox{\bf infimum}}_{\gamma > 0}
} \ \phi_{\rm ub}(t,\gamma) \, =   \, \displaystyle{
\operatornamewithlimits{\mbox{\bf limit}}_{\gamma \downarrow 0}
} \ \phi_{\rm ub}(t,\gamma), \epc \forall \, t \, \in \, \mathbb{R} \\ [0.15in]
\mbox{and} \ \mathbf{1}_{( \, 0, \infty \, )}(t) & = & \displaystyle{
	\operatornamewithlimits{\mbox{\bf supremum}}_{\gamma > 0}
} \ \phi_{\rm lb}(t,\gamma) \, =
\displaystyle{
\operatornamewithlimits{\mbox{\bf limit}}_{\gamma \downarrow 0}
} \ \phi_{\rm lb}(t,\gamma), \epc \forall \, t \, \in \, \mathbb{R}.
\end{array}
\end{equation}
\end{proposition}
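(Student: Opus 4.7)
The plan is to handle the two parts separately, with part (a) reducing to monotonicity of compositions plus standard Lipschitz arguments, and part (b) being a routine case analysis on the sign of $t$, reduced to a limit by the monotonicity proved in (a).

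For the monotonicity in part (a), the key observation is that for fixed $t$ the map $\gamma \mapsto t/\gamma$ is monotone on $\mathbb{R}_{++}$ (nonincreasing if $t \geq 0$, nondecreasing if $t \leq 0$). I would analyze $\phi_{\rm ub}(t,\gamma)$ in the cases $t \geq 0$ and $t < 0$. When $t \geq 0$ we have $1 + t/\gamma \geq 1$, so $\wh\theta_{\rm cvx}(1+t/\gamma) \geq \wh\theta_{\rm cvx}(1) = 1$ by the nondecreasing property outside $[0,1]$, hence truncation yields the constant value $1$. When $t < 0$, increasing $\gamma$ increases $1+t/\gamma$, so $\wh\theta_{\rm cvx}(1+t/\gamma)$ is nondecreasing in $\gamma$ by the monotonicity of $\wh\theta_{\rm cvx}$; truncation preserves this. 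A symmetric argument on $\phi_{\rm lb}$, using the nondecreasing property of $\wh\theta_{\rm cve}$, yields the nonincreasing dependence on $\gamma$.

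For the Lipschitz claim, on any compact $T \times \Gamma \subseteq \mathbb{R} \times \mathbb{R}_{++}$, $\gamma$ is bounded below by some $\gamma_{\min} > 0$, so $(t,\gamma) \mapsto t/\gamma$ is Lipschitz on $T \times \Gamma$. The functions $\wh\theta_{\rm cvx}$ and $\wh\theta_{\rm cve}$, being finite-valued convex resp.\ concave on $\mathbb{R}$, are locally Lipschitz and hence Lipschitz on the (bounded) image of $(t,\gamma) \mapsto 1 + t/\gamma$ or $t/\gamma$. Since the truncation operations $\max(\cdot,0)$ and $\min(\cdot,1)$ are $1$-Lipschitz, composition preserves Lipschitz continuity, giving the claim.

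For part (b), by the monotonicity established in (a), $\inf_{\gamma > 0} \phi_{\rm ub}(t,\gamma) = \lim_{\gamma \downarrow 0} \phi_{\rm ub}(t,\gamma)$ and $\sup_{\gamma > 0} \phi_{\rm lb}(t,\gamma) = \lim_{\gamma \downarrow 0} \phi_{\rm lb}(t,\gamma)$, so it suffices to identify these one-sided limits. For $\phi_{\rm ub}$: if $t \geq 0$, the function equals $1$ identically (shown above), matching $\mathbf{1}_{[0,\infty)}(t) = 1$; if $t < 0$, then $1 + t/\gamma \to -\infty$ as $\gamma \downarrow 0$, and the nondecreasing property of $\wh\theta_{\rm cvx}$ together with $\wh\theta_{\rm cvx}(0) = 0$ gives $\wh\theta_{\rm cvx}(1+t/\gamma) \leq 0$ for $\gamma$ sufficiently small, so the truncation $\max(\cdot,0)$ forces the value to be $0$. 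A symmetric case split for $\phi_{\rm lb}$ (now using $\wh\theta_{\rm cve}(1) = 1$ and $\wh\theta_{\rm cve}(0) = 0$) handles $t > 0$, $t = 0$, and $t < 0$, matching $\mathbf{1}_{(0,\infty)}(t)$ in each case.

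The only mildly delicate point is keeping the asymmetry between $[0,\infty)$ and $(0,\infty)$ straight: the closed Heaviside is captured by $\phi_{\rm ub}$ because the value at $t = 0$ is forced to $1$ by $\wh\theta_{\rm cvx}(1) = 1$ sitting above the truncation floor, while the open Heaviside is captured by $\phi_{\rm lb}$ because $\wh\theta_{\rm cve}(0) = 0$ sits below the ceiling. This is the one place where the specific normalizations in $(\boldsymbol{\Theta})$ are essential, and I expect no real obstacle beyond bookkeeping the cases.
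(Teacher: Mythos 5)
Your proposal is correct and follows essentially the same route as the paper's proof: a sign-based case analysis on $t$ for the monotonicity (constant value $1$ resp.\ $0$ on one side, monotone composition on the other), reduction of the Lipschitz claim to the Lipschitz continuity of $(t,\gamma)\mapsto t/\gamma$ on compact subsets of $\mathbb{R}\times\mathbb{R}_{++}$, and identification of the $\gamma\downarrow 0$ limits via the normalizations $\wh{\theta}_{\rm cvx/cve}(0)=0$ and $\wh{\theta}_{\rm cvx/cve}(1)=1$. You spell out a few steps the paper leaves implicit (the local Lipschitz continuity of the convex/concave $\wh{\theta}$ functions and the use of monotonicity to equate the infimum/supremum with the one-sided limit), but the argument is the same.
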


\begin{proof}
(a) When $t \geq 0$, $\phi_{\rm ub}(t ,\gamma)=1$ for any $\gamma >0$.  When
$t \leq 0$,  $\phi_{\rm ub}(t,\bullet)$ is a nondecreasing function on $\mathbb{R}_{++}$.
Thus $\phi_{\rm ub}(t,\bullet)$ is a nondecreasing function on $\mathbb{R}_{++}$ for any
$t\in \mathbb{R}$.  Similarly, $\phi_{\rm lb}(t,\bullet)$ can be proved to
be a nonincreasing  function on $\mathbb{R}_{++}$ for any $t\in \mathbb{R}$.
To see the Lipschitz continuity of $\phi_{\rm ub}$ and $\phi_{\rm lb}$,  it suffices to
note that the bivariate function:
\[
( \, t,\gamma \, ) \, \mapsto \, \displaystyle{
\frac{t}{\gamma}
}, \epc \gamma \, > \, 0
\]
is Lipschitz continuous on any such Cartesian set $T \times \Gamma$.

\gap

(b) The two equalities in the upper-bound expression in (\ref{eq:deterministic inf})
clearly hold when $t \geq 0$ because all three quantities are equal to 1.
For $t < 0$, we have $1 + \displaystyle{
\frac{t}{\gamma}
} \, < \, 0$ for all $\gamma \in ( 0, -t )$; thus $\displaystyle{
\operatornamewithlimits{\mbox{\bf infimum}}_{\gamma > 0}
} \, \phi_{\rm ub}(t,\gamma) \, = \, \displaystyle{
\operatornamewithlimits{\mbox{\bf limit}}_{\gamma \downarrow 0}
} \, \phi_{\rm ub}(t,\gamma) \, = \, 0$.   Similarly, the two equalities in the
lower-bound expression clearly hold when $t \leq 0$ because
all three quantities are equal to 0.  For $t > 0$, since
$\phi_{\rm lb}(t,\gamma) = 1$ for all $\gamma \in ( 0,t ]$, the proof of
(\ref{eq:deterministic inf}) is complete.
\end{proof}

By defining $\phi_{\rm ub}(t,0) \triangleq \onebld_{[ 0,\infty )}(t)$ and
$\phi_{\rm lb}(t,0) \triangleq \onebld_{( 0,\infty )}(t)$,
Proposition~\ref{pr:inner_outer_approximations} allows us to extend the functions
$\phi_{\rm ub}(t,\gamma)$ and $\phi_{\rm lb}(t,\gamma)$
to $\gamma = 0$, making the former upper semicontinuous and the latter lower
semicontinuous on the closed domain {$\mathbb{R} \times \mathbb{R}_+$}.
This is formally stated and proved in the following result.

\begin{proposition} \label{pr:usc and lsc of phi} \rm
The following limiting inequalities hold for all pairs
$( t_*,\gamma_* ) \in \mathbb{R} \times \mathbb{R}_+$:
\[
\phi_{\rm ub}(t_*,\gamma_*) \, \geq \, \displaystyle{
\operatornamewithlimits{\mbox{\bf limsup}}_{( t,\gamma ) \to ( t_*,\gamma_* )}
} \ \phi_{\rm ub}(t,\gamma) \, \geq \, \displaystyle{
\operatornamewithlimits{\mbox{\bf liminf}}_{( t,\gamma ) \to ( t_*,\gamma_* )}
} \ \phi_{\rm lb}(t,\gamma) \, \geq \, \phi_{\rm lb}(t_*,\gamma_*).
\]
\end{proposition}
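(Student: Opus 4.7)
The chain of three inequalities splits naturally into three tasks, only one of which requires real work. For the middle inequality, I would first observe that $\phi_{\rm ub}(t,\gamma) \ge \phi_{\rm lb}(t,\gamma)$ holds pointwise on the whole extended domain $\mathbb{R} \times \mathbb{R}_+$: for $\gamma>0$ this is the chain in display (\ref{eq:lower_upper_approx}), while for $\gamma = 0$ it reduces to $\mathbf{1}_{[0,\infty)}(t) \ge \mathbf{1}_{(0,\infty)}(t)$. Passing this to $\limsup$ and using the trivial $\limsup \ge \liminf$ then yields
\[
\limsup_{(t,\gamma) \to (t_*,\gamma_*)} \phi_{\rm ub}(t,\gamma) \; \ge \; \limsup_{(t,\gamma) \to (t_*,\gamma_*)} \phi_{\rm lb}(t,\gamma) \; \ge \; \liminf_{(t,\gamma) \to (t_*,\gamma_*)} \phi_{\rm lb}(t,\gamma).
\]

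For the outer inequalities I would split on whether $\gamma_* > 0$ or $\gamma_* = 0$. When $\gamma_* > 0$, Proposition~\ref{pr:inner_outer_approximations}(a) gives local Lipschitz continuity of both $\phi_{\rm ub}$ and $\phi_{\rm lb}$ on a small closed box $T \times \Gamma \subset \mathbb{R} \times \mathbb{R}_{++}$ containing $(t_*,\gamma_*)$; continuity collapses each $\limsup$ and $\liminf$ to the corresponding function value, and both outer inequalities hold with equality.

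The boundary case $\gamma_* = 0$ is the real work, and I would handle it by case analysis on the sign of $t_*$, exploiting that $\phi_{\rm ub}, \phi_{\rm lb} \in [0,1]$ everywhere and that $\widehat\theta_{\rm cvx}$, $\widehat\theta_{\rm cve}$ are nondecreasing with $\widehat\theta_{\rm cvx}(0) = 0$ and $\widehat\theta_{\rm cve}(1) = 1$. For the upper-semicontinuity inequality $\phi_{\rm ub}(t_*,0) \ge \limsup \phi_{\rm ub}$: if $t_* \ge 0$ then $\phi_{\rm ub}(t_*,0) = 1$ and the universal bound $\phi_{\rm ub} \le 1$ finishes it; if $t_* < 0$ then $\phi_{\rm ub}(t_*,0) = 0$, and for any $(t,\gamma)$ in a small enough neighborhood one gets $t < t_*/2 < 0$ and $\gamma < -t_*/4$, so $1 + t/\gamma \le -1$, which forces $\widehat\theta_{\rm cvx}(1 + t/\gamma) \le \widehat\theta_{\rm cvx}(-1) \le \widehat\theta_{\rm cvx}(0) = 0$ and hence $\phi_{\rm ub}(t,\gamma) = 0$ on that neighborhood. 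The lower-semicontinuity inequality for $\phi_{\rm lb}$ is symmetric: for $t_* \le 0$ the trivial bound $\phi_{\rm lb} \ge 0$ suffices; for $t_* > 0$, shrinking the neighborhood so that $t/\gamma \ge 2 \ge 1$ gives $\widehat\theta_{\rm cve}(t/\gamma) \ge \widehat\theta_{\rm cve}(1) = 1$, whence $\phi_{\rm lb}(t,\gamma) = 1$.

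The main (and essentially only) obstacle is the asymmetric extension at $\gamma = 0$: one must verify that the closed-Heaviside choice for $\phi_{\rm ub}$ and the open-Heaviside choice for $\phi_{\rm lb}$ are compatible with the one-sided limits of the approximations. The argument above shows that the clipping operators $\max(\cdot,0)$ and $\min(\cdot,1)$, together with monotonicity of $\widehat\theta_{\rm cvx}, \widehat\theta_{\rm cve}$ outside $[0,1]$, make the approximations eventually constant (equal to $0$ or $1$) on small neighborhoods of boundary points with $t_* \ne 0$, while the $t_* = 0$ cases are absorbed by the uniform bounds $\phi_{\rm ub} \le 1$ and $\phi_{\rm lb} \ge 0$.
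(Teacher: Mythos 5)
Your proposal is correct and follows essentially the same route as the paper's proof: continuity handles $\gamma_*>0$, the universal bounds $0\le\phi_{\rm lb}\le\phi_{\rm ub}\le 1$ handle the sign cases where the target value is $1$ (for $\phi_{\rm ub}$) or $0$ (for $\phi_{\rm lb}$), and the remaining boundary case $\gamma_*=0$ with $t_*\ne 0$ is settled by noting that the approximations become identically $0$ or $1$ on a small neighborhood. You merely spell out the explicit neighborhood estimates that the paper dismisses as "fairly obvious," and you make the middle inequality (which the paper leaves implicit) explicit via the pointwise ordering in (\ref{eq:lower_upper_approx}).
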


\begin{proof}  \textcolor{black}{With the definition of $\phi_{\rm ub}$ and
$\phi_{\rm lb}$ extended to the entire
domain $\mathbb{R} \times \mathbb{R}_+$ as described above}, the first inequality
clearly holds for $t_* \geq 0$ and all $\gamma_* \geq 0$ because
$\phi_{\rm ub}(t_*,\gamma_*) = 1$ for all such pairs $( t_*,\gamma_* )$;
\textcolor{black}{see the left curve in Figure~\ref{fig:out_inner}}.
Similarly the last inequality holds for $t_* \leq 0$ and all $\gamma_* \geq 0$  because
$\phi_{\rm lb}(t_*,\gamma_*) = 0$ for all such pairs $( t_*,\gamma_* )$;
\textcolor{black}{see the right curve in Figure~\ref{fig:out_inner}}.
Moreover, these two inequalities clearly hold for $\gamma_* > 0$ and all $t_*$
\textcolor{black}{because $\phi_{\rm ub}$ and
$\phi_{\rm lb}$ are both continuous
on $\mathbb{R} \times \mathbb{R}_{++}$}.  To complete the proof,
it \textcolor{black}{remains to consider $\gamma_* = 0$ and} show
\[
\left[ \, \displaystyle{
\operatornamewithlimits{\mbox{\bf lim}}_{( t,\gamma ) \to ( t_*,0 )}
} \ \phi_{\rm ub}(t,\gamma) \, = \, 0, \ \forall \, t_* \, < \, 0 \, \right]
\epc \mbox{and} \epc
\left[ \, \displaystyle{
\operatornamewithlimits{\mbox{\bf lim}}_{( t,\gamma ) \to ( t_*,0 )}
} \ \phi_{\rm lb}(t,\gamma) \, = \, 1, \ \forall \, t_* \, > \, 0 \, \right].
\]
The latter two limits are fairly obvious and no further proof is needed;
\textcolor{black}{indeed, it suffices to note that all $t$ near a nonzero $t_*$
must have the same sign as $t_*$.}
\end{proof}

The equalities in (\ref{eq:deterministic inf}) are deterministic results.
With $Z$ being a random variable, we have similar results in probability.
In particular, the proposition below shows that the gap between the limits of the outer
and inner approximations as $\gamma \downarrow 0$ is $\mathbb{P}(Z=0)$.

\begin{proposition} \label{pr:stochastic inf} \rm
For any real-valued random variable $Z$, it holds that
\[
\begin{array}{ll}
\mathbb{P}(Z \geq 0 ) \, = \, \displaystyle{
\operatornamewithlimits{\mbox{\bf infimum}}_{\gamma > 0}
} \, \mathbb{E}\left[ \, \phi_{\rm ub}(Z,\gamma) \, \right] \, = \,
\displaystyle{
\operatornamewithlimits{\mbox{\bf limit}}_{\gamma \downarrow 0}
} \, \mathbb{E}\left[ \, \phi_{\rm ub}(Z,\gamma) \, \right],\\[0.2in]
\mathbb{P}(Z > 0 ) \, = \, \displaystyle{
\operatornamewithlimits{\mbox{\bf supremum}}_{\gamma > 0}
} \, \mathbb{E}\left[ \, \phi_{\rm lb}(Z,\gamma) \, \right] \, = \,
\displaystyle{
\operatornamewithlimits{\mbox{\bf limit}}_{\gamma \downarrow 0}
} \, \mathbb{E}\left[ \, \phi_{\rm lb}(Z,\gamma) \, \right].
\end{array}
\]
\end{proposition}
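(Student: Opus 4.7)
The plan is to combine the deterministic pointwise identities from Proposition~\ref{pr:inner_outer_approximations}(b) with the monotonicity in $\gamma$ from part~(a), and then exchange the limit with the expectation via a standard convergence theorem. The uniform boundedness of $\phi_{\rm ub}$ and $\phi_{\rm lb}$ in $[0,1]$ is what makes this exchange routine.

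First, I would fix a realization $z = Z(\omega)$ and invoke Proposition~\ref{pr:inner_outer_approximations}(b) to write
\[
\mathbf{1}_{[\,0,\infty)}(z) \, = \, \lim_{\gamma\downarrow 0}\phi_{\rm ub}(z,\gamma), \qquad \mathbf{1}_{(\,0,\infty)}(z) \, = \, \lim_{\gamma\downarrow 0}\phi_{\rm lb}(z,\gamma),
\]
so that, pointwise in $\omega$, $\phi_{\rm ub}(Z,\gamma)\downarrow \mathbf{1}_{[\,0,\infty)}(Z)$ as $\gamma\downarrow 0$ (by the nondecreasing monotonicity of $\phi_{\rm ub}(t,\cdot)$ in part~(a)) and $\phi_{\rm lb}(Z,\gamma)\uparrow \mathbf{1}_{(\,0,\infty)}(Z)$ (by the nonincreasing monotonicity of $\phi_{\rm lb}(t,\cdot)$).

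Second, since $0 \le \phi_{\rm ub}(Z,\gamma),\phi_{\rm lb}(Z,\gamma)\le 1$ for every $\gamma>0$, the constant function $1$ is an integrable dominating envelope, so the dominated convergence theorem (or, equivalently, the monotone convergence theorem applied to $1-\phi_{\rm ub}$ and to $\phi_{\rm lb}$) yields
\[
\lim_{\gamma\downarrow 0}\mathbb{E}[\phi_{\rm ub}(Z,\gamma)] \, = \, \mathbb{E}[\mathbf{1}_{[\,0,\infty)}(Z)] \, = \, \mathbb{P}(Z\ge 0),
\]
and analogously $\lim_{\gamma\downarrow 0}\mathbb{E}[\phi_{\rm lb}(Z,\gamma)]=\mathbb{P}(Z>0)$. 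Finally, the infimum (resp.\ supremum) in the statement coincides with the corresponding limit because the expectation inherits the monotonicity in $\gamma$ from the integrand: $\gamma\mapsto \mathbb{E}[\phi_{\rm ub}(Z,\gamma)]$ is nondecreasing on $\mathbb{R}_{++}$, so its infimum over $\gamma>0$ equals its right-limit at $0$, and similarly for the supremum of $\gamma\mapsto \mathbb{E}[\phi_{\rm lb}(Z,\gamma)]$.

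There is essentially no obstacle: the hard work was absorbed into Proposition~\ref{pr:inner_outer_approximations}, and boundedness by $1$ trivializes the interchange of limit and expectation. The only detail worth flagging is that the monotone-in-$\gamma$ property holds pointwise in $t$ but in both directions (increasing for $\phi_{\rm ub}$, decreasing for $\phi_{\rm lb}$), so one must be careful to match the monotonicity with the correct version of MCT, or simply defer to DCT using the dominating constant $1$.
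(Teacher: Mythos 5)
Your proposal is correct and follows essentially the same route as the paper: both rely on the pointwise identities and the monotonicity in $\gamma$ from Proposition~\ref{pr:inner_outer_approximations}, and then interchange limit and expectation via the Monotone Convergence Theorem (the paper) or equivalently dominated convergence with the bound $1$ (your variant). No gaps.
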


\begin{proof}
From \eqref{eq:deterministic inf},
\[
\mathbb{P}(Z \geq 0) = \mathbb{E}\Big[ \, \mathbf{1}_{[0, \infty)}(Z) \, \Big] \, = \,
\mathbb{E}\Big[ \, \displaystyle{
\operatornamewithlimits{\mbox{\bf infimum}}_{\gamma > 0}
} \ \phi_{\rm ub}(Z,\gamma)  \, \Big] \, = \, \mathbb{E}\Big[ \, \displaystyle{
\operatornamewithlimits{\mbox{\bf limit}}_{\gamma \downarrow 0}
} \ \phi_{\rm ub}(Z,\gamma) \, \Big].
\]
Since $\phi_{\rm ub}(z,\bullet)$ is a monotonic function on $\mathbb{R}_{++}$,
by the Monotone Convergence Theorem, we have
\[
\mathbb{P}(Z \geq 0 ) \, = \, \displaystyle{
\operatornamewithlimits{\mbox{\bf infimum}}_{\gamma > 0}
} \, \mathbb{E}\left[ \, \phi_{\rm ub}(Z,\gamma) \, \right] \, = \,
\displaystyle{
\operatornamewithlimits{\mbox{\bf limit}}_{\gamma \downarrow 0}
} \, \mathbb{E}\left[ \, \phi_{\rm ub}(Z,\gamma) \, \right].
\]
The proof for the two equalities of $\mathbb{P}(Z > 0 )$ is similar and omitted.
\end{proof}

Note that for all $t$ in a compact interval of $\mathbb{R}$, the differences
$| \, \phi_{\rm ub/lb}(t,\gamma_1) - \phi_{\rm ub/lb}(t,\gamma_2) \, |$ are bounded by
a positive multiple of $\left| \, \displaystyle{
\frac{1}{\gamma_1}
} - \displaystyle{
\frac{1}{\gamma_2}
} \, \right|$ for all $\gamma_1 > \gamma_2 > 0$.
In the next result, we derive a similar bound on the expectation of the differences
$\mathbb{E}[ \, | \, \phi_{\rm ub/lb}(Z,\gamma_1) -
\phi_{\rm ub/lb}(Z,\gamma_2) \, | \, ]$ for a given random variable $Z$; the obtained bounds are
the basis for understanding the choice of the scaling
parameter in the convergence analysis of the algorithm for solving the ACC-SP (\ref{eq:focus sp_dc_constraint})
when $\gamma \downarrow 0$.   To derive these bounds, let $F_Z$ be the cumulative distribution
function (cdf) of $Z$, and for $\gamma > 0$,
\begin{equation} \label{eq:h of Z}
h_Z^{\rm lb}(\gamma) \, \triangleq \, 
\displaystyle{
\frac{1}{\gamma}
} \, \displaystyle{
\int_{\, 0}^{\, \gamma}
} \, F_Z(t) \, dt \epc \mbox{and} \epc
h_Z^{\rm ub}(\gamma) \, \triangleq \, 
\displaystyle{
\frac{1}{\gamma}
} \, \displaystyle{
\int_{\, -\gamma}^{\, 0}
} \, F_Z(t) \, dt. 
\end{equation}
These are nonnegative functions with $\displaystyle{
\lim_{\gamma \downarrow 0}
} \, h_Z^{\, \rm ub/lb}(\gamma) = F_Z(0)$; moreover,
$h_Z^{\rm ub/lb}$ are
nonincreasing/nondecreasing on $\mathbb{R}_{++}$, respectively.  Indeed, we have,
\[ \begin{array}{lll}
( \, h_Z^{\rm ub} )^{\, \prime}(\gamma) & = & -\displaystyle{
\frac{1}{\gamma^2}
} \, \displaystyle{
\int_{\, -\gamma}^{\, 0}
} \, F_Z(t) \, dt + \displaystyle{
\frac{1}{\gamma}
} \, F_Z(-\gamma) \\ [0.2in]
& \leq & -\displaystyle{
\frac{1}{\gamma^2}
} \, \displaystyle{
\int_{\, -\gamma}^{\, 0}
} \, F_Z(-\gamma) \, dt + \displaystyle{
\frac{1}{\gamma}
} \, F_Z(-\gamma) \epc \mbox{because $F_Z$ is nondecreasing} \\ [0.2in]
& = & -\displaystyle{
\frac{1}{\gamma}
} \, F_Z(-\gamma) + \displaystyle{
\frac{1}{\gamma}
} \, F_Z(-\gamma) \, = \, 0.
\end{array}
\]
In terms of the functions $h_Z^{\, \rm ub/lb}$, we have the following result.

\begin{proposition} \label{pr:error Heaviside and indicator} \rm
Let $\mbox{Lip}_{\theta}$ denote
the Lipschitz modulus of $\wh{\theta}_{\rm cvx/cve}$ on $[ 0,1 ]$.
For any random variable $Z$, it holds that for any two scalars
$\gamma_1 > \gamma_2 > 0$,
\begin{equation} \label{eq:error Heaviside and indicator}
\begin{array}{lll}	
0 \, \leq \,
\mathbb{E}\left[ \, \phi_{\rm ub}(Z,\gamma_1) - \phi_{\rm ub}(Z,\gamma_2) \, \right]
& \leq & \mbox{Lip}_{\theta} \, \left[ \, h_Z^{\rm ub}(\gamma_2) - h_Z^{\rm ub}(\gamma_1) \, \right] \\ [0.1in]
0 \, \leq \, \mathbb{E}\left[ \, \phi_{\rm lb}(Z,\gamma_2) - \phi_{\rm lb}(Z,\gamma_1)
\, \right] & \leq & \mbox{Lip}_{\theta} \, \left[ \, h_Z^{\rm lb}(\gamma_1) - h_Z^{\rm lb}(\gamma_2) \, \right].
\end{array}
\end{equation}
\end{proposition}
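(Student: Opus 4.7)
The plan is to prove the left (nonnegativity) inequalities directly from monotonicity, and the right (upper-bound) inequalities by a pointwise case analysis followed by a Fubini-style identification of the resulting integrals with $h_Z^{\rm ub}$ and $h_Z^{\rm lb}$.

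\textbf{Step 1: Nonnegativity.} By Proposition~\ref{pr:inner_outer_approximations}(a), $\phi_{\rm ub}(t,\bullet)$ is nondecreasing and $\phi_{\rm lb}(t,\bullet)$ is nonincreasing on $\mathbb{R}_{++}$. Hence, for $\gamma_1 > \gamma_2 > 0$, $\phi_{\rm ub}(Z,\gamma_1) \geq \phi_{\rm ub}(Z,\gamma_2)$ and $\phi_{\rm lb}(Z,\gamma_2) \geq \phi_{\rm lb}(Z,\gamma_1)$ pointwise; taking expectations gives the two left inequalities immediately.

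\textbf{Step 2: Pointwise case analysis for $\phi_{\rm ub}$.} Splitting on the sign and magnitude of $t$, for $\gamma_1 > \gamma_2 > 0$ I would verify that $\phi_{\rm ub}(t,\gamma_1) - \phi_{\rm ub}(t,\gamma_2) = 0$ whenever $t \geq 0$ or $t \leq -\gamma_1$; for $t \in (-\gamma_1, -\gamma_2]$ the argument $1 + t/\gamma_1$ lies in $[0,1]$ while $1 + t/\gamma_2 \leq 0$, so the difference reduces to $\wh{\theta}_{\rm cvx}(1 + t/\gamma_1)$; for $t \in (-\gamma_2,0]$ both arguments lie in $[0,1]$, so the difference is $\wh{\theta}_{\rm cvx}(1 + t/\gamma_1) - \wh{\theta}_{\rm cvx}(1 + t/\gamma_2)$. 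Using the Lipschitz modulus $\mathrm{Lip}_{\theta}$ of $\wh{\theta}_{\rm cvx}$ on $[0,1]$ together with $\wh{\theta}_{\rm cvx}(0) = 0$, these two nonzero cases are bounded by $\mathrm{Lip}_{\theta}\,(\gamma_1 + t)/\gamma_1$ and $\mathrm{Lip}_{\theta}\,(-t)(1/\gamma_2 - 1/\gamma_1)$ respectively.

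\textbf{Step 3: Matching the bound to $h_Z^{\rm ub}(\gamma_2) - h_Z^{\rm ub}(\gamma_1)$.} Integrating Step~2 against the law of $Z$ gives
\[
\mathbb{E}\left[\phi_{\rm ub}(Z,\gamma_1) - \phi_{\rm ub}(Z,\gamma_2)\right]
\,\leq\, \mathrm{Lip}_{\theta}\left[\int_{(-\gamma_1,-\gamma_2]} \frac{\gamma_1+z}{\gamma_1}\, dP_Z(z) + \left(\frac{1}{\gamma_2} - \frac{1}{\gamma_1}\right)\int_{(-\gamma_2,0]} (-z)\, dP_Z(z)\right].
\]
In parallel, I would apply Fubini to $\int_{-\gamma}^{0} F_Z(t)\,dt = \int_{-\gamma}^{0} \int \mathbf{1}[z \leq t]\,dP_Z(z)\,dt$, obtaining $\gamma\,P(Z \leq -\gamma) + \int_{(-\gamma,0]}(-z)\,dP_Z(z)$, and then subtract the expressions for $\gamma_2$ and $\gamma_1$. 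Algebraic manipulation—in particular, splitting the $\gamma_1$-integral over $(-\gamma_1,-\gamma_2]$ and $(-\gamma_2,0]$ and combining with the probability difference $P(-\gamma_1 < Z \leq -\gamma_2)$—should reproduce exactly the bracketed quantity on the right, proving the first inequality. The symmetric calculation on $[0,\gamma_1]$, using $\phi_{\rm lb}(t,\gamma) = \wh{\theta}_{\rm cve}(t/\gamma)$ for $t \in [0,\gamma]$ and $\wh{\theta}_{\rm cve}(0) = 0$, will handle $\phi_{\rm lb}$.

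\textbf{Main obstacle.} The conceptually straightforward but book-keeping-intensive step is matching the integrated pointwise bound with the difference $h_Z^{\rm ub}(\gamma_2) - h_Z^{\rm ub}(\gamma_1)$: one must carefully track which portion of the $1/\gamma_1$-weighted integral corresponds to the probability mass over $(-\gamma_1,-\gamma_2]$ and which to $(-\gamma_2,0]$, and confirm that the coefficient $(\gamma_1+z)/\gamma_1$ on the former interval arises naturally from combining $P(-\gamma_1 < Z \leq -\gamma_2)$ with the subtracted $\int_{(-\gamma_1,-\gamma_2]}(-z)/\gamma_1\,dP_Z(z)$ term. Handling possible atoms of $Z$ at the endpoints $-\gamma_1, -\gamma_2, 0$ requires a minor care with open/closed intervals but does not affect the bound.
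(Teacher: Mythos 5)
Your proposal is correct and follows essentially the same route as the paper: the identical pointwise case split on $t\geq 0$, $t\leq -\gamma_1$, $t\in(-\gamma_1,-\gamma_2]$, $t\in(-\gamma_2,0]$, the same use of $\wh{\theta}_{\rm cvx}(0)=0$ and the Lipschitz modulus on $[0,1]$, and the same resulting integrand bounds $(\gamma_1+t)/\gamma_1$ and $(-t)(1/\gamma_2-1/\gamma_1)$. The only divergence is in the final bookkeeping: the paper converts the $dF_Z$ integrals to $\int F_Z(t)\,dt$ by Riemann--Stieltjes integration by parts, whereas you expand $h_Z^{\rm ub}(\gamma)=P(Z\leq-\gamma)+\frac{1}{\gamma}\int_{(-\gamma,0]}(-z)\,dP_Z(z)$ via Fubini and match terms directly; the two computations are equivalent, though yours handles possible atoms of $Z$ at the interval endpoints a bit more transparently.
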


\begin{proof} We prove only the right-hand inequality in
(\ref{eq:error Heaviside and indicator}) for $\phi_{\rm ub}$.  We have
\[ \begin{array}{l}
\mathbb{E}\left[ \, \phi_{\rm ub}(Z,\gamma_1) - \phi_{\rm ub}(Z,\gamma_2) \, \right]
\\ [0.1in]
= \, \displaystyle{
\int_{\, -\gamma_1}^{\, -\gamma_2}
} \, \wh{\theta}_{\rm cvx}\left( 1 + \displaystyle{
\frac{t}{\gamma_1}
} \, \right) \, dF_Z(t) + \displaystyle{
\int_{\, -\gamma_2}^{\, 0}
} \, \left[ \, \wh{\theta}_{\rm cvx}\left( 1 + \displaystyle{
\frac{t}{\gamma_1}
} \, \right) - \wh{\theta}_{\rm cvx}\left( 1 + \displaystyle{
\frac{t}{\gamma_2}
} \, \right) \, \right] dF_Z(t) \\ [0.2in]
\leq \, \mbox{Lip}_{\theta} \, \left[ \, \displaystyle{
\int_{\, -\gamma_1}^{\, -\gamma_2}
} \, \left| \, 1 + \displaystyle{
\frac{t}{\gamma_1}
} \, \right| \, dF_Z(t) + \displaystyle{
\int_{\, -\gamma_2}^{\, 0}
} \, \left( \, \displaystyle{
\frac{1}{\gamma_2}
} - \displaystyle{
\frac{1}{\gamma_1}
} \, \right) \, | \, t \, | \, dF_Z(t) \, \right].
\end{array}
\]
Integration by parts yields
\[ \begin{array}{lll}
\displaystyle{
\int_{\, -\gamma_1}^{\, -\gamma_2}
} \, \left| \, 1 + \displaystyle{
\frac{t}{\gamma_1}
} \, \right| \, dF_Z(t) & = & \displaystyle{
\int_{\, -\gamma_1}^{\, -\gamma_2}
} \, \left( \, 1 + \displaystyle{
\frac{t}{\gamma_1}
} \, \right) \, dF_Z(t) \\ [0.2in]
& = & \left( \, 1 - \displaystyle{
\frac{\gamma_2}{\gamma_1}
} \, \right) \, F_Z(-\gamma_2) - \displaystyle{
\frac{1}{\gamma_1}
} \, \displaystyle{
\int_{\, -\gamma_1}^{\, -\gamma_2}
} \, F_Z(t) \, dt
\end{array} \]
and
\[
\displaystyle{
\int_{\, -\gamma_2}^{\, 0}
} \, \left( \, \displaystyle{
\frac{1}{\gamma_2}
} - \displaystyle{
\frac{1}{\gamma_1}
} \, \right) \, | \, t \, | \, dF_Z(t) \, = \, \left( \, \displaystyle{
\frac{1}{\gamma_2}
} - \displaystyle{
\frac{1}{\gamma_1}
} \, \right) \, \left[ \, -\gamma_2 \, F_z(-\gamma_2) + \displaystyle{
\int_{\, -\gamma_2}^{\, 0}
} \, F_Z(t) \, dt \, \right].
\]
Adding the two terms yields
\[ \begin{array}{lll}
\mathbb{E}\left[ \, \phi_{\rm ub}(Z,\gamma_1) - \phi_{\rm ub}(Z,\gamma_2) \, \right]
& \leq & \mbox{Lip}_{\theta} \, \left[ \,  - \displaystyle{
\frac{1}{\gamma_1}
} \, \displaystyle{
\int_{\, -\gamma_1}^{\, -\gamma_2}
} \, F_Z(t) \, dt  + \left( \, \displaystyle{
\frac{1}{\gamma_2}
} - \displaystyle{
\frac{1}{\gamma_1}
} \, \right) \, \displaystyle{
\int_{\, -\gamma_2}^{\, 0}
} \, F_Z(t) \, dt \, \right] \\ [0.2in]
& = & \mbox{Lip}_{\theta} \, \left[ \, \displaystyle{
\frac{1}{\gamma_2}
} \, \displaystyle{
\int_{\, -\gamma_2}^{\, 0}
} \, F_Z(t) \, dt - \displaystyle{
\frac{1}{\gamma_1}
} \, \displaystyle{
\int_{\, -\gamma_1}^{\, 0}
} \, F_Z(t) \, dt \, \right],
\end{array} \]
which is the desired bound.
\end{proof}

\subsection{Approximation of the chance-constrained set $X_{\rm cc}$}

In the following, we discuss the continuous approximation of the chance constraints
in \eqref{cc ori} via the
upper and lower approximations of the
Heaviside functions provided in the last subsection.
Recalling the signed decomposition
$e_{k\ell} = e_{k\ell}^{\,\,+} - e_{k\ell}^{\,\,-}$, we have
\[
\displaystyle{
\sum_{\ell=1}^L
} \, e_{k\ell} \, \mathbb{P}\left( \, {\cal Z}_{\ell}(x,\tilde{z} ) \, \geq \, 0 \,
\right) \, = \, \displaystyle{
\sum_{\ell=1}^L
} \, \left( \, e_{k\ell}^{\,\,+} - e_{k\ell}^{\,\,-} \, \right) \,
\mathbb{P}\left( \, {\cal Z}_{\ell}(x,\tilde{z} ) \, \geq \, 0 \, \right).
\]
To proceed, we denote, for any $x\in X$ and any $\gamma>0$,
\begin{equation}\label{defn: c_gamma}
\left\{\begin{array}{l}
\bar{c}_k^{\, \rm rlx}(x;\gamma) \, \triangleq \, \mathbb{E}\left[ \,
c_k^{\, \rm rlx}(x,\tilde z;\gamma) \,\triangleq \,\displaystyle{
\sum_{\ell=1}^L
} \, \left( \, \underbrace{e_{k\ell}^+ \,
\phi_{\rm lb}({\cal Z}_{\ell}(x,\tilde{z} ),\gamma) - e_{k\ell}^- \,
\phi_{\rm ub}({\cal Z}_{\ell}(x,\tilde{z} ),\gamma)}_{\mbox{denoted by
$c_{k\ell}^{\, \rm rlx}(x,\tilde z;\gamma)$}} \, \right) \, \right]
\\ [0.45in]
\bar{c}_k^{\, \rm rst}(x;\gamma) \, \triangleq \, \mathbb{E}\left[ \,
c_{k}^{\, \rm rst}(x,\tilde{z};\gamma) \,\triangleq \, \displaystyle{
\sum_{\ell=1}^L
} \, \left( \, \underbrace{e_{k\ell}^+ \,
\phi_{\rm ub}({\cal Z}_{\ell}(x,\tilde{z} ),\gamma)  -
e_{k\ell}^- \, \phi_{\rm lb}({\cal Z}_{\ell}(x,\tilde{z} ),\gamma)}_{\mbox{denoted by
$c_{k\ell}^{\, \rm rst}(x,\tilde{z};\gamma)$}} \, \right) \, \right]
\end{array}\right.
\end{equation}
and
\begin{equation}\label{defn:gamma constraints}
\left\{\begin{array}{ll}
\overline{X}_{\rm rlx}(\gamma) \, \triangleq \, \left\{ \, x \in X \ \big| \
\bar{c}_k^{\, \rm rlx}(x;\gamma) - \zeta_k \, \leq \, 0, \epc k \, \in \, [ \, K \, ]
\,\right\} \\ [0.15in]
\overline{X}_{\rm rst}(\gamma) \,  \triangleq \, \left\{ \, x \in X \ \big| \
\bar{c}_k^{\, \rm rst}(x;\gamma) - \zeta_k \, \leq \, 0, \epc k \, \in \, [ \, K \, ]
\,\right\}.
\end{array}\right.
\end{equation}
It then follows by Proposition \ref{pr:inner_outer_approximations} that for any
$\gamma > 0$,
\[
\bar{c}_k^{\rm \, rlx}(x;\gamma) \, \leq \, \displaystyle{
\sum_{\ell=1}^L
} \, e_{k\ell} \,\mathbb{P}\left( \, {\cal Z}_{\ell}(x,\tilde{z} ) \, \geq \, 0 \,
\right) \,\leq \,
\bar{c}_k^{\rm \, rst}(x;\gamma) \epc \mbox{and}\epc \overline{X}_{\rm rst}(\gamma)
\subseteq X_{\rm cc} \subseteq \overline{X}_{\rm rlx}(\gamma).
\]
The set inclusions show that for any $\gamma>0$, the set
$\overline{X}_{\rm rst}(\gamma)$ yields a more restrictive feasible region compared
with the set $X_{\rm cc}$ of the original chance constraints while
$\overline{X}_{\rm rlx}(\gamma)$ is a relaxation of the latter set.  This explains the
scripts ``rst'' and ``rlx'' in the above notations,
\textcolor{black}{which stand for ``restricted'' and ``relaxed'', respectively.}
With each ${\cal Z}_{\ell}$ given by assumption ($\boldsymbol{\cal Z}$), the sets
$\overline{X}_{\rm rst}(\gamma)$ and $\overline{X}_{\rm rlx}(\gamma)$
are closed.  However, with the definition of  the limit of set-valued mappings
in \cite[Chapters~4 and 5]{RockafellarWets98},
the limits of these two sets when $\gamma \downarrow 0$ may not   {be} equal to $X_{\rm cc}$
in general.  In order to derive their respective limits,  we further define
\begin{equation} \label{eq:prob of c_gamma}
\left\{ \begin{array}{l}
\bar{c}_k^{\, \rm rlx}(x) \,\triangleq \,
\displaystyle{
\sum_{\ell=1}^L
} \, \Big( \, e_{k \ell}^+ \, \mathbb{P}({\cal Z}_{\ell}(x,\tilde{z} )> 0) -
e_{k \ell}^- \, \mathbb{P}({\cal Z}_{\ell}(x,\tilde{z} )\geq 0) \, \Big) \\ [0.2in]
\bar{c}_k^{\, \rm rst}(x) \, \triangleq \,
\displaystyle{
\sum_{\ell=1}^L
} \, \Big( \, e_{k \ell}^+ \, \mathbb{P}({\cal Z}_{\ell}(x,\tilde{z} )\geq  0) -
e_{k \ell}^- \, \mathbb{P}({\cal Z}_{\ell}(x,\tilde{z}) > 0) \, \Big)
\end{array} \right.
\end{equation}
and
\begin{equation}\label{eq:prob of gamma constraints}
\left\{\begin{array}{ll}
\overline{X}_{\rm rlx} \, \triangleq \, \left\{ \, x \in X \ \big| \
\bar{c}_k^{\, \rm rlx}(x) - \zeta_k \, \leq \, 0, \epc k \, \in \, [ \, K \, ]
\, \right\} \\ [0.15in]
\overline{X}_{\rm rst} \,  \triangleq \, \left\{ \, x \in X \ \big| \
\bar{c}_k^{\, \rm rst}(x) - \zeta_k \, \leq \, 0, \epc k \, \in \, [ \, K \, ]
\, \right\}.
\end{array}\right.
\end{equation}
Based on Proposition~\ref{pr:error Heaviside and indicator}, we can given the following error of
the restricted/relaxed approximations of the affine constraint functions.

\begin{proposition} \label{pr:error of rst/rlx and acc} \rm
For any two scalars $\gamma_1 > \gamma_2 > 0$, it holds that for all $x \in X$,
\[ \begin{array}{lll}
\left| \, \bar{c}_k^{\, \rm rst/rlx}(x;\gamma_1) - \bar{c}_k^{\, \rm rst/rlx}(x;\gamma_2) \, \right|
\\ [0.1in]
\leq \,   {\mbox{Lip}_{\theta}} \, \displaystyle{
\sum_{\ell=1}^L
} \ | \, e_{k\ell} \, | \ \max\left( \, h_{{\cal Z}_{\ell}(x,\bullet)}^{\rm ub}(\gamma_2) - h_{{\cal Z}_{\ell}(x,\bullet)}^{\rm ub}(\gamma_1), \
\, h_{{\cal Z}_{\ell}(x,\bullet)}^{\rm lb}(\gamma_1) - h_{{\cal Z}_{\ell}(x,\bullet)}^{\rm lb}(\gamma_2) \, \right).
\end{array}
\]	
\end{proposition}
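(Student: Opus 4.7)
The plan is to reduce everything term-by-term to the estimate already proved in Proposition~\ref{pr:error Heaviside and indicator}, which controls $\mathbb{E}[\phi_{\rm ub}(Z,\gamma_1)-\phi_{\rm ub}(Z,\gamma_2)]$ and $\mathbb{E}[\phi_{\rm lb}(Z,\gamma_2)-\phi_{\rm lb}(Z,\gamma_1)]$ through $h_Z^{\rm ub}$ and $h_Z^{\rm lb}$ respectively. First I would fix $k$ and exploit the linearity of the expectation in (\ref{defn: c_gamma}) to write
\[
\bar{c}_k^{\,\rm rlx}(x;\gamma_1)-\bar{c}_k^{\,\rm rlx}(x;\gamma_2) \;=\; \sum_{\ell=1}^L \Big(e_{k\ell}^+\,\Delta^{\rm lb}_\ell - e_{k\ell}^-\,\Delta^{\rm ub}_\ell\Big),
\]
where $\Delta^{\rm ub/lb}_\ell \triangleq \mathbb{E}[\phi_{\rm ub/lb}({\cal Z}_\ell(x,\tilde z),\gamma_1)-\phi_{\rm ub/lb}({\cal Z}_\ell(x,\tilde z),\gamma_2)]$, and similarly with the roles of $\phi_{\rm ub}$ and $\phi_{\rm lb}$ swapped for $\bar{c}_k^{\,\rm rst}$.

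Next, applying Proposition~\ref{pr:error Heaviside and indicator} to the random variable $Z={\cal Z}_\ell(x,\tilde z)$ gives
\[
0 \;\leq\; \Delta^{\rm ub}_\ell \;\leq\; \mbox{Lip}_{\theta}\bigl[h^{\rm ub}_{{\cal Z}_\ell(x,\bullet)}(\gamma_2)-h^{\rm ub}_{{\cal Z}_\ell(x,\bullet)}(\gamma_1)\bigr],
\qquad
0 \;\leq\; -\Delta^{\rm lb}_\ell \;\leq\; \mbox{Lip}_{\theta}\bigl[h^{\rm lb}_{{\cal Z}_\ell(x,\bullet)}(\gamma_1)-h^{\rm lb}_{{\cal Z}_\ell(x,\bullet)}(\gamma_2)\bigr].
\]
Both bounded quantities are nonnegative (and so are $e_{k\ell}^\pm$), so the triangle inequality gives
\[
\bigl|\bar{c}_k^{\,\rm rlx}(x;\gamma_1)-\bar{c}_k^{\,\rm rlx}(x;\gamma_2)\bigr| \;\leq\; \sum_{\ell=1}^L \Big(e_{k\ell}^+\,|\Delta^{\rm lb}_\ell| + e_{k\ell}^-\,|\Delta^{\rm ub}_\ell|\Big),
\]
and likewise for ``rst'' (where the roles of ``ub'' and ``lb'' in $|\Delta|$ are interchanged, but the two upper bounds from Proposition~\ref{pr:error Heaviside and indicator} are still both available to us).

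Finally, using $e_{k\ell}^+ A + e_{k\ell}^- B \leq (e_{k\ell}^+ + e_{k\ell}^-)\,\max(A,B) = |e_{k\ell}|\,\max(A,B)$ for any nonnegative $A,B$ collapses each summand into the single $\max$ expression advertised in the proposition, independently of whether we started from the ``rst'' or ``rlx'' version. There is no real obstacle here: the whole argument is bookkeeping on signs plus one invocation of Proposition~\ref{pr:error Heaviside and indicator}. The only point requiring minor care is that the two signs $e_{k\ell}^+$ and $e_{k\ell}^-$ are attached to different Heaviside approximations ($\phi_{\rm lb}$ versus $\phi_{\rm ub}$), so in the final bound we must keep both the $h^{\rm ub}$- and the $h^{\rm lb}$-differences and coarsen them to a single common $\max$; passing to this $\max$ is what makes the bound uniform across the ``rst'' and ``rlx'' cases.
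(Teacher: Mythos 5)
Your proposal is correct and follows essentially the same route as the paper: decompose the difference of the constraint functions $\ell$-by-$\ell$ using the sign decomposition $e_{k\ell}=e_{k\ell}^+-e_{k\ell}^-$, invoke Proposition~\ref{pr:error Heaviside and indicator} on each term, and absorb the two nonnegative bounds into a single $\max$ via $e_{k\ell}^+ + e_{k\ell}^- = |e_{k\ell}|$. The paper merely states that the bound "follows readily" from (\ref{eq:error Heaviside and indicator}), so your write-up simply makes explicit the sign bookkeeping the paper leaves implicit.
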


\begin{proof} We prove only the inequality for the restricted function.  But this is fairly easy because
\[
\bar{c}_k^{\, \rm rst}(x;\gamma_1) - \bar{c}_k^{\, \rm rst}(x;\gamma_2)
\, = \, \displaystyle{
\sum_{\ell=1}^L
} \, \mathbb{E}\left[ \, \begin{array}{l}
e_{k\ell}^+ \, \left[ \,
\phi_{\rm ub}({\cal Z}_{\ell}(x,\tilde{z} ),\gamma_1) -
\phi_{\rm ub}({\cal Z}_{\ell}(x,\tilde{z} ),\gamma_2) \, \right] \ - \\ [0.15in]
e_{k\ell}^- \,
\left[ \, \phi_{\rm lb}({\cal Z}_{\ell}(x,\tilde{z} ),\gamma_1)  -
\phi_{\rm lb}({\cal Z}_{\ell}(x,\tilde{z} ),\gamma_2) \, \right]
\end{array} \right];
\]
the desired inequality then follows readily from (\ref{eq:error Heaviside and indicator}).
\end{proof}

The proposition below summarizes several set-theoretic properties of
the two families of close\textcolor{black}{d} sets
$\{ \overline{X}_{\rm rlx}(\gamma) \}_{\gamma > 0}$ and
$\{ \overline{X}_{\rm rst}(\gamma) \}_{\gamma > 0}$.
The obtained result also provides a sufficient condition under which the limits
of these approximating sets coincide with the feasible set $X_{\rm cc}$ of the
ACC-SP.

\begin{proposition} \label{pr:relations of limiting sets} \rm
The following statements hold:

\gap

(i) The family $\left\{ \, \overline{X}_{\rm rlx}(\gamma) \, \right\}$
is nondecreasing in $\gamma > 0$; the family
$\left\{ \, \overline{X}_{\rm rst}(\gamma) \, \right\}$ is nonincreasing in $\gamma > 0$.

\gap

(ii) $\displaystyle{
\lim_{\gamma \downarrow 0}
} \ \overline{X}_{\rm rst}(\gamma) \, = \, \mbox{cl}\left( \,
\displaystyle{
\underset{\gamma > 0}{\bigcup}} \, \overline{X}_{\rm rst}(\gamma) \, \right)
\, \subseteq \, \mbox{cl}( \, \overline{X}_{\rm rst} \, )
\, \subseteq \, \mbox{cl}( \, \overline{X}_{\rm rlx} \, ) \, = \, \overline{X}_{\rm rlx}
\, = \, \displaystyle{
\underset{\gamma > 0}{\bigcap}} \, \overline{X}_{\rm rlx}(\gamma) \, = \, \displaystyle{
\lim_{\gamma \downarrow 0}
} \ \overline{X}_{\rm rlx}(\gamma)$.

\gap

(iii) If $\mbox{cl}( \, \overline{X}_{\rm rst} \, )
\, \subseteq \, \mbox{cl}\left\{ \, x \in X \ \left| \right. \
\bar{c}_k^{\rm rst}(x) < \zeta_k, \ \forall \, k \in [K] \, \right\}$, then
$\displaystyle{
\lim_{\gamma \downarrow 0}
} \ \overline
{X}_{\rm rst}(\gamma) = \mbox{cl}( \, \overline{X}_{\rm rst} \, )$.

\gap

(iv) If $\mathbb{P}({\cal Z}_{\ell}(x,\tilde{z} ) = 0) = 0$ for all
$\ell = 1, \cdots, L$ and all $x \in X$, then
$\overline{X}_{\rm rst} = X_{\rm cc} = \overline{X}_{\rm rlx}$.
If in addition the assumption in part (iii) holds, then all sets in part (ii) are equal.
\end{proposition}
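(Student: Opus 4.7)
The plan treats the four parts in order, with (i) and (iv) routine and the substance concentrated in (ii) and (iii).

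For part (i), the monotonicity of $\phi_{\rm ub}(t,\bullet)$ and $\phi_{\rm lb}(t,\bullet)$ in $\gamma$ from Proposition~\ref{pr:inner_outer_approximations}(a), pulled through the expectation and combined with $e_{k\ell}^{\pm}\geq 0$, makes $\bar{c}_k^{\rm rst}(x;\bullet)$ nondecreasing and $\bar{c}_k^{\rm rlx}(x;\bullet)$ nonincreasing in $\gamma$; taking sublevel sets $\{\bullet\leq\zeta_k\}$ reverses each direction and gives the two claimed monotonicities of the set families.

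For part (ii), I would first apply Proposition~\ref{pr:stochastic inf} to $Z=\mathcal{Z}_{\ell}(x,\tilde{z})$ and combine with (i) to obtain the pointwise (in $x$) monotone convergences $\bar{c}_k^{\rm rst}(x;\gamma)\downarrow\bar{c}_k^{\rm rst}(x)$ and $\bar{c}_k^{\rm rlx}(x;\gamma)\uparrow\bar{c}_k^{\rm rlx}(x)$ as $\gamma\downarrow 0$. The one-sided inequality $\bar{c}_k^{\rm rst}(x;\gamma)\geq\bar{c}_k^{\rm rst}(x)$ immediately yields $\overline{X}_{\rm rst}(\gamma)\subseteq\overline{X}_{\rm rst}$ for every $\gamma>0$, while the analogous inequality $\bar{c}_k^{\rm rlx}(x;\gamma)\leq\bar{c}_k^{\rm rlx}(x)$ together with its limit gives $\overline{X}_{\rm rlx}=\bigcap_{\gamma>0}\overline{X}_{\rm rlx}(\gamma)$; in particular $\overline{X}_{\rm rlx}$ is closed as an intersection of closed sets, so $\mbox{cl}(\overline{X}_{\rm rlx})=\overline{X}_{\rm rlx}$. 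The inclusion $\mbox{cl}(\overline{X}_{\rm rst})\subseteq\mbox{cl}(\overline{X}_{\rm rlx})$ reduces to the elementary identity $\bar{c}_k^{\rm rst}(x)-\bar{c}_k^{\rm rlx}(x)=\sum_{\ell}|e_{k\ell}|\,\mathbb{P}(\mathcal{Z}_{\ell}(x,\tilde{z})=0)\geq 0$. The two extremal set-valued limits are then the standard Painlev\'e--Kuratowski limits of monotone families of closed sets, cf.~\cite[Chapter~4]{RockafellarWets98}: an increasing union for $\overline{X}_{\rm rst}(\gamma)$ as $\gamma\downarrow 0$ and a decreasing intersection for $\overline{X}_{\rm rlx}(\gamma)$.

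Part (iii) requires a diagonal argument. The ``$\subseteq$'' direction is already in (ii); for the reverse, I take $x\in\mbox{cl}(\overline{X}_{\rm rst})$ and, using the hypothesis, find $x^n\to x$ with $\bar{c}_k^{\rm rst}(x^n)<\zeta_k$ for every $k$. Since $\bar{c}_k^{\rm rst}(x^n;\gamma)\downarrow\bar{c}_k^{\rm rst}(x^n)$ as $\gamma\downarrow 0$, I can select $\gamma_n\downarrow 0$ with $x^n\in\overline{X}_{\rm rst}(\gamma_n)$ for each $n$, placing $x$ in $\mbox{cl}(\bigcup_{\gamma>0}\overline{X}_{\rm rst}(\gamma))$. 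Part (iv) is then immediate: the zero-mass assumption collapses $\mathbb{P}(\mathcal{Z}_{\ell}\geq 0)$ and $\mathbb{P}(\mathcal{Z}_{\ell}>0)$, so $\bar{c}_k^{\rm rst}(x)=\bar{c}_k^{\rm rlx}(x)$ equals the original constraint function in~(\ref{cc ori}), forcing $\overline{X}_{\rm rst}=X_{\rm cc}=\overline{X}_{\rm rlx}$; combining with (iii) collapses the entire chain in (ii). The main obstacle sits in (iii): the hypothesis only supplies density of strict feasibility in $\mbox{cl}(\overline{X}_{\rm rst})$, so no single $\gamma$ works uniformly across the closure and one must tailor $\gamma_n\downarrow 0$ to the approximating sequence, which is exactly what Proposition~\ref{pr:stochastic inf}'s pointwise monotone convergence permits, and for which the uniform estimate in Proposition~\ref{pr:error of rst/rlx and acc} is neither needed nor delivers additional uniformity without further hypotheses on the distribution of $\mathcal{Z}_{\ell}(x,\tilde{z})$.
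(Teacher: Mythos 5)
Your proposal is correct and follows essentially the same route as the paper: monotonicity of $\phi_{\rm ub/lb}$ in $\gamma$ for (i), the pointwise monotone limits from Proposition~\ref{pr:stochastic inf} together with \cite[Exercise~4.3]{RockafellarWets98} for the set-valued limits in (ii), the inclusion of the strict-feasibility set in $\bigcup_{\gamma>0}\overline{X}_{\rm rst}(\gamma)$ for (iii), and the collapse of $\mathbb{P}(\mathcal{Z}_\ell\geq 0)$ and $\mathbb{P}(\mathcal{Z}_\ell>0)$ under the zero-probability hypothesis for (iv). The only differences are cosmetic (e.g., you get $\overline{X}_{\rm rst}(\gamma)\subseteq\overline{X}_{\rm rst}$ directly from the one-sided inequality rather than by passing to the limit, and you deduce closedness of $\overline{X}_{\rm rlx}$ from the intersection representation rather than from lower semicontinuity of $\bar{c}_k^{\,\rm rlx}$), and both are valid.
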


\begin{proof}
Since $\phi_{\rm ub}(t, \gamma)$ is a nondecreasing function and
$\phi_{\rm lb}(t, \gamma)$ is a nonincreasing function in $\gamma$
for any $t\in \mathbb{R}$, statement (i) is obvious.  For statement (ii), the first and
last equalities follow from statement~(i) and \cite[Exercise~4.3]{RockafellarWets98};
\textcolor{black}{in particular, the set $\overline{X}_{\rm rlx}$ is closed because
$\bar{c}_k^{\, \rm rlx}(\bullet)$ is lower semicontinuous.}
For the other relations, it suffices to prove the inclusion
$\displaystyle{
\underset{\gamma > 0}{\bigcup}} \, \overline{X}_{\rm rst}(\gamma) \, \subseteq \,
\overline{X}_{\rm rst}$ and the second-to-last equality.
Let $x \in \displaystyle{
\underset{\gamma > 0}{\bigcup}} \, \overline{X}_{\rm rst}(\gamma)$   {be given}.  Then
$x \in \overline{X}_{\rm rst}(\gamma)$ for all $\gamma > 0$ sufficiently small
because the family $\{ \overline{X}_{\rm rst}(\gamma) \}$ is nonincreasing in $\gamma$.
Thus, for such $\gamma$, we have
\[
\displaystyle{
\sum_{\ell=1}^L
} \, \Big\{ \, e_{k\ell}^+ \, \mathbb{E}\Big[ \,
\phi_{\rm ub}({\cal Z}_{\ell}(x,\tilde{z} ), \gamma) \, \Big] -
e_{k\ell}^- \, \mathbb{E}\Big[ \, \phi_{\rm lb}({\cal Z}_{\ell}(x,\tilde{z} ), \gamma)
\, \Big] \, \Big\} \, \leq \, \zeta_k.
\]
By letting $\gamma \downarrow 0$ on both sides, with
Proposition~\ref{pr:stochastic inf}, we deduce
$\bar{c}_k^{\rm rst}(x)
\, \leq \, \zeta_k$.  Hence $\displaystyle{
\underset{\gamma > 0}{\bigcup}
} \, \overline{X}_{\rm rst}(\gamma) \subseteq \overline{X}_{\rm rst}$.
In a similar manner, we can prove $\displaystyle{
\underset{\gamma > 0}{\bigcap}
} \, \overline{X}_{\rm rlx}(\gamma) \subseteq \overline{X}_{\rm rlx}$.
\textcolor{black}{Indeed, let $x$ be an element in the left-hand intersection.
We then have, for all $\gamma > 0$.
\[
\bar{c}^{\, \rm rlx}_k(x;\gamma) \, = \, \mathbb{E}\left[ \, \displaystyle{
\sum_{\ell=1}^L
} \, \left( \, e_{k\ell}^+ \, \phi_{\rm lb}({\cal Z}_{\ell}(x,\tilde{z} ),\gamma) -
e_{k\ell}^- \, \phi_{\rm ub}({\cal Z}_{\ell}(x,\tilde{z} ),\gamma) \, \right) \, \right]
\, \leq \, \zeta_k.
\]
By letting $\gamma \downarrow 0$ on both sides, with Proposition~\ref{pr:stochastic inf}
we deduce $\bar{c}_k^{\, \rm rlx}(x)
\, \leq \, \zeta_k$.  Thus $x \in  \overline{X}_{\rm rlx}$, showing that $\displaystyle{
\underset{\gamma > 0}{\bigcap}
} \, \overline{X}_{\rm rlx}(\gamma) \subseteq \overline{X}_{\rm rlx}$.  Conversely, let
$x \in \overline{X}_{\rm rlx}$.}  Since
$\mathbb{P}( \mathcal{Z}_\ell(x,\tilde{z}) > 0 ) \geq
\mathbb{E}\left[ \, \phi_{\rm lb}({\cal Z}_{\ell}(x,\tilde{z} ), \gamma) \, \right]$ and
$\mathbb{P}( \mathcal{Z}_\ell(x,\tilde{z}) \geq 0) \leq \mathbb{E}\left[ \,
\phi_{\rm ub}({\cal Z}_{\ell}(x,\tilde{z} ), \gamma) \, \right] $ for any $\gamma >0$
by Proposition \ref{pr:stochastic inf}, it follows that $\overline{X}_{\rm rlx} \subseteq
\overline{X}_{\rm rlx}(\gamma)$ for any $\gamma >0$.  Hence,
$\overline{X}_{\rm rlx} = \displaystyle{
\underset{\gamma > 0}{\bigcap}} \, \overline{X}_{\rm rlx}(\gamma)$.   To prove (iii),
it suffices to note that
\[
\left\{ \, x \in X \ \left| \right. \ \bar{c}_k^{\, \rm rst}(x) < \zeta_k, \
\forall \, k \, \in \, [ K ] \, \right\}
\subseteq \, \displaystyle{
\bigcup_{\gamma > 0}
} \, \overline{X}_{\rm rst}(\gamma),
\]
taking closures on both sides and using the assumption easily establishes the equality
of the two sets $\displaystyle{
\lim_{\gamma \downarrow 0}
} \ \overline{X}_{\rm rst}(\gamma)$ and $\mbox{cl}( \, \overline{X}_{\rm rst} \, )$.
Finally, to prove (iv), note that
\[ \begin{array}{lll}
\overline{c}_k^{\, \rm rlx}(x) & = & \displaystyle{
\sum_{\ell=1}^L
} \, e_{k\ell} \, \mathbb{P}({\cal Z}_{\ell}(x,\tilde{z} )> 0) - \displaystyle{
\sum_{\ell=1}^L
} \, e_{k\ell}^- \, \mathbb{P}({\cal Z}_{\ell}(x,\tilde{z} ) = 0), \\ [0.2in]
\bar{c}_k^{\, \rm rst}(x) & = & \displaystyle{
\sum_{\ell=1}^L
} \, e_{k\ell} \, \mathbb{P}({\cal Z}_{\ell}(x,\tilde{z} )> 0) + \displaystyle{
\sum_{\ell=1}^L
} \, e_{k\ell}^+ \, \mathbb{P}({\cal Z}_{\ell}(x,\tilde{z} ) = 0).
\end{array} \]
Hence the equalities $\overline{X}_{\rm rst} = X_{\rm cc} = \overline{X}_{\rm rlx}$
follow readily
under the zero-probability assumption; and so does the last assertion in this part.
\end{proof}

Proposition~\ref{pr:relations of limiting sets} shares much resemblance with
\cite[Theorem 3.6]{GHKLi17}.  The only difference is that the cited theorem has a
blanket assumption (A0), which implies in particular the closedness of the feasible set
$X_{\rm cc}$.  We drop this assumption until the last part where we equate all the sets.
In the following, we provide an example showing that for a closed set $X_{\rm cc}$
(empty set included), strict inclusions between the three sets
$\overline{X}_{\rm rst}$, $X_{\rm cc}$ and $\overline{X}_{\rm rlx}$ are possible
if there exists $\ell \in [L]$ such that
$\mathbb{P}(\mathcal{Z}_\ell(x,\tilde{z} ) = 0 ) \neq 0$ for some $x$.

\begin{example} \label{ex:Bernoulli} \rm
Consider the set ${X}_{\rm cc} = \left\{ \, x \, \in \, \mathbb{R} \, : \, e \,
\mathbb{P}( x \, Z \, \geq \, 0 \, ) \, \leq \, \zeta \, \right\}$,
where $Z$ is a Bernoulli random variable such that
$\mathbb{P}(Z=1) = \mathbb{P}(Z=-1) = 1/2$.
Then with $e = 1$ and $\zeta = 0.1$, we have
$X_{\rm cc} =\overline{X}_{\rm rst} = \emptyset$ while
$\overline{X}_{\rm rlx} = \{ 0 \}$.  With $e = -1$ and $\zeta = -0.6$,
we have $X_{\rm cc} =\overline{X}_{\rm rlx}=\{ 0\}$
while $\overline{X}_{\rm rst}=\emptyset$. \hfill $\Box$
\end{example}

To end the section, it would be useful to summarize the
notations for the constraint functions used throughout the paper.  Absence of the
scalar $\gamma$, the notations for the objective function are similar.
\gap

\fbox{
\parbox{6.5in}{
\begin{center}
\textcolor{black}{\bf Notations for constraint functions} \\
(similar notations for the objective function)
\end{center}

{\bf I. Plain:} (3 arguments) for the defining functions of the problems

\gap

$\bullet $ $c_{k\ell}^{\, \rm rlx/rst}(x,z;\gamma)$ defined in (\ref{defn: c_gamma});

--- superscripts rlx/rst are omitted in general discussion;
e.g.\ $c_k(x,z;\gamma) \triangleq
\displaystyle{
\sum_{\ell=1}^L
} \, c_{k\ell}(x,z;\gamma)$ in Sections~\ref{sec:parameterized exp SP} and \ref{sec:MM};

--- the scalar $\gamma$ is fixed and thus omitted in Section~\ref{sec:exact penalization}

\gap

{\bf II. bar:} for expectation (2 arguments) and probability (1 argument)

\gap

$\bullet $ $\bar{c}_k^{\, \rm rlx/rst}(x;\gamma) \, \triangleq \,
\mathbb{E}\left[ \, \displaystyle{
\sum_{\ell=1}^L
} \, c_{k\ell}^{\, \rm rlx/rst}(x,\tilde{z};\gamma) \, \right]$ along
with the associated sets $\overline{X}_{\rm rlx/rst}(\gamma)$;

\gap

$\bullet $ $\bar{c}_{k\ell}^{\, \rm rlx/rst}(x)$ defined in (\ref{eq:prob of c_gamma})
along with the associated sets $\overline{X}_{\rm rlx/rst}$;

\gap

{\bf III. hat:} (4 arguments) for surrogation used in Section~\ref{sec:MM}

\gap

$\bullet $ $\wh{c}_k(\bullet,z;\gamma;\bar{x})$, derived from the surrogation
$\wh{c}_{k\ell}(\bullet,z;\gamma;\bar{x})$ of the summands
$c_{k\ell}(\bullet,z;\gamma)$ at $\bar{x}$;

--- superscripts rlx/rst used when referred to the relaxed/restricted problems;

\gap

{\bf IV. tilde:} (3 arguments) for limiting function in convergence analysis of
diminishing $\gamma_{\nu}$

\gap

$\bullet $ $\wt{c}_k(x,z;\bar{x})$ used in Subsection~\ref{subsec:diminishing approx}.
}}

\section{The Expectation Constrained SP} \label{sec:parameterized exp SP}

In this section, we start by considering the following abstract stochastic program
without referring to the detailed structure of the constraint functions:
for given positive integers $K$ and $L$ and a parameter $\gamma > 0$,
\begin{equation} \label{eq:new SP}
\begin{array}{ll}
\displaystyle{
\operatornamewithlimits{\mbox{\bf minimize}}_{x \in X}
} & \bar c_0(x) \, \triangleq \, \mathbb{E}\left[ \, c_0(x,\tilde{z} ) \, \right]
\\ [0.1in]
\mbox{\bf subject to} &\bar  c_k(x;\gamma) \, \triangleq \,
\mathbb{E}\left[ \, \underbrace{\displaystyle{
\sum_{\ell=1}^L
} \,  c_{k\ell}(x,\tilde{z} ;\gamma)}_{\mbox{denoted $c_k(x,\tilde{z};\gamma)$}} \, \right] \, \leq \, \zeta_k,
\epc k \, = \, 1, \cdots, K.
\end{array} \end{equation}
Subsequently, we will specialize the constraint functions to those in the sets
$\overline{X}_{\rm rlx}(\gamma)$ and $\overline{X}_{\rm rst}(\gamma)$
that are defined in \eqref{defn:gamma constraints} and apply the results to the
following two problems:

\gap

$\bullet $ {\bf Relaxed Problem:}
\begin{equation} \label{eq:relaxed sp_dc_constraint}
\begin{array}{ll}
\displaystyle{
\operatornamewithlimits{\mbox{\bf minimize}}_{x \, \in \, \overline{X}_{\rm rlx}(\gamma)}
} &  \mathbb{E}[ \, c_0(x,\tilde{z} )\, ].
\end{array}
\end{equation}
$\bullet $ {\bf Restricted Problem:}
\begin{equation} \label{eq:restricted sp_dc_constraint}
\begin{array}{ll}
\displaystyle{
\operatornamewithlimits{\mbox{\bf minimize}}_{x \, \in \, \overline{X}_{\rm rst}(\gamma)}
} & \mathbb{E}[ \, c_0(x,\tilde{z} )\, ].
\end{array}
\end{equation}
Abstracting assumption ($\boldsymbol{\cal Z}$) in
Section~\ref{sec:piecewise probabilistic constraints}
for the functionals $\{ {\cal Z}_{\ell} \}_{\ell \in [ L ]}$ and
assumption ($\boldsymbol{\Theta}$) for the functions $\wh{\theta}_{\rm rst/rlx}$,
we make the following blanket assumptions on the functions in (\ref{eq:new SP}).
Thus the assumptions below on
$c_{k\ell}(\bullet,\bullet;\gamma)$ are satisfied for
$c_{k\ell}^{\rm rlx}(\bullet,\bullet;\gamma)$ and
$c_{k\ell}^{\rm rst}(\bullet,\bullet;\gamma)$ that define the feasible sets in
problems \eqref{eq:relaxed sp_dc_constraint} and
\eqref{eq:restricted sp_dc_constraint}.

\gap

\fbox{
\parbox{6.5in}{
\begin{center}
{\bf Blanket Assumptions on (\ref{eq:new SP})}
\end{center}

$\bullet $ $X \subseteq \mathbb{R}^n$ is a closed convex set (and is a polytope starting
from Propostion~\ref{pr:approx stationary penalization})
and the objective function $c_0(\bullet,z)$ is nonnegative on $X$ for all $z \in \Xi$;
this holds for instance when $c_0(\bullet,\bullet)$ has a known lower bound on
$X \times \Xi$;

\gap

$\bullet $ {\bf Objective (A$_{\rm o}$):} the  function $c_0(\bullet,z)$ is
directionally differentiable and globally Lipschitz continuous
with a Lipschitz constant $\mbox{Lip}_0(z) > 0$ satisfying
$\mathbb{E}\left[ \, \mbox{Lip}_0(\tilde{z} ) \, \right] < \infty$.  This implies
that the expectation function $\bar c_0(x)$
is directionally differentiable and globally Lipschitz
continuous; moreover its directional derivative
$\bar{c}_0^{\, \prime}(\bar{x};v) = \mathbb{E}\left[
c_0(\bullet,\tilde{z} )^{\prime}(\bar{x};v)  \right]$
for all $( \bar{x},v ) \in X \times \mathbb{R}^n$; see
\cite[Theorem~7.44]{ShapiroDentchevaRuszczynski09} for the latter directional
derivative formula.

\gap

$\bullet $ {\bf Constraint (A$_{\rm c}$):}
there exist integrable functions $\mbox{Lip}_{\rm c}(\bullet)$
\textcolor{black}{and $\wh{\mbox{Lip}}_{\rm c}( \bullet )$ both mapping $\Xi$ into
$\mathbb{R}_{++}$} and a probability-one set $\Xi_{\rm c}$ such that
$\displaystyle{
\sup_{z \in \Xi_{\rm c}}
} \, \mbox{Lip}_{\rm c}(z) < \infty$ and

--- {\bf Uniform Lipschitz continuity in $x$:} for all tuples
$( \, x^1,x^2,z,\gamma \, ) \, \in \, X \, \times \, X \, \times \, \Xi_{\rm c} \,
\times \, \mathbb{R}_{++}$,
\begin{equation} \label{eq:Lip in x}
\left| \, c_{k\ell}(x^1,z;\gamma) - c_{k\ell}(x^2,z;\gamma) \, \right| \, \leq \,
\displaystyle{
\frac{\mbox{Lip}_{\rm c}(z)}{\gamma}
} \, \| \, x^1 - x^2 \, \|, \epc \forall \, (\, k,\ell \, ) \, \in \, [ \, K \, ] \,
\times \, [ \, L \, ];
\end{equation}
--- {\bf Uniform Lipschitz continuity in $1/\gamma$:} for all tuples
$( \, x,z,\gamma_1,\gamma_2 \, ) \, \in \, X \, \times\, \Xi_{\rm c}\, \, \times \,
\mathbb{R}_{++}^2$,
\[
\left| \, c_{k\ell}(x,z;\gamma_1) - c_{k\ell}(x,z;\gamma_2) \, \right|
\, \leq \, \wh{\mbox{Lip}}_{\rm c}(z) \, \left[ \, 1 + \| \, x \, \| \, \right] \,
\left| \, \displaystyle{
\frac{1}{\gamma_1}
} - \displaystyle{
\frac{1}{\gamma_2}
} \, \right|,
\epc \forall \,  (k,\ell\,)  \, \in \, [ \, K \, ] \times [ \, L \, ].
\]
{\bf Remark:} As it turns out, the latter Lipschitz continuity in $1/\gamma$ is not useful
for the analysis; nevertheless we include it for completeness and also in contrast to
the former Lipschitz continuity in $x$.  The noteworthy point of (\ref{eq:Lip in x}) is that
$\gamma$ appears in the denominator; this feature carries over to a later assumption about the
growth of the ``Rademacher average'' of the random variables $c_k(x,\bullet;\gamma) \triangleq
\displaystyle{
\sum_{\ell=1}^L
} \, c_{k\ell}(x,\bullet;\gamma)$.

\gap

$\bullet $ {\bf Interchangeability of directional derivatives ({\bf I}$_{\rm dd}$):}
each expectation function $\bar c_{k}(\bullet\,;\gamma)$ is
directionally differentiable with directional derivative given by
\[
\bar c_k(\bullet;\gamma)^{\, \prime}(\bar{x};v) \, = \, \displaystyle{
\sum_{\ell=1}^L
} \, \mathbb{E}\left[ \,c_{k\ell}(\bullet,\tilde{z};\gamma)^{\, \prime}(\bar{x};v)
\, \right],
\;\, \forall \, ( \bar{x},\gamma;v ) \, \in \, X \times \mathbb{R}_{++} \times
\mathbb{R}^n \;\mbox{and all $k \in [ K ]$}.
\]
}}

\gap

Associated with the expectation problem (\ref{eq:new SP}) is its
discretized/empirical (or sample average approximated) version corresponding
to a given family of samples
$Z^N \, \triangleq \, \{  z^s \}_{s=1}^N \, \subseteq \, \mathbb{R}^d$ for some positive integer $N$ that are realizations of the
nominal random variable $\tilde{z}$:
\begin{equation} \label{eq:sampled new SP}
\begin{array}{ll}
\displaystyle{
\operatornamewithlimits{\mbox{\bf minimize}}_{x \in X}
} & c_0^N(x) \, \triangleq \, \displaystyle{
\frac{1}{N}
} \, \displaystyle{
\sum_{s=1}^N
} \ c_0(x,z^s) \\ [0.1in]
\mbox{\bf subject to} & c_k^N(x;\gamma) \, \triangleq \,  \displaystyle{
\frac{1}{N}
} \, \displaystyle{
\sum_{s=1}^N
} \ \displaystyle{
\sum_{\ell=1}^L
} \, c_{k\ell}(x,z^s;\gamma) \, \leq \, \zeta_k,\epc k=1, \cdots, K,
\end{array} \end{equation}
whose feasible set we denote $\overline{X}(Z^N;\gamma)$.
This empirical problem is the key computational workhorse for solving the
expectation problem \eqref{eq:new SP}.

\subsection{Preliminaries on stationarity} \label{subsec:deterministic stationarity}

In order to define the stationary solutions of problem \eqref{eq:new SP} and its
empirical counterpart \eqref{eq:sampled new SP},
we first review some concepts in nonsmooth analysis \cite{RockafellarWets09,CuiPang2020}.
\textcolor{black}{By definition, a function $\phi : {\cal O} \subseteq  \mathbb{R}^n
\to \mathbb{R}$ defined on the open set ${\cal O}$
is {\sl B(ouligand)-differentiable} at $\bar{x} \in {\cal O}$ if $\phi$ is locally
Lipschitz continuous and directionally differentiable
at $\bar{x}$; the latter means that the (elementary) one-sided directional derivative:
\[
\phi^{\prime}(\bar{x};v) \, \triangleq \, \displaystyle{
\lim_{\tau \downarrow 0}
} \, \displaystyle{
\frac{\phi(\bar{x} + \tau v) - \phi(\bar{x})}{\tau}
}
\]
exists for all directions $v \in \mathbb{R}^n$.  By the locally Lipschitz continuity of
$\phi$ at $\bar{x}$, we have \cite[Proposition~4.4.1]{CuiPang2020}
\begin{equation} \label{eq:Lip+dd}
\displaystyle{
\lim_{\bar{x} \neq x \to \bar{x}}
} \, \displaystyle{
\frac{\phi(x) - \phi(\bar{x}) - \phi^{\prime}(\bar{x};x - \bar{x})}{
\| \, x - \bar{x} \, \|} \, = \, 0.
}
\end{equation}
The directional derivative $\phi^{\prime}(\bar{x};v)$ is in contrast to the
Clarke directional derivative
\[
\phi^{\circ}(\bar{x};v) \, \triangleq \, \displaystyle{
\limsup_{\substack{x \to \bar{x} \\ \tau \downarrow 0}}
} \, \displaystyle{\frac{\phi(x + \tau v) - \phi(x)}{\tau}
}\, , \epc ( \bar{x},v ) \, \in \, {\cal O} \, \times \, \mathbb{R}^n,
\]
which is always well defined and satisfies
$\phi^{\circ}(\bar{x};v) \geq \phi^{\, \prime}(\bar{x};v)$ for any pair $( \bar{x},v )$.
If equality holds for all $v \in \mathbb{R}^n$ at some $\bar{x} \in {\cal O}$, then we
say that $\phi$ is {\sl Clarke regular} at $\bar{x}$.  One key property of the Clarke
directional derivative is that} it is jointly upper
semicontinuous in the base point $\bar{x} \in {\cal O}$
and the direction $v \in \mathbb{R}^n$; that is, for
every sequence $\{ ( x^\nu,v^\nu ) \}$ converging to $( \bar{x},\bar{v})$, it holds that
\begin{equation} \label{eq:Clarke usc}
\displaystyle{
\limsup_{\nu \to \infty}
} \, \phi^{\circ}(x^\nu;v^\nu) \, \leq \, \phi^{\circ}(\bar{x};\bar{v}).
\end{equation}
The Clarke subdifferential of $\phi$ at $\bar{x}$ is defined as the set
\[
\partial_C \phi(\bar{x}) \, \triangleq \, \left\{ \, a \, \in \, \mathbb{R}^n \, : \, \phi^{\circ}(\bar{x};v) \, \geq \, a^{\top}v, \
\forall \, v \, \in \, \mathbb{R}^n \, \right\}.
\]
In general, we say that a vector $\bar{x}$ is a {\sl B-stationary} point of a
B-differentiable function $f_0$ on
a closed set $\wh{X} \subseteq {\cal O}$ if $\bar{x} \in \wh{X}$ and
\begin{equation} \label{eq:d-stationarity}
f_0^{\, \prime}(\bar{x};v) \, \geq \, 0, \epc \forall \, v \, \in \,
{\cal T}(\bar{x};\wh{X}),
\end{equation}
where ${\cal T}(\bar{x};\wh{X})$ is the (Bouligand) tangent cone of the set $\wh{X}$
at $\bar{x}$; by definition, a tangent vector $v$ in this cone
is the limit of a sequence $\left\{ \, \displaystyle{
\frac{x^{\nu} - \bar{x}}{\tau_{\nu}}
} \, \right\}$ where $\{ x^{\nu} \} \subset \wh{X}$ is a sequence of vectors converging
to $\bar{x}$ and $\{ \tau_{\nu} \}$ is a sequence of positive
scalars converging to zero.   When $\wh{X}$ is convex, we use the terminology
``d(irectional) stationarity'' for B-stationarity; in this case, the condition
(\ref{eq:d-stationarity}) is equivalent to
\[
f_0^{\, \prime}(\bar{x};x - \bar{x}) \, \geq \, 0, \epc \forall \, x \, \in \, \wh{X}.
\]
We say that $\bar{x}$ is a {\sl C(larke)-stationary} point of $f_0$ on $\wh{X}$
if the directional derivative $f_0^{\, \prime}(\bar{x};v)$
in (\ref{eq:d-stationarity}) is replaced
by the Clarke directional derivative.  An important special case is when the
set $\wh{X}$ is defined by B-differentiable constraints intersecting
a polyhedron $X$:
\[
\wh{X} \, = \, \displaystyle{
\bigcap_{k \, \in \, [ \, K \, ]}
} \, \left\{ \, x \, \in \, X \, \mid \, f_k(x) \, \leq \, 0 \, \right\},
\]
where each $f_k$ is B-differentiable.
We may then define the directional derivative based  ``linearization cone'' of
$\wh{X}$ at a given vector $\bar{x} \in \wh{X}$ as
\begin{equation} \label{eq:def of linearization}
{\cal L}(\bar{x};\wh{X}) \, \triangleq \, \displaystyle{
\bigcap_{k \in {\cal A}(\bar{x})}
} \, \left\{ \, v \, \in \, {\cal T}(\bar{x};X) \, : \, f_k^{\, \prime}(\bar{x};v)
\, \leq \, 0 \, \right\},
\end{equation}
where ${\cal A}(\bar{x}) \triangleq \left\{ \, k \, : \, f_k(\bar{x}) \, = \, 0 \,
\right\}$ is the index set of active constraints at $\bar{x}$.
Clearly we have
\begin{equation} \label{eq:relations of cones}
\mbox{cl}\left\{ \, v \, \in \, {\cal T}(\bar{x};X) \, \mid \,
f_k^{\, \prime}(\bar{x};v) < 0,  \,\,  \forall \
k \in {\cal A}(\bar{x}) \, \right\} \, \subseteq \, {\cal T}(\bar{x};\wh{X} ) \,
\subseteq \, {\cal L}(\bar{x};\wh{X}),
\end{equation}
where the first inclusion holds because by the closedness of the tangent
cone ${\cal T}(\bar{x};\wh{X})$, one
may take closures on both sides of the inclusion:
\begin{equation} \label{eq:weak relations of cones}
\left\{ \, v \, \in \, {\cal T}(\bar{x};X) \, \mid \, f_k^{\, \prime}(\bar{x};v) < 0 , \,\,  \forall \
k \in {\cal A}(\bar{x}) \, \right\} \, \subseteq \, {\cal T}(\bar{x};\wh{X} ).
\end{equation}
The second inequality in (\ref{eq:relations of cones}) holds
because 
for any sequence $\{ x^{\nu} \} \subset \wh{X}$ converging  to $\bar{x}$ and any
sequence $\{ \tau_{\nu} \} \downarrow 0$ with
$\displaystyle{
\lim_{\nu \to \infty}
} \, \displaystyle{
\frac{x^{\nu} - \bar{x}}{\tau_{\nu}}
} \, = \, v$, we have, by the B-differentiability of $f_k$ at $\bar{x}$,
\[ \begin{array}{llll}
f_k^{\, \prime}(\bar{x};v) & = & \displaystyle{
\lim_{\nu \to \infty}
} \, \displaystyle{
\frac{f_k(x^{\, \nu}) - f_k(\bar{x})}{\tau_{\nu}}
} &   {\mbox{by (\ref{eq:Lip+dd})}} \\ [0.2in]
& \leq & 0 &   {\mbox{for $k \in {\cal A}(\bar{x})$}}.
\end{array}
\]
  {Clearly, if
${\cal A}(\bar{x})$ is empty, the cone ${\cal L}(\bar{x};\wh{X})$ coincides with
${\cal T}(\bar{x};\wh{X}) = {\cal T}(\bar{x};X)$; i.e., the intersection operation in (\ref{eq:def of linearization}) is vacuous
in this case.}
This remark applies throughout the paper.  In general,
we say that the {\sl Abadie constraint qualification} (ACQ) holds for $\wh{X}$ at
$\bar{x} \in \wh{X}$ if
the last two sets in (\ref{eq:relations of cones}) are equal.  A sufficient condition
for the ACQ to hold is that
the {\sl directional Slater constraint qualification} holds for $\wh{X}$ at
$\bar{x} \in \wh{X}$; i.e., if the
first and the third sets in (\ref{eq:relations of cones}) are equal.
In turn, the latter directional Slater CQ holds if the left-hand set in
(\ref{eq:weak relations of cones}) is nonempty and $f_k^{\, \prime}(\bar{x};\bullet)$
is a convex function.   A function $f_k$ with the latter directional-derivative
convexity property has been coined a dd-convex function
in \cite[Definition~4.3.3]{CuiPang2020}.

\subsection{Convex-like property: B-stationarity implies locally minimizing}

A function $f : \mathbb{R}^n \to \mathbb{R}$ is said to be
{\sl convex-like} near a vector $\bar{x}$ if there exists a neighborhood
${\cal N}_{\bar x}$
of $\bar{x}$ such that
\[
f(x) \, \geq \, f(\bar{x}) + f^{\, \prime}(\bar{x};x - \bar{x}), \epc
\forall \, x \, \in \, {\cal N}_{\bar x}.
\]
It is clear that the class of convex-like functions near a fixed vector is closed
under nonnegative addition.
The fundamental role of this property for nonconvex functions was first discussed
in \cite[Proposition~4.1]{CuiChangHongPang20}, which we restate
in part (ii) of the following result.

\begin{proposition} \label{pr:stationary locmin} \rm
Let $X$ be a polyhedron.  Suppose that each $f_k$ for $k = 0, 1, \cdots, K$ is
B-differentiable on $\mathbb{R}^n$.  Let $\bar{x} \in \wh{X}$ be
arbitrary.  The following two statements hold.

\gap

(i) If $\bar{x}$ is a local minimizer of $f_0$ on $\wh{X}$, then $\bar{x}$ is a
B-stationary point of $f_0$ on $\wh{X}$.
%
%
\gap

(ii) If $f_k$ for $k = 0, 1, \cdots, K$ are all convex-like near $\bar{x}$,
the ACQ holds for $\wh{X}$ at $\bar{x}$,
and $\bar{x}$ is a B-stationary point of $f_0$ on $\wh{X}$, then $\bar{x}$ is a
local minimizer of $f_0$ on $\wh{X}$.
%
%
\end{proposition}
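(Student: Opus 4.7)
The plan is to handle the two parts separately, with part (i) being a standard consequence of B-differentiability and part (ii) being the substantive statement that exploits the interplay between the convex-like property, the ACQ, and B-stationarity.

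For part (i), I would take an arbitrary tangent vector $v \in {\cal T}(\bar{x};\wh{X})$ and a representing sequence $\{x^\nu\} \subset \wh{X}$ and $\{\tau_\nu\} \downarrow 0$ with $(x^\nu - \bar{x})/\tau_\nu \to v$. Local minimality gives $f_0(x^\nu) \geq f_0(\bar x)$ for all large $\nu$. Invoking the B-differentiability expansion \eqref{eq:Lip+dd}, writing $f_0(x^\nu) - f_0(\bar x) = f_0^{\,\prime}(\bar x; x^\nu - \bar x) + o(\|x^\nu - \bar x\|)$, using positive homogeneity of the directional derivative to pull out $\tau_\nu$, dividing by $\tau_\nu$, and passing to the limit (using the Lipschitz continuity in direction of $f_0^{\,\prime}(\bar x; \cdot)$ that comes from the local Lipschitz continuity of $f_0$), I obtain $f_0^{\,\prime}(\bar x; v) \geq 0$, which is the desired B-stationarity.

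For part (ii), I propose a direct, non-asymptotic argument that avoids the subtle degenerate case $f_0^{\,\prime}(\bar x; v) = 0$. Fix any $x \in \wh{X}$ in a neighborhood of $\bar x$ (small enough so that the convex-like inequalities hold for all $f_k$, $k = 0,1,\ldots,K$, and so that inactive constraints remain inactive by continuity). Since $X$ is polyhedral and convex, $x - \bar x \in {\cal T}(\bar x; X)$. For each active index $k \in {\cal A}(\bar x)$, the convex-like property of $f_k$ combined with $f_k(x) \leq 0 = f_k(\bar x)$ forces $f_k^{\,\prime}(\bar x; x - \bar x) \leq 0$. These two facts together place $x - \bar x$ in the linearization cone ${\cal L}(\bar x; \wh X)$ of \eqref{eq:def of linearization}. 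The ACQ hypothesis then promotes $x - \bar x$ into ${\cal T}(\bar x; \wh X)$, and B-stationarity of $f_0$ yields $f_0^{\,\prime}(\bar x; x - \bar x) \geq 0$. One final application of the convex-like property of $f_0$ delivers $f_0(x) \geq f_0(\bar x) + f_0^{\,\prime}(\bar x; x - \bar x) \geq f_0(\bar x)$, establishing local minimality.

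The main obstacle is conceptual rather than technical: one must recognize that the convex-like property of each constraint $f_k$ is precisely what converts the sign information $f_k(x) \leq 0$ into the linearized sign information $f_k^{\,\prime}(\bar x; x - \bar x) \leq 0$, thereby permitting a finite-point (rather than limiting) argument that sidesteps the failure mode of the asymptotic approach, wherein $f_0^{\,\prime}(\bar x; v^\nu)$ may be slightly negative along unit directions $v^\nu \to v$ even if $f_0^{\,\prime}(\bar x; v) = 0$. Once this observation is in place, both ACQ and B-stationarity slot in cleanly, and no further machinery is required.
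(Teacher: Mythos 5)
Your argument for part (ii) is exactly the paper's proof: use the convex-like inequality on each active $f_k$ to get $f_k^{\,\prime}(\bar{x};x-\bar{x}) \leq f_k(x) \leq 0$, place $x-\bar{x}$ in ${\cal L}(\bar{x};\wh{X})$, invoke the ACQ and B-stationarity, and finish with the convex-like inequality for $f_0$; your part (i) is the standard argument the paper omits. The proposal is correct and takes essentially the same route.
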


\begin{proof}  The first statement is a standard result.
To prove (ii), let $x \in \wh{X}$ be sufficiently near $\bar{x}$ such that the
convex-like inequality holds for all functions $f_k$.
For any $k \in \mathcal{A}(\bar x)$, it follows that
$f_k^{\,\prime}(\bar x; x-\bar x) \leq f_k(x) \leq 0$, and thus,
$x - \bar{x} \in {\cal L}(\bar{x};\wh{X}) = {\cal T}(\bar{x};\wh{X})$.
By the convex-like inequality for the function $f_0$ and the B-stationarity of
$\bar x$, we have
\[
f_0(x) \, \geq \, f_0(\bar{x}) + f_0^{\, \prime}(\bar{x};x-\bar{x}) \, \geq \,
f_0(\bar{x}),
\]
and thus the claim in (ii) follows.
\end{proof}

In what follows, we present a broad class of composite functions that have this
property.  Let
\begin{equation} \label{eq:generic fk}
f(x) \, \triangleq \, \varphi \circ \theta \, \circ \, \psi(x),
\end{equation}
where $\varphi : \mathbb{R} \to \mathbb{R}$ is piecewise affine and nondecreasing;
$\theta : \mathbb{R} \to \mathbb{R}$ is convex, and
$\psi : \mathbb{R}^n \to \mathbb{R}$ is piecewise affine.

\begin{lemma} \label{lm:piecewise convex-like} \rm
The function $f$ given by (\ref{eq:generic fk}) with properties as described is
convex-like near any $\bar{x} \in \mathbb{R}^n$.
\end{lemma}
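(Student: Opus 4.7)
The plan is to exploit the three pieces of structure separately and glue them together through a chain of inequalities. Let $\bar y \triangleq \theta(\psi(\bar x))$. First I would use the piecewise affine property of $\psi$: there is a neighborhood ${\cal N}_1$ of $\bar x$ on which $\psi$ coincides with its one-sided affine expansion, i.e.\
\[
\psi(x) \, = \, \psi(\bar x) + \psi^{\, \prime}(\bar x; x - \bar x), \qquad x \, \in \, {\cal N}_1 .
\]
This is the standard local representation of a piecewise affine function in terms of the (finitely many) closed polyhedral pieces containing $\bar x$. Similarly, since $\varphi$ is piecewise affine on $\mathbb{R}$, there is an open interval ${\cal I}$ around $\bar y$ on which
\[
\varphi(y) \, = \, \varphi(\bar y) + \varphi^{\, \prime}(\bar y; y - \bar y) , \qquad y \, \in \, {\cal I} .
\]
By continuity of $\theta\circ\psi$ I can shrink ${\cal N}_1$ to a smaller neighborhood ${\cal N}_{\bar x}$ so that $\theta(\psi(x)) \in {\cal I}$ for all $x \in {\cal N}_{\bar x}$.

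Next I would invoke the global subgradient inequality for the convex function $\theta$: for every $x$,
\[
\theta(\psi(x)) - \theta(\psi(\bar x)) \, \geq \, \theta^{\, \prime}(\psi(\bar x); \psi(x) - \psi(\bar x)),
\]
which, combined with the first local identity, yields for $x \in {\cal N}_{\bar x}$
\[
\theta(\psi(x)) - \bar y \, \geq \, \theta^{\, \prime}(\psi(\bar x); \psi^{\, \prime}(\bar x; x - \bar x)) .
\]
Now apply the local identity for $\varphi$ and then the monotonicity of $s \mapsto \varphi^{\, \prime}(\bar y;s)$: because $\varphi$ is nondecreasing, both its one-sided derivatives at $\bar y$ are nonnegative, making the piecewise-linear function $\varphi^{\, \prime}(\bar y;\cdot)$ nondecreasing on $\mathbb{R}$. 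Thus
\[
\varphi(\theta(\psi(x))) \, = \, \varphi(\bar y) + \varphi^{\, \prime}(\bar y; \theta(\psi(x)) - \bar y) \, \geq \, \varphi(\bar y) + \varphi^{\, \prime}\bigl(\bar y; \theta^{\, \prime}(\psi(\bar x); \psi^{\, \prime}(\bar x; x - \bar x))\bigr) .
\]
Finally, recognize the right-hand side as $f(\bar x) + f^{\, \prime}(\bar x; x - \bar x)$ by the chain rule for directional derivatives of the composition of B-differentiable functions, which applies here since $\psi$, $\theta$, and $\varphi$ are all B-differentiable (being piecewise affine or convex real-valued).

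The main obstacle is really bookkeeping rather than a genuine analytic difficulty: one must verify (i) that a piecewise affine function is locally affine in the sense of the directional-derivative identity above (standard for piecewise affine functions, since $\bar x$ lies in finitely many polyhedral cells and each cell carries a single affine representative), and (ii) that the chain-rule identity $f^{\, \prime}(\bar x;v) = \varphi^{\, \prime}\bigl(\bar y; \theta^{\, \prime}(\psi(\bar x); \psi^{\, \prime}(\bar x;v))\bigr)$ holds; the latter follows from a direct computation that in turn uses the local affine identities for $\psi$ and $\varphi$ combined with the directional differentiability of the convex function $\theta$. Once these are in hand the rest of the argument is the three-step inequality above.
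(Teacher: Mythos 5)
Your proof is correct and follows essentially the same route as the paper's: both rest on the local affine identity for the piecewise affine functions $\psi$ and $\varphi$, the subgradient inequality for the convex function $\theta$ combined with the monotonicity of $\varphi^{\,\prime}(\bar y;\cdot)$, and the chain rule for directional derivatives to identify the resulting lower bound with $f(\bar x)+f^{\,\prime}(\bar x;x-\bar x)$. The only difference is that you make explicit the shrinking of the neighborhood so that $\theta(\psi(x))$ lands in the interval where $\varphi$ is locally affine, a step the paper leaves implicit.
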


{\it Proof.}  The key of the proof is the fact
(cf.~\cite[Proposition~4.1]{CuiChangHongPang20}) that for any piecewise
affine (PA) function $H : \mathbb{R}^M \to \mathbb{R}$
and any $\bar{y} \in \mathbb{R}^M$, there exists a neighborhood ${\cal N}_{\bar{y}}$
of $\bar{y}$ such that
\[
H(y) \, = \, H(\bar{y}) + H^{\, \prime}(\bar{y};y - \bar{y}), \epc \forall \,
y \, \in \, {\cal N}_{\bar{y}}.
\]
Applying this result to $\psi$ at $\bar{x}$ and also to $\varphi$ at
$\bar{t} \triangleq \theta(\psi(\bar{x}))$, we deduce the
existence of a neighborhood ${\cal N}_{\bar{x}}$ of $\bar{x}$ such that
\[ \begin{array}{lll}
\varphi \circ \theta \, \circ \, \psi(x) & = & \varphi(\bar{t}) + \varphi^{\, \prime}(\bar{t};\theta(\psi(x)) - \theta(\psi(\bar{x})))
\hspace{0.4in} \mbox{by PA property of $\varphi$ at $\bar{t}$} \\ [0.1in]
& \geq & \varphi(\bar{t}) + \varphi^{\, \prime}(\bar{t};\theta^{\, \prime}(\psi(\bar{x});\psi(x) - \psi(\bar{x})))
\ \mbox{by convexity of $\theta$ and $\uparrow$ property of $\varphi^{\, \prime}(\bar{t};\bullet)$} \\ [0.1in]
& = & \varphi(\bar{t}) + \varphi^{\, \prime}(\bar{t};\theta^{\, \prime}(\psi(\bar{x});\psi^{\, \prime}(\bar{x};x - \bar{x})))
\hspace{0.35in} \mbox{by PA property of $\psi$ at $\bar{x}$} \\ [0.1in]
& = & \varphi(\theta(\psi(\bar{x}))) + \left( \, \varphi \circ \theta \, \circ \, \psi \, \right)^{\, \prime}(\bar{x};x - \bar{x})
\ \mbox{by the chain rule of the dir.\ derivative.} \epc \Box
\end{array}
\]

With the above lemma, we can easily obtain the following corollary of Proposition~\ref{pr:stationary locmin} applied to the
empirical problem (\ref{eq:sampled new SP})  for a fixed sample batch $Z^N = \{ z^s \}_{s=1}^N$ when the problem is
derived from the expectation
problems \eqref{eq:relaxed sp_dc_constraint} and \eqref{eq:restricted sp_dc_constraint} with fixed $\gamma > 0$.  This requires
the functions
$c^{\, \rm rst}_{k\ell}(\bullet,z;\gamma)$  and $c^{\, \rm rlx}_{k\ell}(\bullet,z;\gamma)$ to have the composite structure
in  \eqref{eq:generic fk}.

\begin{corollary} \label{co:stationarity implies locmin} \rm
Let $X$ be a polyhedron and $\gamma > 0$ be a fixed but arbitrary scalar. Using the notation in
\eqref{defn: c_gamma}, we let each constraint function

\gap

$\bullet $ for the restricted problem:
$c_{k\ell}(\bullet,z;\gamma) = c_{k\ell}^{\, \rm rst}(\bullet,z;\gamma)$
for all $(k,\ell) \in [ K ] \times [ L ]$;

\gap

$\bullet $ for the relaxed problem:
$c_{k\ell}(\bullet,z;\gamma) = c_{k\ell}^{\, \rm rlx}(\bullet,z;\gamma)$
for all $(k,\ell) \in [ K ] \times [ L ]$.

\gap
Suppose that $\left\{ \, g_{i\ell}(\bullet,z) \, \right\}_{i=1}^{I_{\ell}}$ and
$\left\{ \, h_{j\ell}(\bullet,z) \, \right\}_{j=1}^{J_{\ell}}$
are all affine functions.  Then $c_{k\ell}(\bullet,z;\gamma)$ is convex-like near any
$\bar{x} \in X$ for all $(k,\ell) \in [ K ] \times [ L ]$,
provided that

\gap

$\bullet $ for the restricted problem: $\wh{\theta}_{\rm cvx}$ and
$\wh{\theta}_{\rm cve}$ are convex and concave functions, respectively;

\gap

$\bullet $ for the relaxed problem: $\wh{\theta}_{\rm cvx}$ and
$\wh{\theta}_{\rm cve}$ are piecewise affine (not necessarily convex/concave).

\gap

If additionally,
the objective function $c_0(\bullet,z)$ is convex-like near a B-stationary point
$\bar{x}$ of (\ref{eq:sampled new SP})
satisfying the ACQ for the feasible set $\overline{X}(Z^N;\gamma)$, then $\bar{x}$ is a
local minimizer of (\ref{eq:sampled new SP}).
\end{corollary}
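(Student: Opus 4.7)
The plan is to first establish the convex-likeness of each $c_{k\ell}(\bullet,z;\gamma)$ by fitting it into the composite form $\varphi \circ \theta \circ \psi$ of Lemma~\ref{lm:piecewise convex-like}, and then to invoke Proposition~\ref{pr:stationary locmin}(ii) to turn B-stationarity into local minimality. Since under the hypothesis all $g_{i\ell}(\bullet,z)$ and $h_{j\ell}(\bullet,z)$ are affine, the function $\psi(x) \triangleq {\cal Z}_{\ell}(x,z) = \max_i g_{i\ell}(x,z) - \max_j h_{j\ell}(x,z)$ is piecewise affine in $x$; this will serve as the inner layer in every application of Lemma~\ref{lm:piecewise convex-like}.

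For the restricted problem, I would decompose $c_{k\ell}^{\, \rm rst}(x,z;\gamma) = e_{k\ell}^+ \, [\phi_{\rm ub} \circ {\cal Z}_{\ell}(\bullet,z)](x) + e_{k\ell}^- \, [-\phi_{\rm lb} \circ {\cal Z}_{\ell}(\bullet,z)](x)$ and treat each summand separately. For the first term, write $\phi_{\rm ub}(t,\gamma) = \varphi(\theta(t))$ with $\varphi(u) \triangleq \min\{\max(u,0),1\}$ piecewise affine and nondecreasing, and $\theta(t) \triangleq \wh{\theta}_{\rm cvx}(1 + t/\gamma)$ convex (because $\wh{\theta}_{\rm cvx}$ is convex). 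For the second term, note that
\[
-\phi_{\rm lb}(t,\gamma) \, = \, \min\bigl\{\max(-\wh{\theta}_{\rm cve}(t/\gamma), -1),\, 0\bigr\} \, = \, \wt{\varphi}(\wt{\theta}(t))
\]
with $\wt{\varphi}(u) \triangleq \min\{\max(u,-1),0\}$ piecewise affine and nondecreasing and $\wt{\theta}(t) \triangleq -\wh{\theta}_{\rm cve}(t/\gamma)$ convex (the negation of the concave function $\wh{\theta}_{\rm cve}$). Both summands therefore satisfy the hypothesis of Lemma~\ref{lm:piecewise convex-like} and are convex-like near $\bar{x}$; since $e_{k\ell}^{\pm} \geq 0$ and convex-likeness is closed under nonnegative addition, $c_{k\ell}^{\, \rm rst}(\bullet,z;\gamma)$ is convex-like near $\bar{x}$.

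For the relaxed problem, the argument is simpler but uses different structural features: when $\wh{\theta}_{\rm cvx}$ and $\wh{\theta}_{\rm cve}$ are piecewise affine, both $\phi_{\rm ub}(\bullet,\gamma)$ and $\phi_{\rm lb}(\bullet,\gamma)$ are piecewise affine (the outer clippings $\max(\bullet,0)$ and $\min(\bullet,1)$ preserve piecewise affinity), so $c_{k\ell}^{\, \rm rlx}(\bullet,z;\gamma)$ is a real linear combination of compositions of piecewise affine maps, hence itself piecewise affine. Every piecewise affine function is convex-like near any point (the first-order expansion of a PA function is exact on a neighborhood, by the fact used at the start of the proof of Lemma~\ref{lm:piecewise convex-like}), so convex-likeness of $c_{k\ell}^{\, \rm rlx}(\bullet,z;\gamma)$ near $\bar{x}$ follows immediately.

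For the final claim, applying these results to each sample $z^s$ and averaging gives that the empirical constraint function $c_k^N(\bullet;\gamma) = \frac{1}{N}\sum_{s=1}^N \sum_{\ell=1}^L c_{k\ell}(\bullet,z^s;\gamma)$ is a nonnegative combination of convex-like functions near $\bar{x}$, hence convex-like there; by hypothesis the objective $c_0^N(\bullet) = \frac{1}{N}\sum_s c_0(\bullet,z^s)$ is also convex-like near $\bar{x}$. Then Proposition~\ref{pr:stationary locmin}(ii), combined with the assumed ACQ for $\overline{X}(Z^N;\gamma)$ at $\bar{x}$ and the B-stationarity of $\bar{x}$, yields that $\bar{x}$ is a local minimizer of (\ref{eq:sampled new SP}). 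The main obstacle is purely bookkeeping: correctly choosing the outer piecewise affine \emph{nondecreasing} function $\varphi$ and the inner \emph{convex} function $\theta$ when absorbing the signs in $-\phi_{\rm lb}$ and $-\phi_{\rm ub}$ so that Lemma~\ref{lm:piecewise convex-like} applies verbatim; everything else is routine.
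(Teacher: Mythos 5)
Your proposal is correct and follows essentially the same route as the paper's own proof: the restricted case is handled by writing $c_{k\ell}^{\,\rm rst}$ as $e_{k\ell}^+\,\phi_{\rm ub}\circ{\cal Z}_{\ell} + e_{k\ell}^-\,(-\phi_{\rm lb})\circ{\cal Z}_{\ell}$, recognizing each summand as $\varphi\circ\theta\circ\psi$ with $\varphi$ piecewise affine nondecreasing, $\theta$ convex, and $\psi$ piecewise affine, and invoking Lemma~\ref{lm:piecewise convex-like}; the relaxed case is handled by observing the whole function is piecewise affine; and the final claim follows from Proposition~\ref{pr:stationary locmin}(ii). Your explicit remark that the relaxed case cannot go through nonnegative combinations (because the coefficients there are $-e_{k\ell}^{\mp}$) and therefore needs the full piecewise-affinity argument is exactly the point the paper's choice of hypotheses is designed to address.
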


\begin{proof}
Writing $t_{\ell} \triangleq {\cal Z}_{\ell}(x,z)$, we have, for the restricted problem,
\[
c^{{\, \rm rst}}_{k\ell}(x,z;\gamma) \, = \, e_{k\ell}^+
\min\left\{ \, \max\left( \, \wh{\theta}_{\rm cvx}\left( 1 + \displaystyle{
\frac{t_{\ell}}{\gamma}
} \right), \, 0 \right), \, 1 \, \right\} + e_{k\ell}^- \,
\min\left\{ \, \max\left( \, - \wh{\theta}_{\rm cve}\left( \displaystyle{
\frac{t_{\ell}}{\gamma}
} \right), \, -1 \, \right), \, 0 \, \right\}.
\]
By (\ref{eq:g and h}), it follows $t_{\ell}$ is a piecewise affine function of $x$
for fixed $z$.  Thus $c^{{\, \rm rst}}_{k\ell}(\bullet,z;\gamma)$
is of the kind (\ref{eq:generic fk}) and the claims hold in this case.
For the relaxed problem, we have
\[
c^{{\, \rm rlx}}_{k\ell}(x,z;\gamma) \, = \, -e_{k\ell}^-
\min\left\{ \, \max\left( \, \wh{\theta}_{\rm cvx}\left( 1 + \displaystyle{
\frac{t_{\ell}}{\gamma}
} \right), \, 0 \right), \, 1 \, \right\} - e_{k\ell}^+ \,
\min\left\{ \, \max\left( \, - \wh{\theta}_{\rm cve}\left( \displaystyle{
\frac{t_{\ell}}{\gamma}
} \right), \, -1 \, \right), \, 0 \, \right\},
\]
which shows that $c^{{\, \rm rlx}}_{k\ell}(\bullet,z;\gamma)$ is the composite
of piecewise affine functions, thus is piecewise affine itself.
Hence the  claims also hold in this case.
\end{proof}

\subsection{Asymptotic results for $\gamma \downarrow 0$}

Based on Proposition~\ref{pr:relations of limiting sets} that asserts
the limits of the approximating sets  $\overline{X}_{\rm rlx}(\gamma)$ and
$\overline{X}_{\rm rst}(\gamma)$ as $\gamma \downarrow 0$,
it is easy to show that under the zero-probability assumption in
Proposition~\ref{pr:relations of limiting sets}(iv), any accumulation
point of the globally optimal solutions of the relaxed problem
\eqref{eq:relaxed sp_dc_constraint} as $\gamma \downarrow 0$ must be a globally
optimal solution of the original chance-constrained
problem \eqref{eq:focus sp_dc_constraint}.  Additionally under the condition
in Proposition~\ref{pr:relations of limiting sets}(iii), any
accumulation point of the globally optimal solutions of the restricted problem
\eqref{eq:restricted sp_dc_constraint} as $\gamma \downarrow 0$
must be a globally optimal solution of the original chance-constrained problem
\eqref{eq:focus sp_dc_constraint}.  However, an accumulation point of
(strictly) locally optimal solutions $\{ \bar{x}_{\, \rm rst}(\gamma) \}$ of
\eqref{eq:restricted sp_dc_constraint} may not  be a locally optimal
solution of \eqref{eq:focus sp_dc_constraint} even with the conditions in
Proposition~\ref{pr:relations of limiting sets}(iii) and (iv).
In the following, we provide an example to illustrate the latter fact.  A slightly
modified example illustrates an unexpected limit with the relaxed problem.


\begin{example} \label{ex:a counterexample} \rm
Consider the problem
\begin{equation} \label{eq:example cc}
\displaystyle{
\operatornamewithlimits{\mbox{\bf minimize}}_{-1 \, \leq \, x \, \leq \, 1}
} \ x \epc
\mbox{\bf subject to } \ \mathbb{P}\left(\,
\textcolor{black}{Z - \max(2x,1-2x) \geq 0} \, \right) \, \leq \, \displaystyle{
\frac{1}{4}
},
\end{equation}
where the random variable $Z$ is uniformly distributed on $[\, -1,1\,]$.  We can show

\gap
	
$\bullet $ $\mathbb{P}\left(\, Z - \max(2x,1-2x) \, = \, 0 \, \right) = 0$
for $x \in [-1,1]$;
	
\gap
	
$\bullet $ $	\mathbb{P}\left( \, Z - \max(2x,1-2x) \geq 0 \, \right) \, = \,
\left\{\begin{array}{ll}
\left( 1-2x \right)/2 & \mbox{if $x\in \left[\,1/4, \, 1/2\,\right)$} \\[0.1in]
x & \mbox{if $x\in \left[ \, 0, 1/4\, \right)$} \\[0.1in]
0 & \mbox{if $x \in [-1,0) \, \cup \, \left[ \, 1/2 , \, 1 \,\right].$}
\end{array} \right.$

\gap

It follows that the conditions in
Proposition~\ref{pr:relations of limiting sets} (iii) and (iv) both hold.
Therefore, ${X}_{\rm cc} = [-1,1]$ and the unique
B-stationary point/local minimizer/global minimizer
of \eqref{eq:example cc} is $\bar{x} = -1$.

\gap
	
$\bullet $ Let $\wh{\theta}_{\rm cvx}(t) = t$ in $\phi_{\rm ub}(t,\gamma)$.
We have for any $\gamma\in (0,1/2)$,
\[
\begin{array}{ll}
\bar c^{\, \rm rst}(x; \gamma) & = \mathbb{E}\left[ \, \min\left( \, 1, \max\left( \, 1
+ \displaystyle{
\frac{1}{\gamma}
} \, \left( \, Z - \max(2x,1-2x) \, \right), \, 0 \, \right) \, \right) \, \right]
\\ [0.25in]
& = \mathbb{E}\left[ \, 1 \,\left| \;  1+ \displaystyle{
\frac{1}{\gamma}
} \, \left( Z-\max(2x,1-2x) \right) \, \geq 1 \right. \right] \times
\mathbb{P} \left(  {Z} - {\max(2x,1-2x)} \geq 0 \right) \ +  \\ [0.3in]
& \epc \left( \begin{array}{l}
\, \mathbb{E}\left[ 1+ \displaystyle{
\frac{1}{\gamma}
} \, \left( Z - \max(2x,1-2x) \right) \, \left| \; 1 + \displaystyle{
\frac{1}{\gamma}
} \, \left( \, Z - \max(2x,1-2x)  \, \right) \, \in \, (0,1) \, \right. \right] \\[0.3in]
\epc \ \times \, \mathbb{P} \left(  -\gamma < Z - \max(2x,1-2x)  < 0 \right)
\end{array} \, \right) \\ [0.5in]
& = \, \left\{ \begin{array}{ll}
\displaystyle{
\frac{(\gamma+2 x)^2}{4\gamma}
} & \mbox{if $x \, \in \, \left[ \, -\displaystyle{
\frac{\gamma}{2}
}, \, 0 \, \right)$} \\[0.25in]
x + \displaystyle{
\frac{\gamma}{4}
} & \mbox{if $x \, \in \, \left[ \, 0, \,
\displaystyle{
\frac{1}{4}
} \, \right)$} \\[0.25in]
\displaystyle{
\frac{1}{2}
} \, (1-2x) +\displaystyle{
\frac{\gamma}{4}
} & \mbox{if $x\, \in \, \left[ \, \displaystyle{
\frac{1}{4}
}, \, \displaystyle{
\frac{1}{2}
} \, \right)$} \\[0.25in]
\displaystyle{
\frac{(\gamma+1 - 2x)^2}{4\gamma}
} & \mbox{if $x \, \in \, \left[ \, \displaystyle{
\frac{1}{2}
}, \, \displaystyle{
\frac{1}{2}(1+\gamma)
} \, \right)$} \\[0.25in]
0 & \mbox{if $x \, \in \, \left[ \, -1, \, -\displaystyle{
\frac{\gamma}{2}
} \, \right) \, \cup \, \left[ \, \displaystyle{
\frac{1}{2}
} \, (1+\gamma), \, 1 \, \right]$}.
\end{array} \right.
\end{array}
\]
Therefore, $\overline{X}_{\rm rst}(\gamma) = \left[ \, -1, \, \displaystyle{
\frac{1-\gamma}{4}
} \, \right] \, \cup \, \left[ \, \displaystyle{
\frac{1 + \gamma}{4}
}, \, 1 \, \right]$ for any $\gamma \in (0,1/2)$.  Hence,
$\bar{x}_{\rm rst}(\gamma) = \displaystyle{
\frac{1 + \gamma}{4}
}$ is a B-stationary point and a strict local minimizer
of \eqref{eq:restricted sp_dc_constraint} for any $\gamma \in (0,1/2)$.
	
\gap
	
$\bullet $ However, the limit of $\{ \bar{x}_{\rm rst}(\gamma) \}$ as
$\gamma \downarrow 0$ is $\displaystyle\frac{1}{4}$,
which is not a local minimizer of \eqref{eq:example cc}.

\gap

\textcolor{black}{Alternatively, consider the following slight modification of the problem
\eqref{eq:example cc}:
\begin{equation}\label{ex for relaxed}
\displaystyle{
\operatornamewithlimits{\mbox{\bf minimize}}_{-1 \, \leq \, x \, \leq \, 1}
} \ -\left|\,x-\frac{3}{8}\,\right| \epc 
\mbox{\bf subject to } \ \mathbb{P}\left(\, Z - \max(2x,1-2x)\geq 0\, \right) \, \leq \,
\displaystyle{
\frac{1}{8}
}
\end{equation}
for  the same random variable $Z$.  Then
$X_{\rm cc} = \left[\,-1, \displaystyle\frac{1}{8}\,\right] \cup \left[\,
\displaystyle\frac{3}{8},1\,\right]$ and the local minimizer of the above problem is
$\{ -1, 1 \}$.  Letting $\wh{\theta}_{\rm cve}(t) = t$ and omitting the details,
we get
\[
\begin{array}{ll}
\bar{c}^{\, \rm rlx}(x; \gamma)
& = \, \left\{ \begin{array}{cl}
\displaystyle{
\frac{(1-2 x)^2}{4\gamma}
} & \mbox{if $x \, \in \, \left[ \, \displaystyle{
\frac{1-\gamma}{2}
}, \, \frac{1}{2} \, \right)$} \\[0.2in]
\displaystyle{
\frac{x^2}{\gamma}
} & \mbox{if $x \, \in \, \left[ \, 0, \, \displaystyle{
\frac{\gamma}{2}
} \, \right)$} \\[0.2in]
\displaystyle{
\frac{2 - \gamma}{4}
} - x & \mbox{if $x\, \in \, \left[ \, \displaystyle{
\frac{1}{4}
}, \, \displaystyle{
\frac{1-\gamma}{2}
} \, \right)$} \\[0.2in]
x - \displaystyle{
\frac{\gamma}{4}
} & \mbox{if $x \, \in \, \left[ \, \displaystyle{
\frac{\gamma}{2}
}, \, \displaystyle{
\frac{1}{4}
} \, \right)$} \\[0.2in]
0 & \mbox{if $x \, \in \, \left[ \, -1, \, 0\, \right) \, \cup \, \left[ \,
\displaystyle{
\frac{1}{2}
}, \, 1 \, \right]$}.
\end{array} \right.
\end{array}
\]
Therefore,
$\overline{X}_{\rm rlx}(\gamma) = \left[\, -1, \,\displaystyle{
\frac{1+2\gamma}{8}
} \,\right] \, \cup \, \left[ \,\displaystyle{
\frac{3-2\gamma}{8}
}, \, 1 \,\right]$ for any $\gamma \in (0,1/2)$.  Hence
$\bar{x}_{\rm rlx}(\gamma) = \displaystyle{
\frac{3-2\gamma}{8}
}$ is a strict local minimizer of the relaxed
problem \eqref{eq:relaxed sp_dc_constraint}.  However, the limit of
$\{\bar{x}_{\rm rlx}(\gamma)\}$ as $\gamma \downarrow 0$ is $\displaystyle{
\frac{3}{8}
}$, which is a global maximizer instead of a local minimizer of the original
problem \eqref{ex for relaxed}.  In this case, the relaxed problem has a bad local
minimizer that converges to a most undesirable point.}

\gap

Figure~\ref{fig:illustration cc} below shows the plot of the probability function
$\mathbb{P}\left(\, {Z}-{\max(2x,1-2x)}\geq 0\, \right)$, its restricted
approximation using $\wh{\theta}_{\rm cvx}(t)=t$ (left) and its relaxed
approximation using
$\wh{\theta}_{\rm cve}(t)=t$ (right).  From the figure, one can easily observe
the respective feasible regions of the original and approximate problems. \hfill $\Box$
\begin{figure*}[h]
\begin{center}
\includegraphics[width=0.45\textwidth]{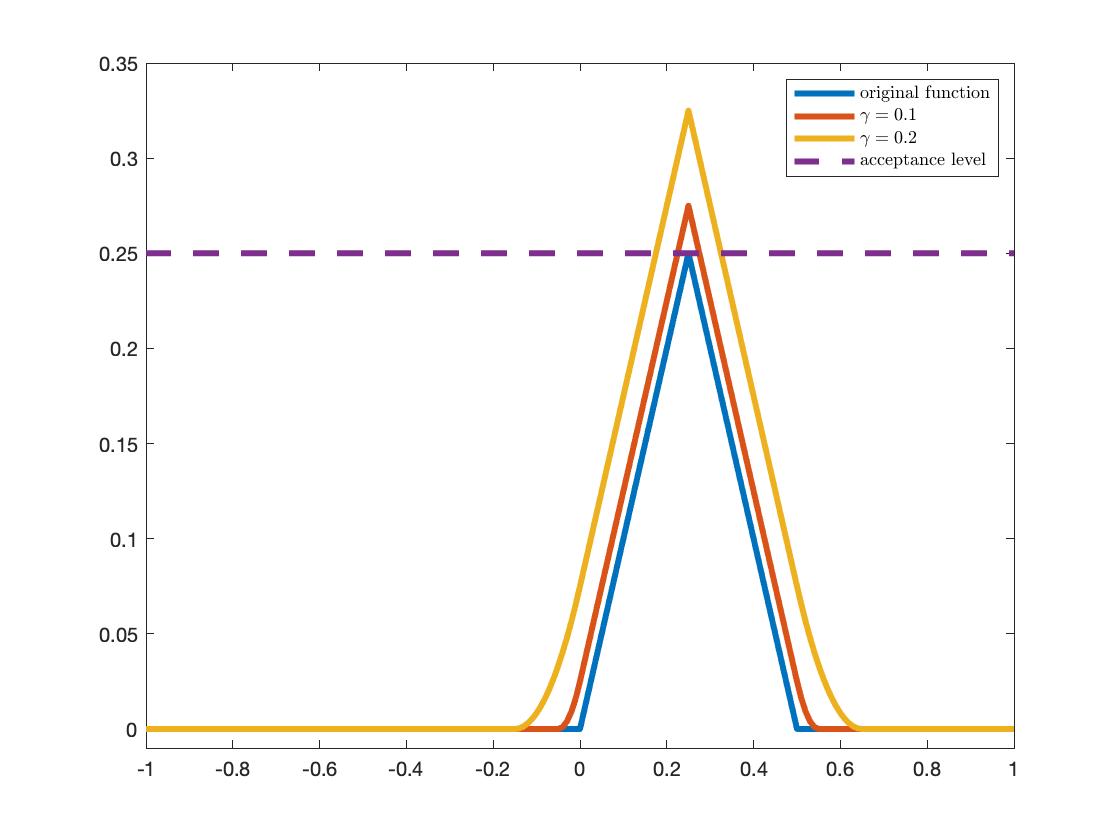}
\includegraphics[width=0.45\textwidth]{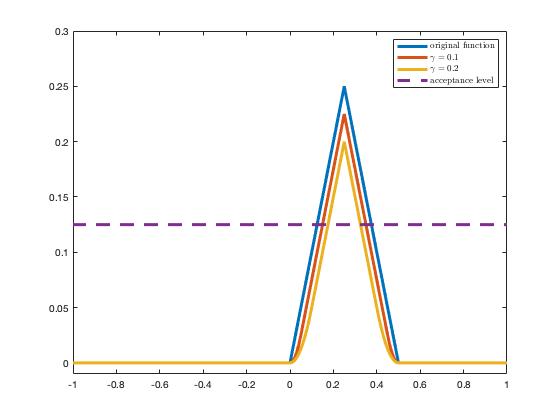}
\end{center}
\caption{A chance constraint and its approximations (left: restriction; right:
relaxation).} \label{fig:illustration cc}
\end{figure*}
\end{example}


While the above examples 
illustrates that limit points of the sequence of strict local minima of
the restricted/relaxed problem may not be a local minimum of the original
chance-constrained problem, it is possible to derive a simple result asserting
a weak kind of stationarity property of such a limit under minimal assumptions.
Phrasing this in a more general context, we consider a parameterized
family of closed sets $\{ C(w) \}_{w \in {\cal W}}$ and the associated optimization
problem:
\begin{equation} \label{eq:gamma-problem}
\displaystyle{
\operatornamewithlimits{\mbox{\bf minimize}}_{x \in C(w)}
} \ c_0(x),
\end{equation}
where the objective function $c_0$ is locally Lipschitz continuous.  Being fairly straightforward,
the next result
has two parts: the first part pertains to C-stationary points without
assuming convexity; this part is applicable to the families of restricted sets
$\{ \overline{X}_{\rm rst}(\gamma) \}$ and relaxed sets
$\{ \overline{X}_{\rm rlx}(\gamma) \}$.  The second part pertains to global minimizers
when these are computationally meaningful (e.g., when (\ref{eq:gamma-problem}) is a convex program);
this part is applicable to a family
$\{ C(\gamma_{\nu};x^{\nu} )\}_{\nu=1}^{\infty}$ of surrogate convex feasible sets
where $\{ x^{\nu} \}$ is a sequence of iterates with each
$x^{\nu}$ being associated with the scalar $\gamma_{\nu} > 0$.

\gap

For each $w \in {\cal W}$, let $\bar{x}^{\rm C}(w)$ be a C-stationary point of
(\ref{eq:gamma-problem}) and $\bar{x}^{\rm O}(w)$ be a globally optimal solution.
\textcolor{black}{Let the sequence $\{ w^{\nu} \}$ converge to $w^{\infty}$, and let
$\bar{C}(w^{\infty}) \triangleq \displaystyle{
\limsup_{\nu \to \infty}
} \, C(w^{\nu}) \triangleq \displaystyle{
\bigcap_{\nu \geq 1}
} \, \displaystyle{
\bigcup_{j \geq \nu}
} \, C(w^j)$.  Consider two arbitrary sequences $\{ \bar{x}^{\rm C}(w^{\nu}) \}$ and
$\{ \bar{x}^{\rm O}(w^{\nu}) \}$ of C-stationary points and global minima, respectively,
of the problem (\ref{eq:gamma-problem}) corresponding to the sequence $\{ w^{\nu} \}$.
We are interested in the respective C-stationary and globally minimizing properties
of the limit points of these sequences.  If the union $\displaystyle{
\bigcup_{\nu}
} \ C(w^{\nu})$ is bounded, then the two sequences must have convergent subsequences
whose limits we take as
$\bar{x}^{\rm C}(w^{\infty})$ and $\bar{x}^{\rm O}(w^{\infty})$.
It is clear that both limits belong to $\bar{C}(w^{\infty})$.}

\begin{proposition} \label{pr:limiting stationarity} \rm
In the above setting, the following two statements hold:
	
\gap
	
(a) $c_0^{\, \circ}\left( \bar{x}^{\rm C}(w^{\infty});v \right) \geq 0$ for all
$v \in \displaystyle{
\limsup_{\nu \to \infty}
} \, {\cal T}\left( \bar{x}^{\rm C}(w^{\nu});C(w^{\nu}) \right)$;
in particular, if
\[
{\cal T}\left( \bar{x}^{\rm C}(w^{\infty});\bar{C}(w^{\infty}) \right) \, \subseteq
\, \displaystyle{
\limsup_{\nu \to \infty}
} \, {\cal T}\left( \bar{x}^{\rm C}(w^{\nu});C(w^{\nu}) \right),
\]
then $\bar{x}^{\rm C}(w^{\infty})$ is a C-stationary solution of the limiting problem:
$\displaystyle{
\operatornamewithlimits{\mbox{\bf minimize}}_{x \in \bar{C}(w^{\infty})}
} \ c_0(x)$;
	
\gap
	
(b)
$\bar{x}^{\rm O}(w^{\infty}) \in \displaystyle{
\operatornamewithlimits{\mbox{\bf argmin}}_{x \in \bar{C}(w^{\infty})}
} \ c_0(x)$.
\end{proposition}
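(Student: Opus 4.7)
The plan is to prove both parts by applying the defining inequality of C-stationarity (respectively global optimality) at each index $\nu$ and then passing to the limit. The only tools required are the upper semicontinuity \eqref{eq:Clarke usc} of the Clarke directional derivative in part~(a) and the (local Lipschitz) continuity of $c_0$ in part~(b). Throughout, I would interpret $\bar C(w^{\infty})$ as the outer limit taken along the (relabeled) subsequence for which $\bar x^{\rm C}(w^{\nu})\to \bar x^{\rm C}(w^{\infty})$ and $\bar x^{\rm O}(w^{\nu})\to \bar x^{\rm O}(w^{\infty})$, so that whenever I realize a point $y\in \bar C(w^{\infty})$ via some $y^k\in C(w^{\nu_k})$ with $y^k\to y$, the matching subsequence $\{\bar x^{\rm O}(w^{\nu_k})\}$ still converges to $\bar x^{\rm O}(w^{\infty})$.

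For part~(a), fix any $v\in \displaystyle{\limsup_{\nu\to\infty}}\,{\cal T}(\bar x^{\rm C}(w^{\nu});C(w^{\nu}))$. By the outer-limit definition there exist a subsequence $\{\nu_k\}$ and $v^{k}\in {\cal T}(\bar x^{\rm C}(w^{\nu_k});C(w^{\nu_k}))$ with $v^k\to v$. C-stationarity at each $\bar x^{\rm C}(w^{\nu_k})$ gives $c_0^{\circ}(\bar x^{\rm C}(w^{\nu_k});v^k)\ge 0$; letting $k\to \infty$ and invoking \eqref{eq:Clarke usc} yields
\[
c_0^{\circ}(\bar x^{\rm C}(w^{\infty});v) \,\ge\, \displaystyle{\limsup_{k\to\infty}}\, c_0^{\circ}(\bar x^{\rm C}(w^{\nu_k});v^k) \,\ge\, 0.
\]
Under the extra tangent-cone inclusion assumed in the ``in particular'' clause, every $v\in {\cal T}(\bar x^{\rm C}(w^{\infty});\bar C(w^{\infty}))$ lies in that outer limit and hence satisfies this inequality, which is exactly the C-stationarity of $\bar x^{\rm C}(w^{\infty})$ for the limiting problem.

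For part~(b), fix any $y\in \bar C(w^{\infty})$ and use the outer-limit definition to extract a subsequence $\{\nu_k\}$ together with $y^{k}\in C(w^{\nu_k})$ with $y^k\to y$. Global optimality of $\bar x^{\rm O}(w^{\nu_k})$ over $C(w^{\nu_k})$ forces $c_0(\bar x^{\rm O}(w^{\nu_k}))\le c_0(y^k)$; continuity of $c_0$ (from its local Lipschitz property) then permits passage to the limit on both sides, producing $c_0(\bar x^{\rm O}(w^{\infty}))\le c_0(y)$. Since $y$ is arbitrary in $\bar C(w^{\infty})$ and $\bar x^{\rm O}(w^{\infty})$ itself lies in $\bar C(w^{\infty})$, the claimed global minimality follows.

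The principal obstacle is the bookkeeping of subsequence extractions: one must identify $\bar C(w^{\infty})$ with the outer limit along the very subsequence on which the $\bar x$'s already converge, and then, for each $y\in \bar C(w^{\infty})$, be able to extract a further common subsequence serving both the realization of $y$ and the preservation of the convergence $\bar x^{\rm O}(w^{\nu_k})\to \bar x^{\rm O}(w^{\infty})$ (respectively $\bar x^{\rm C}(w^{\nu_k})\to \bar x^{\rm C}(w^{\infty})$). Once this setup is clarified, each conclusion collapses to a one-line invocation of the upper semicontinuity of $c_0^{\circ}$ in part~(a) and of the continuity of $c_0$ in part~(b); no regularity, convexity, or further structure of the constraint family is needed.
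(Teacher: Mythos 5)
Your argument for part (a) is exactly the paper's: extract a subsequence realizing $v$ as a limit of tangent vectors $v^{\nu}\in{\cal T}(\bar{x}^{\rm C}(w^{\nu});C(w^{\nu}))$, apply C-stationarity at each index, and pass to the limit via the upper semicontinuity \eqref{eq:Clarke usc} of the Clarke directional derivative; the paper omits part (b) as "similar," and your continuity-of-$c_0$ argument is the intended one. Your care about identifying $\bar{C}(w^{\infty})$ with the outer limit along the subsequence on which the iterates already converge is a reasonable reading of the paper's implicit relabeling and does not change the substance of the proof.
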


\begin{proof}  To prove statement (a), let $v \in \displaystyle{
\limsup_{\nu \to \infty}
} \, {\cal T}\left( \bar{x}^{\rm C}(w^{\nu});C(w^{\nu}) \right)$.  Then there exist
an infinite index set $\kappa$ and a sequence of vectors
$\{ v^{\nu} \}_{\nu \in \kappa}$ such that $v = \displaystyle{
\lim_{\nu (\in \kappa) \to \infty}
} \, v^{\nu}$ and
$v^{\nu} \in {\cal T}\left( \bar{x}^{\rm C}(w^{\nu});C(w^{\nu}) \right)$
for all $\nu \in \kappa$.  Therefore, we have
\[
c_0^{\, \circ}\left( \bar{x}^{\rm C}(w^{\nu});v^{\nu} \right) \, \geq \, 0 \, \epc
\forall \, \nu \, \in \, \kappa.
\]
By (\ref{eq:Clarke usc}), we pass to the limit $\nu (\in \kappa) \to \infty$ and
obtain the desired C-stationarity property
of $\bar{x}^{\rm C}(w^{\infty})$.  The second assertion in statement (a) is clear.
The proof of statement (b) is similar to that of (a) and omitted.	
\end{proof}

{\bf Example \ref{ex:a counterexample} continued.}  We have
${\cal T}\left( \bar{x}_{\rm rst}(\gamma);\overline{X}_{\rm rst}(\gamma) \right)
= \mathbb{R}_+$ for all $\gamma \in (0,1/2)$.
Since the objective function is the identity function, therefore
$c_0^{\, \prime}(x;v) = v$ for all pairs $(x,v) \in \mathbb{R}^2$;
hence the first assertion of Proposition~\ref{pr:limiting stationarity}(a)
is valid, even though the limit of
$\bar{x}_{\, \rm rst}(\gamma)$ as $\gamma \downarrow 0$
regrettably has no minimizing property with regards to the original
chance-constrained problem (\ref{eq:example cc}).  Of course, it is possible
in this example to obtain the unique global minimizer of the problem if one
identifies the global minimizers of the objective function over
the various approximating sets $\overline{X}_{\rm rst}(\gamma)$ for $\gamma > 0$.
From a practical computational perspective, it is in general not possible
to identify such a global minimizer when the problem is highly nonconvex
and coupled with nondifferentiability.  So one has to settle for the computable
solutions and understand their properties to the extent possible.  \hfill $\Box$

\gap

\textcolor{black}{{\bf A general comment:} In the above examples, the feasible regions
of the restricted
and relaxed problems are each the union of two intervals; due to the simplicity of the
objective functions, global minima of the restricted and relaxed problems can therefore
be identified and they will converge to the global minima of the respective problems
(\ref{eq:example cc}) and (\ref{ex for relaxed}).
However, in practical applications, we do not have the luxury of computing the global
minima exactly and the best we can settle for are stationary solutions, which under the
convexity-like property, are local minima.   These examples illustrate that if the
restricted/relaxed problems have ``bad'' local minima, their limits can be very
undesirable for the original CCP.   In the absence of favorable structures that can be
exploited, computing the ``sharpest'' kind of stationary solutions of the
restricted/relaxed/approximated problems, which themselves are most likely nonconvex
and nondifferentiable problems too, provides the first step toward obtaining a desirable
solution of the given CCP.  This important step is the guiding principle for the
developments in the rest of the paper.}

\section{External Sampling: Uniform Exact Penalization}
\label{sec:exact penalization}

This section develops a uniform exact penalization theory for the
following (un-parameterized) expectation constrained stochastic program,
without assuming any special structures on the constraint functions except for the
well-definedness of the expectation functions
and the Lipschitzian properties in Assumption (A$_{\rm Lip}$) below. In particular,
it covers the relaxed problem \eqref{eq:relaxed sp_dc_constraint}
and the restricted  problem \eqref{eq:restricted sp_dc_constraint} for the CCP with
a fixed $\gamma>0$ which we omit in this section.  Specifically, we consider
\begin{equation} \label{eq:exp SP}
\begin{array}{ll}
\displaystyle{
\operatornamewithlimits{\mbox{\bf minimize}}_{x \in X}
} & \bar c_0(x) \, \triangleq \, \mathbb{E}\left[ \, c_0(x, \tilde z) \, \right]
\\ [0.1in]
\mbox{\bf subject to} & \bar c_k(x) \, \triangleq \,
\underbrace{\mathbb{E}\left[ \, c_k(x, \tilde z) \, \right] \, \leq \, \zeta_k,
\epc k=1, \cdots, K}_{
\mbox{constraint set denoted $\wh{S}$}}.
\end{array} \end{equation}
To be self-contained for this section, we restate the blanket assumptions
(A$_{\rm o}$) and (A$_{\rm c}$) in the context of (\ref{eq:exp SP}):

\gap

{\bf Assumption (A$_{\rm Lip}$) for (\ref{eq:exp SP}):} the functions $\bar c_0$
and $c_k(\bullet,z)$ for all $z \in \Xi$ are
directionally differentiable; moreover,
the objective function $\bar c_0$ is Lipschitz continuous on $X$ with constant
$\mbox{Lip}_{\rm 0}$ and there exists an integrable
function $\mbox{Lip}_{\rm c} : \Xi \to \mathbb{R}_{++}$ such that for all
$k = 1, \cdots, K$,
\[
\left| \, c_k(x,z) - c_k(x^{\prime},z) \, \right| \, \leq \, \mbox{Lip}_{\rm c}(z)
\, \left\| \, x - x^{\prime} \, \right\|,
\epc \forall \; x,x^{\prime} \, \in \, X \mbox{ and all } z \, \in \, \Xi.
\]
Besides the
well-known benefit of transferring the (hard) constraints to the objective, exact
penalization is particularly useful in a stochastic setting where
the expectation constraints are discretized by sampling.  In practice, random sampling
of the constraint functions can generate a discretized
problem that is not feasible, thus leading to computational difficulties in a solution
algorithm.  With penalization, this becomes a non-issue. However, penalization raises
the question of exactness; that is, can feasibility be recovered with a uniformly
finite penalty parameter for all SAA problems with sufficiently large sample sizes?
Consistent with our perspective of solving nonconvex problems \cite{CuiPang2020},
our analysis below addresses stationary solutions under penalization.  We
denote the feasible set of (\ref{eq:exp SP}) by
$\wh{X} \, \triangleq \, X \, \cap \, \wh{S}$.

\gap

Given a penalty parameter $\lambda > 0$  applied to the residual function
$r_{\rm c}(x)$, we obtain the penalized version of
(\ref{eq:exp SP}):
\begin{equation} \label{eq:penalized exp SP}
\displaystyle{
\operatornamewithlimits{\mbox{\bf minimize}}_{x \in X}
} \ \bar c_0(x) + \lambda \, r_{\rm c}(x), \epc \mbox{where} \epc
r_{\rm c}(x) \, \triangleq \, \displaystyle{
\sum_{k=1}^K
} \, \max\left( \,\bar  c_k(x) - \zeta_k, \, 0 \, \right).
\end{equation}
Considering the above two problems with the family $\{ \bar c_k (x) \}_{k=0}^K$
treated as deterministic functions, we have the following exact penalization result
which is drawn from \cite[Proposition~9.2.2]{CuiPang2020}.

\begin{proposition}  \label{pr:stationary penalization} \rm
Let $X$ be a closed convex set and let $\{ \bar c_k \}_{k=0}^K$ be B-differentiable
functions defined on an open set containing $X$.  Suppose in addition that
$\bar c_0$ is Lipschitz continuous on $X$ with Lipschitz modulus $\mbox{Lip}_0 > 0$.  If
\begin{equation} \label{eq:dd assumption}
\displaystyle{
\operatornamewithlimits{\mbox{\bf supremum}}_{x \, \in \, X \, \setminus \, \wh{S}}
} \ \left[ \, \displaystyle{
\operatornamewithlimits{\mbox{\bf minimum}}_{v \in {\cal T}(x;X); \, \| v \| = 1}
} \ r_{\rm c}^{\, \prime}(x;v) \, \right] \, \leq \, -1,
\end{equation}
then for every $\lambda > \mbox{Lip}_0$, every directional stationary point of
(\ref{eq:penalized exp SP}) is a B-stationary point of (\ref{eq:exp SP}).
\end{proposition}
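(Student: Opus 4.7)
The plan is to show the claim by first using the hypothesis (\ref{eq:dd assumption}) together with the Lipschitz bound on $\bar c_0$ to force any d-stationary point $\bar x$ of the penalized problem (\ref{eq:penalized exp SP}) into the feasible set $\wh X$, and then to use the B-differentiable structure of the residual $r_{\rm c}$ at a feasible point to conclude B-stationarity for the original problem (\ref{eq:exp SP}). Throughout, I will use the fact that for any $v \in \mathcal{T}(\bar x;X)$ (which is the usual convex tangent cone, since $X$ is convex), d-stationarity of $\bar x$ for (\ref{eq:penalized exp SP}) is equivalent to $\bar c_0^{\,\prime}(\bar x;v) + \lambda\, r_{\rm c}^{\,\prime}(\bar x;v) \geq 0$, where the directional derivatives add because both $\bar c_0$ and $r_{\rm c}$ are B-differentiable on an open neighborhood of $X$.

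The feasibility step goes by contradiction: assume $\bar x \in X \setminus \wh S$. Then (\ref{eq:dd assumption}) yields a unit tangent direction $v \in \mathcal{T}(\bar x;X)$ with $r_{\rm c}^{\,\prime}(\bar x;v) \leq -1$. Combined with the d-stationarity inequality this gives $\bar c_0^{\,\prime}(\bar x;v) \geq -\lambda\, r_{\rm c}^{\,\prime}(\bar x;v) \geq \lambda$. But the global Lipschitz bound on $\bar c_0$ implies $|\bar c_0^{\,\prime}(\bar x;v)| \leq \mbox{Lip}_0\,\|v\| = \mbox{Lip}_0$, contradicting $\lambda > \mbox{Lip}_0$. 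Hence $\bar x \in \wh X$.

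For the B-stationarity step, let $v \in \mathcal{T}(\bar x;\wh X)$ be any tangent to the feasible set; pick sequences $\{x^\nu\} \subset \wh X$ and $\tau_\nu \downarrow 0$ with $(x^\nu - \bar x)/\tau_\nu \to v$. B-differentiability of each $\bar c_k$ at $\bar x$ (the analogue of (\ref{eq:Lip+dd})) yields
\[
\bar c_k^{\,\prime}(\bar x;v) \,=\, \lim_{\nu \to \infty}\, \frac{\bar c_k(x^\nu) - \bar c_k(\bar x)}{\tau_\nu}\,,
\]
and for each active index $k \in \mathcal{A}(\bar x) \triangleq \{k : \bar c_k(\bar x) = \zeta_k\}$ the right-hand side is nonpositive since $\bar c_k(x^\nu) \leq \zeta_k = \bar c_k(\bar x)$. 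The chain rule for the directional derivative of $\max(\cdot,0)$ then gives $r_{\rm c}^{\,\prime}(\bar x;v) = \sum_{k \in \mathcal{A}(\bar x)} \max(\bar c_k^{\,\prime}(\bar x;v),\,0) = 0$, while inactive indices contribute zero trivially. Moreover, $v \in \mathcal{T}(\bar x;\wh X) \subseteq \mathcal{T}(\bar x;X)$, so the d-stationarity inequality applied to this $v$ reduces to $\bar c_0^{\,\prime}(\bar x;v) \geq 0$, which is exactly the defining inequality (\ref{eq:d-stationarity}) for B-stationarity of $\bar x$ on $\wh X$.

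The only subtle point is the second step, where one must be careful that $r_{\rm c}^{\,\prime}(\bar x;v) = 0$ (not merely $\leq 0$) for tangents to $\wh X$, so that no slack appears in the d-stationarity inequality to be inherited by $\bar c_0^{\,\prime}(\bar x;v)$. This follows because each summand $\max(\bar c_k(\cdot)-\zeta_k,\,0)$ is nonnegative with value $0$ at a feasible $\bar x$, so its directional derivative at $\bar x$ is automatically $\geq 0$ in every direction; combined with the one-sided bound derived from $x^\nu \in \wh X$, equality holds. The first step, the feasibility argument, is essentially forced once the penalty parameter dominates $\mbox{Lip}_0$, and the hypothesis (\ref{eq:dd assumption}) is precisely the uniform ``descent-of-residual'' condition that makes the Lipschitz comparison sharp.
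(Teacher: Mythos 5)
Your proof is correct. The paper does not actually supply a proof of this proposition---it is imported verbatim from \cite[Proposition~9.2.2]{CuiPang2020}---so there is nothing in the text to compare against; your two-step argument (first forcing the d-stationary point $\bar{x}$ into $\wh{S}$ by playing the descent direction guaranteed by (\ref{eq:dd assumption}) against the bound $| \bar{c}_0^{\, \prime}(\bar{x};v) | \leq \mbox{Lip}_0$, then using the formula (\ref{eq:dd expression}) at a feasible point together with the one-sided bound $\bar{c}_k^{\, \prime}(\bar{x};v) \leq 0$ for $k \in {\cal A}(\bar{x})$ and $v \in {\cal T}(\bar{x};\wh{X})$ to conclude $r_{\rm c}^{\, \prime}(\bar{x};v) = 0$ and hence $\bar{c}_0^{\, \prime}(\bar{x};v) \geq 0$) is the standard route and is exactly the mechanism the cited result relies on. The only cosmetic point is that if the inner minimum in (\ref{eq:dd assumption}) were not attained one would work with an $\varepsilon$-minimizing direction and use $\lambda(1 - \varepsilon) > \mbox{Lip}_0$ for small $\varepsilon$, which changes nothing.
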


\gap
We make several remarks about the condition (\ref{eq:dd assumption}):

\gap

(a) It holds that
\begin{equation} \label{eq:dd expression}
r_{\rm c}^{\, \prime}(x;v) \, = \, \displaystyle{
\sum_{k \, : \, \bar c_k(x) \, > \, \zeta_k}
} \, \bar c_k^{\, \prime}(x;v) +  \displaystyle{
\sum_{k \, : \, \bar c_k(x) \, = \, \zeta_k}
} \, \max\left( \, \bar c_k^{\, \prime}(x;v), \, 0 \, \right), \epc \forall \, ( x,v ) \, \in \, X \times \mathbb{R}^n;
\end{equation}
(b) $r_{\rm c}^{\, \prime}(x;v) \geq 0$ for all $x \in \wh{S}$ and all $v \in \mathbb{R}^n$
  {(because the first summation on the right-hand side is vacuous for $x \in \wh{S}$)},
this sign property makes it clear that the restriction of $x$ outside the set $\wh{S}$ is essential in the condition (\ref{eq:dd assumption}).

\gap

(c) The condition (\ref{eq:dd assumption}) stipulates that for every  $x \in X$ which
is  infeasible to (\ref{eq:exp SP}), it is possible to drive $x$ closer to
feasibility by reducing the constraint residual function $r_c$ starting at $x$ and
moving along a descent direction that is tangent to the base
set $X$ at $x$.  Of course, this condition is intuitively needed for a penalized
vector to reach feasibility eventually for finite $\lambda$.

\gap

(d) The lower bound $\mbox{Lip}_0$ of the penalty parameter $\lambda$ matches the
right-hand bound of $-1$ in (\ref{eq:dd assumption}).  The essential
requirement in obtaining the exactness of the penalization (i.e., a finite lower bound
of $\lambda$) is that the left-hand supremum is negative.

\subsection{Stochastic penalization for Clarke stationarity}

Extending the deterministic treatment, we consider the approximation of the expectation
constraint functions by their sample averages,
leaving the expected objective function $\bar c_0$ as is (so that we can focus
on the treatment of the constraints).  Specifically, given the family of
samples $Z^N \triangleq \{ z^s \}_{s=1}^N$ of the random variable $\tilde{z}$,
we consider
\begin{equation} \label{eq:sampled for penalization}
\begin{array}{ll}
\displaystyle{
\operatornamewithlimits{\mbox{\bf minimize}}_{x \in X}
} & \bar c_0(x) \\ [5pt]
\mbox{\bf subject to} & c_k^N(x) \, \triangleq \, \displaystyle{
\frac{1}{N}
} \, \displaystyle{
\sum_{s=1}^N
} \, c_k(x,z^s) \, \leq \, \zeta_k, \epc k \, = \, 1, \cdots, K.
\end{array}
\end{equation}
The penalization of the latter problem with the penalty parameter $\lambda>0$ is:
\begin{equation} \label{eq:sampled penalized}
\displaystyle{
\operatornamewithlimits{\mbox{\bf minimize}}_{x \in X}
} \ \bar{c}_0(x) + \lambda \, r_{\rm c}^N(x), \epc \mbox{where} \epc
r_{\rm c}^N(x) \, \triangleq \, \displaystyle{
\sum_{k=1}^K
} \, \max\left( \, c_k^N(x) - \zeta_k, \, 0 \, \right).
\end{equation}
For an arbitrary $x \in X$ and a sample family $Z^N$, we define the index sets corresponding to the expectation problem \eqref{eq:exp SP} and the SAA problem \eqref{eq:sampled for penalization}:
\[ \begin{array}{lll}
{\cal A}_>(x) \, \triangleq \, \{ \, k \in [K] \, \mid \, \bar c_k(x) \, > \, \zeta_k \, \} & \mbox{ versus } &
{\cal A}_>^N(x) \, \triangleq \, \{ \, k \in [K] \, \mid \, c_k^N(x) \, > \, \zeta_k \, \} \\ [0.1in]
{\cal A}_<(x) \, \triangleq \, \{ \, k\in [K]  \, \mid \, \bar c_k(x) \, < \, \zeta_k \, \} & \mbox{ versus } &
{\cal A}_<^N(x) \, \triangleq \, \{ \, k \in [K] \, \mid \, c_k^N(x) \, < \, \zeta_k \, \} \\ [0.1in]
{\cal A}_=(x) \, \triangleq \, \{ \, k\in [K]  \, \mid \,\bar  c_k(x) \, = \, \zeta_k \, \} & \mbox{ versus } &
{\cal A}_=^N(x) \, \triangleq \, \{ \, k\in [K]  \, \mid \, c_k^N(x) \, = \, \zeta_k \, \}.
\end{array} \]
Our goal in what follows is to show that, under appropriate assumptions, for finite
values of the penalty parameter $\lambda > 0$ (that is independent of $N$),
if $\left\{ \, \bar{x}^{N,\lambda} \, \right\}_{N=1}^{\infty}$ is a sequence of
C-stationary points of (\ref{eq:sampled penalized}), then
every accumulation point of that sequence  is a {\sl weak C-stationary point} of the
expectation constrained problem (\ref{eq:exp SP}).
The latter point is defined as a feasible vector $\bar{x}$
to (\ref{eq:exp SP}) such that
\[ \begin{array}{l}
\bar{c}_0^\circ(\bar{x};v) \, \geq \, 0,
\epc \forall \;
v \, \in \, {\cal T}_{wC}(\bar{x};\wh{X} ) \, \triangleq \,
\left\{ \, v \in \, {\cal T}(\bar{x};X) \, \mid \,
\mathbb{E}\left[ \, c_k(\bullet,\tilde{z} )^\circ(\bar{x};v) \, \right] \,
\leq \, 0, \; \forall\; k \in {\cal A}_=(\bar{x}) \, \right\}.
\end{array} \]
We term this as a ``weak'' C-stationary point because
\[
\mathbb{E}\left[ \, c_k(\bullet,\tilde{z} )^\circ(x;v) \, \right] \, \geq \,
\mathbb{E}\left[ \, c_k(\bullet,\tilde{z} )^{\prime}(x;v) \, \right]
\, = \, \left( \, \mathbb{E}\left[ \, c_k(\bullet,\tilde{z} ) \, \right]
\, \right)^{\prime}(x;v) \, = \, \bar c_k^{\, \prime}(x;v).
\]
Hence, ${\cal T}_{wC}(\bar{x};\wh{X} ) \subseteq {\cal L}(\bar{x};\wh{X})$
with the right-hand (directional derivative based) linearization cone equal to
${\cal T}(\bar{x};\wh{X})$   {under the ACQ for the set $\wh{X}$ at $\bar{x}$}.
As we have noted, a sufficient condition
for the ACQ to hold is that the directional Slater CQ holds; i.e., if
\[
\mbox{cl}\left\{ \, v \, \in \, {\cal T}(\bar{x};X) \, \mid \,
\bar{c}_k^{\, \prime}(\bar{x};v) < 0,  \,\,  \forall \
k \in {\cal A}_=(\bar{x}) \, \right\} \, = \, {\cal L}(\bar{x};\wh{X}).
\]
It therefore follows that if these CQs hold for the set $\wh{X}$ at $\bar{x}$,
and if $\bar{x}$ is a C-stationary point as defined in
Subsection~\ref{subsec:deterministic stationarity}, then $\bar{x}$ must be a
weak C-stationary
point; the converse holds if 
$c_k(\bullet, z)$ is Clarke regular for almost every $z\in \Xi$ so that the two cones
${\cal T}_{wC}(\bar{x};\wh{X} )$ and ${\cal L}(\bar{x};\wh{X} )$
are equal.  This connection with C-stationarity explains the adjective ``weak''.

\gap

In terms of the above defined index sets, we have
\[
\begin{array}{ll}
( \, r_{\rm c}^N \, )^{\, \prime}(x;v) \, &= \, \displaystyle{
	\sum_{k \, \in \, {\cal A}_>^N(x)}
} \,  ( c_k^N )^{\prime}(x;v) +  \displaystyle{
	\sum_{k \, \in \, {\cal A}_=^N(x)}
} \, \max\left( \,  ( c_k^N )^{\prime}(x;v), \, 0 \, \right) \\[0.2in]
& = \displaystyle{
\sum_{k \, \in \, {\cal A}_>^N(x)}
} \, {\displaystyle{
\frac{1}{N}
} \, \displaystyle{
\sum_{s=1}^N
} \, c_k(\bullet,z^s)^{\prime}(x;v)}  +  \displaystyle{
\sum_{k \, \in \, {\cal A}_=^N(x)}
} \, \max\left( \, \displaystyle{
\frac{1}{N}
} \, \displaystyle{
\sum_{s=1}^N
} \, c_k(\bullet,z^s)^{\prime}(x;v), \, 0 \, \right).
\end{array}
\]
Note that unlike ${\cal A}_>(x)$ and ${\cal A}_=(x)$ which are deterministic index
sets in  (\ref{eq:dd expression}) for the directional derivative of the expectation
function, ${\cal A}_>^N(x)$ and ${\cal A}_=^N(x)$ are sample-dependent index sets.
Using the Clarke directional derivative, we define, for any $x \in X$ and $v$ in
$\mathbb{R}^n$,
\begin{equation}
\label{def:weak_clarke_dd}
 \begin{array}{rll}
( \, \wh{r}_{\rm c}^{\, N} \, )^{\circ}(x;v) & \triangleq  \displaystyle{
\sum_{k \, \in \, {\cal A}_>^N(x)}
} \, \displaystyle{
\frac{1}{N}
} \, \displaystyle{
\sum_{s=1}^N
} \, c_k(\bullet,z^s)^{\circ}(x;v) +  \displaystyle{
\sum_{k \, \in \, {\cal A}_=^N(x)}
} \, \max\left( \, \displaystyle{
\frac{1}{N}
} \, \displaystyle{
\sum_{s=1}^N
} \, c_k(\bullet,z^s)^{\circ}(x;v), \, 0 \, \right) \\ [0.25in]
\wh{r}_{\rm c}^{\, \circ}(x;v) & \triangleq  \displaystyle{
\sum_{k \, \in \, {\cal A}_>(x)}
} \, \mathbb{E}\left[ \, c_k(\bullet,\tilde{z} )^{\circ}(x;v) \, \right] +  \displaystyle{
\sum_{k \, \in \, {\cal A}_=(x)}
} \, \max\left( \, \mathbb{E}\left[ \, c_k(\bullet,\tilde{z} )^{\circ}(x;v) \, \right], \, 0 \, \right).
\end{array}
\end{equation}
Notice that in general $( \, \wh{r}_{\rm c}^N \, )^{\circ}(x;v)$ is not equal to $( \, r_{\rm c}^N \, )^{\circ}(x;v)$ due to the failure of the additivity
of the Clarke directional derivative in terms of the directions; nevertheless, we have
\begin{equation} \label{eq:pointwise of Clarke}
( \, \wh{r}_{\rm c}^N \, )^{\circ}(x;v) \, \geq \, ( \, r_{\rm c}^N \, )^{\circ}(x;v) \, \geq \,
( r_{\rm c}^N )^{\prime}(x;v) , \epc \forall \, ( x,v) \, \in \ X \, \times \,
\mathbb{R}^n;
\end{equation}
similar inequalities hold for the residual of the expectation constraint functions.

\gap

Given two sets $A$ and $B$ in $\mathbb{R}^n$, we denote the (one-side) deviation of
$A$ from $B$ as
\[
\mathbb{D}(A,B) \, \triangleq \, \sup_{x\in A} \, \mbox{dist}(x,B) \, = \,
\sup_{x\in A} \,\inf_{y\in B} \, \| \, x-y \, \|.
\]
The following lemma is a direct consequence of \cite[Theorem 2]{ShapiroXu2002}.
In the lemma, we write $\partial_C c_k(x,z)$ for the Clarke subdifferential
of $c_k(\bullet,z)$ at $x$.

\begin{lemma} \label{lm:limsup of Clarke} \rm
Let $X$ be a compact set and let (A$_{\rm Lip}$) hold.   Let $\{ z^s \}_{s=1}^{\infty}$
be independent realizations of the random vector $\tilde{z}$.
For any $v\in \mathbb{R}^n$, it holds that for all $k = 1, \cdots, K$,
\[
\displaystyle{
\limsup_{N \to \infty}
} \, \displaystyle{
\sup_{x\in X}
} \, \left( \, \displaystyle{
\frac{1}{N}
} \, \displaystyle{
\sum_{s=1}^N
} \, c_k(\bullet,z^s)^{\circ}(x;v) - \mathbb{E}\left[ \,
c_k (\bullet,\tilde{z} )^{\circ}(x;v) \, \right] \, \right) \, \leq \, 0
\epc \mbox{almost surely}.
\]
\end{lemma}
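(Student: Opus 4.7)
The plan is to verify that the hypotheses of \cite[Theorem~2]{ShapiroXu2002} apply to the random function $\phi_k(x,z) \triangleq c_k(\bullet,z)^{\circ}(x;v)$ on $X \times \Xi$, with the direction $v \in \mathbb{R}^n$ and the index $k \in [K]$ held fixed throughout. The cited theorem provides exactly a one-sided uniform strong law of large numbers: if a random function is upper semicontinuous in its first argument, dominated by an integrable function, and the first argument ranges over a compact set, then its sample-average sup-deviation above the expectation vanishes almost surely.

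First I would establish upper semicontinuity of $\phi_k(\bullet,z)$ on $X$ for each $z \in \Xi$. This follows directly from the joint upper semicontinuity property of the Clarke directional derivative recorded in (\ref{eq:Clarke usc}): taking a sequence $\{ x^{\nu} \} \subset X$ converging to $\bar{x}$ and fixing $v^{\nu} = v$, the inequality $\limsup_{\nu\to\infty} c_k(\bullet,z)^{\circ}(x^{\nu};v) \leq c_k(\bullet,z)^{\circ}(\bar{x};v)$ is precisely upper semicontinuity in $x$. Second, I would exhibit an integrable dominating function. By Assumption (A$_{\rm Lip}$), $c_k(\bullet,z)$ is Lipschitz on $X$ with modulus $\mbox{Lip}_{\rm c}(z)$; since the Clarke directional derivative is bounded in magnitude by the Lipschitz modulus times the direction norm, we obtain $|\phi_k(x,z)| \leq \mbox{Lip}_{\rm c}(z) \, \| v \|$ for all $x \in X$ and $z \in \Xi$, and $\mbox{Lip}_{\rm c}(\bullet) \, \| v \|$ is integrable by hypothesis. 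Third, the compactness of $X$ is given.

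With these three ingredients in hand, \cite[Theorem~2]{ShapiroXu2002} applies and yields the asserted almost-sure one-sided uniform convergence:
\[
\limsup_{N \to \infty} \ \sup_{x \in X} \left( \frac{1}{N} \sum_{s=1}^N c_k(\bullet,z^s)^{\circ}(x;v) - \mathbb{E}\left[ \, c_k(\bullet,\tilde{z})^{\circ}(x;v) \, \right] \right) \leq 0.
\]
Finally, I would note that the bound is finite and well-defined: the domination ensures $\mathbb{E}[\,c_k(\bullet,\tilde{z})^{\circ}(x;v)\,]$ exists for every $x$, and the sample averages are almost-surely finite.

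The main obstacle I anticipate is less technical than conceptual: the Clarke directional derivative is generally only upper semicontinuous (not continuous) in $x$, which is why we can only expect a one-sided inequality rather than genuine two-sided uniform convergence. The statement is carefully phrased to reflect this asymmetry, and invoking Shapiro--Xu's one-sided ULLN (rather than a classical two-sided ULLN requiring continuity) is precisely the right tool. No measurability pathologies arise because $c_k(\bullet,z)$ is Carath\'eodory-like and $X$ is compact metrizable, so the $\sup_{x \in X}$ operation preserves measurability under the upper semicontinuity assumption via the standard separability argument already embedded in the Shapiro--Xu framework.
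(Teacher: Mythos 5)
There is a genuine gap: the tool you invoke is not the one you cite, and the general principle you attribute to it is false. \cite[Theorem~2]{ShapiroXu2002} is a uniform law of large numbers for the \emph{set-valued} map $x \mapsto \partial_C c_k(x,z)$; its conclusion is that $\sup_{x\in X}\mathbb{D}\bigl(\frac{1}{N}\sum_{s=1}^N \partial_C c_k(x,z^s),\ \bigcup_{x'\in\mathbb{B}_\delta(x)}\mathbb{E}[\partial_C c_k(x',\tilde z)]\bigr)\to 0$ almost surely, i.e.\ the target is the $\delta$-enlarged union of expected subdifferentials, not the expectation at $x$ itself. It is not a scalar one-sided ULLN for u.s.c.\ dominated random functions, and no such principle holds in that generality. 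Counterexample: take $X=\{0\}\cup\{1/n:n\geq 1\}\subseteq\mathbb{R}$, let $\tilde z=(\tilde z_1,\tilde z_2,\dots)$ have i.i.d.\ fair Bernoulli coordinates, and set $\phi(1/n,z)=z_n$, $\phi(0,z)=1$. Then $\phi(\bullet,z)$ is u.s.c.\ on the compact set $X$, bounded by $1$, and $\mathbb{E}[\phi(1/n,\tilde z)]=1/2$; yet for every fixed $N$ the averages $\frac{1}{N}\sum_{s=1}^N z^s_n$, $n=1,2,\dots$, are i.i.d.\ and each equals $1$ with probability $2^{-N}>0$, so by Borel--Cantelli almost surely $\sup_{x\in X}\bigl(\frac{1}{N}\sum_s\phi(x,z^s)-\mathbb{E}[\phi(x,\tilde z)]\bigr)\geq 1/2$ for all $N$. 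Upper semicontinuity, integrable domination and compactness --- the only three ingredients your argument uses --- therefore cannot deliver the conclusion.

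The lemma is nevertheless true because $c_k(\bullet,z)^{\circ}(x;v)$ is not an arbitrary u.s.c.\ function of $x$: it is the support function of the compact convex set $\partial_C c_k(x,z)$ evaluated at $v$, and the paper's proof exploits exactly this. From the set-valued deviation bound one chooses, for each $x$, selections $a^s\in\partial_C c_k(x,z^s)$ with $c_k(\bullet,z^s)^{\circ}(x;v)=(a^s)^{\top}v$ and a nearby $\bar a\in\bigcup_{x'\in\mathbb{B}_\delta(x)}\mathbb{E}[\partial_C c_k(x',\tilde z)]$, which gives $\frac{1}{N}\sum_s c_k(\bullet,z^s)^{\circ}(x;v)\leq \sup_{x'\in\mathbb{B}_\delta(x)}\mathbb{E}[c_k(\bullet,\tilde z)^{\circ}(x';v)]+\varepsilon$ uniformly in $x\in X$ for $N$ large; the $\delta$-enlargement is then removed by letting $\varepsilon,\delta\downarrow 0$ and invoking the upper semicontinuity (\ref{eq:Clarke usc}) of the Clarke directional derivative. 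Your two correct observations --- the u.s.c.\ of $x\mapsto c_k(\bullet,z)^{\circ}(x;v)$ and the integrable bound $\mbox{Lip}_{\rm c}(z)\,\|v\|$ --- do appear in that argument, but only as supporting ingredients; the load-bearing step is the passage through the set-valued ULLN and the support-function identity, which your proposal omits entirely.
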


\begin{proof}  It follows from \cite[Theorem 2]{ShapiroXu2002} that for any $\delta > 0$,
\[
\sup_{x\in X} \, \mathbb{D}\left( \, \displaystyle{
\frac{1}{N}
} \, \displaystyle{
\sum_{s=1}^N
} \,  \partial_C \, c_k(x,z^s), \, \displaystyle{
\bigcup_{x^{\prime} \in \mathbb{B}_{\delta}(x)}
} \, \mathbb{E}\left[\,\partial_C \,c_k(x^\prime,\tilde{z} ) \, \right] \, \right) \to 0
\epc \mbox{as $N\to \infty$} \epc \mbox{almost surely}.
\]
Consider any $v\in \mathbb{R}^n$ and any $\delta > 0$.  Since for any $x\in X$ and any
$z \in \Xi$,
\[
\partial_C \,c_k(x,z) \, = \, \left\{ \, a \, \in \, \mathbb{R}^n \, \mid \,
c_k(\bullet,z)^\circ (x;v) \, \geq \, a^\top v,
\;\ \forall\; v \, \in \, \mathbb{R}^n \, \right\},
\]
we derive that for any $x\in X$, $v\in \mathbb{R}^n$, and $\varepsilon>0$, there exist
a positive integer $\overline{N}$ independent of $x$,
vectors $\{ a^s\in \partial_C \,c_k(x,z^s)\}_{s=1}^N$
and $\bar{a}\in \displaystyle{
\bigcup_{x^\prime\in \mathbb{B}_\delta(x)}
} \, \mathbb{E}\left[\,\partial_C \, c_k(x^\prime,\tilde{z} )\,\right]$ such that for
all $N \geq \overline{N}$,
\[
\begin{array}{rl}
\displaystyle{
\frac{1}{N}
} \, \displaystyle{
\sum_{s=1}^N
} \, c_k(\bullet,z^s)^\circ (x;v) \, = & \displaystyle{
\frac{1}{N}
} \, \displaystyle{
\sum_{s=1}^N
} \, ( a^s )^\top v \, \leq \, \bar{a}^\top v + \varepsilon \, \leq \, \displaystyle{
\limsup_{x^{\prime} \in \mathbb{B}_{\delta}(x)}
} \, \mathbb{E}\left[\,c_k(\bullet,\tilde z)^\circ(x^\prime;v)\,\right] + \varepsilon.
\end{array}
\]
We can thus derive the stated result by taking $N\to \infty$, $\varepsilon \downarrow 0$,
and using the upper semicontinuity of the Clarke directional derivative.
\end{proof}

The above lemma yields the following sequential generalization of the pointwise
inequalities (\ref{eq:pointwise of Clarke}).

\begin{lemma} \label{lm:usc of residual} \rm
Let $X$ be a compact set and let (A$_{\rm Lip}$) hold.  Then for any
$v\in \mathbb{R}^n$ and
every sequence $\{x^N \}\subseteq X$ converging to $\bar{x} \in X$, it holds that
\[
\displaystyle{
\limsup_{N \to \infty}
} \, ( \, \wh{r}_{\rm c}^{\, N} \, )^{\circ}(x^N;v) \, \leq \, \wh{r}_c^{\, \circ}(\bar{x};v)
\epc \mbox{almost surely}.
\]
\end{lemma}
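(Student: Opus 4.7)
The plan is to split the indices $k = 1, \dots, K$ according to the sign of $\bar c_k(\bar x) - \zeta_k$, identify eventual containments between the empirical and deterministic active sets, and then invoke Lemma \ref{lm:limsup of Clarke} combined with a reverse-Fatou argument to pass the limsup through. Under (A$_{\rm Lip}$) and compactness of $X$, the classical uniform strong law of large numbers for Lipschitzian integrands gives $\sup_{x \in X} |c_k^N(x) - \bar c_k(x)| \to 0$ almost surely, and $\bar c_k$ is itself Lipschitz (hence continuous). Combined with $x^N \to \bar x$, this yields $c_k^N(x^N) \to \bar c_k(\bar x)$ a.s. Consequently, almost surely and for all $N$ sufficiently large, every $k \in {\cal A}_<(\bar x)$ lies in ${\cal A}_<^N(x^N)$ and contributes nothing to $(\wh r_{\rm c}^{\, N})^{\circ}(x^N;v)$; every $k \in {\cal A}_>(\bar x)$ lies in ${\cal A}_>^N(x^N)$; and indices $k \in {\cal A}_=(\bar x)$ may migrate among the three classes.

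Using the elementary bound $a \le \max(a,0)$ together with the above containments, I would majorize, for all large $N$,
\[
(\wh r_{\rm c}^{\, N})^{\circ}(x^N;v) \ \le\ \sum_{k \in {\cal A}_>(\bar x)} \frac{1}{N} \sum_{s=1}^N c_k(\bullet, z^s)^{\circ}(x^N;v) \ +\ \sum_{k \in {\cal A}_=(\bar x)} \max\!\left( \frac{1}{N} \sum_{s=1}^N c_k(\bullet, z^s)^{\circ}(x^N;v),\ 0 \right).
\]
The ${\cal A}_=(\bar x)$-summand subsumes the three subcases: if $k \in {\cal A}_<^N(x^N)$ the term contributes $0 \le \max(\cdot,0)$; if $k \in {\cal A}_=^N(x^N)$ the term equals the $\max$ expression; and if $k \in {\cal A}_>^N(x^N)$ the inequality $a \le \max(a,0)$ applies to the empirical Clarke average. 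Indices in ${\cal A}_<(\bar x)$ disappear from both sides.

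The final step passes to the limsup termwise. For each fixed $k$ and $v$, Lemma \ref{lm:limsup of Clarke} applied to the sequence $\{x^N\} \subseteq X$ gives $\limsup_N \tfrac{1}{N} \sum_s c_k(\bullet, z^s)^{\circ}(x^N;v) \le \limsup_N \mathbb{E}[c_k(\bullet, \tilde z)^{\circ}(x^N;v)]$ almost surely. Because (A$_{\rm Lip}$) forces the uniform bound $|c_k(\bullet, z)^{\circ}(x;v)| \le \mbox{Lip}_{\rm c}(z) \, \|v\|$ with $\mbox{Lip}_{\rm c}$ integrable, reverse Fatou applies and, combined with the joint upper semicontinuity (\ref{eq:Clarke usc}) of the Clarke directional derivative along $(x^N,v) \to (\bar x, v)$, yields $\limsup_N \mathbb{E}[c_k(\bullet, \tilde z)^{\circ}(x^N;v)] \le \mathbb{E}[c_k(\bullet, \tilde z)^{\circ}(\bar x;v)]$. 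Since $\max(\cdot,0)$ is continuous and isotone, $\limsup_N \max(a_N,0) \le \max(\limsup_N a_N,0)$; combining these inequalities with the majorant and summing over $k$ produces exactly $\wh r_{\rm c}^{\, \circ}(\bar x;v)$ as an upper bound for $\limsup_N (\wh r_{\rm c}^{\, N})^{\circ}(x^N;v)$.

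The main obstacle is the commutation of $\limsup$ with the expectation for the Clarke directional derivative evaluated along the sequence $x^N$; this is precisely where the uniform Lipschitz envelope supplied by (A$_{\rm Lip}$) is crucial, since it provides the integrable dominating majorant $\mbox{Lip}_{\rm c}(\tilde z)\,\|v\|$ required for reverse Fatou. The careful bookkeeping of the ${\cal A}_=(\bar x)$ indices through the $\max(\cdot,0)$ envelope is routine but indispensable, since the empirical active set can oscillate between strict and loose constraints along the sequence.
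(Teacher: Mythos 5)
Your proof is correct and follows essentially the same route as the paper's: the eventual active-set containments from the uniform law of large numbers, the same majorant over ${\cal A}_>(\bar{x})$ and ${\cal A}_=(\bar{x})$, Lemma~\ref{lm:limsup of Clarke} to replace empirical Clarke averages by expectations, and the upper semicontinuity (\ref{eq:Clarke usc}) to pass to $\bar{x}$. If anything, you are more explicit than the paper about the reverse-Fatou step (with the integrable majorant $\mbox{Lip}_{\rm c}(\tilde z)\,\|v\|$) needed to commute the limsup with the expectation, a point the paper's proof leaves implicit.
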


\begin{proof}
It follows from the uniform law of large numbers
(cf.\ \cite[Theorem 7.48]{ShapiroDentchevaRuszczynski09}) that
\begin{equation} \label{eq:approx conditions}
\begin{array}{l}
\displaystyle{
\lim_{N \to \infty}
} \, \displaystyle{
\sup_{x \in X}
} \, \left| \, c_k^N(x) - \bar c_k(x) \, \right| \, = \, 0 \epc \mbox{almost surely},
\epc \forall \, k \, = \, 1, \cdots, K.
\end{array} \end{equation}
Since we have
\[
c_k^N(x^N) - \bar c_k(\bar{x}) \, = \, \left[ \, c_k^N(x^N) - \bar c_k(x^N) \, \right]
+ \left[ \, \bar c_k(x^N) - \bar c_k(\bar{x}) \, \right],
\]
we may obtain, \textcolor{black}{by the continuity of $c_k(\bullet,z^s)$,} that for all
$N$ sufficiently large,
\[
{\cal A}_>(\bar{x}) \, \subseteq \,
{\cal A}_>^N(x^N) \epc \mbox{and} \epc
{\cal A}_>^N(x^N) \, \cup \, {\cal A}_=^N(x^N)
\, \subseteq \, {\cal A}_>(\bar{x}) \, \cup \, {\cal A}_=(\bar{x})
\epc \mbox{almost surely}.
\]
  {The first inclusion rules out that an index $k \in {\cal A}_=^N(x^N)$ belongs to ${\cal A}_>(\bar{x})$.}
Hence, for any $\varepsilon>0$, there exists a  sufficiently large $N$ such that
the following string of inequalities hold almost surely:
{\small
\[
\begin{array}{rl}
( \, \wh{r}_{\rm c}^{\, N} \, )^{\circ}(x^N;v) 
\,  = & \displaystyle{
\sum_{k \, \in \, {\cal A}_{\textcolor{black}{>}}^N(x^N)}
} \displaystyle{
\frac{1}{N}
} \, \displaystyle{
\sum_{s=1}^N
} \,  c_k(\bullet,z^s)^{\circ}(x^N;v) +  \displaystyle{
\sum_{k \, \in \, {\cal A}_=^N(x^N)}
} \, \max\left( \, \displaystyle{
\frac{1}{N}
} \, \displaystyle{
\sum_{s=1}^N
} \, c_k(\bullet,z^s)^{\circ}(x^N;v), \, 0 \, \right) \\ [0.3in]
\leq & \displaystyle{
\sum_{k \, \in \, {\cal A}_{\textcolor{black}{>}}(\bar{x})}
} \displaystyle{
\frac{1}{N}
} \, \displaystyle{
\sum_{s=1}^N
} \, c_k(\bullet,z^s)^{\circ}(x^N;v) +  \displaystyle{
\sum_{k \, \in \, {\cal A}_=(\bar{x})}
} \, \max\left( \, \displaystyle{
\frac{1}{N}
} \, \displaystyle{
\sum_{s=1}^N
} \, c_k(\bullet,z^s)^{\circ}(x^N;v), \, 0 \, \right) \\ [0.3in]
\leq & \displaystyle{
\sum_{k \, \in \, {\cal A}_{\textcolor{black}{>}}(\bar{x})}
} \, \mathbb{E}\,\left[\,c_k(\bullet, \tilde z)^{\circ}(x^N;v)\,\right] +
\displaystyle{
\sum_{k \, \in \, {\cal A}_=(\bar{x})}
} \, \max\left( \, \mathbb{E}\,\left[\,c_k(\bullet,\tilde z)^{\circ}(x^N;v)\,\right],
\, 0 \, \right) \ + \\ [0.3in]
&\epc \displaystyle{
\sum_{k=1}^K
} \, \max\left( \, \displaystyle{
\frac{1}{N}
} \, \displaystyle{
\sum_{s=1}^N
} \, c_k(\bullet,z^s)^{\circ}(x^N;v) - \mathbb{E}\left[ \,
c_k(\bullet,\tilde z)^{\, \circ}(x^N;v)\,\right], \, 0 \, \right) \\ [0.3in]
\leq & \displaystyle{
\sum_{k \, \in \, {\cal A}_{\textcolor{black}{>}}(\bar{x})}
} \, \mathbb{E}\,\left[\, c_k(\bullet,\tilde z)^{\circ}(x^N;v)\,\right] + \displaystyle{
\sum_{k \, \in \, {\cal A}_=(\bar{x})}
} \, \max\left( \, \mathbb{E}\,\left[\,c_k(\bullet,\tilde z)^{\circ}(x^N;v)\,\right],
\, 0 \, \right) + \varepsilon,
\end{array}
\]
}

\noindent where the last inequality is due to Lemma \ref{lm:limsup of Clarke}.
By the upper semicontinuity (\ref{eq:Clarke usc}) of the Clarke directional derivative,
the desired conclusion follows.
\end{proof}

For each pair $(Z^N,\lambda)$, let $\bar{x}^{N,\lambda}$ be a C-stationary point of
(\ref{eq:sampled penalized}).
The result below shows that a finite $\bar{\lambda} > 0$ exists such that for all
$\lambda > \bar{\lambda}$, every
accumulation point of the sequence $\{ \bar{x}^{N,\lambda} \}$ is feasible for
(\ref{eq:exp SP}) and is a weak C-stationary point of this expectation-constrained
problem.  The proof of this result is
based on the above two technical lemmas and by strengthening the sufficient condition
(\ref{eq:dd assumption}).  Note that the result
does not address how the iterate $\bar{x}^{N,\lambda}$ is obtained.  Thus, the result
is in the spirit of the convergence analysis
of an SAA scheme, albeit it pertains to a stationary point as opposed to a minimizer.

\begin{proposition} \label{pr:approx stationary penalization} \rm
Let $X$ be a polyhedron.  Assume that (A$_{\rm Lip}$) holds and
\begin{equation} \label{eq:Clarke dd assumption}
\displaystyle{
\operatornamewithlimits{\mbox{\bf supremum}}_{x \, \in \, X \, \setminus \, \wh{S}}
} \ \left[ \, \displaystyle{
\operatornamewithlimits{\mbox{\bf minimum}}_{v \in {\cal T}(x;X); \, \| v \| = 1}
} \ \wh{r}_{\rm c}^{\, \circ}(x;v) \, \right] \, \leq \, -1,
\end{equation}
where $\wh r _c^{\,\circ} (x; v)$ is defined in \eqref{def:weak_clarke_dd}.
Then, for 
every $\lambda > \mbox{Lip}_0$,
the following three statements hold for any accumulation point $\bar{x}^{\lambda}$
of the sequence $\{ \bar{x}^{N,\lambda} \}_{N=1}^{\infty}$:

\gap

(a) $\bar{x}^{\lambda} \in \wh{S}$ almost surely; thus $\bar{x}^{\lambda}$ is feasible
to (\ref{eq:exp SP}) almost surely;

\gap

(b) $\bar{x}^{\lambda}$ is a weak C-stationary point of (\ref{eq:exp SP}) almost surely;

\gap

(c) if for almost every $z \in \Xi$,   {each function in the family
$\{ c_k(\bullet,z) \}_{k=0}^K$} is Clarke regular at $\bar{x}^\lambda$, then
$\bar{x}^\lambda$ is a B-stationary point of
(\ref{eq:exp SP}) almost surely.
\end{proposition}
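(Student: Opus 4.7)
The plan is a three-step limiting argument that closes the gap between the C-stationarity of $\bar{x}^{N,\lambda}$ on the sampled penalized problem~(\ref{eq:sampled penalized}) and the weak C-stationarity of its accumulation point on the expectation problem~(\ref{eq:exp SP}). First I would rewrite the Clarke stationarity $0\in \partial_C(\bar c_0 + \lambda\,r_{\rm c}^N)(\bar{x}^{N,\lambda}) + {\cal N}_X(\bar{x}^{N,\lambda})$ using subadditivity of the Clarke directional derivative together with the Clarke subdifferential calculus for sample averages and for the pointwise maxima $\max(c_k^N-\zeta_k,0)$, obtaining the exploitable inequality
\[
\bar c_0^{\,\circ}(\bar{x}^{N,\lambda};v)\,+\,\lambda\,(\wh{r}_{\rm c}^{\,N})^{\circ}(\bar{x}^{N,\lambda};v)\;\geq\;0,\epc \forall\,v\in {\cal T}(\bar{x}^{N,\lambda};X),
\]
where the upper bound $(r_{\rm c}^N)^{\,\circ}\leq (\wh{r}_{\rm c}^{\,N})^{\,\circ}$ is read off from the definition~(\ref{def:weak_clarke_dd}) once the active index sets ${\cal A}_>^N(\bar{x}^{N,\lambda})$ and ${\cal A}_=^N(\bar{x}^{N,\lambda})$ are unpacked. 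Since $X$ is polyhedral, for every fixed $v\in {\cal T}(\bar{x}^{\lambda};X)$ one has $v\in {\cal T}(\bar{x}^{N,\lambda};X)$ for all sufficiently large $N$ along the convergent subsequence, so this inequality is a valid test whose limit I can control via Lemma~\ref{lm:usc of residual} on the residual term and the joint upper semicontinuity~(\ref{eq:Clarke usc}) on the objective term.

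For part~(a), I would assume toward contradiction that $\bar{x}^{\lambda}\notin \wh S$; then $\bar{x}^{N,\lambda}\in X\setminus\wh S$ eventually, so assumption~(\ref{eq:Clarke dd assumption}) supplies a unit tangent $v$ at $\bar{x}^{\lambda}$ with $\wh r_{\rm c}^{\,\circ}(\bar{x}^{\lambda};v)\leq -1$. Taking $\limsup$ in the above inequality along this $v$, using $|\bar c_0^{\,\circ}(x;v)|\leq \mbox{Lip}_0\|v\|$ on the objective and Lemma~\ref{lm:usc of residual} on the residual, yields $0\leq \mbox{Lip}_0-\lambda$, contradicting $\lambda>\mbox{Lip}_0$. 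Hence $\bar{x}^{\lambda}\in \wh S$ almost surely, which settles feasibility.

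For part~(b), I would fix any $v\in {\cal T}_{wC}(\bar{x}^{\lambda};\wh X)$. Feasibility from~(a) forces ${\cal A}_>(\bar{x}^{\lambda})=\emptyset$, so expression~(\ref{def:weak_clarke_dd}) collapses to $\wh r_{\rm c}^{\,\circ}(\bar{x}^{\lambda};v)=\sum_{k\in{\cal A}_=(\bar{x}^{\lambda})}\max(\mathbb{E}[c_k(\bullet,\tilde z)^{\,\circ}(\bar{x}^{\lambda};v)],0)$, and each summand vanishes by the defining inequality of the cone ${\cal T}_{wC}$. Repeating the $\limsup$ passage along this $v$ therefore yields $\bar c_0^{\,\circ}(\bar{x}^{\lambda};v)\geq 0$, which is precisely weak C-stationarity. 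Part~(c) is then an immediate consequence: Clarke regularity of each $c_k(\bullet,z)$ together with (I$_{\rm dd}$) turns every $\mathbb{E}[c_k(\bullet,\tilde z)^{\,\circ}(\bar{x}^{\lambda};v)]$ into $\bar c_k^{\,\prime}(\bar{x}^{\lambda};v)$, so ${\cal T}_{wC}(\bar{x}^{\lambda};\wh X)={\cal L}(\bar{x}^{\lambda};\wh X)\supseteq {\cal T}(\bar{x}^{\lambda};\wh X)$; Clarke regularity of $c_0(\bullet,z)$ together with the corresponding interchange upgrades $\bar c_0^{\,\circ}(\bar{x}^{\lambda};v)\geq 0$ to $\bar c_0^{\,\prime}(\bar{x}^{\lambda};v)\geq 0$, delivering B-stationarity on $\wh X$.

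The main obstacle I anticipate is the simultaneous orchestration of the two limiting mechanisms acting inside a single inequality along a common direction $v$: one must (i) use polyhedrality of $X$ to keep $v$ tangent at $\bar{x}^{N,\lambda}$, (ii) invoke Lemma~\ref{lm:usc of residual} to absorb the sample-average error on $(\wh{r}_{\rm c}^{\,N})^{\,\circ}(\bar{x}^{N,\lambda};v)$ and pass to its expectation-based majorant $\wh r_{\rm c}^{\,\circ}(\bar{x}^{\lambda};v)$, and (iii) tame the non-additivity of the Clarke derivative on the sum-of-maxima structure of $r_{\rm c}^N$ through the outer approximation $(r_{\rm c}^N)^{\,\circ}\leq (\wh r_{\rm c}^{\,N})^{\,\circ}$. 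Since a single probability-one event suffices for each fixed test direction and only countably many such $v$'s (those chosen in parts~(a)--(c)) are needed, the almost-sure qualifier supplied by Lemma~\ref{lm:usc of residual} is preserved throughout the argument.
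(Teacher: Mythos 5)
Your proposal is correct and follows essentially the same route as the paper's proof: starting from the Clarke stationarity inequality for the penalized SAA problem, majorizing $(r_{\rm c}^N)^{\circ}$ by $(\wh{r}_{\rm c}^{\,N})^{\circ}$ via (\ref{eq:pointwise of Clarke}), using polyhedrality of $X$ to keep the test direction tangent along the subsequence, invoking Lemma~\ref{lm:usc of residual} for the limsup passage, and then arguing feasibility by contradiction from (\ref{eq:Clarke dd assumption}) with the bound $|\bar c_0^{\,\circ}(\cdot;v)|\leq \mbox{Lip}_0$, weak C-stationarity from the collapse of the residual at feasible points, and B-stationarity from Clarke regularity and the interchange formula. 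The only cosmetic difference is that the paper carries the error terms explicitly through the chain of inequalities in part (b) rather than passing directly to the limiting majorant, but the mechanism is identical.
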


\begin{proof}  The C-stationarity condition at $\bar{x}^{N,\lambda}$ of
\textcolor{black}{(\ref{eq:sampled penalized})} implies that
\begin{equation} \label{eq:N stationarity}
( \, \bar c_0 \, )^{\circ}(\bar{x}^{N,\lambda};v) +
\lambda \, ( r^N_{\rm c} )^{\circ}(\bar{x}^{N,\lambda};v) \, \geq \, 0, \epc
\forall \, v \, \in \, {\cal T}(\bar{x}^{N,\lambda};X).
\end{equation}
For simplicity, we assume that $\bar{x}^{\lambda}$ is the limit of the sequence
$\{ \bar{x}^{N,\lambda} \}$.  We claim
that $\bar{x}^{\lambda} \in \wh{S}$ almost surely.  Assume by contradiction that there
exists positive probability  such that
$\bar{x}^{\lambda} \not\in \wh{S}$. 
Then restricted to the event where $\bar{x}^{\lambda} \not\in \wh{S}$,  there exists
$\bar{v} \in {\cal T}(\bar{x}^{\lambda};X)$ with $\| \bar{v} \| = 1$ such that
\[
\wh{r}_{\rm c}^{\, \circ}(\bar{x}^{\lambda};\bar{v}) \, \leq \, -1.
\]
Since $X$ is a polyhedron, it follows that with $N$ sufficiently large,
$\bar{v}$ belongs to ${\cal T}(x^{N,\lambda};X)$.
Let $\varepsilon \in \left( \, 0, \, 1 - \displaystyle{
\frac{\mbox{Lip}_0}{\lambda}
} \, \right)$.
Then from Lemma \ref{lm:usc of residual}, there exists $N$ such that
$( r_c^N )^\circ(\bar{x}^{N,\lambda};\bar{v}) \leq \wh{r}_c^{\, \circ}(\bar{x}^{\lambda};
\bar{v}) + \varepsilon$ almost surely.
By substituting $v = \bar{v}$ into (\ref{eq:N stationarity})
\textcolor{black}{and noting
$| \, ( \, \bar c_0 \, )^{\circ}(\bar{x}^{N,\lambda};\bar{v}) \, | \, \leq \,
\mbox{Lip}_0 \, \| \, \bar{v} \, \| \, = \, \mbox{Lip}_0$,} we deduce
\[
0  \leq  \mbox{Lip}_0 + \lambda \, \left( \, \wh{r}_{\rm c}^{\, \circ}(\bar{x}^{\lambda};
\bar{v}) + \varepsilon  \, \right)
\, \leq \, \mbox{Lip}_0 + \lambda \, \left( \, -1 + \varepsilon \, \right) 
\, < \, 0,
\]
which is a contradiction.  Therefore, $\bar{x}^{\lambda} \in \wh{S}$ almost surely.
To show the claimed weak C-stationarity of $\bar{x}^{\lambda}$ for
the problem (\ref{eq:exp SP}), let
$v \in {\cal T}_{wC}(\bar{x}^{\lambda};X \, \cap \, \wh{S})$ be arbitrary with
unit length.  For such a tangent vector $v$, we
have $\mathbb{E}\,\left[\,c_k(\bullet,\tilde z)^{\, \circ}(\bar{x}^{\lambda};v)\,\right]
\leq 0$ for all $k \in {\cal A}_=(\bar{x}^{\lambda})$.  Moreover, since
$\bar{x}^{\lambda} \in \wh{S}$, thus   {${\cal A}_>(\bar{x}^{\lambda}) = \emptyset$},
we have ${\cal A}_>^N(\bar{x}^{N,\lambda}) \, \cup \,
{\cal A}_=^N(\bar{x}^{N,\lambda}) \, \subseteq {\cal A}_=(\bar{x}^{\lambda})$
for all $N$ sufficiently large almost surely.  Hence, for any $\varepsilon^{\,\prime}>0$
and sufficiently large $N$, the following inequalities hold almost surely:
\[
\begin{array}{rl}
0 \, \leq & \Bigg( \, \bar{c}_0(\bullet)  + \lambda \, \displaystyle{
\sum_{k \in {\cal A}_>^N(\bar{x}^{N,\lambda})}
} \, c_k^N(\bullet\,;v) + \lambda \,
\displaystyle{
\sum_{k \in  {\cal A}_=^N(\bar{x}^{N,\lambda})}
} \, \max\left( \, c_k^N(\bullet\,;v),0 \, \right)\, \Bigg)^\circ(\bar{x}^{N,\lambda};v)
\\[0.3in]
\leq & ( \, \bar{c}_0 \, )^{\circ}(\bar{x}^{N,\lambda};v) + \lambda \, \displaystyle{
\sum_{k \, \in \, {\cal A}_=(\bar{x}^{\lambda})}
} \, \max\left( \, ( \, c_k^N \, )^{\circ}(\bar{x}^{N,\lambda};v), \, 0 \, \right)
\\ [0.25in]
\leq & ( \, \bar{c}_0 \, )^{\circ}(\bar{x}^{N,\lambda};v) +  \lambda \, \displaystyle{
\sum_{k \, \in \, {\cal A}_=(\bar{x}^{\lambda})}
} \, \max\Bigg( \, \displaystyle{
\frac{1}{N}
} \, \displaystyle{
\sum_{s=1}^N
} \, c_k(\bullet,z^s)^{\circ}(\bar{x}^{N,\lambda};v) - \mathbb{E}\left[ \,
c_k(\bullet,\tilde z)^{\circ}(\bar{x}^{N,\lambda};v) \, \right], \, 0 \, \Bigg)
\\ [0.3in]
&  +\,\lambda \, \displaystyle{
\sum_{k \, \in \, {\cal A}_=(\bar{x}^{\lambda})}
} \, \max\left( \, \mathbb{E}\left[ \, c_k(\bullet,\tilde z)^{\circ}(
\bar{x}^{N,\lambda};v)\,\right] -
\mathbb{E}\left[\,c_k(\bullet,\tilde z)^{\circ}(\bar{x}^{\lambda};v)\,\right], \, 0
\, \right) \\ [0.25in]
\leq & ( \, \bar{c}_0 \, )^{\circ}(\bar{x}^{N,\lambda};v) + \lambda \, \varepsilon^{\, \prime} +
\lambda \, \displaystyle{
\sum_{k \, \in \, {\cal A}_=(\bar{x}^{\lambda})}
} \, \max\left( \, \mathbb{E}\,\left[\,c_k(\bullet,\tilde z)^{\circ}(
\bar{x}^{N,\lambda};v) - c_k(\bullet,\tilde z)^{\circ}(\bar{x}^{\lambda};v)\,\right],
\, 0 \, \right).
\end{array} \]
Letting $N \to \infty$ and using the upper semicontinuity of the Clarke directional
derivative at $\bar{x}^{\lambda}$, we deduce that almost surely,
\[
( \, \bar{c}_0 \, )^{\circ}(\bar{x}^{\lambda};v) \,\geq \, 0,  \epc \forall \, v \, \in \,
{\cal T}_{wC}(\bar{x}^\lambda;X \, \cap \, \wh{S}),
\]
which is the almost sure weak C-stationarity of $\bar{x}^{\lambda}$ for the problem
(\ref{eq:exp SP}). 

\gap

To prove part (c),   {as we have already noted that,
under the Clarke regularity of $\{ c_k(\bullet,z) \}_{k=1}^K$ at
$\bar{x}^{\lambda}$ for almost all $z \in \Xi$, we have
${\cal T}_{wC}(\bar{x}^\lambda;X \, \cap \, \wh{S}) = {\cal L}(\bar{x}^\lambda;X \, \cap \, \wh{S})$.
Hence, by (b), it follows that $( \, \bar{c}_0 \, )^{\circ}(\bar{x}^{\lambda};v) \geq 0$ for all
$v \in {\cal L}(\bar{x}^\lambda;X \, \cap \, \wh{S}) \supseteq {\cal T}(\bar{x}^\lambda;X \, \cap \, \wh{S})$.}
Since
\[ \begin{array}{lll}
\left( \,\mathbb{E} \,\left[\, c_0(\bullet,\tilde{z} )\,\right] \,
\right)^{\prime}(\bar{x}^\lambda;v) & = &
\mathbb{E} \,\left[\,c_0(\bullet,\tilde{z} )^{\prime}(\bar{x}^\lambda;v)\,\right]
\\ [0.1in]
& = & \mathbb{E} \,\left[\,c_0(\bullet,\tilde{z} )^\circ(\bar{x}^\lambda;v)\,\right] \epc \mbox{by Clarke regularity of $c_0(\bullet,z)$
at $\bar{x}^{\lambda}$} \\ [0.1in]
& = & \left( \,\mathbb{E} \,\left[\, c_0(\bullet,\tilde{z} )\,\right] \,
\right)^\circ(\bar{x}^\lambda;v) \epc \mbox{by \cite[Theorem~7.68]{ShapiroDentchevaRuszczynski09}},
\end{array}
\]
the claimed B-stationary of $\bar{x}^{\lambda}$ for (\ref{eq:exp SP}) almost surely follows readily.
\end{proof}

\section{Sequential Sampling with Majorization}
\label{sec:MM}

In this section, we are interested in the combination of sequential sampling,
penalization (with variable penalty parameter) and upper surrogation
to solve the CCP in  \eqref{eq:focus sp_dc_constraint} via the restricted
(\ref{eq:restricted sp_dc_constraint}) and relaxed (\ref{eq:relaxed sp_dc_constraint})
problems.    {We propose an algorithm based on the unified
formulation (\ref{eq:new SP}) of the latter problems; we also recall the blanket assumptions
for (\ref{eq:new SP}).}
Closely related to majorization minimization that is the basis of
popular ``linearization'' algorithms for solving dc programs,
\cite{LeThiNgaiPham09,LeThiPham05,PhamDinhLeThi97}, the basic idea of surrogation
for solving a nonconvex nondifferentiable optimization problem is to derive upper
bounding functions of the functions involved, followed by the solution of
a sequence of subproblems by convex programming methods.  When this solution strategy
is applied to the problem (\ref{eq:new SP}), there are two most important
points to keep in mind:

\gap

\textcolor{black}{(a) Although in the context of the
relaxed $c_{k\ell}^{\rm rlx}(\bullet,z;\gamma)$ and restricted
$c_{k\ell}^{\rm rst}(\bullet,z;\gamma)$ functions, their unifications
$c_{k\ell}(\bullet,z;\gamma)$ are dc in theory, their practical dc decompositions are
not easily available for the purpose of computations (unless the indicator function is
relaxed/restricted by piecewise affine functions; see Lemma~\ref{lm:PA of PA}.)}

\gap

\textcolor{black}{(b) The resulting expectation
functions $\bar{c}_k(\bullet;\gamma)$
appear in the constraints; the standard dc approach as described in the cited references
would lump all such constraints into the objective via infinity-valued
indicator functions.  Even if the explicit dc representations of the constraint
functions are available, the resulting dc algorithm is at best a conceptual procedure
not readily implementable in practice.}

\gap

To address the former point---lack of explicit dc
representation, the extended idea of “surrogation” is used of which the dc-like
linearization is a special case. A comprehensive treatment of the ``surrogation
approach'' for solving nonconvex nondifferentiable optimization problems is detailed
in \cite[Chapter 7]{CuiPang2020}. To address the second
point---proper treatment of the chance constraints, we employ exact penalization
(i.e., finite value of the penalty parameter) with the aim of recovering solutions of
the original CCP (\ref{eq:new SP}); furthermore,
due to the nonconvexity of the functions involved, recovery is with reference
to stationary solutions instead of minimizers, as exemplified by the results in
Section~\ref{sec:exact penalization}.
When these considerations are combined with the need of sampling to handle the
expectation operator, the end result is the
{\bf S}ampling + {\bf P}enalization + {\bf S}urrogation {\bf A}lgorithm (SPSA) to be
introduced momentarily.
  {We remark that while the cited monograph and the reference \cite{PangRazaAlvarado16}
have discussed a solution approach for a deterministic dc program based on the linearization of the
constraints without penalization, in the context where sampling is needed, this direct treatment of constraints runs the risk of
infeasible sampled subproblems that is avoided by the penalization approach.}

\gap

For any given pair $( \bar{x},z ) \in X  \times \Xi$ and $k \in [ K ]$, we
let $\wh{c}_k(\bullet,z;\gamma;\bar{x})$ be a majorization of the function
$c_k(\bullet,z;\gamma)$ at $\bar{x}$ satisfying

\gap

(a) [B-differentiability] $\wh{c}_k(\bullet,z;\gamma;\bar{x})$
is B-differentiable on $X$;

\gap

(b) [upper surrogation] $\wh{c}_k(x,z;\gamma;\bar{x})\geq c_k(x,z;\gamma)$ for all
$x \in X$;

\gap
(c) [touching condition] $\wh{c}_k(\bar{x},z;\gamma;\bar{x})= c_k(\bar{x},z;\gamma)$;

\gap
(d) [upper semicontinuity] $\wh{c}_k(\bullet,z;\gamma;\bullet)$  is upper semicontinuous
on $X \times X$; and

\gap
(e) [directional derivative consistency]
$\wh{c}_k^{\,\prime}(\bullet,z;\gamma;\bar{x})(\bar{x};d) =
c_k^{\,\prime}(\bullet,z;\gamma)(\bar{x};d)$ for any $d\in \mathbb{R}^n$.

\gap

\textcolor{black}{Starting with the respective summands
$c_{k\ell}^{\rm rlx}(x,z;\gamma)$ and
$c_{k\ell}^{\rm rst}(x,z;\gamma)$ given in \eqref{defn: c_gamma}, there are several
ways to construct majorization functions for the relaxed
$c_k^{\rm rlx}(x,z;\gamma) \triangleq \displaystyle{
\sum_{\ell=1}^L
} \, c_{k\ell}^{\rm rlx}(x,z;\gamma)$ and restricted
$c_k^{\rm rst}(x,z;\gamma) \triangleq \displaystyle{
\sum_{\ell=1}^L
} \, c_{k\ell}^{\rm rst}(x,z;\gamma)$ functions that satisfy the conditions.}
Details can be found in Appendix~1; see also Subsection~\ref{subsec:gamma to zero}.
In what follows, we assume that
the surrogation functions $\wh{c}_k(x,z;\gamma;\bar{x})$ are given.
We also assume a similar surrogation function $\wh{c}_0(\bullet,z;\bar{x})$
of $c_0(\bullet,z)$ in the objective satisfying the same five conditions.  Denote
\begin{equation}  \label{eq:Vfunction}
\begin{array}{lll}
V_{\lambda}(x,Z^N;\gamma) & \triangleq & \displaystyle{
\frac{1}{N}
} \, \displaystyle{
\sum_{s=1}^N
} \, c_0(x,z^s) + \lambda \, \textcolor{black}{\displaystyle{
\sum_{k=1}^K
}} \, \max\left\{ \, \displaystyle{
\frac{1}{N}
} \, \displaystyle{
\sum_{s=1}^N
} \, c_k(x,z^s;\gamma) - \zeta_k, \, 0 \, \right\} \\ [0.3in]
\wh{V}_{\lambda}(x,Z^N;\gamma;\bar{x}) & \triangleq & \displaystyle{
\frac{1}{N}
} \, \displaystyle{
\sum_{s=1}^N
} \,  \wh{c}_0(x,z^s;\bar{x})+ \lambda \, \textcolor{black}{\displaystyle{
\sum_{k=1}^K
}} \, \max\left( \, \displaystyle{
\frac{1}{N}
} \, \displaystyle{
\sum_{s=1}^N
} \,\wh{c}_k(x,z^s;\gamma;\bar{x}) - \zeta_k, \, 0 \, \right) \\ [0.3in]
\wh{V}_{\lambda}^{\rho}(x,Z^N;\gamma;\bar{x}) & \triangleq &
\wh{V}_{\lambda}(x,Z^N;\gamma;\bar{x}) + \displaystyle{
\frac{\rho}{2}
} \, \| \, x - \bar{x} \, \|^2, \epc \mbox{for $\rho > 0$}.
\end{array}
\end{equation}
Notice that in the context of the relaxed/restricted functions
$c_{k\ell}^{\rm rlx/rst}(x,z;\gamma)$, the surrogate functions
$\wh{c}_k(x,z^s;\gamma;\bar{x})$ given in Appendix~1
are the pointwise minimum of finitely many convex functions.
Thus, a global minimizer of the problem
\begin{equation} \label{eq:proximal Vsubproblem}
\displaystyle{
\operatornamewithlimits{\mbox{\bf minimize}}_{x \in X}
} \ \wh{V}_{\lambda}^{\rho}(x,Z^N;\gamma;\bar{x})
\end{equation}
can be obtained by solving finitely many convex programs (see Appendix~2 for an
explanation how this is carried out).
This is an important practical aspect of the SPSA; namely, the iterates
can be constructively obtained by convex programming algorithms.

\gap

In the algorithm below, we present the version where the
subproblems (\ref{eq:proximal Vsubproblem}) are solved to global optimality
\textcolor{black}{without requiring the uniqueness of the minimizer}.  The
algorithm makes use of several sequences:
$\{ N_{\nu} \}_{\nu=1}^{\infty}$ (sample sizes),
$\{ \lambda_{\nu} \}_{\nu=1}^{\infty}$ (penalty parameters), $\{ \rho_{\nu} \}_{\nu=1}^{\infty}$
(proximal parameters), and $\{ \gamma_{\nu} \}_{\nu=1}^{\infty}$ (scaling factors),
as specified below:

\gap

\noindent $\bullet $ $\{N_\nu\}_{\nu=0}^{\infty}$: an increasing sequence of
positive integers with $N_0 = 0$;
each $N_\nu$ denotes the sample batch size at the $\nu$th iteration;

\gap

\noindent   {$\bullet $ $\{ \lambda_{\nu} \}_{\nu=1}^{\infty}$: a nondecreasing sequence
of positive scalars with $\lambda_1 = 1$
and $\displaystyle{
\lim_{\nu \to \infty}
} \, \lambda_{\nu} = \lambda_{\infty} \in [ \, 1,\infty \, ]$ (this includes both
a bounded and unbounded sequence);}

\gap

\noindent $\bullet $ $\{ \rho_{\nu} \}_{\nu=1}^{\infty}$: a sequence of positive
scalars with
$\displaystyle{
\lim_{\nu \to \infty}
} \, \rho_{\nu} = \rho_{\infty} \in [ \, 0, \infty \, )$ and such that for some
constants $\alpha_2 > \alpha_1 > 0$,
\begin{equation} \label{eq:ratio of rho and lambda}
\displaystyle{
\frac{\alpha_1}{\nu}
} \, \leq \, \displaystyle{
\frac{\rho_{\nu}}{\lambda_{\nu}}
} \, \leq \, \displaystyle{
\frac{\alpha_2}{\nu}
}, \epc \forall \, \nu;
\end{equation}
\noindent $\bullet $ $\{ \gamma_\nu \}_{\nu=1}^{\infty}$: a nonincreasing sequence
of positive values with $\displaystyle{
\lim_{\nu \to \infty}
} \, \gamma_\nu = \underline{\gamma} \geq 0$; moreover,
the sum
\begin{equation} \label{eq:gamma diff}
\displaystyle{
\sum_{\nu=1}^{\infty}
} \, \displaystyle{
\sup_{x \in X}
} \,\Bigg| \, \underbrace{\mathbb{E}\left[ \, c_k(x,\tilde z;\gamma_{\nu}) -
c_k(x,\tilde z;\gamma_{\nu-1}) \, \right]}_{\mbox{
$= \bar{c}_k(x,\gamma_{\nu}) - \bar{c}_k(x,\gamma_{\nu-1})$}} \Bigg|
\end{equation}
%
is finite.

\gap

Note that the condition (\ref{eq:ratio of rho and lambda}) implies:
\begin{equation} \label{eq:sum_rho_lambda}
\displaystyle{
\lim_{\nu \to \infty}
} \,  \displaystyle{
\frac{\rho_{\nu}}{\lambda_{\nu}}
} \, = \, 0 \epc \mbox{and} \epc \displaystyle{
\sum_{\nu=1}^{\infty}
} \,  \displaystyle{
\frac{\rho_{\nu}}{\lambda_{\nu}}
} \, \, = \, \infty.
\end{equation}
In particular, such condition permits $\{ \lambda_{\nu} \}$ to stay bounded while
$\{ \rho_{\nu} \} \downarrow 0$,
and also the opposite situation where
$\{ \rho_{\nu} \}$ is bounded away from zero while $\{ \lambda_{\nu} \} \to \infty$.
Condition (\ref{eq:gamma diff}) holds trivially if $\gamma_{\nu} = \gamma_{\nu-1}$ for
all $\nu$ sufficiently large, in particular, when the sequence $\{ \gamma_{\nu} \}$ is
a constant.  In the context of the restricted/relaxed approximations of the probability
constraints, we recall Proposition~\ref{pr:error of rst/rlx and acc} that yields, with
$\gamma_{\nu-1} \geq \gamma_{\nu}$,
\[ \begin{array}{lll}
\left| \, \bar{c}_k^{\, \rm rst/rlx}(x;\gamma_{\nu}) - \bar{c}_k^{\, \rm rst/rlx}(x;\gamma_{\nu-1}) \, \right|
\\ [0.1in]
\leq \,   {\mbox{Lip}_{\theta}} \, \displaystyle{
\sum_{\ell=1}^L
} \ | \, e_{k\ell} \, | \, \max\left( \, h_{{\cal Z}_{\ell}(x,\bullet)}^{\rm ub}(\gamma_{\nu}) - h_{{\cal Z}_{\ell}(x,\bullet)}^{\rm ub}(\gamma_{\nu-1}), \
h_{{\cal Z}_{\ell}(x,\bullet)}^{\rm lb}(\gamma_{\nu-1}) - h_{{\cal Z}_{\ell}(x,\bullet)}^{\rm lb}(\gamma_\nu) \, \right),
\end{array}
\]	
see (\ref{eq:h of Z}) for the definitions of the functions $h_Z^{\rm lb/ub}(\gamma)$ associated
with the random variable $Z \triangleq {\cal Z}_{\ell}(x,\bullet)$.
Hence, condition (\ref{eq:gamma diff}) holds if for all $\ell \in [ L ]$,
\[
\displaystyle{
\sum_{\nu=1}^{\infty}
} \, \displaystyle{
\sup_{x \in X}
} \, \max\left( \, h_{{\cal Z}_{\ell}(x,\bullet)}^{\rm ub}(\gamma_{\nu}) - h_{{\cal Z}_{\ell}(x,\bullet)}^{\rm ub}(\gamma_{\nu-1}), \
h_{{\cal Z}_{\ell}(x,\bullet)}^{\rm lb}(\gamma_{\nu-1}) - h_{{\cal Z}_{\ell}(x,\bullet)}^{\rm lb}(\gamma_{\nu}) \, \right) \, < \, \infty.
\]
Below we give an example to illustrate the above summability condition on the $\gamma$'s focusing on the case of a diminishing sequence.

\begin{example} \label{ex:gamma condition} \rm
Let ${\cal Z}(x,z) = \min( f(x)z,z + 1 )$ and $\tilde{z}$ be a random variable with the uniform distribution in the interval $( -2,2 )$;
let 
$f(X) \subseteq [ 2,a ]$ for some scalar $a > 2$.  Then, for $\gamma \leq 1$,
\[ \begin{array}{l}
h_{{\cal Z}(x,\bullet)}^{\, \rm lb}(\gamma) \, = \, \displaystyle{
\frac{1}{\gamma}
} \, \displaystyle{
\int_{\, 0}^{\, \gamma}
} \, \mathbb{P}\left( \min( f(x) \tilde{z}, \tilde{z} + 1 )  \leq t \, \right) \, dt \\ [0.2in]
= \, \displaystyle{
\frac{1}{\gamma}
} \, \displaystyle{
\int_{\, 0}^{\, \gamma}
} \, \mathbb{P}\left( \{ \, f(x) \tilde{z} \leq \tilde{z} + 1; \, f(x) \tilde{z} \leq t \, \} \, \cup \, \{ \, \tilde{z} + 1 \leq f(x) \tilde{z}; \,
\tilde{z} + 1 \leq t \, \}
\, \right) \, dt \\ [0.1in]
\, = \, \displaystyle{
\frac{1}{\gamma}
} \, \displaystyle{
\int_{\, 0}^{\, \gamma}
} \, \mathbb{P}\left( \left\{ \, \tilde{z} \leq \displaystyle{
\frac{1}{f(x) - 1}
}; \, \tilde{z} \leq \displaystyle{
\frac{t}{f(x)}
} \, \right\} \, \cup \, \underbrace{\left\{ \, \displaystyle{
\frac{1}{f(x) - 1}
} \, \leq \, \tilde{z} \leq t - 1 \, \right\}}_{\mbox{empty set}} \, \right) \, dt \\ [0.4in]
\, = \, \displaystyle{
\frac{1}{\gamma}
} \, \displaystyle{
\int_{\, 0}^{\, \gamma}
} \, \mathbb{P}\left( \left\{ \, \tilde{z} \leq \displaystyle{
\frac{t}{f(x)}
} \, \right\} \, \right) \, dt \epc \mbox{because $\displaystyle{
\frac{t}{f(x)}
} \leq \displaystyle{
\frac{\gamma}{f(x)}
} \leq \displaystyle{
\frac{1}{f(x) - 1}
}$} \\ [0.2in]
= \, \displaystyle{
\frac{1}{4 \gamma}
} \, \displaystyle{
\int_{\, 0}^{\, \gamma}
} \, \left( \, \displaystyle{
\frac{t}{f(x)}
} \, + 2 \, \right) \, dt \\ [0.2in]
= \, \displaystyle{
\frac{\gamma}{8 f(x)}
} + \displaystyle{
\frac{1}{2}
} \, .
\end{array} \]
Hence, for any nonincreasing sequence of positive scalars $\{ \gamma_{\nu} \}$ satisfying $\gamma_0 \leq 1$ and
$\underline{\gamma} \triangleq \displaystyle{
\lim_{\nu \to \infty}
} \, \gamma_{\nu}$, we have
\[
\displaystyle{
\sum_{\nu=1}^{\infty}
} \, \displaystyle{
\sup_{x \in X}
} \, \left[ \, h_{{\cal Z}(x,\bullet)}^{\, \rm lb}(\gamma_{\nu-1}) - h_{{\cal Z}(x,\bullet)}^{\, \rm lb}(\gamma_{\nu}) \, \right]
\, = \, \displaystyle{
\sum_{\nu=1}^{\infty}
} \, \displaystyle{
\frac{\gamma_{\nu-1} - \gamma_{\nu}}{8 \, \displaystyle{
\inf_{x \in X}
} \, f(x)}} \, = \,  \displaystyle{
\frac{\gamma_0 - \underline{\gamma}}{8 \, \displaystyle{
\inf_{x \in X}
} \, f(x)}} \, < \, \infty.
\]
Similarly, we have
\[ \begin{array}{l}
h_{{\cal Z}(x,\bullet)}^{\, \rm ub}(\gamma) \, = \, \displaystyle{
\frac{1}{\gamma}
} \, \displaystyle{
\int_{\, -\gamma}^{\, 0}
} \, \mathbb{P}\left( \min( f(x) \tilde{z}, \tilde{z} + 1 )  \leq t \, \right) \, dt \\ [0.1in]
\, = \, \displaystyle{
\frac{1}{\gamma}
} \, \displaystyle{
\int_{\, -\gamma}^{\, 0}
} \, \mathbb{P}\left( \left\{ \, \tilde{z} \leq \displaystyle{
\frac{1}{f(x) - 1}
}; \, \tilde{z} \leq \displaystyle{
\frac{t}{f(x)}
} \, \right\} \, \cup \, \underbrace{\left\{ \, \displaystyle{
\frac{1}{f(x) - 1}
} \, \leq \, \tilde{z} \leq t - 1 \, \right\}}_{\mbox{empty set}} \, \right) \, dt \\ [0.4in]
\, = \, \displaystyle{
\frac{1}{\gamma}
} \, \displaystyle{
\int_{\, -\gamma}^{\, 0}
} \, \mathbb{P}\left( \, \tilde{z} \leq \displaystyle{
\frac{t}{f(x)}
} \, \right) \, dt \, = \, = \, \displaystyle{
\frac{1}{4 \gamma}
} \, \displaystyle{
\int_{\, -\gamma}^{\, 0}
} \, \left( \, \displaystyle{
\frac{t}{f(x)}
} \, + 2 \, \right) \, dt \, = \, \displaystyle{
\frac{1}{2}
} - \displaystyle{
\frac{\gamma}{8 f(x)}
} \, ,
\end{array} \]
and the series $\displaystyle{
\sum_{\nu=1}^{\infty}
} \, \displaystyle{
\sup_{x \in X}
} \, \left[ \, h_{{\cal Z}(x,\bullet)}^{\, \rm ub}(\gamma_{\nu}) - h_{{\cal Z}(x,\bullet)}^{\, \rm ub}(\gamma_{\nu-1}) \, \right]$ is
also finite.   \hfill $\Box$
\end{example}

\noindent\makebox[\linewidth]{\rule{\textwidth}{1pt}}
\vspace{-0.1in}

\textbf{The SPSA: Global solution of subproblems and incremental sample batches}
\\

\vspace{-0.2in}

\noindent\makebox[\linewidth]{\rule{\textwidth}{1pt}}

\begin{algorithmic}[1]
\STATE \textbf{Initialization:}
Let the parameters $\left\{ \, N_{\nu}; \, \rho_{\nu}; \,
\gamma_{\nu}; \, \lambda_{\nu} \, \right\}_{\nu=1}^{\infty}$
be given.  Start with the empty sample batch $Z^0 = \emptyset$, $N_0=0$, and an
arbitrary $x^1 \in X$.

\gap

\FOR {$\nu= 1, 2, \cdots$}
\STATE generate samples $\{z^s\}_{s=N_{\nu-1}+1}^{N_{\nu}}$ independently from previous
samples, and add them to the present sample set $Z^{N_{\nu-1}}$
to obtain the new sample set $Z^{N_{\nu}}\, \triangleq \, Z^{N_{\nu-1}} \, \cup \,
\{ z^s \}_{s=N_{\nu-1}+1}^{N_{\nu}}$;
\STATE compute $x^{\nu+1} \ \textcolor{black}{\in} \ \displaystyle{
\operatornamewithlimits{\mbox{\bf argmin}}_{x \, \in \, X}
} \  \wh{V}_{\lambda_{\nu}}^{\rho_{\nu}}(x,Z^{N_{\nu}};\gamma_{\nu};x^{\nu})$;
\ENDFOR
\end{algorithmic}

\noindent\makebox[\linewidth]{\rule{\textwidth}{1pt}}
\vspace{-0.05in}

The convergence analysis of the SPSA
consists of two major parts: the first part
relies on some general properties of the functions $c_k(\bullet,z;\gamma)$ and their
majorizations $\wh{c}_k(\bullet,z,\gamma;\bar{x})$
(as described previously and summarized below)
and conditions on the sequences
$\{ (N_{\nu}, \lambda_{\nu}, \rho_{\nu}, \gamma_{\nu} \}_{\nu=1}^{\infty}$
(see Lemma~\ref{lm:condition on sample sizes} and
Proposition~\ref{pr:successive iterates}).   The second part is specific to an
accumulation point of the sequence produced by the
Algorithm and requires the applicability of some uniform law of large numbers (ULLN)
on the majorizing functions at the point.
This part is further divided into two cases:
a constant sequence with $\gamma_{\nu} = \underline{\gamma}$ for all $\nu$,
or a diminishing sequence with $\gamma_{\nu} \downarrow 0$.  The ULLN needed in the
former case is fairly straightforward.  The second
part requires more care as we need to deal with the limits of the majorizing functions
$\wh{c}_k(\bullet,z;\gamma_{\nu};x^{\nu})$
as $\nu \to \infty$.

\gap

A key tool in the convergence proof of the SPSA is a uniform bound for
the errors:
\begin{equation} \label{eq:errors}
\begin{array}{l}
\mathbb{E}\, \Bigg[\, \Bigg| \, \mathbb{E}[ c_0(x^{\nu},\tilde{z} ) ] -
\displaystyle{
\frac{1}{N_{\nu-1}}
} \, \displaystyle{
\sum_{s=1}^{N_{\nu-1}}
} \, c_0(x^{\nu},z^s) \, \Bigg| \,\Bigg], \\ [0.2in]
\mathbb{E}\, \Bigg[\, \Bigg| \, \mathbb{E}[ c_k(x^{\nu},\tilde{z};\gamma_{\nu-1} ) ] -
\displaystyle{
\frac{1}{N_{\nu-1}}
} \, \displaystyle{
\sum_{s=1}^{N_{\nu-1}}
} \, c_k(x^{\nu},z^s;\gamma_{\nu-1}) \, \Bigg| \,\Bigg], \epc k \in [ K ].
\end{array} \end{equation}
We derive
these bounds following the approach in \cite{ErmolievNorkin13} that is based on the
concept of Rademacher averages defined below.

\begin{definition} \label{df:Rademacher averages} \rm
For a given family of points $\boldsymbol{\xi}^N \triangleq \{ \xi_1, \cdots, \xi_N \}$
with each $\xi_i  \in \Xi$ and a sequence
of functions  $\{ \, f(\bullet, \xi_i) : X \to \mathbb{R} \, \}_{i=1}^N$,
the {\sl Rademacher average} $\mathbf{R}_N(f,\boldsymbol{\xi}^N)$ is defined as
\[
\mathbf{R}_N(f,\boldsymbol{\xi}^N) \, \triangleq \, \mathbb{E}_{\boldsymbol{\sigma}}
\left[ \, \sup_{x\in X} \, \Big| \, \frac{1}{N} \sum_{i=1}^N \sigma_i f(x,\xi_i) \, \Big|
\, \right],
\]
where $\sigma_i$ are i.i.d.\ random numbers such that $\sigma_i \in \{ \, +1, -1\, \}$
each with the probability 1/2 and $\mathbb{E}_{\boldsymbol{\sigma}}$ denotes the
expectation
over the random vector $\boldsymbol{\sigma} = (\sigma_1, \ldots, \sigma_N)$.
For the family of Carath\'eodory
functions $\{ f(\bullet,\xi): X \to \mathbb{R} \}_{\xi\in \Xi}$, the Rademacher average
is defined as
\[
\mathbf{R}_N(f,\Xi) \, \triangleq \, \sup_{\boldsymbol{\xi}^N \, \in \, \Xi^N} \,
\mathbf{R}_N(f,\boldsymbol{\xi}^N).
\]	
\end{definition}
The following simple lemma \cite[Theorem~3.1]{ErmolievNorkin13} facilitates the bound
of (\ref{eq:errors}) given upper bounds on the Rademacher averages.  The proof follows
from a straightforward application of the symmetrization
lemma \cite[Lemma~2.3.1]{vanderVaartWellner96}; see \cite[Appendix~C]{ErmolievNorkin13}.

\begin{lemma} \label{lm:Rademacher range implies Nemirovsky} \rm
Let $\{ \, f(\bullet, z^s) : X \to \mathbb{R} \, \}_{s=1}^N$ be arbitrary
Carath\'eodory functions.
For any $N > 0$ and any family
$Z^N \triangleq \{ z^s \}_{s=1}^N$ of i.i.d.\  samples of the
random variable $\tilde{z}$,
\[
\mathbb{E}\left[ \, \sup_{x \in X} \, \left| \displaystyle{
\frac{1}{N}
} \, \displaystyle{
\sum_{s=1}^N
} \, f(x,z^s) - \mathbb{E}[ f(x,\tilde{z}) ] \, \right| \, \right] \, \leq \, 2 \,
\mathbf{R}_N(f,\Xi).
\]
\end{lemma}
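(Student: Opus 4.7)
The plan is to deploy the classical symmetrization argument via a ghost sample, exactly as indicated in the paper's pointer to \cite[Lemma~2.3.1]{vanderVaartWellner96}. The target quantity is a two-sided supremum of an empirical process deviation; the standard route is to bound it by a Rademacher-type process, and then by the definition of $\mathbf{R}_N(f,\Xi)$.

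Concretely, I would first introduce an independent ``ghost'' sample $Z^{\, \prime N} = \{ z^{\, \prime s} \}_{s=1}^N$ i.i.d.\ from the law of $\tilde{z}$ and independent of $Z^N$. Since $\mathbb{E}[ f(x,\tilde{z}) ] = \mathbb{E}_{Z^{\, \prime N}}\Big[ \frac{1}{N} \sum_{s=1}^N f(x,z^{\, \prime s}) \Big]$ for each fixed $x$, Jensen's inequality (applied to the convex map $u \mapsto \sup_{x \in X} |u(x)|$) lets me pull the ghost expectation outside the supremum:
\[
\mathbb{E}\Big[\sup_{x\in X} \Big| \tfrac{1}{N}\sum_{s=1}^N f(x,z^s) - \mathbb{E}[f(x,\tilde z)] \Big|\Big]
\leq \mathbb{E}\Big[\sup_{x\in X} \Big| \tfrac{1}{N}\sum_{s=1}^N \left( f(x,z^s) - f(x,z^{\, \prime s}) \right) \Big|\Big],
\]
where the outer expectation on the right is over the joint law of $(Z^N,Z^{\, \prime N})$.

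Next I would symmetrize. For any choice of signs $(\sigma_1,\ldots,\sigma_N)\in\{\pm 1\}^N$, the joint distribution of $\{ f(x,z^s) - f(x,z^{\, \prime s}) \}_{s=1}^N$ is unchanged under the map $(z^s,z^{\, \prime s}) \mapsto (z^{\, \prime s},z^s)$ applied in any coordinate, because $z^s$ and $z^{\, \prime s}$ are i.i.d. Hence, introducing i.i.d.\ Rademacher random variables $\sigma_s$ independent of everything else, the right-hand side above equals
\[
\mathbb{E}_{Z^N, Z^{\, \prime N}, \boldsymbol{\sigma}}\Big[\sup_{x\in X} \Big| \tfrac{1}{N}\sum_{s=1}^N \sigma_s\left( f(x,z^s) - f(x,z^{\, \prime s}) \right) \Big|\Big].
\]
Applying the triangle inequality inside the supremum and then splitting the expectation into the two pieces (in $z^s$ and in $z^{\, \prime s}$), each piece is bounded by $\mathbb{E}_{Z^N}\mathbb{E}_{\boldsymbol\sigma}\big[\sup_{x} | \tfrac{1}{N}\sum_s \sigma_s f(x,z^s) |\big]$, and similarly for $Z^{\, \prime N}$, each of which is in turn dominated by $\sup_{\boldsymbol{\xi}^N\in \Xi^N} \mathbf{R}_N(f,\boldsymbol{\xi}^N) = \mathbf{R}_N(f,\Xi)$. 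Summing the two identical bounds yields the factor $2$ on the right-hand side, completing the argument.

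The only genuine subtlety is measurability of $\sup_{x\in X}|\,\cdot\,|$ (so that the expectations are well-defined and Fubini is legitimate when interchanging the order of $\boldsymbol\sigma$, $Z^N$, and $Z^{\, \prime N}$); this is the one place where Carath\'eodory continuity of $f(\bullet,z)$ together with separability of $X$ (inherited from $X\subseteq \mathbb{R}^n$) is used, reducing the supremum over $X$ to a countable supremum. Apart from this routine bookkeeping, the rest is just Jensen, symmetry of the Rademacher sign flips, and the triangle inequality, as anticipated in the remark following the lemma.
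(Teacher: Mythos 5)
Your proposal is correct and is precisely the argument the paper points to: the paper offers no written proof but defers to the symmetrization lemma of van der Vaart and Wellner (and Appendix~C of Ermoliev--Norkin), and your ghost-sample/Jensen/sign-symmetrization/triangle-inequality chain is exactly that standard argument, with the measurability point handled appropriately via the Carath\'eodory property and separability of $X$. Nothing further is needed.
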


\fbox{
\parbox{6.5in}{
\begin{center}
{\bf Blanket assumptions on (\ref{eq:new SP}) for convergence of SPSA}
\end{center}

\textbf{Basic B-differentiability and other properties}: as described for the
problem (\ref{eq:new SP}), including   {the boundedness of $X$} and the
nonnegativity of the objective $c_0(\bullet,z)$; thus all the $V$-functions
defined in (\ref{eq:Vfunction}) are nonnegative.

\gap

\textbf{Growth of Rademacher averages}: there exist positive constants $W_0$,
$W_1$ and $W_2$ such that the Rademacher averages of the objective function
$c_0(\bullet,\tilde{z} )$ and the
constraint functions $c_k(\bullet,\tilde{z};\bullet)$ satisfy, for all integers $N > 0$
and all exponents $\beta \in ( 0, 1/2 )$

\gap

$\bullet $ $\mathbf{R}_N(c_0,\Xi) \leq
\displaystyle{
\frac{W_0}{N^{\beta}}
}$; and

\gap

$\bullet $ $\displaystyle{
\max_{k \in [ K ]}
} \, \mathbf{R}_N(c_k(\bullet,\bullet;\gamma),\Xi) \leq \displaystyle{
\frac{W_1}{N^{\beta}}
}
+ \displaystyle{
\frac{W_2}{\gamma}
} \, \displaystyle{
\frac{1}{\sqrt{N}}
}$ for all $\gamma > 0$.  \hfill $\Box$
}}

\gap

The growth conditions of the Rademacher averages imposed above are
essentially assumption B(iii) in \cite{ErmolievNorkin13} where there is a discussion
with proofs of various common cases for the satisfaction of the conditions.  Most
relevant to us is Lemma~B.2 therein that explains both the exponent $\beta$ and
the fraction $1/\gamma$.  In particular, $\beta$ is used to upper bound a term
$\sqrt{\ln N/N}$ and thus can be somewhat flexible.  Nevertheless, the ``constants''
in the numerators of the bounds of the Rademacher averages
$\mathbf{R}_N(c_k(\bullet,\bullet;\gamma),\Xi)$ depend on two things:
(i) the uniform boundedness of the functions $c_k(\bullet,\bullet;\gamma)$ on $X \times \Xi$
by a constant independent of $\gamma$ (Assumption B(i) in \cite{ErmolievNorkin13}) and (ii) the linear dependence on
the Lipschitz modulus of the function $c_k(\bullet,z;\gamma)$, among other constants (Lemma B.4 in the reference).
In the context of the relaxed/restricted
functions $c_k^{\rm rlx/rst}(x,z;\gamma) = \displaystyle{
\sum_{\ell=1}^L
} \, c_{k\ell}^{\rm rlx/rst}(x,z;\gamma)$, these summands are affine combinations of
$\phi_{\rm lb/ub}({\cal Z}_{\ell}(\bullet,z),\gamma)$ which are bounded between 0 and 1;
moreover, by their definitions, the functions $c_k^{\rm rlx/rst}(\bullet,z;\gamma)$,
are Lipschitz continuous with modulus $\mbox{Lip}_c(z)/\gamma$; cf.\ (\ref{eq:Lip in x}).
This explains the term $1/\gamma$ in the numerator of the bound of $\mathbf{R}_N(c_k(\bullet,\bullet;\gamma),\Xi)$.
The reason to expose this fraction is for the analysis of the case where the sequence
$\{ \gamma_{\nu} \} \downarrow 0$.  Knowing
how the Rademacher bound depends on $\gamma$ leads to conditions on the decay of this
sequence to ensure convergence of the SPSA; see the proof of Proposition~\ref{pr:successive iterates}
that makes use of Lemma~\ref{lm:condition on sample sizes}.

\gap

Before moving to the next subsection, we state a (semi)continuous convergence result
of random functionals.
This result is drawn from \cite[Theorem 2.3]{ArtsteinWets95}; see also
\cite[Theorem~7.48]{ShapiroDentchevaRuszczynski09} where continuity is assumed.
For ease of reference, we state the result pertaining to a given vector $\bar{x}$.

\begin{proposition} \label{pr:uniform pointwise ULLN} \rm
Let $c(\bullet,z) :  Y \to \mathbb{R}$ be semicontinuous in a neighborhood ${\cal N}$
of a vector $\bar{x}$ in the open set $Y \subseteq \mathbb{R}^n$.
Suppose that $c(x,\bullet)$ is dominated by an integrable function for any
$x \in {\cal N}$.  For any
sequence $\{ x^N \}_{N=1}^{\infty}$ converging to $\bar{x}$, and any i.i.d.\ samples
$\{ z^s \}_{s=1}^N$, it holds that

\gap

$\bullet $ if $c(\bullet,z)$ is lower semicontinuous in ${\cal N}$, then
\[ \displaystyle{
\liminf_{N \to \infty}
} \, \, \displaystyle{
\frac{1}{N}
} \, \displaystyle{
\sum_{s=1}^N
} \, c(x^N,z^s) - \mathbb{E}[ c(\bar{x},\tilde{z} ) ] \, \geq  \, 0 \epc
\mbox{almost surely};
\]
$\bullet $ if $c(\bullet,z)$ is upper semicontinuous in ${\cal N}$, then
\[ \displaystyle{
\limsup_{N \to \infty}
} \, \, \displaystyle{
\frac{1}{N}
} \, \displaystyle{
\sum_{s=1}^N
} \, c(x^N,z^s) - \mathbb{E}[ c(\bar{x},\tilde{z} ) ] \, \leq  \, 0 \epc
\mbox{almost surely}.
\]
\end{proposition}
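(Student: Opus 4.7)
The plan is to prove the two semicontinuity halves by a standard "epi/hypo-envelope" sandwich followed by the strong law of large numbers. I treat the lower-semicontinuous case first; the upper-semicontinuous case is symmetric.

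For each $\delta > 0$ small enough that $\ball_{\delta}(\bar{x}) \subseteq {\cal N}$, I would introduce the lower $\delta$-envelope
\[
c_{\delta}(z) \, \triangleq \, \inf_{x \, \in \, \ball_{\delta}(\bar{x})} c(x,z),
\]
which is measurable (by lower semicontinuity of $c(\bullet,z)$, the infimum can be taken over a countable dense subset of $\ball_{\delta}(\bar{x})$) and bounded below by the negative part of the integrable dominating function, hence integrable. By lower semicontinuity of $c(\bullet,z)$ at $\bar{x}$, the family $\{c_{\delta}(z)\}_{\delta > 0}$ is nondecreasing as $\delta \downarrow 0$ with pointwise limit $c(\bar{x},z)$, so by the Monotone (or Dominated) Convergence Theorem,
\[
\lim_{\delta \downarrow 0} \mathbb{E}\left[ \, c_{\delta}(\tilde{z}) \, \right] \, = \, \mathbb{E}\left[ \, c(\bar{x},\tilde{z}) \, \right].
\]

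Next, fix $\delta > 0$. Since $x^N \to \bar{x}$, for all $N$ sufficiently large $x^N \in \ball_{\delta}(\bar{x})$, and consequently $c(x^N,z^s) \geq c_{\delta}(z^s)$ for every $s$. Hence
\[
\displaystyle{\frac{1}{N}} \, \displaystyle{\sum_{s=1}^N} \, c(x^N,z^s) \, \geq \, \displaystyle{\frac{1}{N}} \, \displaystyle{\sum_{s=1}^N} \, c_{\delta}(z^s).
\]
The right-hand side is an empirical mean of i.i.d.\ integrable random variables, so by Kolmogorov's strong law of large numbers it converges almost surely to $\mathbb{E}[c_{\delta}(\tilde{z})]$. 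Taking $\liminf_{N \to \infty}$ on both sides yields, almost surely,
\[
\displaystyle{\liminf_{N \to \infty}} \, \displaystyle{\frac{1}{N}} \, \displaystyle{\sum_{s=1}^N} \, c(x^N,z^s) \, \geq \, \mathbb{E}\left[ \, c_{\delta}(\tilde{z}) \, \right].
\]
Finally, letting $\delta$ run through a countable sequence $\delta_j \downarrow 0$ (so the null set of exceptional samples remains a countable union of null sets, hence null) and invoking the monotone-convergence identity above gives the claimed lower semicontinuous conclusion.

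The upper-semicontinuous case is established in exactly the same manner by replacing the infimum with a supremum: set $c^{\delta}(z) \triangleq \sup_{x \in \ball_{\delta}(\bar{x})} c(x,z)$, which is measurable by upper semicontinuity of $c(\bullet,z)$, integrable by the dominating-function hypothesis, and satisfies $c^{\delta}(z) \downarrow c(\bar{x},z)$ as $\delta \downarrow 0$. The analogous reverse inequalities and Dominated Convergence then yield the $\limsup$ bound. The only delicate point I anticipate is the measurability/integrability of the envelopes $c_{\delta}$ and $c^{\delta}$; this is where the semicontinuity in $x$ (allowing the inf/sup to be reduced to a countable subfamily) together with the integrable majorant of $c(x,\bullet)$ on ${\cal N}$ are essential. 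Once this measurability is secured, the remainder of the argument is a direct sandwiching with the SLLN.
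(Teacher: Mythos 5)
Your overall strategy is sound and essentially self-contained: the sandwich by the lower/upper $\delta$-envelopes $c_{\delta}(z)=\inf_{x\in\ball_{\delta}(\bar{x})}c(x,z)$ and $c^{\delta}(z)=\sup_{x\in\ball_{\delta}(\bar{x})}c(x,z)$, followed by the SLLN applied to the envelope and a countable limit $\delta_j\downarrow 0$, is exactly the classical epi-convergence argument behind \cite[Theorem~2.3]{ArtsteinWets95}. Note that the paper does not prove this proposition at all --- it simply cites Artstein--Wets (and Shapiro et al.\ for the continuous case) --- so your write-up supplies a proof where the paper supplies a reference. The monotone/dominated convergence step and the handling of the null sets are both correct, granted the natural reading of the hypothesis that a single integrable function dominates $c(x,\cdot)$ uniformly over the neighborhood.

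There is, however, a genuine error in the one step you yourself flag as delicate. You justify measurability of $c_{\delta}$ by saying that, since $c(\bullet,z)$ is lower semicontinuous, the infimum over $\ball_{\delta}(\bar{x})$ can be taken over a countable dense subset. That reduction goes the wrong way: for a lower semicontinuous function the \emph{supremum} over a dense subset equals the supremum over the set, but the \emph{infimum} need not. For instance, $f(0)=0$ and $f(x)=1$ for $x\neq 0$ is lsc with $\inf_{\ball_{\delta}(0)}f=0$, yet $\inf_D f=1$ for any dense $D$ avoiding the origin. Symmetrically, in your usc case the supremum of a usc function over a dense subset can undershoot the true supremum ($f=\mathbf{1}_{\{0\}}$). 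So both instances of your dense-subset argument fail. The measurability of the inf-projection $z\mapsto\inf_{x\in\ball_{\delta}(\bar{x})}c(x,z)$ is nonetheless true, but it must be obtained from the theory of normal integrands (random lsc functions) --- e.g.\ \cite[Theorem~14.37]{RockafellarWets98} --- or from the measurable projection theorem on a complete probability space; this requires slightly more than ``lsc in $x$ for each $z$ and measurable in $z$ for each $x$,'' namely that $c$ be a normal integrand, which is the standing framework in the Artstein--Wets setting and is satisfied by the structured integrands to which the proposition is applied in this paper. With that substitution the rest of your argument goes through unchanged.
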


\subsection{Convergence analysis: preliminary results}

We are now ready to begin the proof of convergence of the SPSA.
We first establish a lemma that
provides a practical guide for the selection of the sample sizes $N_{\nu}$.

\begin{lemma} \label{lm:condition on sample sizes} \rm
For the sequence of positive integers $\{ N_{\nu} \}$, a scalar $\beta \in ( 0, 1/2 )$,
and the positive sequence $\{ \gamma_{\nu} \}$,
suppose that there exist
a positive integer $\bar{\nu}$ and positive scalars $\delta$ and $\{ c_i \}_{i=1}^4$
with $c_3 < \bar{\nu}$ and $\beta ( 1 + c_1 ) > 1 + \delta$ such that
\[
c_2 \, \nu^{\, 1 + c_1} 
\, \leq \, N_{\nu} \, \leq \, \displaystyle{
\frac{N_{\nu-1}}{\left( 1 - \displaystyle{
\frac{c_3}{\nu}
} \, \right)}
} \ \mbox{ and } \ \gamma_{\nu} \, \geq \, \displaystyle{
\frac{c_4}{\nu^{\, \delta}}
} \epc \epc \forall \, \nu \, \geq \, \bar{\nu}.
\]
Then the following six series are finite:
\[ \begin{array}{lll}
S_1 \, \triangleq \, \displaystyle{
\sum_{\nu=1}^{\infty}
} \, \displaystyle{
\frac{N_{\nu} - N_{\nu-1}}{N_{\nu}}
} \, \displaystyle{
\frac{1}{N_{\nu-1}^{\, \beta}}
};  &
S_2 \, \triangleq \, \displaystyle{
\sum_{\nu=1}^{\infty}
} \, \displaystyle{
\frac{1}{N_{\nu}^{\beta}}
}; &
S_3 \, \triangleq \, \displaystyle{
\sum_{\nu=1}^{\infty}
} \, \displaystyle{
\frac{( \, N_{\nu} - N_{\nu-1} \, )^{1 - \beta}}{N_{\nu}}
}; \\ [0.3in]
S_4\, \triangleq \,
\displaystyle{
\sum_{\nu=1}^{\infty}
} \, \displaystyle{
\frac{N_{\nu} - N_{\nu-1}}{N_{\nu}}
} \, \displaystyle{
\frac{1}{N_{\nu-1}^{\beta}}
} \, \displaystyle{
\frac{1}{\gamma_{\nu-1}}
}; &
S_5 \, \triangleq \, \displaystyle{
\sum_{\nu=1}^{\infty}
} \, \displaystyle{
\frac{1}{N_{\nu}^{\beta} \, \gamma_{\nu}}
}; &
S_6 \, \triangleq \, \displaystyle{
\sum_{\nu=1}^{\infty}
} \, \displaystyle{
\frac{( \, N_{\nu} - N_{\nu-1} \, )^{1 - \beta}}{N_{\nu}}
} \, \displaystyle{
\frac{1}{\gamma_{\nu-1}}
} \, .
\end{array}
\]
are all finite.
\end{lemma}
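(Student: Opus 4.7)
The plan is to reduce each of the six series to a comparison with a $p$-series $\sum 1/\nu^{p}$ and verify that the exponent $p$ exceeds $1$ in every case. Three elementary consequences of the hypotheses will be extracted first:
\[
\frac{N_{\nu} - N_{\nu-1}}{N_{\nu}} \,\leq\, \frac{c_{3}}{\nu}, \qquad
\frac{1}{N_{\nu}^{\beta}} \,\leq\, \frac{1}{c_{2}^{\beta}\,\nu^{\beta(1+c_{1})}}, \qquad
\frac{1}{\gamma_{\nu}} \,\leq\, \frac{\nu^{\delta}}{c_{4}},
\]
for $\nu \geq \bar{\nu}$, the first of which follows by rearranging the upper bound $N_{\nu}(1 - c_{3}/\nu) \leq N_{\nu-1}$. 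A fourth useful bound, used for $S_{3}$ and $S_{6}$, combines the first two: $(N_{\nu}-N_{\nu-1})^{1-\beta}/N_{\nu} \leq (c_{3}/\nu)^{1-\beta}\,N_{\nu}^{-\beta} \leq C\,\nu^{-(1-\beta) - \beta(1+c_{1})} = C\,\nu^{-(1+\beta c_{1})}$. Under the standing hypothesis $\beta(1+c_{1}) > 1+\delta$ we have in particular $\beta c_{1} > 1 + \delta - \beta > 1/2$, so this bound already has exponent greater than $3/2$.

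Each series is then handled by direct substitution. For $S_{1}$ I would combine the first two bounds to obtain $S_{1} \leq C \sum \nu^{-1}(\nu-1)^{-\beta(1+c_{1})}$, whose tail behaves like $\sum \nu^{-(1+\beta(1+c_{1}))}$, and the exponent exceeds $2+\delta$. For $S_{2}$ the bound on $1/N_{\nu}^{\beta}$ alone gives $S_{2} \leq C\sum \nu^{-\beta(1+c_{1})}$, which converges because $\beta(1+c_{1}) > 1+\delta > 1$. For $S_{3}$, the fourth bound above yields $S_{3} \leq C \sum \nu^{-(1+\beta c_{1})}$, with exponent $> 3/2$.

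The $\gamma$-weighted series $S_{4}, S_{5}, S_{6}$ are treated by inserting the factor $1/\gamma_{\nu-1} \leq C\nu^{\delta}$ (after absorbing a constant from $(\nu-1)^{\delta} \leq 2^{\delta}\nu^{\delta}$). This yields
\[
S_{4} \,\leq\, C \sum \nu^{-1 - \beta(1+c_{1}) + \delta}, \quad
S_{5} \,\leq\, C \sum \nu^{-\beta(1+c_{1}) + \delta}, \quad
S_{6} \,\leq\, C \sum \nu^{-(1+\beta c_{1}) + \delta},
\]
with respective exponents exceeding $2$, $1$, and $3/2$, using $\beta(1+c_{1}) - \delta > 1$ in the first two and $\beta c_{1} - \delta > 1 - \beta > 1/2$ in the last. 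All six tails are therefore summable, and adding the finitely many terms with $\nu < \bar{\nu}$ only changes each partial sum by a finite constant, so the full series are finite.

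The proof is essentially bookkeeping: there is no conceptual obstacle, only the need to track carefully how the strict inequality $\beta(1+c_{1}) > 1+\delta$ supplies just enough margin, simultaneously, in each of the three weighted and three unweighted cases. The tightest margin appears in $S_{5}$, where the exponent is exactly $\beta(1+c_{1}) - \delta$; this is the one series that would fail if the hypothesis were weakened to $\beta(1+c_{1}) \geq 1+\delta$, which explains why the strict inequality is imposed.
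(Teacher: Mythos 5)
Your proof is correct and follows essentially the same route as the paper's: both arguments reduce each series to a $p$-series comparison using the elementary bounds $(N_{\nu}-N_{\nu-1})/N_{\nu}\le c_3/\nu$, $N_{\nu}^{-\beta}\le c_2^{-\beta}\nu^{-\beta(1+c_1)}$ and $1/\gamma_{\nu}\le \nu^{\delta}/c_4$, arriving at the same exponents (in particular $1+c_1\beta-\delta>2-\beta>3/2$ for the increment terms). The only cosmetic difference is that the paper first notes $N_{\nu}-N_{\nu-1}\le c_3 N_{\nu-1}$ to deduce $S_1\le c_3^{\beta}S_3$ and $S_4\le c_3^{\beta}S_6$ and hence only bounds four series explicitly, whereas you bound all six directly; your closing remark that $S_5$ is the series forcing the strict inequality $\beta(1+c_1)>1+\delta$ is a correct observation not made in the paper.
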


\begin{proof}  For any $\nu \geq \bar{\nu} + 1$, we have
\[
\textcolor{black}{N_{\nu} \, \leq \, \displaystyle{
\frac{\nu \, N_{\nu-1}}{\nu - c_3}
} \, \leq \, \displaystyle{
\frac{\nu \, N_{\nu-1}}{1 + \bar{\nu} - c_3}
} \, \leq \, \nu \, N_{\nu-1}}.
\]
Hence, $N_{\nu} - N_{\nu-1} \leq c_3 \, \displaystyle{
\frac{N_{\nu}}{\nu}
} \, \leq \, c_3 \, N_{\nu-1}$ for any $\nu \geq \bar{\nu}+1$.  Thus,
\[
S_1 \, \leq \, S_3 \epc \mbox{and} \epc S_4 \, \leq \, S_6.
\]
Since
\[
\begin{array}{rll}
\displaystyle{
\frac{( \, N_{\nu} - N_{\nu-1} \, )^{1 - \beta}}{N_{\nu}}
} & = & \left( \displaystyle{
\frac{N_{\nu} - N_{\nu-1}}{N_{\nu}}
} \right)^{1 - \beta} \, \displaystyle{
\frac{1}{N_{\nu}^{\, \beta}}
} \\ [0.2in]
& \leq & \left( \displaystyle{
\frac{c_3}{\nu}
} \right)^{1 - \beta} \, \displaystyle{
\frac{1}{( \, c_2 \, \nu^{1 + c_1} \, )^{\beta}}
} \, = \, \displaystyle{
\frac{c_3^{1 - \beta}}{c_2^{\, \beta}}
} \, \displaystyle{
\frac{1}{\nu^{\, 1 + c_1 \beta}}
}; \\ [0.2in]
\mbox{and} \epc
\displaystyle{
\frac{1}{\gamma_{\nu-1}}
} \, \displaystyle{
\frac{( \, N_{\nu} - N_{\nu-1} \, )^{1 - \beta}}{N_{\nu}}
} & \leq & \displaystyle{
\frac{c_3^{1 - \beta}}{c_4 \, c_2^{\, \beta}}
} \, \displaystyle{
\frac{1}{\nu^{\, 1 + c_1 \beta -\delta}}
}, \epc \mbox{because $\gamma_{\nu-1} \geq \gamma_{\nu} \geq \displaystyle{
\frac{c_4}{\nu^{\, \delta}}
}$}
\end{array}
\]
and by assumption, $\beta ( 1 + c_1 ) > 1 + \delta$, which implies $1 + c_1 \beta -\delta > 2 - \beta > 1.5$, it follows that
the sums $S_1$, $S_3$, $S_4$, and $S_6$ are finite.  Finally, we have
\[
\displaystyle{
\frac{1}{N_{\nu}^{\beta}}
} \, \leq \, \displaystyle{
\frac{1}{c_2^{\beta} \, \nu^{\beta ( 1 + c_1 )}}
} \epc \mbox{and} \epc \displaystyle{
\frac{1}{N_{\nu}^{\beta} \, \gamma_{\nu}}
} \, \leq \, \displaystyle{
\frac{1}{c_2^{\beta} \, \nu^{\beta ( 1 + c_1 ) - \delta}}
} .
\]
Thus the remaining two sums $S_2$ and $S_5$ are finite too.
\end{proof}

Based on the above lemma, we next prove a preliminary result for the sequence
$\{ x^{\nu} \}$ of iterates produced by the SPSA.  Notice that the
proposition does not assume any limiting condition on the
sequence of penalty parameters $\{ \lambda_{\nu} \}$.

\begin{proposition} \label{pr:successive iterates} \rm
Under the blanket assumptions set forth above for the
problem \eqref{eq:new SP} and the assumptions on
$\{ N_{\nu}, \rho_{\nu}, \gamma_{\nu}, \lambda_{\nu} \}$,
  {including those in Lemma~\ref{lm:condition on sample sizes}},
if $\{ x^{\nu} \}$ is any sequence produced by the SPSA, then
the sum $\displaystyle{
\sum_{\nu=1}^{\infty}
} \, \displaystyle{
\frac{\rho_{\nu}}{\lambda_{\nu}}
} \, \| \, x^{\nu+1} - x^{\nu} \, \|^2_2$
is finite with probability one.  \hfill $\Box$
\end{proposition}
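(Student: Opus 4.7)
The plan is to combine a per-iteration descent inequality from the subproblem's global minimization with a telescoping argument, and then bound the resulting ``drift'' using the Rademacher hypothesis and the summability condition~(\ref{eq:gamma diff}). By the touching condition (c) the equality $\wh V_{\lambda_\nu}^{\rho_\nu}(x^\nu, Z^{N_\nu};\gamma_\nu;x^\nu)=V_{\lambda_\nu}(x^\nu, Z^{N_\nu};\gamma_\nu)$ holds, and by the upper-surrogation condition (b), $\wh V_{\lambda_\nu}^{\rho_\nu}(x^{\nu+1}, Z^{N_\nu};\gamma_\nu;x^\nu)\ge V_{\lambda_\nu}(x^{\nu+1}, Z^{N_\nu};\gamma_\nu)+\frac{\rho_\nu}{2}\|x^{\nu+1}-x^\nu\|_2^2$; combined with the global minimality of $x^{\nu+1}$, these yield $V_{\lambda_\nu}(x^{\nu+1}, Z^{N_\nu};\gamma_\nu)+\frac{\rho_\nu}{2}\|x^{\nu+1}-x^\nu\|_2^2 \le V_{\lambda_\nu}(x^\nu, Z^{N_\nu};\gamma_\nu)$. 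Dividing by $\lambda_\nu$, setting $U_\nu(x):=\lambda_\nu^{-1}V_{\lambda_\nu}(x,Z^{N_\nu};\gamma_\nu)$, summing for $\nu=1,\ldots,M$ and using $U_M(x^{M+1})\ge 0$ gives the basic telescoping bound $\sum_{\nu=1}^M \frac{\rho_\nu}{2\lambda_\nu}\|x^{\nu+1}-x^\nu\|_2^2 \le U_1(x^1)+\sum_{\nu=2}^M[U_\nu(x^\nu)-U_{\nu-1}(x^\nu)]$.

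The crux is to show $\mathbb{E}\sum_\nu |U_\nu(x^\nu)-U_{\nu-1}(x^\nu)|$ is uniformly bounded in $M$. Each drift splits into an objective piece and a penalty piece. In the objective piece, the portion coming purely from the $\lambda$-update is nonpositive because $\lambda_\nu\ge\lambda_{\nu-1}$ and $c_0\ge 0$; the portion from the sample increase has the algebraic form $\frac{1}{\lambda_\nu}\cdot\frac{N_\nu-N_{\nu-1}}{N_\nu}$ times the difference of two independent sample averages of $c_0(x^\nu,\cdot)$, so invoking Lemma~\ref{lm:Rademacher range implies Nemirovsky} separately on each average (legitimate because the fresh samples are independent of $x^\nu$) bounds its expectation by a positive multiple of $\frac{(N_\nu-N_{\nu-1})^{1-\beta}}{N_\nu}+\frac{N_\nu-N_{\nu-1}}{N_\nu N_{\nu-1}^\beta}$, the summands of $S_3$ and $S_1$. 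The penalty piece is first reduced via the Lipschitz bound $|\max(a,0)-\max(b,0)|\le |a-b|$ and the finiteness of $K$ to the same analysis applied componentwise to $c_k(\cdot;\gamma)$, now with the $\gamma$-dependent Rademacher bound; summability follows from $S_2, S_4, S_5, S_6$, using $\beta<1/2$ so that $1/\sqrt{N_\nu}\le 1/N_\nu^\beta$. The residual ``$\gamma$-only'' contribution $|\phi(x^\nu,Z^{N_{\nu-1}};\gamma_\nu)-\phi(x^\nu,Z^{N_{\nu-1}};\gamma_{\nu-1})|$ is handled by inserting $\pm\,\mathbb{E}[c_k(x^\nu,\tilde z;\gamma_j)]$ for $j\in\{\nu,\nu-1\}$, decomposing it into a deterministic ``$\gamma$-drift'' controlled by $\sup_{x\in X}|\mathbb{E}[c_k(x,\tilde z;\gamma_\nu)-c_k(x,\tilde z;\gamma_{\nu-1})]|$, summable by~(\ref{eq:gamma diff}), plus two sampling errors handled as above.

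Adding all contributions and invoking Lemma~\ref{lm:condition on sample sizes}, every error series is finite, whence $\mathbb{E}\sum_{\nu=1}^\infty \frac{\rho_\nu}{\lambda_\nu}\|x^{\nu+1}-x^\nu\|_2^2 <\infty$ by monotone convergence, which yields the almost-sure conclusion. The main obstacle I foresee is the accounting in the drift decomposition: one has to track the three simultaneous parameter changes $(\lambda_\nu,N_\nu,\gamma_\nu)$ and match each resulting error term to one of the six summable series of Lemma~\ref{lm:condition on sample sizes} or to the $\gamma$-drift~(\ref{eq:gamma diff}), exploiting in particular the explicit factor $1/\gamma$ in the Rademacher bound for the constraint functionals and the freedom in $\beta<1/2$ so that the Nemirovski-type inequality delivers the exponents that Lemma~\ref{lm:condition on sample sizes} is designed to control.
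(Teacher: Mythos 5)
Your proposal is correct and reproduces the substance of the paper's argument: the same descent inequality from majorization, optimality, and the touching condition; the same monotonicity uses of $\lambda_\nu\ge\lambda_{\nu-1}$ and $c_0\ge 0$; the same three-way drift decomposition into an objective sampling error, a $\gamma$-update error, and a constraint sampling error; and the same bounding of each piece via Lemma~\ref{lm:Rademacher range implies Nemirovsky}, the series of Lemma~\ref{lm:condition on sample sizes}, and the summability condition~(\ref{eq:gamma diff}). The one place you genuinely depart from the paper is the finishing move. The paper takes conditional expectations with respect to ${\cal F}^{\nu-1}$, shows the conditional drifts are almost surely summable, and invokes the Robbins--Siegmund nonnegative almost-supermartingale lemma; you instead telescope the descent inequality directly, drop the nonnegative terminal term $U_M(x^{M+1})$, and bound the \emph{total} expectation of the accumulated drift, concluding by monotone convergence. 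Your route is more elementary and avoids the supermartingale machinery; it is valid because every drift term is summable in unconditional expectation (not merely conditionally) and because the nonnegativity of the $V$-functions keeps the telescoped sum bounded below. What the conditioning buys the paper is a marginally cleaner bound on the objective drift (the fresh-sample fluctuation vanishes exactly under $\mathbb{E}[\cdot\mid{\cal F}^{\nu-1}]$, whereas you must absorb it into the $S_3$-type series), but as you correctly note this extra term is controlled by Lemma~\ref{lm:condition on sample sizes} anyway, so nothing is lost.
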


\begin{proof}  Based on the main iteration in the SPSA, we have
\begingroup
\allowdisplaybreaks
\begin{align*}
& \;\displaystyle{
\frac{1}{\lambda_{\nu}}
} \, V_{\lambda_{\nu}}(x^{\nu+1}, Z^{N_{\nu}};\gamma_{\nu}) + \displaystyle{
\frac{\rho_{\nu}}{2 \, \lambda_{\nu}}
} \, \| \, x^{\nu+1} - x^{\nu} \, \|^2_2 \\[0.05in]
\leq &\;\, \displaystyle{
\frac{1}{\lambda_{\nu}}
} \, \wh{V}_{\lambda_{\nu}}(x^{\nu+1},Z^{N_{\nu}};\gamma_{\nu};x^{\nu}) + \displaystyle{
\frac{\rho_{\nu}}{2 \, \lambda_{\nu}}
} \, \| \, x^{\nu+1} - x^{\nu} \, \|^2 \hspace{0.3in} \mbox{by majorization} \\[0.05in]
\leq &\,\; \displaystyle{
\frac{1}{\lambda_{\nu}}
} \, \wh{V}_{\lambda_{\nu}}(x^{\nu},Z^{N_{\nu}};\gamma_{\nu};x^{\nu}) \hspace{0.8in}
\mbox{by the optimality of $x^{\nu+1}$} \\[0.05in]
= &\;\, \displaystyle{
\frac{1}{\lambda_{\nu}}
} \, V_{\lambda_{\nu}}(x^{\nu},Z^{N_{\nu}};\gamma_{\nu}) \hspace{0.7in}
\mbox{by the touching property of the majorization} \\
= &\;\, \displaystyle{
\frac{1}{N_{\nu}}
} \, \displaystyle{
\frac{1}{\lambda_{\nu}}
} \, \, \displaystyle{
\sum_{s=1}^{N_{\nu}}
} \, c_0(x^{\nu},z^s) + \textcolor{black}{\displaystyle{
\sum_{k=1}^K
}} \, \max\left\{ \, \displaystyle{
\frac{1}{N_{\nu}}
} \, \displaystyle{
\sum_{s=1}^{N_{\nu}}
} \, c_k(x^{\nu},z^s;\gamma_{\nu}) - \zeta_k, \, 0 \, \right\} \hspace{0.2in}
\mbox{by definition} \\
\leq &\;\, \displaystyle{
\frac{1}{N_{\nu-1} \, \lambda_{\nu-1}}
} \, \displaystyle{
\sum_{s=1}^{N_{\nu-1}}
} \, c_0(x^{\nu},z^s) + \, \delta_{1, \nu} + \textcolor{black}{\displaystyle{
\sum_{k=1}^K
}} \, \max\left\{ \, \displaystyle{
\frac{1}{N_{\nu-1}}
} \, \displaystyle{
\sum_{s=1}^{N_{\nu-1}}
} \, c_k(x^{\nu},z^s;\gamma_{\nu-1}) - \zeta_k, \, 0 \, \right\} \\
\epc & \;\, + \textcolor{black}{\displaystyle{
\sum_{k=1}^K
}} \, \max\left\{ \, \displaystyle{
\frac{1}{N_{\nu}}
} \, \displaystyle{
\sum_{s=1}^{N_{\nu}}
} \, c_k(x^{\nu},z^s;\gamma_{\nu}) - \zeta_k, \, 0 \, \right\} -
\textcolor{black}{\displaystyle{
\sum_{k=1}^K
}} \, \max\left\{ \, \displaystyle{
\frac{1}{N_{\nu-1}}
} \, \displaystyle{
\sum_{s=1}^{N_{\nu-1}}
} \, c_k(x^{\nu},z^s;\gamma_{\nu-1}) - \zeta_k, \, 0 \, \right\} \\
& \epc \textcolor{black}{
\mbox{by   {$\lambda_{\nu} \geq \lambda_{\nu-1}$}, nonnegativity of $c_0$, definition of $\delta_{1,\nu}$
and adding and subtracting terms}} \\
= & \;\, \displaystyle{
\frac{1}{\lambda_{\nu-1}}
} \, V_{\lambda_{\nu-1}}(x^{\nu},Z^{N_{\nu-1}};\gamma_{\nu-1}) 
+ \delta_{1,\nu} + \delta_{2,\nu} + \delta_{3,\nu}, \\
& \epc \textcolor{black}{
\mbox{by definition of $V_{\lambda_{\nu-1}}(x^{\nu},Z^{N_{\nu-1}};\gamma_{\nu-1})$
and the definitions of the $\delta$-terms below}}
\end{align*}
\endgroup
where
\[
\begin{array}{ll}
\delta_{1,\nu} \triangleq   {\displaystyle{
\frac{1}{\lambda_{\nu-1}}
} \, \Bigg( \, \displaystyle{
\frac{1}{N_{\nu}}
} \, \displaystyle{
\sum_{s=1}^{N_{\nu}}
} \, c_0(x^{\nu},z^s) -  \displaystyle{
\frac{1}{N_{\nu-1}}
} \, \displaystyle{
\sum_{s=1}^{N_{\nu-1}}
} \, c_0(x^{\nu},z^s) \, \Bigg)},  \\ [0.25in]
\delta_{2,\nu} \triangleq \textcolor{black}{\displaystyle{
\sum_{k=1}^K
}} \, \left[ \, \max\Bigg\{ \, \displaystyle{
\frac{1}{N_{\nu}}
} \, \displaystyle{
\sum_{s=1}^{N_{\nu}}
} \, c_k(x^{\nu},z^s;\gamma_{\nu}) - \zeta_k, \, 0 \, \Bigg\} -
\max\Bigg\{ \, \displaystyle{
\frac{1}{N_{\nu}}
} \, \displaystyle{
\sum_{s=1}^{N_{\nu}}
} \, c_k(x^{\nu},z^s;\gamma_{\nu-1}) - \zeta_k, \, 0 \, \Bigg\} \, \right] \\ [0.25in]
\delta_{3,\nu} \triangleq \textcolor{black}{\displaystyle{
\sum_{k=1}^K
}} \, \left[ \, \max\Bigg\{ \, \displaystyle{
\frac{1}{N_{\nu}}
} \, \displaystyle{
\sum_{s=1}^{N_{\nu}}
} \, c_k(x^{\nu},z^s;\gamma_{\nu-1}) - \zeta_k, \, 0 \, \Bigg\} -
\max\Bigg\{ \, \displaystyle{
\frac{1}{N_{\nu-1}}
} \, \displaystyle{
\sum_{s=1}^{N_{\nu-1}}
} \, c_k(x^{\nu},z^s;\gamma_{\nu-1}) - \zeta_k, \, 0 \, \Bigg\} \, \right].
\end{array} \]
Therefore taking conditional expectation with respect to the
$\sigma$-algebra ${\cal F}^{\, \nu-1}$
generated by the family $Z^{N_{\nu-1}}$ of random samples up to iteration $\nu-1$,
we have
\[ \begin{array}{l}
\mathbb{E}\left[ \, \displaystyle{
\frac{1}{\lambda_{\nu}}
} \, V_{\lambda_{\nu}}(x^{\nu+1},Z^{N_{\nu}};\gamma_{\nu}) + \displaystyle{
\frac{\rho_\nu}{2 \lambda_{\nu}}
} \, \| \, x^{\nu+1} - x^{\nu} \, \|^2 \, | \, {\cal F}^{\nu-1} \, \right] \\ [0.2in]
\leq \, \displaystyle{
\frac{1}{\lambda_{\nu-1}}
} \,  V_{\lambda_{\nu-1}}(x^{\nu},Z^{N_{\nu-1}};\gamma_{\nu-1}) +
\abs{ \, \mathbb{E}\left[ \, \delta_{1,\nu} \, | \, {\cal F}^{\nu-1} \, \right]\, }+
\abs{\, \mathbb{E}\left[ \, \delta_{2,\nu} \, | \, {\cal F}^{\nu-1} \, \right] \,}+
\abs{\, \mathbb{E}\left[ \, \delta_{3,\nu} \, | \, {\cal F}^{\nu-1} \, \right]\,}.  
\end{array} \]
We next evaluate each error term individually.  Since
  {
\[ \begin{array}{l}
\displaystyle{
\frac{1}{N_{\nu}}
} \, \displaystyle{
\sum_{s=1}^{N_{\nu}}
} \, c_0(x^{\nu},z^s) -  \displaystyle{
\frac{1}{N_{\nu-1}}
} \, \displaystyle{
\sum_{s=1}^{N_{\nu-1}}
} \, c_0(x^{\nu},z^s) \\ [0.2in]
= \, \displaystyle{
\frac{1}{N_{\nu}}
} \, \displaystyle{
\sum_{s=1}^{N_{\nu-1}}
} \, c_0(x^{\nu},z^s) + \displaystyle{
\frac{1}{N_{\nu}}
} \, \displaystyle{
\sum_{s=N_{\nu-1}+1}^{N_{\nu}}
} \, c_0(x^{\nu},z^s) -  \displaystyle{
\frac{1}{N_{\nu-1}}
} \, \displaystyle{
\sum_{s=1}^{N_{\nu-1}}
} \, c_0(x^{\nu},z^s) \\ [0.2in]
= \, \left( \, 1 - \displaystyle{
\frac{N_{\nu-1}}{N_{\nu}}
} \, \, \right) \, \left( \, \displaystyle{
\frac{1}{N_{\nu} - N_{\nu-1}}
} \, \displaystyle{
\sum_{s=N_{\nu-1}+1}^{N_{\nu}}
} \, c_0(x^{\nu},z^s) - \displaystyle{
\frac{1}{N_{\nu-1}}
} \, \displaystyle{
\sum_{s=1}^{N_{\nu-1}}
} \, c_0(x^{\nu},z^s) \, \right),
\end{array} \]}
and $\{ z^s \}$ are i.i.d.\ samples of $\wt{z}$, we deduce
\[ \begin{array}{lll}
\mathbb{E}\left[ \, \left| \, \mathbb{E}\left[ \, \delta_{1,\nu} \, | \,
{\cal F}^{\nu-1} \, \right] \, \right| \, \right]
& = & \displaystyle{
\frac{N_{\nu} - N_{\nu-1}}{N_{\nu} \, \lambda_{\nu-1}}
} \, \, \mathbb{E}\, \Bigg[\, \Bigg| \, \mathbb{E}[ c_0(x^{\nu},\tilde{z} ) ] -
\displaystyle{
\frac{1}{N_{\nu-1}}
} \, \displaystyle{
\sum_{s=1}^{N_{\nu-1}}
} \, c_0(x^{\nu},z^s) \, \Bigg| \,\Bigg]\\ [0.2in]
& \leq & \displaystyle{
\frac{N_{\nu} - N_{\nu-1}}{N_{\nu} \, \lambda_{\nu-1}}
} \, \, \mathbb{E}\, \Bigg[\, \displaystyle{
\sup_{x \in X}
} \, \Bigg| \, \mathbb{E}[ c_0(x,\tilde{z} ) ] -
\displaystyle{
\frac{1}{N_{\nu-1}}
} \, \displaystyle{
\sum_{s=1}^{N_{\nu-1}}
} \, c_0(x,z^s) \, \Bigg| \,\Bigg]\\ [0.25in]
& \leq & \displaystyle{
\frac{N_{\nu} - N_{\nu-1}}{N_{\nu} \, \lambda_{\nu-1}}
} \, \displaystyle{
\frac{2 \, W_0}{N_{\nu-1}^{\, \beta}}
} \, \leq \, \displaystyle{
\frac{N_{\nu} - N_{\nu-1}}{N_{\nu}}
} \, \displaystyle{
\frac{2 \, W_0}{N_{\nu-1}^{\, \beta}}
}\, ,
\end{array} \]
where the   {second} inequality follows from the growth assumption of the Rademacher
averages of the objective
function  and the   {third} inequality holds because $\lambda_{\nu-1} \geq 1$.
For the second error term $\delta_{2, \nu}$,  we have
\[ \begin{array}{lll}
\left| \, \delta_{2,\nu} \, \right| & \leq & \displaystyle{
\sum_{k=1}^K
} \, \left| \, \begin{array}{l}
\max\Bigg\{ \, \displaystyle{
\frac{1}{N_{\nu}}
} \, \displaystyle{
\sum_{s=1}^{N_{\nu}}
} \, c_k(x^{\nu},z^s;\gamma_{\nu}) - \zeta_k, \, 0 \, \Bigg\} \\ [0.25in]
- \, \max\Bigg\{ \, \displaystyle{
\frac{1}{N_{\nu}}
} \, \displaystyle{
\sum_{s=1}^{N_{\nu}}
} \, c_k(x^{\nu},z^s;\gamma_{\nu-1}) - \zeta_k, \, 0 \, \Bigg\}
\end{array} \right| \\ [0.55in]
& \leq & \displaystyle{
\sum_{k=1}^K
} \ \Bigg| \, \displaystyle{
\frac{1}{N_{\nu}}
} \, \displaystyle{
\sum_{s=1}^{N_{\nu}}
} \, c_k(x^{\nu},z^s;\gamma_{\nu}) - \displaystyle{
\frac{1}{N_{\nu}}
} \, \displaystyle{
\sum_{s=1}^{N_{\nu}}
} \, c_k(x^{\nu},z^s;\gamma_{\nu-1}) \, \Bigg| \\ [0.3in]
& \leq & \displaystyle{
\sum_{k=1}^K
} \, \left[ \, \begin{array}{l}
\Bigg| \, \mathbb{E}[ c_k(x^{\nu},\tilde z;\gamma_{\nu}) ] - \displaystyle{
\frac{1}{N_{\nu}}
} \, \displaystyle{
\sum_{s=1}^{N_{\nu}}
} \, c_k(x^{\nu},z^s;\gamma_{\nu}) \, \Bigg| \, + \\ [0.2in]
\Bigg| \,  \mathbb{E}[ c_k(x^{\nu},\tilde z;\gamma_{\nu-1}) ] - \displaystyle{
\frac{1}{N_{\nu}}
} \, \displaystyle{
\sum_{s=1}^{N_{\nu}}
} \, c_k(x^{\nu},z^s;\gamma_{\nu-1}) \, \Bigg| \, + \\ [0.2in]
\Bigg| \, \mathbb{E}[ c_k(x^{\nu},\tilde z;\gamma_{\nu}) ] -
\mathbb{E}[ c_k(x^{\nu},\tilde z;\gamma_{\nu-1}) ] \, \Bigg|
\end{array} \right].
\end{array} \]
Consequently, since $1/\gamma_{\nu} \geq 1/\gamma_{\nu-1}$, we deduce
\[ \begin{array}{l}
\mathbb{E}\left[ \, \left| \, \mathbb{E}\left[ \, \delta_{2,\nu} \,
| \, {\cal F}^{\nu-1} \, \right] \, \right| \, \right] \, \leq \,
\displaystyle{
\sum_{k=1}^K
} \, \mathbb{E}\left[ \, \left[ \, \begin{array}{l}
\displaystyle{
\sup_{x \in X}
} \, \Bigg| \, \mathbb{E}[ c_k(x,\tilde z;\gamma_{\nu}) ] - \displaystyle{
\frac{1}{N_{\nu}}
} \, \displaystyle{
\sum_{s=1}^{N_{\nu}}
} \, c_k(x,z^s;\gamma_{\nu}) \, \Bigg| \, + \\ [0.2in]
\displaystyle{
\sup_{x \in X}
} \, \Bigg| \,  \mathbb{E}[ c_k(x,\tilde z;\gamma_{\nu-1}) ] - \displaystyle{
\frac{1}{N_{\nu}}
} \, \displaystyle{
\sum_{s=1}^{N_{\nu}}
} \, c_k(x,z^s;\gamma_{\nu-1}) \, \Bigg| \, + \\ [0.2in]
\displaystyle{
\sup_{x \in X}
} \,\Bigg| \, \mathbb{E}[ c_k(x,\tilde z;\gamma_{\nu}) ] -
\mathbb{E}[ c_k(x,\tilde z;\gamma_{\nu-1}) ] \, \Bigg|
\end{array} \right] \, \right] \\ [0.8in]
\epc \leq \, \displaystyle{
\frac{2 K}{N_{\nu}^{\beta}}
} \, \left( \, W_1 + \displaystyle{
\frac{W_2}{\gamma_{\nu}}
} \, \right) + \displaystyle{
\sum_{k=1}^K
} \ \displaystyle{
\sup_{x \in X}
} \, \Bigg| \,  \mathbb{E}[ c_k(x,\tilde z;\gamma_{\nu}) - c_k(x,\tilde z;\gamma_{\nu-1}) ] \, \Bigg|,
\end{array} \]
where for simplicity, we have used the fact that $\sqrt{N_{\nu}} \geq N_{\nu}^{\beta}$.
Regarding the third error term $\delta_{3, \nu}$,  we have
\[ \begin{array}{l}
\left| \, \delta_{3,\nu} \, \right| \, \leq \, \textcolor{black}{\displaystyle{
\sum_{k=1}^K
}} \, \left| \, \begin{array}{l}
\max\Bigg\{ \, \displaystyle{
\frac{1}{N_{\nu}}
} \, \displaystyle{
\sum_{s=1}^{N_{\nu}}
} \, c_k(x^{\nu},z^s;\gamma_{\nu-1}) - \zeta_k, \, 0 \, \Bigg\} \\ [0.25in]
- \, \max\Bigg\{ \, \displaystyle{
\frac{1}{N_{\nu-1}}
} \, \displaystyle{
\sum_{s=1}^{N_{\nu-1}}
} \, c_k(x^{\nu},z^s;\gamma_{\nu-1}) - \zeta_k, \, 0 \, \Bigg\}
\end{array} \right| \\ [0.55in]
\, \leq \, \textcolor{black}{\displaystyle{
\sum_{k=1}^K
}} \ \Bigg| \, \displaystyle{
\frac{1}{N_{\nu}}
} \, \displaystyle{
\sum_{s=1}^{N_{\nu}}
} \, c_k(x^{\nu},z^s;\gamma_{\nu-1}) - \displaystyle{
\frac{1}{N_{\nu-1}}
} \, \displaystyle{
\sum_{s=1}^{N_{\nu-1}}
} \, c_k(x^{\nu},z^s;\gamma_{\nu-1}) \, \Bigg| \\ [0.3in]
\, \leq \, \displaystyle{
\frac{N_{\nu} - N_{\nu-1}}{N_{\nu}}
} \, \textcolor{black}{\displaystyle{
\sum_{k=1}^K
}} \, \left[ \, \begin{array}{l}
\Bigg| \, \mathbb{E}[ c_k(x^{\nu},\tilde z;\gamma_{\nu-1}) ] - \displaystyle{
\frac{1}{N_{\nu-1}}
} \, \displaystyle{
\sum_{s=1}^{N_{\nu-1}}
} \, c_k(x^{\nu},z^s;\gamma_{\nu-1}) \, \Bigg| \\ [0.2in]
+ \ \Bigg| \, \displaystyle{
\frac{1}{N_{\nu} - N_{\nu-1}}
} \, \displaystyle{
\sum_{s=N_{\nu-1}+1}^{N_{\nu}}
} \, c_k(x^{\nu},z^s;\gamma_{\nu-1}) - \mathbb{E}[ c_k(x^{\nu},\tilde{z};
\gamma_{\nu-1}) ] \, \Bigg| 
\end{array} \, \right]. 
\end{array} \]
Consequently,
\[ \begin{array}{l}
\mathbb{E}\left[ \, \left| \, \mathbb{E}\left[ \, \delta_{3,\nu} \,
| \, {\cal F}^{\nu-1} \, \right] \, \right| \, \right] \\ [0.2in]
\leq \, \displaystyle{
\frac{N_{\nu} - N_{\nu-1}}{N_{\nu}}
} \, \textcolor{black}{\displaystyle{
\sum_{k=1}^K
}} \ \mathbb{E}\left[ \, \begin{array}{l}
\displaystyle{
\sup_{x \in X}
} \, \Bigg| \, \mathbb{E}[ c_k(x,\tilde z;\gamma_{\nu-1}) ] - \displaystyle{
\frac{1}{N_{\nu-1}}
} \, \displaystyle{
\sum_{s=1}^{N_{\nu-1}}
} \, c_k(x,z^s;\gamma_{\nu-1}) \, \Bigg| \\ [0.2in]
+ \ \displaystyle{
\sup_{x \in X}
} \, \Bigg| \, \displaystyle{
\frac{1}{N_{\nu} - N_{\nu-1}}
} \, \displaystyle{
\sum_{s=N_{\nu-1}+1}^{N_{\nu}}
} \, c_k(x,z^s;\gamma_{\nu-1}) - \mathbb{E}[ c_k(x,\tilde{z};
\gamma_{\nu-1}) ] \, \Bigg| 
\end{array} \, \right] \\ [0.5in]
\leq \, \displaystyle{
\frac{N_{\nu} - N_{\nu-1}}{N_{\nu}}
} \, \left[ \, \displaystyle{
\frac{1}{N_{\nu-1} ^{\, \beta}}
} + \displaystyle{
\frac{1}{( \, N_{\nu} - N_{\nu-1} \, )^{\, \beta}}
} \, \right] \, (  2 K ) \, \left( \, W_1 + \displaystyle{
\frac{W_2}{\gamma_{\nu-1}}
} \, \right).
\end{array}
\]
By Lemma~\ref{lm:condition on sample sizes}, 
we can show that
\[ \displaystyle{
\sum_{\nu=1}^{\infty}
} \ \left( \, \Big| \, \mathbb{E}\left[ \, \delta_{1,\nu} \mid \, {\cal F}^{\nu-1}
\right] \, \Big| +
\Big| \, \mathbb{E} \left[ \, \delta_{2,\nu} \mid \, {\cal F}^{\nu-1} \right] \, \Big| +
\Big| \, \mathbb{E} \left[ \, \delta_{3,\nu} \mid \, {\cal F}^{\nu-1} \right] \, \Big|
\, \right)
\]
is finite with probability 1.
By the Robbins-Siegmund nonnegative almost supermartingale convergence
lemma (see \cite[Theorem~1]{robbins1971convergence} and
\cite[Lemma~11, Chapter~2]{polyak1987optimization}),
it follows that the sum
\[
\displaystyle{
\sum_{\nu=1}^{\infty}
} \, \displaystyle{
\frac{\rho_{\nu}}{\lambda_{\nu}}
} \, \mathbb{E}\left[ \, \| \, x^{\nu+1} - x^{\nu} \, \|^2 \, \mid \, {\cal F}^{\nu-1}\, \right]
\]
is finite with probability one. Thus so is the sum $\displaystyle{
\sum_{\nu=1}^{\infty}
} \,  \displaystyle{
\frac{\rho_{\nu}}{\lambda_{\nu}}
} \, \| \, x^{\nu+1} - x^{\nu} \, \|^2$ with probability one, by a similar
argument as in \cite[Theorem 1]{LiuCuiPang20}.
\end{proof}

\subsection{Feasibility and stationarity of a limit point}

Define the family ${\cal K}$ of infinite index subsets $\kappa$ of
$\{ 1, 2, \cdots, \infty \}$ such that $\displaystyle{
\lim_{\nu (\in \kappa) \to \infty}
} \, \| \, x^{\nu+1}- x^{\nu} \, \| \, = \, 0$ with probability 1.
This family is nonempty because otherwise, $\displaystyle{
\liminf_{\nu \to \infty}
} \, \| \, x^{\nu+1}- x^{\nu} \, \|$ would be positive, contradicting the
combined consequences:
$\displaystyle{
\sum_{\nu=1}^{\infty}
} \,  \displaystyle{
\frac{\rho_{\nu}}{\lambda_{\nu}}
} \, = \, \infty$ (see (\ref{eq:sum_rho_lambda})) and $\displaystyle{
\sum_{\nu=1}^{\infty}
} \, \displaystyle{
\frac{\rho_{\nu}}{\lambda_{\nu}}
} \, \| \, x^{\nu+1} - x^{\nu} \, \|^2 \, < \, \infty$
(Proposition~\ref{pr:successive iterates}) under the given assumptions.
Let $x^{\infty}$ be any accumulation point
  {(which must exist by the boundedness assumption of $X$)} of the subsequence
$\{ x^{\nu} \}_{\nu \in \kappa}$ produced by the SPSA for any
$\kappa \in {\cal K}$.   For simplicity, we assume that
$x^{\infty} = \displaystyle{
\lim_{\nu (\in \kappa) \to \infty}
} \ x^{\nu}$.  Hence $x^{\infty} = \displaystyle{
\lim_{\nu (\in \kappa) \to \infty}
} \ x^{\nu+1}$.  We wish to establish certain
feasibility and stationarity property of such a limit point.  We will divide the
analysis into two major cases: (i) the sequence
$\{ \gamma_{\nu} \}$ is a constant, and (ii) $\{ \gamma_{\nu} \} \downarrow 0$.
By the majorization property of the surrogation functions and the global optimality
of the iterates, we have
\[ \begin{array}{l}
\displaystyle{
\frac{\rho_{\nu}}{2 \lambda_{\nu}}
} \, \| \, x^{\nu+1} - x^{\nu} \, \|^2 + \displaystyle{
\frac{1}{N_{\nu} \lambda_{\nu}}
} \, \displaystyle{
\sum_{s=1}^{N_{\nu}}
} \, c_0(x^{\nu+1},z^s) + \displaystyle{
\sum_{k=1}^K
} \, \max\left\{ \, \displaystyle{
\frac{1}{N_{\nu}}
} \, \displaystyle{
\sum_{s=1}^{N_{\nu}}
} \, c_k(x^{\nu+1},z^s;\gamma_{\nu}) - \zeta_k, \, 0 \, \right\} \\ [0.3in]
\leq \, \displaystyle{
\frac{\rho_{\nu}}{2 \lambda_{\nu}}
} \, \| \, x^{\nu+1} - x^{\nu} \, \|^2 + \displaystyle{
\frac{1}{N_{\nu} \lambda_{\nu}}
} \, \displaystyle{
\sum_{s=1}^{N_{\nu}}
} \, \wh{c}_0(x^{\nu+1},z^s;x^{\nu}) + \displaystyle{
\sum_{k=1}^K
} \, \max\left\{ \, \displaystyle{
\frac{1}{N_{\nu}}
} \, \displaystyle{
\sum_{s=1}^{N_{\nu}}
} \, \wh{c}_k(x^{\nu+1},z^s;\gamma_{\nu};x^{\nu}) - \zeta_k, \, 0 \, \right\} \\ [0.3in]
\leq \, \displaystyle{
\frac{\rho_{\nu}}{2 \lambda_{\nu}}
} \, \| \, x - x^{\nu} \, \|^2 + \displaystyle{
\frac{1}{N_{\nu} \lambda_{\nu}}
} \, \displaystyle{
\sum_{s=1}^{N_{\nu}}
} \, \wh{c}_0(x,z^s;x^{\nu}) + \displaystyle{
\sum_{k=1}^K
} \, \max\left\{ \, \displaystyle{
\frac{1}{N_{\nu}}
} \, \displaystyle{
\sum_{s=1}^{N_{\nu}}
} \, \wh{c}_k(x,z^s;\gamma_{\nu};x^{\nu}) - \zeta_k, \, 0 \, \right\},\
\forall \, x \in X .
\end{array} \]

\subsubsection{Fixed approximation parameter $\gamma_\nu$}

Let $\gamma_{\nu} = \underline{\gamma} > 0$ for all $\nu$.
We then have the following inequality, which is  the cornerstone of the
remaining arguments. For all $x \in X$,
\begin{equation} \label{eq:LRHS}
\begin{array}{l}
\underbrace{\displaystyle{
\frac{\rho_{\nu}}{2 \lambda_{\nu}}
} \, \| \, x^{\nu+1} - x^{\nu} \, \|^2 + \displaystyle{
\frac{1}{N_{\nu} \lambda_{\nu}}
} \, \displaystyle{
\sum_{s=1}^{N_{\nu}}
} \, c_0(x^{\nu+1},z^s) + \displaystyle{
\sum_{k=1}^K
} \, \max\left\{ \, \displaystyle{
\frac{1}{N_{\nu}}
} \, \displaystyle{
\sum_{s=1}^{N_{\nu}}
} \, c_k(x^{\nu+1},z^s;\underline{\gamma}) - \zeta_k, \, 0 \, \right\}}_{
\mbox{denoted LHS$_{\, \nu}$}} \\ [0.35in]
\leq \,  \underbrace{\displaystyle{
\frac{\rho_{\nu}}{2 \lambda_{\nu}}
} \, \| \, x - x^{\nu} \, \|^2 + \displaystyle{
\frac{1}{N_{\nu} \lambda_{\nu}}
} \, \displaystyle{
\sum_{s=1}^{N_{\nu}}
} \, \wh{c}_0(x,z^s;x^{\nu}) + \displaystyle{
\sum_{k=1}^K
} \, \max\left\{ \, \displaystyle{
\frac{1}{N_{\nu}}
} \, \displaystyle{
\sum_{s=1}^{N_{\nu}}
} \, \wh{c}_k(x,z^s;\underline{\gamma};x^{\nu}) - \zeta_k, \, 0 \, \right\}}_{
\mbox{denoted RHS$_{\, \nu}$}}.
\end{array} \end{equation}

\gap

The following theorem presents the main convergence result for the case of a fixed
$\underline{\gamma}$.  In particular, the first assertion gives a sufficient
condition
for the feasibility of a limit point to the $\underline{\gamma}$-approximation
problem (\ref{eq:new SP}), under which
the B-stationarity of the point to the same problem can be established with
a further constraint qualification.  Notice that since $\underline{\gamma}$ stays
positive, one cannot expect the feasibility to the limiting constraint
in (\ref{eq:focus sp_dc_constraint}) to be recovered.  Thus, this result addresses
basically the $\underline\gamma$-approximation of the chance-constraint optimization
problem  with an arbitrarily prescribed scaling parameter $\underline\gamma > 0$.

\begin{theorem} \label{th:convergence finite gamma} \rm
In the setting of Proposition~\ref{pr:successive iterates}  for  the problem
\eqref{eq:new SP}, let $\{ x^{\nu} \}$ be a sequence of iterates
produced by the Algorithm with $\gamma_{\nu}$ equal to the constant
$\underline{\gamma}$ for all $\nu$.  For any infinite index set $\kappa \in {\cal K}$,
the following two statements (a) and (b) hold for any accumulation point
$x^{\infty}$ of the subsequence $\{ x^{\nu} \}_{\nu \in \kappa}$:

\gap

(a) If the following \textcolor{black}{surrogate}
$\underline{\gamma}$-problem at $x^{\infty}$:
\begin{equation} \label{eq:problem at limit}
\begin{array}{ll}
\displaystyle{
\operatornamewithlimits{\mbox{\bf minimize}}_{x \in X}
} & 
\mathbb{E}\left[ \, \wh{c}_0(x,\tilde{z};x^{\infty}) \, \right] \\ [0.1in]
\mbox{\bf subject to} & \wh{c}_k(x;\underline{\gamma};x^{\infty}) \, \triangleq \,
\mathbb{E}\left[ \, \wh{c}_k(x,\tilde{z};\underline{\gamma};x^{\infty}) \, \right]
\, \leq \, \zeta_k \epc \forall \, k \, \in \, [ K ]
\end{array} \end{equation}
has a feasible solution $\wh{x}$ satisfying
\[
\displaystyle{
\frac{1}{\lambda_{\infty}}
} \, \mathbb{E}\left[ \, \wh{c}_0(\wh{x},\tilde{z};x^{\infty}) \, \right] \, \leq \,
\displaystyle{
\frac{1}{\lambda_{\infty}}
} \,\mathbb{E}\left[ \, \wh{c}_0(x^{\infty},\tilde{z};x^{\infty}) \, \right],
\]
[the latter condition is trivially satisfied if $\lambda_{\infty} = +\infty$],
then $x^{\infty}$ is feasible to the $\underline{\gamma}$-problem
(\ref{eq:new SP}), or equivalently,
feasible to the problem (\ref{eq:problem at limit}).

\gap

\textcolor{black}{(b) Assume that the vector $\wh{x}$ in (a) exists.  Then, under
the constraint closure condition:
\begin{equation} \label{eq:closure condition finite gamma}
\emptyset \, \neq \, \displaystyle{
\bigcap_{k=1}^K
} \, \left\{ \, x \, \in \, X \, \mid \, \wh{c}_k(x;\underline{\gamma};x^{\infty})
\, \leq \, \zeta_k \, \right\}
\, \subseteq \, \mbox{cl} \left( \, \displaystyle{
\bigcap_{k=1}^K
} \, \left\{ \, x \, \in \, X \, \mid \, \wh{c}_k(x;\underline{\gamma};x^{\infty})
\, < \, \zeta_k \, \right\} \right),
\end{equation}
it holds that
$x^{\infty}$ is a B-stationary point of the problem (\ref{eq:problem at limit});
%
%
if additionally,
$x^{\infty}$, which
is feasible to the $\underline{\gamma}$-problem (\ref{eq:new SP}),
satisfies the directional Slater condition for the feasible set of this problem,
\textcolor{black}{i.e., if the following inclusion holds:
\begin{equation} \label{eq:dd-Slater}
\begin{array}{l}
\displaystyle{
\bigcap_{k \in {\cal A}(x^{\infty};\underline{\gamma})}
} \, \left\{ \, v \, \in \, {\cal T}(x^{\infty};X) \, \mid \,
\bar{c}_k(\bullet;\underline{\gamma})^{\prime}(x^{\infty};v) \, \leq 0 \, \right\}
\\ [0.25in]
\epc \subseteq \mbox{cl}\left( \, \displaystyle{
\bigcap_{k \in {\cal A}\left( x^{\infty};\underline{\gamma} \right)}
} \, \left\{ \, v \, \in \, {\cal T}(x^{\infty};X) \, \mid \,
\bar{c}_k(\bullet;\underline{\gamma})^{\prime}(x^{\infty};v) \, < 0 \, \right\}
\, \right),
\end{array} \end{equation}
where ${\cal A}\left( x^{\infty};\underline{\gamma} \right) \, \triangleq \,
\left\{ \, k \in [ K ] \, \mid \,
\bar{c}_k(x^{\infty};\underline{\gamma}) \, = \, \zeta_k \, \right\}$,}
\noindent then $x^{\infty}$ is a B-stationary point of (\ref{eq:new SP}).}
\end{theorem}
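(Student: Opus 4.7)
The plan is to leverage the key inequality (\ref{eq:LRHS}) with carefully chosen test points $x$ and pass to the limit along the subsequence indexed by $\kappa$. Three levers drive the analysis: first, $\rho_\nu/\lambda_\nu \to 0$ from (\ref{eq:sum_rho_lambda}) together with $\|x^{\nu+1}-x^\nu\|\to 0$ along $\kappa$ eliminates both quadratic proximal terms; second, Proposition~\ref{pr:uniform pointwise ULLN} applied to $c_0(\bullet,\bullet)$, $c_k(\bullet,\bullet;\underline{\gamma})$ and to the surrogates $\wh{c}_0(\bullet,\bullet;x^\nu)$, $\wh{c}_k(\bullet,\bullet;\underline{\gamma};x^\nu)$ (which are upper semicontinuous in the surrogation argument as $x^\nu \to x^\infty$) controls the sample averages uniformly in $\nu$; and third, the five properties of the surrogation---most crucially the touching identity $\wh{c}_k(x^\infty,z;\underline{\gamma};x^\infty)=c_k(x^\infty,z;\underline{\gamma})$ and the directional derivative consistency~(e) at $x^\infty$---converts surrogate-problem information back to the original problem.

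For part~(a), I would substitute $x=\wh{x}$ into (\ref{eq:LRHS}). Upper semicontinuity of $\wh{c}_k(\wh{x},z;\underline{\gamma};\bullet)$ combined with the ULLN yields
\[
\limsup_{\nu\,(\in\kappa)\to\infty}\frac{1}{N_\nu}\sum_{s=1}^{N_\nu}\wh{c}_k(\wh{x},z^s;\underline{\gamma};x^\nu)\ \le\ \wh{c}_k(\wh{x};\underline{\gamma};x^\infty)\ \le\ \zeta_k,
\]
so the right-hand-side max terms vanish in the limit. Taking $\limsup$ in (\ref{eq:LRHS})---handling the cases $\lambda_\infty<\infty$ and $\lambda_\infty=+\infty$ uniformly via $1/\lambda_\nu \to 1/\lambda_\infty \in [0,1]$---and invoking the touching identity $\mathbb{E}[\wh{c}_0(x^\infty,\tilde{z};x^\infty)]=\bar{c}_0(x^\infty)$ together with the hypothesis on $\wh{x}$, yields
\[
\frac{\bar{c}_0(x^\infty)}{\lambda_\infty}+\sum_{k=1}^K \max\{\bar{c}_k(x^\infty;\underline{\gamma})-\zeta_k,\,0\}\ \le\ \frac{\mathbb{E}[\wh{c}_0(\wh{x},\tilde{z};x^\infty)]}{\lambda_\infty}\ \le\ \frac{\bar{c}_0(x^\infty)}{\lambda_\infty},
\]
forcing the residual sum to vanish; this is feasibility of $x^\infty$.

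For the first assertion of part~(b), the just-established feasibility cancels the residual term on the left-hand side of (\ref{eq:LRHS}). Choose any $x$ \emph{strictly} feasible to (\ref{eq:problem at limit}); by the same upper semicontinuity plus ULLN argument, $\frac{1}{N_\nu}\sum_{s=1}^{N_\nu}\wh{c}_k(x,z^s;\underline{\gamma};x^\nu)<\zeta_k$ for all large $\nu\in\kappa$, so the right-hand-side max terms also drop out. Multiplying through by $\lambda_\nu$ and taking $\limsup$ delivers $\bar{c}_0(x^\infty)\le \mathbb{E}[\wh{c}_0(x,\tilde{z};x^\infty)]$; the closure condition (\ref{eq:closure condition finite gamma}) propagates the inequality to every $x$ feasible to (\ref{eq:problem at limit}), with $x^\infty$ attaining equality via touching. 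Hence $x^\infty$ is a global (and therefore local) minimizer of the surrogate problem, and Proposition~\ref{pr:stationary locmin}(i) delivers B-stationarity; directional derivative consistency combined with interchangeability of derivative and expectation rephrases this as $\bar{c}_0^{\,\prime}(x^\infty;v)\ge 0$ for all $v\in{\cal T}(x^\infty;\wh{X}_{\rm surr})$, where $\wh{X}_{\rm surr}$ denotes the feasible set of (\ref{eq:problem at limit}).

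The final assertion is the main obstacle, because $\wh{X}_{\rm surr}$ is typically a strict subset of the feasible set $\wh{X}$ of (\ref{eq:new SP}), so ${\cal T}(x^\infty;\wh{X}_{\rm surr})$ is a priori smaller than ${\cal T}(x^\infty;\wh{X})$. Property~(e) together with interchangeability gives $\wh{c}_k(\bullet;\underline{\gamma};x^\infty)^{\,\prime}(x^\infty;v)=\bar{c}_k(\bullet;\underline{\gamma})^{\,\prime}(x^\infty;v)$ for every $v$, so the linearization cones at $x^\infty$ of the surrogate and original problems coincide. Under directional Slater (\ref{eq:dd-Slater}) it therefore suffices to show that every $v\in{\cal T}(x^\infty;X)$ with $\bar{c}_k^{\,\prime}(x^\infty;v)<0$ for each active $k$ belongs to ${\cal T}(x^\infty;\wh{X}_{\rm surr})$; the B-differentiability expansion $\wh{c}_k(x^\infty+\tau v;\underline{\gamma};x^\infty)=\zeta_k+\tau\,\bar{c}_k^{\,\prime}(x^\infty;v)+o(\tau)<\zeta_k$ for sufficiently small $\tau>0$ delivers this. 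Closing up via the Lipschitz continuity of $\bar{c}_0^{\,\prime}(x^\infty;\cdot)$ in its direction argument extends $\bar{c}_0^{\,\prime}(x^\infty;v)\ge 0$ from the strict-descent directions to all of ${\cal T}(x^\infty;\wh{X})$, completing the proof.
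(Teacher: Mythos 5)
Your proposal follows essentially the same route as the paper's proof: substitute $\wh{x}$ into the key inequality (\ref{eq:LRHS}), use Proposition~\ref{pr:uniform pointwise ULLN} together with the semicontinuity of the surrogates to eliminate the right-hand max terms, and invoke the touching property to force the constraint residuals at $x^{\infty}$ to vanish (part (a)); then use strictly feasible points supplied by the closure condition (\ref{eq:closure condition finite gamma}) to obtain a minimizing property of $x^{\infty}$ for the surrogate problem, and finally transfer B-stationarity to (\ref{eq:new SP}) via directional derivative consistency and the directional Slater condition, exactly as the paper does. The one inaccuracy is in part (b): after multiplying (\ref{eq:LRHS}) by $\lambda_{\nu}$, the right-hand proximal term becomes $\tfrac{\rho_{\nu}}{2}\|x-x^{\nu}\|^2$, which converges to $\tfrac{\rho_{\infty}}{2}\|x-x^{\infty}\|^2$ and need not vanish since $\rho_{\infty}\in[0,\infty)$ may be positive; the correct limit is therefore $\bar{c}_0(x^{\infty})\le\mathbb{E}[\wh{c}_0(x,\tilde z;x^{\infty})]+\tfrac{\rho_{\infty}}{2}\|x-x^{\infty}\|^2$, i.e., $x^{\infty}$ globally minimizes the \emph{proximally regularized} surrogate problem (the paper's (\ref{eq:limiting approximate problem})), not (\ref{eq:problem at limit}) itself. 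This slip is harmless for your conclusions, since the proximal term has zero directional derivative at $x^{\infty}$, so B-stationarity for (\ref{eq:problem at limit}) --- and everything downstream, including the ray argument for the final assertion --- still goes through.
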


\begin{proof}   (a) We have
\[ \begin{array}{l}
\mbox{LHS}_{\, \nu} \, = \, \displaystyle{
\frac{\rho_{\nu}}{2 \, \lambda_{\nu}}
} \, \| \, x^{\nu+1} - x^{\nu} \, \|^2 + \displaystyle{
\frac{1}{N_{\nu} \, \lambda_{\nu}}
} \, \displaystyle{
\sum_{s=1}^{N_{\nu}}
} \, c_0( x^{\nu+1},z^s) \ + \\ [0.2in]
\, \displaystyle{
\sum_{k=1}^K
} \, \max\left\{ \, \left( \, \displaystyle{
\frac{1}{N_{\nu}}
} \, \displaystyle{
\sum_{s=1}^{N_{\nu}}
} \, c_k(x^{\nu+1},z^s;\underline{\gamma}) -
\mathbb{E}\left[ c_k(x^{\nu+1},\tilde{z};\underline{\gamma}) \right] \, \right) +
\left( \, \mathbb{E}\left[ c_k(x^{\nu+1},\tilde{z};\underline{\gamma}) \right] -
\zeta_k \, \right), \, 0 \, \right\}.
\end{array} \]
Hence with probability 1,
\[
\displaystyle{
\lim_{\nu (\in \kappa) \to \infty}
} \, \mbox{LHS}_{\, \nu} \, = \, \displaystyle{
\frac{1}{\lambda_{\infty}}
} \, \mathbb{E}\left[ c_0(x^{\infty},\tilde{z} ) \right] + \displaystyle{
\sum_{k=1}^K
} \, \max\left\{ \, \mathbb{E}\left[ c_k(x^{\infty},\tilde{z};\underline{\gamma}) \right]
- \zeta_k, \, 0 \, \right\}.
\]
Substituting the feasible vector $\wh{x}$ into RHS$_{\, \nu}$, we have
\[ \begin{array}{l}
\mbox{RHS}_{\, \nu} \, = \, \displaystyle{
\frac{\rho_{\nu}}{2 \lambda_{\nu}}
} \, \| \, \wh{x} - x^{\nu} \, \|^2 + \displaystyle{
\frac{1}{N_{\nu} \lambda_{\nu}}
} \, \displaystyle{
\sum_{s=1}^{N_{\nu}}
} \, \wh{c}_0(\wh{x},z^s;x^{\nu}) + \displaystyle{
\sum_{k=1}^K
} \, \max\left\{ \, \displaystyle{
\frac{1}{N_{\nu}}
} \, \displaystyle{
\sum_{s=1}^{N_{\nu}}
} \, \wh{c}_k(\wh{x},z^s;\underline{\gamma};x^{\nu}) - \zeta_k, \, 0 \, \right\}
\\ [0.2in]
\leq \, \displaystyle{
\frac{\rho_{\nu}}{2 \lambda_{\nu}}
} \, \| \, \wh{x} - x^{\nu} \, \|^2 + \displaystyle{
\frac{1}{N_{\nu} \lambda_{\nu}}
} \, \displaystyle{
\sum_{s=1}^{N_{\nu}}
} \, \wh{c}_0(\wh{x},z^s;x^{\nu}) +  \displaystyle{
\sum_{k=1}^K
} \, \max\left\{ \,
\mathbb{E}\left[ \wh{c}_k(\wh{x},\tilde{z};\underline{\gamma};x^{\infty}) \, \right] -
\zeta_k, \, 0 \, \right\} \ + \\ [0.2in]
\hspace{0.4in}  \, \displaystyle{
\sum_{k=1}^K
} \, \max\left\{ \, \displaystyle{
\frac{1}{N_{\nu}}
} \, \displaystyle{
\sum_{s=1}^{N_{\nu}}
} \, \wh{c}_k(\wh{x},z^s;\underline{\gamma};x^{\nu}) -
\mathbb{E}\left[ \wh{c}_k(\wh{x},\tilde{z};\underline{\gamma};x^{\infty}) \, \right], \,
0 \, \right\} \\ [0.2in]
\leq \, \displaystyle{
\frac{\rho_{\nu}}{2 \lambda_{\nu}}
} \, \| \, \wh{x} - x^{\nu} \, \|^2 + \displaystyle{
\frac{1}{\lambda_{\nu}}
} \, \left\{ \displaystyle{
\frac{1}{N_{\nu}}
} \, \displaystyle{
\sum_{s=1}^{N_{\nu}}
} \, \wh{c}_0(\wh{x},z^s;x^{\nu}) - \mathbb{E}\left[
\wh{c}_0(\wh{x},\tilde{z};x^{\infty}) \right] \right\} +  \displaystyle{
\frac{1}{\lambda_{\nu}}
} \, \mathbb{E}\left[ \wh{c}_0(\wh{x},\tilde{z};x^{\infty}) \right] \ + \\ [0.2in]
\hspace{0.4in}  \, \displaystyle{
\sum_{k=1}^K
} \, \max\left\{ \, \displaystyle{
\frac{1}{N_{\nu}}
} \, \displaystyle{
\sum_{s=1}^{N_{\nu}}
} \, \wh{c}_k(\wh{x},z^s;\underline{\gamma};x^{\nu}) -
\mathbb{E}\left[ \wh{c}_k(\wh{x},\tilde{z};\underline{\gamma};x^{\infty}) \, \right],
\, 0 \, \right\}.
\end{array} \]
Therefore by Proposition~\ref{pr:uniform pointwise ULLN} for the constraint surrogation functions
$\wh{c}_k(\wh{x},z;{\underline \gamma};\bullet)$
and a similar inequality for the objective
surrogation function $\wh{c}_0$, and the fact
that $\rho_{\nu}/\lambda_{\nu} \to 0$ and $\lambda_{\nu} \to \lambda_{\infty}$,
we deduce with probability 1,
\[
\displaystyle{
\lim_{\nu (\in \kappa) \to \infty}
} \, \mbox{RHS}_{\nu} \, \leq \, \displaystyle{
\frac{1}{\lambda_{\infty}}
} \, \mathbb{E}\left[ \wh{c}_0(\wh{x},\tilde{z};x^{\infty}) \right] \, \leq \,  \displaystyle{
\frac{1}{\lambda_{\infty}}
} \, \mathbb{E}\left[ \wh{c}_0(x^{\infty},\tilde{z};x^{\infty}) \right] = \displaystyle{
\frac{1}{\lambda_{\infty}}
} \, \mathbb{E}\left[{c}_0(x^{\infty},\tilde{z} ) \right].
\]
Consequently, we deduce
$\max\left\{ \, \mathbb{E}\left[ c_k(x^{\infty},\tilde{z};\underline{\gamma}) \right]
- \zeta_k, \, 0 \, \right\} \leq 0$ for all $k \in [ K ]$ with probability 1.  Therefore,
$x^{\infty}$ is feasible to the
$\underline{\gamma}$-problem (\ref{eq:new SP}) with probability 1.

\gap

(b) Suppose (\ref{eq:closure condition finite gamma}) holds.  By (a),
$x^{\infty}$ is feasible to
the problem (\ref{eq:problem at limit}).  We first show that $x^{\infty}$ is
a global minimizer of the same problem with an additional proximal
term; i.e., the problem
\begin{equation} \label{eq:limiting approximate problem}
\begin{array}{ll}
\displaystyle{
\operatornamewithlimits{\mbox{\bf minimize}}_{x \in X}
} & 
\mathbb{E}\left[ \, \wh{c}_0(x,\tilde{z};x^{\infty}) \, \right] + \displaystyle{
\frac{\rho_{\infty}}{2}
} \, \| \, x - x^{\infty} \, \|^2 \\ [0.1in]
\mbox{\bf subject to} & \mbox{same constraints as (\ref{eq:problem at limit}).}
\end{array} \end{equation}
Let $\wh{x}$ be a feasible solution to (\ref{eq:limiting approximate problem}).
By (\ref{eq:closure condition finite gamma}), there exists a sequence
$\{ \wh{x}^{\, \mu} \}_{\mu=1}^{\infty} \subseteq X$ converging to $\wh{x}$ such that
for each $\mu$,
$\mathbb{E}\left[ \,
\wh{c}_k(\wh{x}^{\, \mu},\tilde{z};\underline{\gamma};x^{\infty}) \, \right] < \zeta_k$
for all $k \in [ K ]$.
Consider any such vector $\wh{x}^{\, \mu}$.
It follows from Proposition \ref{pr:uniform pointwise ULLN} of the constraint functions
that for all
$\nu (\in \kappa)$ sufficiently large (dependent on $\mu$), we have $\displaystyle{
\frac{1}{N_{\nu}}
} \, \displaystyle{
\sum_{s=1}^{N_{\nu}}
} \, \wh{c}_k(\wh{x}^{\, \mu},z^s;\underline{\gamma};x^{\nu}) \leq \zeta_k$
almost surely.  Therefore, with $x = \wh{x}^{\, \mu}$,
we obtain from (\ref{eq:LRHS}), after justifiably dropping the max terms on the left
and right sides and then $\lambda_{\nu}$,
\[ \begin{array}{l}
\displaystyle{
\frac{\rho_{\nu}}{2}
} \, \| \, x^{\nu+1} - x^{\nu} \, \|^2 + \displaystyle{
\frac{1}{N_{\nu}}
} \, \displaystyle{
\sum_{s=1}^{N_{\nu}}
} \, c_0(x^{\nu+1},z^s) \, \leq \, \displaystyle{
\frac{\rho_{\nu}}{2}
} \, \| \, \wh{x}^{\, \mu} - x^{\nu} \, \|^2 + \displaystyle{
\frac{1}{N_{\nu}}
} \, \displaystyle{
\sum_{s=1}^{N_{\nu}}
} \, \wh{c}_0(\wh{x}^{\, \mu},z^s;x^{\nu}) \\ [0.2in]
\leq \,  \displaystyle{
\frac{\rho_{\nu}}{2}
} \, \| \, \wh{x}^{\, \mu} - x^{\nu} \, \|^2 + \left\{ \, \displaystyle{
\frac{1}{N_{\nu}}
} \, \displaystyle{
\sum_{s=1}^{N_{\nu}}
} \, \wh{c}_0(\wh{x}^{\, \mu},z^s;x^{\nu}) - \mathbb{E}\left[ \wh{c}_0(\wh{x}^{\, \mu},
\tilde{z};x^{\infty}) \right] \, \right\} +
\mathbb{E}\left[ \wh{c}_0(\wh{x}^{\, \mu},\tilde{z};x^{\infty}) \right].
\end{array} \]
By Proposition \ref{pr:uniform pointwise ULLN} applied to the objective function
$\wh c_0(\bullet,\tilde{z};\bullet)$, and by taking the limits
$\mu \to \infty$ and $\nu (\in \kappa) \to \infty$, we deduce with probability 1,
\[
\mathbb{E}\left[ c_0(x^{\infty},\tilde{z} ) \right] \, \leq \, \displaystyle{
\frac{\rho_{\infty}}{2}
} \, \| \, \wh{x} - x^{\infty} \, \|^2 + \mathbb{E}\left[ \wh{c}_0(\wh{x},\tilde{z};x^{\infty}) \right],
\]
which establishes the minimizing claim about $x^{\infty}$.
By the first-order optimality condition (\ref{eq:limiting approximate problem}),
the claimed B-stationarity of
$x^{\infty}$ with reference to the problem (\ref{eq:problem at limit}) follows readily.

\gap

With $x^{\infty}$ being feasible to the $\underline{\gamma}$-problem
(\ref{eq:new SP}), we can justifiably assume
the directional Slater condition (\ref{eq:dd-Slater}).
It remains to show, by using the latter condition, that
$c_0^{\, \prime}(x^{\infty};v) \geq 0$ for all $v$ satisfying
$\bar{c}_k(\bullet;\underline{\gamma})^{\prime}(x^{\infty};v) < 0$ for all
$k \in {\cal A}(x^{\infty})$.
For such a vector $v$, we have
$\wh{c}_k(\bullet;\underline{\gamma};x^{\infty})^{\prime}(x^{\infty};v) < 0$
for all $k \in {\cal A}(x^{\infty})$,
by the directional derivative consistency condition  of the surrogate functions; thus
$\wh{c}_k(x^{\infty} + \tau v;\underline{\gamma};x^{\infty}) < \zeta$
for all $\tau > 0$ sufficiently small.  By the above proof,
$x^{\infty}$ is an optimal solution of (\ref{eq:limiting approximate problem}); thus
\[
\mathbb{E}\left[ \, \wh{c}_0(x^{\infty} + \tau v,\tilde{z};x^{\infty}) \, \right] +
\displaystyle{
\frac{\rho_{\infty}}{2}
} \ \tau ^2 \, \| \,\, v \, \|^2 \, \geq \, \mathbb{E}\left[ \,
\wh{c}_0(x^{\infty},\tilde{z};x^{\infty}) \, \right], \epc
\forall \, \tau > 0 \mbox{ sufficiently small}.
\]
Dividing $\tau > 0$ and letting $\tau \downarrow 0$, we obtain,
by the directional derivative consistency condition of the surrogate function
$\wh{c}_0(\bullet,z;x^{\infty})$ for $c_0(\bullet,z)$
\[
\left[ \, \mathbb{E}\left[ \, \wh{c}_0(\bullet,\tilde{z};x^{\infty}) \, \right]
\, \right]^{\, \prime}(x^{\infty};v) \, = \,
\bar{c}_0^{\, \prime}(x^{\infty};v) \, \geq \, 0,
\]
establishing the desired B-stationarity of $x^{\infty}$.
\end{proof}

We make several remarks about the above theorem.  First, in addition to the basic
set-up, the theorem relies on two key assumptions:
(i) the existence of the feasible vector $\wh{x}$, and (ii) the constraint closure
condition.  Both are reasonable: for the former
condition, we need to keep in mind that the algorithm encompasses a penalty idea by
softening the hard $\gamma$-expectation constraints.
In order to recover the feasibility of such constraints, the two main exact penalization
results in
Section~\ref{sec:exact penalization}---Propositions~\ref{pr:stationary penalization}
and \ref{pr:approx stationary penalization}---impose certain global directional
derivative conditions on all infeasible points; whereas the feasibility
assumption in Theorem~\ref{th:convergence finite gamma} pertains to the limit point
on hand; if we desire, the assumption can certainly be globalized to all
infeasible points.  Another salient point about
this pointwise assumption is that it exploits the construction that leads to the limit.
Since the function $\wh{c}_k(\bullet,z;\underline{\gamma};x^{\infty})$
is continuous, the left-hand set in (\ref{eq:closure condition finite gamma}) is
closed; thus equality holds between the two sets.  We write
this condition as an inclusion to be consistent with the subsequent condition
(\ref{eq:closure condition diminishing gamma}) for the case of diminishing
$\gamma_{\nu} \downarrow 0$ where the left-hand set may not be closed.
These closure conditions are constraint qualifications; needless
to say, for general non-affine problems, such a condition is a must in order to
establish any kind of sharp stationarity properties of the point of interest.

\subsubsection{Diminishing approximation parameter $\gamma_\nu$} \label{subsec:diminishing approx}

Let $\displaystyle{
\lim_{\nu \to \infty}
} \, \gamma_{\nu} = 0$.  The general result in this case requires the use of a
limiting function to play the role of the fixed $\underline{\gamma}$-functions
$\wh{c}_k(\bullet;\underline{\gamma};x^{\infty})$ for $k \in [ K ]$ in the previous case.
These alternative functions are required to satisfy the limit conditions
(\ref{eq:Bunderline}) and (\ref{eq:Bupper})   {given below}.

\gap

$\bullet $ For each $k \in [ K ]$, there exists a function
$\wt{c}_k(\bullet,\bullet;\bullet) : X \times \Xi \times X \to \mathbb{R}$ such that
for all $x \in X$, the function
$\wt{c}_k(x,\bullet;x^{\infty})$ is measurable with
$\mathbb{E}\,[ \, \wt{c}_k(x,\tilde{z};x^{\infty})\, ] < \infty$, and for all
i.i.d.\ samples $\{ z^s \}_{s=1}^{\infty}$, it holds that

\gap

-- for all $\{ ( y^{\nu},x^{\nu} ) \} \subseteq X \times X$ converging to
$( x^\infty,x^\infty )$,
\begin{equation} \label{eq:Bunderline}
\mathbb{E}[ \, \wt{c}_k(x^\infty,\tilde{z};x^{\infty}) \, ] \, \leq \,
\displaystyle{
\liminf_{\nu (\in \kappa) \to \infty}
} \, \displaystyle{
\frac{1}{N_{\nu}}
} \, \displaystyle{
\sum_{s=1}^{N_{\nu}}
} \, \wh{c}_k(y^\nu,z^s;\gamma_{\nu};x^{\nu}), \epc \mbox{almost surely};
\end{equation}
-- for all $\{ ( y^{\nu},x^{\nu} ) \} \subseteq X \times X$ converging to
$( y^\infty,x^\infty )$ for some $y^{\infty} \in X$,
\begin{equation} \label{eq:Bupper}
\displaystyle{
\limsup_{\nu (\in \kappa) \to \infty}
} \, \displaystyle{
\frac{1}{N_{\nu}}
} \, \displaystyle{
\sum_{s=1}^{N_{\nu}}
} \, \wh{c}_k(y^{\nu},z^s;\gamma_{\nu};x^{\nu}) \, \leq \, \mathbb{E}[ \,
\wt{c}_k(y^{\infty},\tilde{z};x^{\infty}) \, ], \epc \mbox{almost surely}.
\end{equation}

Subsequently, we will discuss the choice of the functions
$\wt{c}_k(\bullet,\bullet;\bullet)$  in the context of the restricted
(\ref{eq:restricted sp_dc_constraint}) and
relaxed (\ref{eq:relaxed sp_dc_constraint}) problems; for now,
we establish the following analogous convergence result for the case of a
diminishing sequence $\{ \gamma_{\nu} \} \downarrow 0$.

\begin{theorem} \label{th:convergence diminishing gamma} \rm
In the setting of Proposition~\ref{pr:successive iterates},
let $\{ x^{\nu} \}$ be a sequence of iterates produced by the
Algorithm with $\gamma_{\nu} \downarrow 0$.  Let $x^{\infty}$ be an accumulation point
of the
subsequence $\{ x^{\nu} \}_{\nu \in \kappa}$ corresponding to any infinite index set
$\kappa \in {\cal K}$.
Assume that $x^{\infty}$ satisfies the conditions \eqref{eq:Bunderline}
and \eqref{eq:Bupper}
for some functions 
$\{ \wt{c}_k(\bullet,\bullet;\bullet) \}_{k=1}^K$.
Then the following two statements hold for $x^{\infty}$:

\gap

(a) if there exists $\wh{x} \in X$ satisfying $\mathbb{E}\left[ \,
\wt{c}_k(\wh{x},\tilde{z};x^{\infty}) \, \right]
\, \leq \, \zeta_k$ for all $k \in [ K ]$ and also
\begin{equation} \label{eq:objective less}
\displaystyle{
\frac{1}{\lambda_{\infty}}
} \, \mathbb{E}\left[ \, \wh{c}_0(\wh{x},\tilde{z};x^{\infty}) \, \right]
\, \leq \displaystyle{
\frac{1}{\lambda_{\infty}}
} \,\mathbb{E}\left[ \, \wh{c}_0(x^{\infty},\tilde{z};x^{\infty}) \, \right];
\end{equation}
then $x^{\infty}$ satisfies
$\mathbb{E}[ \, \wt{c}_k(x^\infty,\tilde{z};x^{\infty}) \, ]  \, \leq \, \zeta_k$ for
all $k \in [ K ]$;

\gap

(b) if in addition the closure condition holds:
\begin{equation} \label{eq:closure condition diminishing gamma}
\emptyset \, \neq \, \displaystyle{
\bigcap_{k=1}^K
} \, \left\{ \, x \, \in \, X \, \mid \, \mathbb{E}[ \,
\wt{c}_k(x,\tilde{z};x^{\infty})  \, ] \, \leq \, \zeta_k \, \right\}
\, \subseteq  \, \mbox{cl}\left( \, \displaystyle{
\bigcap_{k=1}^K
} \, \left\{ x \, \in \, X \, \mid \,
\mathbb{E}[ \, \wt{c}_k(x,\tilde{z};x^{\infty}) \, ] \, < \, \zeta_k  \, \right\}
\, \right),
\end{equation}
then $x^{\infty}$ is a B-stationary point of the problem:
\begin{equation} \label{eq:problem at limit diminishing gamma}
\begin{array}{ll}
\displaystyle{
\operatornamewithlimits{\mbox{\bf minimize}}_{x \in X}
} & \mathbb{E}\left[ \, \wh{c}_0(x,\tilde{z};x^{\infty}) \, \right] + \displaystyle{
\frac{\rho_{\infty}}{2}
} \, \| \, x - x^{\infty} \, \|^2 \\ [0.15in]
\mbox{\bf subject to} & \mathbb{E}\left[ \, \wt{c}_k(x,\tilde{z};x^{\infty}) \, \right]
\, \leq \, \zeta_k, \epc \forall \, k \, \in \, [ K ].
\end{array} \end{equation}
\end{theorem}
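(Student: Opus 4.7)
The plan is to mirror the proof of Theorem~\ref{th:convergence finite gamma}, pivoting on the same algorithmic inequality $\mbox{LHS}_{\,\nu}\leq \mbox{RHS}_{\,\nu}$ (as in (\ref{eq:LRHS}) but with $\gamma_\nu$ in place of $\underline\gamma$) and on the fact that $\|x^{\nu+1}-x^\nu\|\to 0$ along $\kappa\in{\cal K}$. The novelty is that Proposition~\ref{pr:uniform pointwise ULLN} is no longer directly applicable to the sums $\frac{1}{N_\nu}\sum_s c_k(\bullet,z^s;\gamma_\nu)$ because the integrand itself varies with $\nu$; the bespoke conditions (\ref{eq:Bunderline})--(\ref{eq:Bupper}) take over this role.

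For part~(a), I bound $\liminf \mbox{LHS}_{\,\nu}$ from below and $\limsup \mbox{RHS}_{\,\nu}|_{x=\hat x}$ from above. The objective sum in LHS converges to $\frac{1}{\lambda_\infty}\mathbb{E}[c_0(x^\infty,\tilde z)]$ by Proposition~\ref{pr:uniform pointwise ULLN} under (A$_{\rm o}$); the proximal piece is nonnegative. For each max term I use the touching property at the \emph{variable} slot, $c_k(x^{\nu+1},z;\gamma_\nu)=\wh{c}_k(x^{\nu+1},z;\gamma_\nu;x^{\nu+1})$, and invoke (\ref{eq:Bunderline}) with both generic sequences equal to $\{x^{\nu+1}\}$ (which converges to $x^\infty$), to obtain
\[
\liminf_{\nu(\in\kappa)\to\infty}\ \frac{1}{N_\nu}\sum_{s=1}^{N_\nu}c_k(x^{\nu+1},z^s;\gamma_\nu)\,\geq\,\mathbb{E}[\,\wt{c}_k(x^\infty,\tilde z;x^\infty)\,].
\]
On the RHS side at $\hat x$: the proximal term vanishes because $\rho_\nu/\lambda_\nu\to 0$; Proposition~\ref{pr:uniform pointwise ULLN} applied to the upper semicontinuous $\wh{c}_0(\hat x,z;\bullet)$ controls the objective surrogate sum; and (\ref{eq:Bupper}) with the constant sequence $y^\nu\equiv\hat x$, combined with feasibility of $\hat x$, makes the max terms vanish eventually. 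Combining these with the hypothesis on $\hat x$ and the touching identity at $x^\infty$ forces $\mathbb{E}[\wt{c}_k(x^\infty,\tilde z;x^\infty)]\leq\zeta_k$ for each $k$.

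For part~(b), as in Theorem~\ref{th:convergence finite gamma}(b), I first show that $x^\infty$ globally minimizes the proximal problem (\ref{eq:problem at limit diminishing gamma}); B-stationarity is then the first-order necessary condition at $x^\infty$, using the directional derivative consistency of $\wh{c}_0(\bullet,z;x^\infty)$ to equate the derivative of the surrogate objective with that of the true objective at $x^\infty$. Given any $\hat x$ feasible for (\ref{eq:problem at limit diminishing gamma}), the closure condition (\ref{eq:closure condition diminishing gamma}) supplies $\hat x^{\,\mu}\to\hat x$ with strict feasibility $\mathbb{E}[\wt{c}_k(\hat x^{\,\mu},\tilde z;x^\infty)]<\zeta_k$. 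For fixed $\mu$, condition (\ref{eq:Bupper}) with $y^\nu\equiv\hat x^{\,\mu}$ guarantees $\frac{1}{N_\nu}\sum_s\wh{c}_k(\hat x^{\,\mu},z^s;\gamma_\nu;x^\nu)\leq\zeta_k$ for $\nu$ large, so that multiplying $\mbox{LHS}_{\,\nu}\leq\mbox{RHS}_{\,\nu}|_{x=\hat x^{\,\mu}}$ through by $\lambda_\nu$ and discarding the nonnegative LHS max terms yields
\[
\tfrac{\rho_\nu}{2}\|x^{\nu+1}-x^\nu\|^2+\tfrac{1}{N_\nu}\sum_{s=1}^{N_\nu}c_0(x^{\nu+1},z^s)\,\leq\,\tfrac{\rho_\nu}{2}\|\hat x^{\,\mu}-x^\nu\|^2+\tfrac{1}{N_\nu}\sum_{s=1}^{N_\nu}\wh{c}_0(\hat x^{\,\mu},z^s;x^\nu).
\]
Passing $\nu(\in\kappa)\to\infty$ via Proposition~\ref{pr:uniform pointwise ULLN} applied to both sides, then $\mu\to\infty$, and invoking touching at $x^\infty$ delivers $\mathbb{E}[c_0(x^\infty,\tilde z)]\leq\frac{\rho_\infty}{2}\|\hat x-x^\infty\|^2+\mathbb{E}[\wh{c}_0(\hat x,\tilde z;x^\infty)]$, which is precisely the global minimization property.

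The main technical obstacle is the mismatch between the actual $c_k$ appearing in $\mbox{LHS}_{\,\nu}$ and the surrogate $\wh{c}_k$ appearing in (\ref{eq:Bunderline})--(\ref{eq:Bupper}); one cannot uniformly compare $c_k(x^{\nu+1},z;\gamma_\nu)$ with $c_k(x^\nu,z;\gamma_\nu)$ through the modulus $\mbox{Lip}_{\rm c}(z)/\gamma_\nu$ in (\ref{eq:Lip in x}) since it blows up as $\gamma_\nu\downarrow 0$. The decisive maneuver is to apply (\ref{eq:Bunderline}) with the anchor slot set to $x^{\nu+1}$ rather than to the algorithmic anchor $x^\nu$; the touching property then rewrites $c_k(x^{\nu+1},z;\gamma_\nu)$ \emph{exactly} as a surrogate value at which the limiting liminf bound is available, entirely bypassing the need to replace $x^{\nu+1}$ by $x^\nu$ through a Lipschitz estimate.
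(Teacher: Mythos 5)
Your proposal is correct and follows essentially the same route as the paper's proof: the algorithmic inequality $\mbox{LHS}_{\,\nu}\leq\mbox{RHS}_{\,\nu}$, conditions (\ref{eq:Bunderline}) and (\ref{eq:Bupper}) to control the liminf of the left side and the limsup of the right side at $\wh{x}$, and for part (b) the closure condition (\ref{eq:closure condition diminishing gamma}) to produce strictly feasible approximants $\wh{x}^{\,\mu}$ along which the penalty terms drop out, yielding global minimality of $x^{\infty}$ for (\ref{eq:problem at limit diminishing gamma}) and hence B-stationarity. The only cosmetic difference is in the left-hand max terms: the paper retains the surrogate $\wh{c}_k(x^{\nu+1},\bullet;\gamma_{\nu};x^{\nu})$ anchored at the algorithmic point $x^{\nu}$ (which sits between the true function and the right-hand side by majorization) and applies (\ref{eq:Bunderline}) with $y^{\nu}=x^{\nu+1}$ and anchor $x^{\nu}$, whereas you invoke the touching identity to anchor at $x^{\nu+1}$ instead -- both are legitimate since (\ref{eq:Bunderline}) is assumed for arbitrary pairs of sequences converging to $(x^{\infty},x^{\infty})$.
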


\begin{proof}  Instead of (\ref{eq:LRHS}), we have \textcolor{black}{for all $x \in X$,}
\[ 
\begin{array}{l}
\underbrace{\displaystyle{
\frac{\rho_{\nu}}{2 \lambda_{\nu}}
} \, \| \, x^{\nu+1} - x^{\nu} \, \|^2 + \displaystyle{
\frac{1}{N_{\nu} \lambda_{\nu}}
} \, \displaystyle{
\sum_{s=1}^{N_{\nu}}
} \, c_0(x^{\nu+1},z^s) + \displaystyle{
\sum_{k=1}^K
} \, \max\left\{ \, \displaystyle{
\frac{1}{N_{\nu}}
} \, \displaystyle{
\sum_{s=1}^{N_{\nu}}
} \, \wh{c}_k(x^{\nu+1},z^s;\gamma_{\nu};x^\nu) - \zeta_k, \, 0 \, \right\}}_{
\mbox{denoted LHS$_{\, \nu}$}} \\ [0.35in]
\leq \, \underbrace{\displaystyle{
\frac{\rho_{\nu}}{2 \lambda_{\nu}}
} \, \| \, x - x^{\nu} \, \|^2 + \displaystyle{
\frac{1}{N_{\nu} \lambda_{\nu}}
} \, \displaystyle{
\sum_{s=1}^{N_{\nu}}
} \, \wh{c}_0(x,z^s;x^{\nu}) + \displaystyle{
\sum_{k=1}^K
} \, \max\left\{ \, \displaystyle{
\frac{1}{N_{\nu}}
} \, \displaystyle{
\sum_{s=1}^{N_{\nu}}
} \, \wh{c}_k(x,z^s;\gamma_{\nu};x^{\nu}) - \zeta_k, \, 0 \, \right\}}_{
\mbox{denoted RHS$_{\, \nu}$}}.
\end{array} \]
Thus
\[ \begin{array}{l}
\mbox{LHS}_{\nu} \, = \, \displaystyle{
\frac{\rho_{\nu}}{2 \lambda_{\nu}}
} \, \| \, x^{\nu+1} - x^{\nu} \, \|^2 + \displaystyle{
\frac{1}{N_{\nu} \lambda_{\nu}}
} \, \displaystyle{
\sum_{s=1}^{N_{\nu}}
} \, c_0(x^{\nu+1},z^s) \ + \\ [0.1in]
\displaystyle{
\sum_{k=1}^K
 } \, \max\left\{ \, \left( \, \displaystyle{
\frac{1}{N_{\nu}}
} \, \displaystyle{
\sum_{s=1}^{N_{\nu}}
} \, \wh{c}_k(x^{\nu+1},z^s;\gamma_\nu;x^\nu) - \,
\mathbb{E}[ \, \wt{c}_k(x^\infty,\tilde{z};x^{\infty}) \, ]  \, \right) +
\mathbb{E}[ \, \wt{c}_k(x^\infty,\tilde{z};x^{\infty}) \, ] - \zeta_k, \, 0 \, \right\},
\end{array} \]
which yields, upon taking the liminf as $\nu (\in \kappa) \to \infty$,
with probability 1,
\[
\displaystyle{
\liminf_{\nu (\in \kappa) \to \infty}
} \, \mbox{LHS}_{\nu} \, \geq \, \displaystyle{
\frac{1}{\lambda_{\infty}}
} \, \mathbb{E}\left[ c_0(x^{\infty}, \tilde z) \right] + \displaystyle{
\sum_{k=1}^K
} \, \max\left\{ \, \mathbb{E}[ \, \wt{c}_k(x^\infty,\tilde{z};x^{\infty}) \, ] -
\zeta_k, \, 0 \, \right\}.
\]
Letting $x = \wh{x}$, we deduce
\[ \begin{array}{l}
\left( \mbox{ RHS}_{\nu} \mbox{ at $x = \wh{x}$} \, \right) \, \leq \, \displaystyle{
\frac{\rho_{\nu}}{2 \lambda_{\nu}}
} \, \| \, \wh{x} - x^{\nu} \, \|^2 + \displaystyle{
\frac{1}{N_{\nu} \lambda_{\nu}}
} \, \displaystyle{
\sum_{s=1}^{N_{\nu}}
} \, \wh{c}_0(\wh{x},z^s;x^{\nu}) \ + \\ [0.2in]
\displaystyle{
\sum_{k=1}^K
} \, \max\left\{ \, \left( \, \displaystyle{
\frac{1}{N_{\nu}}
} \, \displaystyle{
\sum_{s=1}^{N_{\nu}}
} \, \wh{c}_k(\wh{x},z^s;\gamma_{\nu};x^{\nu}) - \mathbb{E}\left[ \,
\wt{c}_k(\wh{x},\tilde{z};x^{\infty}) \, \right] \, \right) +
\mathbb{E}\left[ \, \wt{c}_k(\wh{x},\tilde{z};x^{\infty}) \, \right] - \zeta_k,
\,  0 \, \right\}.
\end{array} \]
By the same argument as in the proof of Theorem~\ref{th:convergence finite gamma},
the proof of the two assertions about $x^{\infty}$ from this point on
is similar, except that instead of the functions
$\wh{c}_k(\bullet,\bullet;\underline{\gamma};x^{\infty})$, we replace them by
the functions
$\wt{c}_k(\bullet,\bullet;x^{\infty})$ and employ the two limit assumptions
\eqref{eq:Bunderline} and \eqref{eq:Bupper}.   We do not repeat the details.
\end{proof}

\begin{remark}  \label{rm:theorem diminishing} \rm
To be consistent with Theorem~\ref{th:convergence finite gamma},
Theorem~\ref{th:convergence diminishing gamma} involves only one function
$\wt{c}$ satisfying the two limits
(\ref{eq:Bunderline}) and (\ref{eq:Bupper}).  When specialized to the relaxed
(\ref{eq:relaxed sp_dc_constraint}) and
restricted (\ref{eq:restricted sp_dc_constraint}) problems to be discussed momentarily,
this necessitates a zero-probability assumption at the limit
point $x^{\infty}$.  A more general version of
Theorem~\ref{th:convergence diminishing gamma} can be proved wherein
we employ two separate functions $\underline{c}_k$ and $\wt{c}_k$, the former for the
liminf inequality (\ref{eq:Bunderline})
and the latter for the limsup inequality (\ref{eq:Bupper}).  In this generalized
version, the conclusion of part (a) in
Theorem~\ref{th:convergence diminishing gamma} would be that
$x^{\infty}$ satisfies $\mathbb{E}[ \,
\underline{c}_k(x^\infty,\tilde{z};x^{\infty}) \, ]  \leq \zeta$, while the feasibility
of $x^{\infty}$ to (\ref{eq:problem at limit diminishing gamma}) in part (b) needs to
be made an assumption, instead of being
a consequence of part (a) as in the single-function version of the theorem.
See also Remark~\ref{rm:theorem diminishing continued}.  \hfill $\Box$
\end{remark}


\subsubsection{Returning to the relaxed (\ref{eq:relaxed sp_dc_constraint}) and
restricted (\ref{eq:restricted sp_dc_constraint}) problems: $\gamma_{\nu} \downarrow 0$}
\label{subsec:gamma to zero}

Under the setting in Sections~\ref{sec:piecewise probabilistic constraints}
and \ref{sec:ACCSP},
the specialization of Theorem~\ref{th:convergence finite gamma} (for finite
$\underline{\gamma}$) to the restricted
and relaxed problems with the surrogation functions derived in Appendix~1 is
fairly straightforward.  \textcolor{black}{See also Appendix~2 for the discussion of
the practical implementation of the SPSA with these surrogate functions.}
In what follows, we address the specialization of
Theorem~\ref{th:convergence diminishing gamma} to these two problems
\textcolor{black}{since it involves
the auxiliary functions $\wt{c}_k(\bullet,\bullet;\bullet)$ that remain fairly
abstract up to this point.}
For this purpose, we need to introduce a particular majorization so that the theorem
is applicable.  The focus is on the approximation of the constraint:
\begin{equation} \label{eq:single cc}
\displaystyle{
\sum_{\ell=1}^L
} \, ( \, e^+_{k\ell} - e^-_{k\ell} \, ) \,
\mathbb{P}({\cal Z}_{\ell}(x,\tilde{z} ) \geq 0) \, \leq \, \zeta_k, \epc \mbox{where} \epc
{\cal Z}_{\ell}(x,z) \, = \, \displaystyle{
\max_{1 \leq i \leq I_{\ell}}
} \, g_{i\ell}(x,z) - \displaystyle{
\max_{1 \leq j \leq J_{\ell}}
}\, h_{j\ell}(x,z),
\end{equation}
by the relaxed and restricted constraints
\[
\bar c^{\rm rlx}_k(x;\gamma) \, \triangleq \, \mathbb{E}\left[ \, \displaystyle{
\sum_{\ell=1}^L
} \, c^{\, \rm rlx}_{k\ell}(x,\tilde{z};\gamma) \, \right] \, \leq \, \zeta_k
\epc \mbox{and} \epc
\bar c^{\rm rst}_k(x;\gamma) \, \triangleq \, \mathbb{E}\left[ \, \displaystyle{
\sum_{\ell=1}^L
} \, c_{k\ell}^{\, \rm rst}(x,\tilde{z};\gamma) \, \right] \, \leq \, \zeta_k,
\]
where, with $t_{\ell}$ being the shorthand for ${\cal Z}_{\ell}(x,z)$,
\[ \begin{array}{l}
c^{\, \rm rlx}_{k\ell}(x,z;\gamma) \, \triangleq \,
e_{k\ell}^+ \phi_{\rm lb}(t_{\ell},\gamma) - e_{k\ell}^-
\phi_{\rm ub}(t_{\ell},\gamma) \\ [0.1in]
\hspace{0.8in} = e_{k\ell}^+ \max\left\{ \,
\min\left( \, \wh{\theta}_{\rm cve}\left( \displaystyle{
\frac{t_{\ell}}{\gamma}
} \right), \, 1 \, \right), \ 0 \, \right\} - e_{k\ell}^-
\min\left\{ \, \max\left( \, \wh{\theta}_{\rm cvx}\left( 1 + \displaystyle{
\frac{t_{\ell}}{\gamma}
} \right), \, 0 \, \right), \, 1 \, \right\} \\ [0.25in]
c^{\, \rm rst}_{k\ell}(x,z;\gamma) \, \triangleq \,
e_{k\ell}^+ \phi_{\rm ub}(t_{\ell},\gamma) - e_{k\ell}^- \phi_{\rm lb}(t_{\ell},\gamma)
\\ [0.1in]
\hspace{0.8in} = e_{k\ell}^+ \min\left\{ \, \max\left( \,
\wh{\theta}_{\rm cvx}\left( 1 + \displaystyle{
\frac{t_{\ell}}{\gamma}
} \right), \, 0 \, \right), \, 1 \, \right\} - e_{k\ell}^- \max\left\{ \,
\min\left( \, \wh{\theta}_{\rm cve}\left( \displaystyle{
\frac{t_{\ell}}{\gamma}
} \right), \, 1 \, \right), \ 0 \, \right\}.
\end{array} \]
By Proposition~\ref{pr:usc and lsc of phi}, it follows that
$c^{\, \rm rlx}_{k\ell}(\bullet,z;\bullet)$ and
$c^{\, \rm rst}_{k\ell}(\bullet,z;\bullet)$ are lower and upper semicontinuous on
$X \times \mathbb{R}_+$, respectively.
Given $( x,z,\bar{x} ) \in X \times\Xi \times X$, let
\[ \begin{array}{l}
Lg_{i\ell}(x,z;\bar{x}) \, \triangleq \, g_{i\ell}(\bar{x},z) +
\nabla_x g_{i\ell}(\bar{x},z)^{\top}( \, x - \bar{x} \, ) \, \leq \, g_{i\ell}(x,z)
\\ [0.1in]
Lh_{j\ell}(x,z;\bar{x}) \, \triangleq \, h_{j\ell}(\bar{x},z) +
\nabla_x h_{j\ell}(\bar{x},z)^{\top}( \, x - \bar{x} \, ) \, \leq \, h_{j\ell}(x,z)
\end{array} \]
be the linearizations at $\bar{x}$ of the functions
$g_{i\ell}(\bullet,z)$ and $h_{j\ell}(\bullet,z)$ evaluated at $x$,
and define
\[
\left\{ \begin{array}{ll}
L{\cal Z}_{\ell}^{\, h}(x,z;\bar{x}) \,\triangleq \, \displaystyle{
\max_{1 \leq i \leq I_{\ell}}
} \, g_{i\ell}(x,z) - \displaystyle{
\max_{1 \leq j \leq J_{\ell}}
}\, Lh_{j\ell}(x,z;\bar{x}) \\[0.15in]
L{\cal Z}_{\ell}^{\, g}(x,z;\bar{x}) \,\triangleq \, \displaystyle{
\max_{1 \leq i \leq I_{\ell}}
} \, Lg_{i\ell}(x,z;\bar{x}) - \displaystyle{
\max_{1\leq j \leq J_{\ell}}
} \, h_{j\ell}(x,z).
\end{array}\right.
\]
Note that the functions $L{\cal Z}_{\ell}^{\, g}(\bullet,z;\bullet)$ and
$L{\cal Z}_{\ell}^{\, h}(\bullet,z;\bullet)$ are both continuous.
Obviously we have
\begin{equation} \label{eq:L-ctilde rlx}
L{\cal Z}_{\ell}^{\, g}(x,z;\bar{x}) \,\leq \, \left[\, \underbrace{\displaystyle{
\max_{1 \leq i \leq I_{\ell}}
} \, g_{i\ell}(x,z) - \displaystyle{
\max_{1 \leq j \leq J_{\ell}}
} \, h_{j\ell}(x,z)}_{\mbox{$= {\cal Z}_{\ell}(x,z)$}} \,\right] \, \leq \,
L{\cal Z}_{\ell}^{\, h}(x,z;\bar{x});
\end{equation}
these inequalities yield
\begin{equation} \label{eq:ctilde rlx}
\left\{ \begin{array}{ll}
c^{\, \rm rlx}_k(x,z;\gamma) \, \leq \,\wh{c}_k^{\, \rm rlx}(x,z;\gamma;\bar{x})
\,\triangleq \, \displaystyle{
\sum_{\ell=1}^L
} \, \left[ \, \begin{array}{l}
e_{k\ell}^+ \phi_{\rm lb}( \bullet,\gamma) \circ
L{\cal Z}_{\ell}^{\, h}(x,z;\bar{x}) \ - \\ [0.1in]
e^-_{k\ell} \phi_{\rm ub}( \bullet,\gamma)
\circ L{\cal Z}_{\ell}^{\, g}(x,z;\bar{x})
\end{array} \right] \\ [0.3in]
c^{\, \rm rst}_k(x,z;\gamma) \, \leq \, \wh{c}_k^{\, \rm rst}(x,z;\gamma;\bar{x})
\, \triangleq \, \displaystyle{
\sum_{\ell=1}^L
} \, \left[ \, \begin{array}{l}
e^+_{k\ell} \, \phi_{\rm ub}( \bullet,\gamma) \circ
L{\cal Z}_{\ell}^{\, h}(x,z;\bar{x}) \ - \\ [0.1in]
e^-_{k\ell} \,\phi_{\rm lb}(\bullet,\gamma)
\circ L{\cal Z}_{\ell}^{\, g}(x,z;\bar{x}) \,
\end{array} \right].
\end{array} \right.
\end{equation}
  {We note that the inequalities in (\ref{eq:L-ctilde rlx}) and (\ref{eq:ctilde rlx}) all
hold as equalities for $x = \bar{x}$.}
The two majorizing functions $\wh{c}^{\, \rm rlx}_k(x,z;\gamma;\bar{x})$
and $\wh{c}^{\, \rm rst}_k(x,z;\gamma;\bar{x})$ lead to two
majorized subproblems being solved at each iteration:
\begin{equation} \label{eq:tilde relax surrogate}
\displaystyle{
\operatornamewithlimits{\mbox{\bf minimize}}_{x \in X}
} \ \displaystyle{
\frac{1}{N_{\nu}}
} \, \displaystyle{
\sum_{s=1}^{N_{\nu}}
} \, \wh{c}_0(x,z^s;x^{\nu})+ \lambda_{\nu} \displaystyle{
\sum_{k=1}^K
} \, \max\left( \displaystyle{
\frac{1}{N_{\nu}}
} \, \displaystyle{
\sum_{s=1}^{N_{\nu}}
} \, \wh{c}_k^{\, \rm rlx}(x,z^s;\gamma_{\nu};x^{\nu}) - \zeta_k, 0 \right)
+ \displaystyle{
\frac{\rho_{\nu}}{2}
} \, \| \, x - x^{\nu} \, \|^2
\end{equation}
for the relaxed problem (\ref{eq:relaxed sp_dc_constraint}), and
\begin{equation} \label{eq:tilde surrogate}
\displaystyle{
\operatornamewithlimits{\mbox{\bf minimize}}_{x \in X}
} \ \displaystyle{
\frac{1}{N_{\nu}}
} \, \displaystyle{
\sum_{s=1}^{N_{\nu}}
} \, \wh{c}_0(x,z^s;x^{\nu})+ \lambda_{\nu} \displaystyle{
\sum_{k=1}^K
} \, \max\left( \displaystyle{
\frac{1}{N_{\nu}}
} \, \displaystyle{
\sum_{s=1}^{N_{\nu}}
} \, \wh{c}_k^{\, \rm rst}(x,z^s;\gamma_{\nu};x^{\nu}) - \zeta_k, 0 \right)
+ \displaystyle{
\frac{\rho_{\nu}}{2}
} \, \| \, x - x^{\nu} \, \|^2
\end{equation}
for the restricted problem (\ref{eq:restricted sp_dc_constraint}), respectively.
Postponing the discussion of the practical solution of the above two subproblems in
Appendix~2, we remark that
similar to $c^{\, \rm rlx}_{k\ell}(\bullet,z;\bullet)$ and
$c^{\, \rm rst}_{k\ell}(\bullet,z;\bullet)$, for fixed $z$, the functions
$\wh{c}^{\, \rm rlx}_k(x,z;\gamma;\bar{x})$ and
$\wh{c}^{\, \rm rst}_k(x,z;\gamma;\bar{x})$
are, respectively, lower and upper semicontinuous at every
$(x,\gamma,\bar{x}) \in X \times \mathbb{R}_+ \times X$ including $\gamma = 0$ for which
\[ \begin{array}{l}
\wh{c}^{\, \rm rlx}_k(x,z;0;\bar{x}) = \displaystyle{
\sum_{\ell=1}^L
} \, \left[ \, e_{k\ell}^+ \, \onebld_{( 0,\infty )}(\bullet) \circ
L{\cal Z}_{\ell}^{\, h}(x,z;\bar{x}) -
e^-_{k\ell} \, \onebld_{[ 0,\infty )}(\bullet) \circ
L{\cal Z}_{\ell}^{\, g}(x,z;\bar{x}) \, \right]
\, \triangleq \, c_{k,{\rm lb}}(x,z;\bar{x}) \\ [0.2in]
\wh{c}^{\, \rm rst}_k(x,z;0;\bar{x}) = \displaystyle{
\sum_{\ell=1}^L
} \, \left[ \, e^+_{k\ell} \, \onebld_{[ 0,\infty )}(\bullet) \circ
L{\cal Z}_{\ell}^{\, h}(x,z;\bar{x}) -
e_{k\ell}^- \onebld_{( 0,\infty )}(\bullet) \circ
L{\cal Z}_{\ell}^{\, g}(x,z;\bar{x}) \, \right] \, \triangleq \,
c_{k,{\rm ub}}(x,z;\bar{x}).
\end{array} \]
Notice that $c_{k,{\rm lb}}(x,z;\bar{x}) \leq c_{k,{\rm ub}}(x,z;\bar{x})$
and for all $x \in X$,
\[ \begin{array}{lll}
c_{k,{\rm lb}}(x,z;x) & = & \displaystyle{
\sum_{\ell=1}^L
} \, \left[ \, \left( \, e^+_{k\ell} \, \onebld_{( 0,\infty )}(\bullet) - e^-_{k\ell} \,
\onebld_{[ 0,\infty )}(\bullet) \, \right) \, \right] \circ {\cal Z}_{\ell}(x,z)
\\ [0.25in]
& \leq & \displaystyle{
\sum_{\ell=1}^L
} \, \left[ \, \left( \, e^+_{k\ell} \, \onebld_{[ 0,\infty )}(\bullet) - e^-_{k\ell} \,
\onebld_{( 0,\infty )}(\bullet) \, \right) \, \right] \circ {\cal Z}_{\ell}(x,z)
\, = \, c_{k,{\rm ub}}(x,z;x).
\end{array} \]
The following lemma shows how these functions satisfy
the required inequalities (\ref{eq:Bunderline}) and (\ref{eq:Bupper}).

\begin{lemma} \label{lm:ULLN diminishing usc1} \rm
Let $x^{\infty} \in X$ satisfy the zero-probability condition:
$\mathbb{P}( {\cal Z}_{\ell}(x^{\infty},\tilde{z}) = 0 ) = 0$ for all $\ell \in [ L ]$.
Then the inequalities (\ref{eq:Bunderline}) and (\ref{eq:Bupper}) hold at $x^{\infty}$
with $\wt{c}_k(x,z;\bullet) \triangleq c_{k,{\rm ub}}(x,z;\bullet)$ for both the
restricted $\wh{c}_k^{\, \rm rst}(\bullet,z;\gamma;\bullet)$ and relaxed
$\wh{c}_k^{\, \rm rlx}(\bullet,z;\gamma;\bullet)$
functions.
\end{lemma}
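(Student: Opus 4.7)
The plan is to verify the semicontinuous limits (\ref{eq:Bunderline}) and (\ref{eq:Bupper}) termwise in $\ell$ by sandwiching the parameterized approximations $\phi_{\rm ub}(t,\gamma)$ and $\phi_{\rm lb}(t,\gamma)$ between $\gamma$-independent indicators of shifted half-lines, and then invoking the semicontinuous ULLN from Proposition~\ref{pr:uniform pointwise ULLN} on the sandwiching indicators. Specifically, from the definitions one obtains, for every $\eta > 0$ and every $\gamma \in (0,\eta]$,
\[
\mathbf{1}_{( 0,\infty )}(t) \, \leq \, \phi_{\rm ub}(t,\gamma) \, \leq \, \mathbf{1}_{[ -\eta,\infty )}(t) \epc \mbox{and} \epc
\mathbf{1}_{( \eta,\infty )}(t) \, \leq \, \phi_{\rm lb}(t,\gamma) \, \leq \, \mathbf{1}_{[ 0,\infty )}(t).
\]
The outer indicators $\mathbf{1}_{[ -\eta,\infty )}$ and $\mathbf{1}_{[ 0,\infty )}$ are upper semicontinuous (closed-set indicators) while $\mathbf{1}_{( 0,\infty )}$ and $\mathbf{1}_{( \eta,\infty )}$ are lower semicontinuous (open-set indicators). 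Since the majorants $L{\cal Z}_{\ell}^{h}(\bullet,z;\bullet)$ and $L{\cal Z}_{\ell}^{g}(\bullet,z;\bullet)$ are jointly continuous in $(y,\bar{x})$ (as pointwise max/min of continuous functions), the corresponding compositions inherit the requisite semicontinuity in $(y,\bar{x})$ and are uniformly bounded by $1$, so that Proposition~\ref{pr:uniform pointwise ULLN} applies.

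For the limsup (\ref{eq:Bupper}), with $( y^{\nu},x^{\nu} ) \to ( y^{\infty},x^{\infty} )$, I would apply the usc half of Proposition~\ref{pr:uniform pointwise ULLN} to the upper sandwich $\mathbf{1}_{[ -\eta,\infty )} \circ L{\cal Z}_{\ell}^{h}(y^{\nu},z;x^{\nu})$ to obtain
\[
\displaystyle{\limsup_{\nu \in \kappa}} \, \displaystyle{\frac{1}{N_{\nu}}} \, \displaystyle{\sum_{s=1}^{N_{\nu}}} \, \phi_{\rm ub}\bigl(L{\cal Z}_{\ell}^{h}(y^{\nu},z^s;x^{\nu}),\gamma_{\nu}\bigr)
\, \leq \, \mathbb{E}\bigl[ \, \mathbf{1}_{[ -\eta,\infty )}\bigl(L{\cal Z}_{\ell}^{h}(y^{\infty},\tilde{z};x^{\infty})\bigr) \, \bigr]
\]
for every $\eta > 0$, and then let $\eta \downarrow 0$ by dominated convergence to collapse the right-hand side to $\mathbb{E}[ \mathbf{1}_{[ 0,\infty )}(L{\cal Z}_{\ell}^{h}(y^{\infty},\tilde{z};x^{\infty})) ]$. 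A symmetric argument, using $\mathbf{1}_{( \eta,\infty )}$ and monotone convergence, yields
$\liminf_{\nu} \tfrac{1}{N_\nu} \sum_s \phi_{\rm lb}(L{\cal Z}_\ell^g(y^\nu,z^s;x^\nu),\gamma_\nu) \geq \mathbb{E}[\mathbf{1}_{( 0,\infty )}(L{\cal Z}_\ell^g(y^\infty,\tilde{z};x^\infty))]$. Taking the signed combination with the nonnegative coefficients $e_{k\ell}^{\pm}$ produces the desired limsup bound $\mathbb{E}[c_{k,\rm ub}(y^{\infty},\tilde{z};x^{\infty})]$ for both $\wh{c}_{k}^{\, \rm rst}$ and $\wh{c}_{k}^{\, \rm rlx}$; notably, this direction does not require the zero-probability hypothesis.

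For the liminf (\ref{eq:Bunderline}), where $y^{\nu}\to x^{\infty}$ is forced, the same ULLN/sandwich procedure applied in the opposite direction yields lower bounds involving $\mathbb{E}[\mathbf{1}_{( 0,\infty )}({\cal Z}_{\ell}(x^{\infty},\tilde{z}))]$ coming from the $\phi_{\rm ub}$ terms (using $\phi_{\rm ub}\geq \mathbf{1}_{(0,\infty)}$) and upper bounds involving $\mathbb{E}[\mathbf{1}_{[ 0,\infty )}({\cal Z}_{\ell}(x^{\infty},\tilde{z}))]$ coming from the $\phi_{\rm lb}$ terms (using $\phi_{\rm lb}\leq \mathbf{1}_{[0,\infty)}$); here I use the identity $L{\cal Z}_{\ell}^{h/g}(x^{\infty},z;x^{\infty}) = {\cal Z}_{\ell}(x^{\infty},z)$. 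It is precisely at this step that the hypothesis $\mathbb{P}({\cal Z}_{\ell}(x^{\infty},\tilde{z})=0)=0$ enters: it collapses the open and closed indicators on ${\cal Z}_{\ell}(x^{\infty},\tilde{z})$ to a common value, so that the positive and negative contributions can be recombined exactly into $\mathbb{E}[c_{k,\rm ub}(x^{\infty},\tilde{z};x^{\infty})]$.

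The main obstacle is that the majorization $\wh{c}_{k}(\bullet,z;\gamma;\bullet)$ varies with $\nu$ through both the vanishing scaling parameter $\gamma_\nu \downarrow 0$ and the drifting anchor $x^{\nu}$, so Proposition~\ref{pr:uniform pointwise ULLN}---which is stated for a single limiting function---cannot be invoked directly on $\phi_{\rm ub/lb}(\cdot,\gamma_\nu)$. The sandwich trick with the auxiliary parameter $\eta$ is what decouples the $\gamma$-dependence from the $(y,\bar{x})$-dependence, replacing the moving family by a fixed semicontinuous function to which the ULLN does apply; the $\eta \downarrow 0$ limit is then handled by routine monotone/dominated convergence, and the asymmetric role of the zero-probability assumption (needed for (\ref{eq:Bunderline}) but not for (\ref{eq:Bupper})) is a direct reflection of the usc/lsc asymmetry of $\phi_{\rm ub}$ versus $\phi_{\rm lb}$ at $\gamma = 0$.
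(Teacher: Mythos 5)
Your proof is correct and arrives at exactly the same intermediate quantities as the paper's own argument --- a liminf bound by $\mathbb{E}[\,c_{k,{\rm lb}}(x^{\infty},\tilde{z};x^{\infty})\,]$ and a limsup bound by $\mathbb{E}[\,c_{k,{\rm ub}}(y^{\infty},\tilde{z};x^{\infty})\,]$, with the zero-probability hypothesis used only to identify $c_{k,{\rm lb}}$ with $c_{k,{\rm ub}}$ at $x^{\infty}$ --- but the technical route is different. The paper does not sandwich: having extended $\phi_{\rm ub}$ and $\phi_{\rm lb}$ to $\gamma=0$ and shown in Proposition~\ref{pr:usc and lsc of phi} that the extensions are respectively upper and lower semicontinuous on $\mathbb{R}\times\mathbb{R}_+$, it observes that $\wh{c}_k^{\,\rm rst}(\bullet,z;\bullet;\bullet)$ and $\wh{c}_k^{\,\rm rlx}(\bullet,z;\bullet;\bullet)$ are usc and lsc jointly in the augmented variable $(y,\gamma,\bar{x})$ on $X\times\mathbb{R}_+\times X$, and applies Proposition~\ref{pr:uniform pointwise ULLN} directly along $(y^{\nu},\gamma_{\nu},x^{\nu})\to(\cdot,0,x^{\infty})$; so the obstacle you identify --- that the ULLN ``cannot be invoked directly on the moving family'' --- is circumvented there by joint semicontinuity rather than by decoupling. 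The paper further economizes by exploiting the ordering $\wh{c}_k^{\,\rm rlx}\leq\wh{c}_k^{\,\rm rst}$, deducing the liminf for the restricted function from the lsc relaxed one and the limsup for the relaxed function from the usc restricted one, whereas you handle the four sign-and-bound combinations symmetrically and termwise in $\ell$. What your $\eta$-sandwich buys is self-containedness: the shifted indicators $\mathbf{1}_{[-\eta,\infty)}$ and $\mathbf{1}_{(\eta,\infty)}$ are fixed semicontinuous functions to which Proposition~\ref{pr:uniform pointwise ULLN} applies with no appeal to semicontinuity in $\gamma$ at $\gamma=0$, at the cost of one extra monotone limit in $\eta$ (which should be taken along a countable sequence $\eta_m\downarrow 0$ to preserve the almost-sure qualifier). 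Your remark that (\ref{eq:Bupper}) needs no zero-probability assumption while (\ref{eq:Bunderline}) does is exactly consistent with the paper's proof.
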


\begin{proof}
Let $\{ ( y^{\nu},x^{\nu} ) \} \subseteq X \times X$ converge to
$( x^\infty,x^\infty )$.  We have, almost surely,
\[
\begin{array}{l}
\displaystyle{
\liminf_{\nu (\in \kappa) \to \infty}
} \,  \displaystyle{
\frac{1}{N_{\nu}}
} \, \displaystyle{
\sum_{s=1}^{N_{\nu}}
} \, \wh{c}_k^{\, \rm rst}(y^\nu,z^s;\gamma_{\nu};x^{\nu})
\, \geq \, \displaystyle{
\liminf_{\nu (\in \kappa) \to \infty}
} \,  \displaystyle{
\frac{1}{N_{\nu}}
} \, \displaystyle{
\sum_{s=1}^{N_{\nu}}
} \, \wh{c}^{\, \rm rlx}_k(y^\nu,z^s;\gamma_{\nu};x^{\nu}) \\ [0.2in]
\epc \geq \, \mathbb{E}[\,c_{k,{\rm lb}}(x^{\infty},\tilde{z};x^{\infty})\,] \
\left\{ \mbox{\begin{tabular}{l}
by Proposition \ref{pr:uniform pointwise ULLN} and the \\ [3pt]
lower semicontinuity of $\wh{c}^{\, {\rm rlx}}_k(\bullet,z;\bullet;\bullet)$
\end{tabular}} \right. \\ [0.2in]
\epc = \, \mathbb{E}[\,c_{k,{\rm ub}}(x^{\infty},\tilde{z};x^{\infty})\,] \quad
\mbox{ by the zero-probability assumption at $x^{\infty}$}.
\end{array}
\]
This establishes (\ref{eq:Bunderline}) for the restricted function
$\wh{c}_k^{\, \rm rst}(\bullet,z;\gamma;\bullet)$.
To prove (\ref{eq:Bupper}), let $\{ ( y^{\nu},x^{\nu} ) \} \subseteq X \times X$
converge to $( y^\infty,x^\infty )$.
By the upper semicontinuity of the function
$\wh{c}_k^{\, \rm rst}(\bullet,z;\bullet;\bullet)$ mentioned above, we immediately have
\begin{equation} \label{eq:rstlimsup}
\displaystyle{
\limsup_{\nu (\in \kappa) \to \infty}
} \,  \displaystyle{
\frac{1}{N_{\nu}}
} \, \displaystyle{
\sum_{s=1}^{N_{\nu}}
} \, \wh{c}_k^{\, \rm rst}(y^{\nu},z^s;\gamma_{\nu};x^{\nu}) \, \leq \,
\mathbb{E}[ \, c_{k,{\rm ub}}(y^{\infty},\tilde{z};x^{\infty}) \, ],
\epc \mbox{almost surely}.
\end{equation}
To prove (\ref{eq:Bunderline}) for the relaxed function
$\wh{c}_k^{\, \rm rlx}(\bullet,z;\gamma;\bullet)$, let
$\{ ( y^{\nu},x^{\nu} ) \} \subseteq X \times X$ converge to $( x^\infty,x^\infty )$.
We have already noted the following inequality in the above proof:
\[
\displaystyle{
\liminf_{\nu (\in \kappa) \to \infty}
} \,  \displaystyle{
\frac{1}{N_{\nu}}
} \, \displaystyle{
\sum_{s=1}^{N_{\nu}}
} \, \wh{c}_k^{\, \rm rlx}(y^\nu,z^s;\gamma_{\nu};x^{\nu}) \geq \,
\mathbb{E}[\,c_{k,{\rm ub}}(x^{\infty},\tilde{z};x^{\infty})\,], \epc
\mbox{almost surely},
\]
under the zero-probability assumption.
To prove (\ref{eq:Bupper}), let $\{ ( y^{\nu},x^{\nu} ) \} \subseteq X \times X$
converge to $( y^\infty,x^\infty )$.  We have almost surely
\[
\begin{array}{lll}
\displaystyle{
\limsup_{\nu (\in \kappa) \to \infty}
} \,  \displaystyle{
\frac{1}{N_{\nu}}
} \, \displaystyle{
\sum_{s=1}^{N_{\nu}}
} \, \wh{c}_k^{\, \rm rlx}(y^{\nu},z^s;\gamma_{\nu};x^{\nu}) & \leq &
\displaystyle{
\limsup_{\nu (\in \kappa) \to \infty}
} \,  \displaystyle{
\frac{1}{N_{\nu}}
} \, \displaystyle{
\sum_{s=1}^{N_{\nu}}
} \, \wh{c}_k^{\, \rm rst}(y^{\nu},z^s;\gamma_{\nu};x^{\nu}) \\ [0.25in]
& \leq & \mathbb{E}[ \, c_{k,{\rm ub}}(y^{\infty},\tilde{z};x^{\infty}) \, ],
\end{array} \]
as in (\ref{eq:rstlimsup}).
\end{proof}

We can easily get the following corollary of
Theorem \ref{th:convergence diminishing gamma} based on the above lemma.

\begin{corollary} \label{co:convergence restriced diminishing gamma} \rm
Let the blanket assumptions for the functions $c^{\, \rm rst}_{k\ell}$ and
$c^{\, \rm rlx}_{k\ell}$ be valid.
Let $\gamma_{\nu} \downarrow 0$ and $\{ x^{\nu} \}$ be a sequence of iterates
produced by the SPSA with the majorization functions
$\{ \wh{c}_k^{\, \rm rst} \}_{k \in [ K ]}$ or
$\{ \wh{c}_k^{\, \rm rlx} \}_{k \in [ K ]}$ defined by \eqref{eq:ctilde rlx}.
Let $x^{\infty}$ be an accumulation point of the
subsequence $\{ x^{\nu} \}_{\nu \in \kappa}$ corresponding to any infinite index set
$\kappa \in {\cal K}$.  Suppose
$\mathbb{P}( {\cal Z}_{\ell}(x^{\infty},\tilde{z}) = 0 ) = 0$ for all $\ell \in [ L ]$,
then the following two statements (i) and (ii) hold for $x^{\infty}$ almost surely:

\gap

(i)   {the following three statements (ia), (ib), and (ic) are equivalent:}

\gap

(ia) there exists $\wh{x} \in X$ satisfying
\[
\displaystyle{
\sum_{\ell=1}^L
} \, \left[ \, e^+_{k\ell} \, \mathbb{P}\left(
L{\cal Z}_{\ell}^{\, h}(\wh{x},\tilde{z};x^{\infty}) \geq 0 \right) -
e^-_{k\ell} \, \mathbb{P}\left(
L{\cal Z}_{\ell}^{\, g}(\wh{x},\tilde{z};x^{\infty}) > 0 \right) \, \right] \, \leq \,
\zeta_k, \epc \forall \, k \, \in [ K ],
\]
and also
\[
\displaystyle{
\frac{1}{\lambda_{\infty}}
} \, \mathbb{E}\left[ \, \wh{c}_0(\wh{x},\tilde{z};x^{\infty}) \, \right] \, \leq \,
\displaystyle{
\frac{1}{\lambda_{\infty}}
} \,\mathbb{E}\left[ \, \wh{c}_0(x^{\infty},\tilde{z};x^{\infty}) \, \right],
\]
(ib) $x^{\infty}$ is feasible to the problem:
\begin{equation} \label{eq:restricted problem at limit diminishing gamma}
\begin{array}{ll}
\displaystyle{
\operatornamewithlimits{\mbox{\bf minimize}}_{x \in X}
} & \mathbb{E}\left[ \, \wh{c}_0(x,\tilde{z};x^{\infty}) \, \right] \\ [0.15in]
\mbox{\bf subject to} & \displaystyle{
\sum_{\ell=1}^L
} \, \left[ \, \begin{array}{l}
e^+_{k\ell} \, \mathbb{P}\left(
L{\cal Z}_{\ell}^{\, h}(x,\tilde{z};x^{\infty}) \geq 0 \right) \ - \\ [0.1in]
e^-_{k\ell} \, \mathbb{P}\left(
L{\cal Z}_{\ell}^{\, g}(x,\tilde{z};x^{\infty}) > 0 \right)
\end{array} \right] \, \leq \, \zeta_k, \epc \forall \, k \, \in [ K ],
\end{array} \end{equation}
(ic) $x^{\infty}$ is feasible to the CCP (\ref{eq:focus sp_dc_constraint});

\gap

(ii) if in addition the closure condition holds:
\[
\emptyset \, \neq \, \displaystyle{
\bigcap_{k=1}^K
} \, \left\{ \, x \, \in \, X \, \mid \,\mathbb{E}\left[ \,
c_{k, {\rm ub}}(x,\tilde{z};x^{\infty}) \,\right] \, \leq \, \zeta_k \, \right\}
\, \subseteq  \, \mbox{cl}\left( \, \displaystyle{
\bigcap_{k=1}^K
} \, \left\{ \, x \, \in \, X \, \mid \, \mathbb{E}\left[ \,
c_{k,{\rm ub}}(x,\tilde{z};x^{\infty}) \, \right] \, < \, \zeta_k \, \right\} \, \right),
\]
then $x^{\infty}$ is a B-stationary point to the problem
(\ref{eq:restricted problem at limit diminishing gamma}).  \hfill $\Box$
\end{corollary}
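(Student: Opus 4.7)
The plan is to recognize this corollary as a direct specialization of Theorem~\ref{th:convergence diminishing gamma} to the relaxed/restricted setting, with the auxiliary limiting function chosen as $\wt{c}_k(x,z;\bar{x}) \triangleq c_{k,\rm ub}(x,z;\bar{x})$. Lemma~\ref{lm:ULLN diminishing usc1} already established that, under the zero-probability condition $\mathbb{P}({\cal Z}_\ell(x^\infty,\tilde{z}) = 0) = 0$ for all $\ell \in [L]$, the pair of limit inequalities (\ref{eq:Bunderline}) and (\ref{eq:Bupper}) hold at $x^\infty$ for both the relaxed function $\wh{c}_k^{\, \rm rlx}$ and the restricted function $\wh{c}_k^{\, \rm rst}$ with this choice of $\wt{c}_k$. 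Thus the hypotheses of Theorem~\ref{th:convergence diminishing gamma} are met, and its conclusions apply verbatim.

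Next I would unravel what the abstract constraint $\mathbb{E}[\wt{c}_k(x,\tilde{z};x^\infty)] \leq \zeta_k$ becomes in concrete terms. By linearity of expectation and the definition of $c_{k,\rm ub}$,
\[
\mathbb{E}[c_{k,\rm ub}(x,\tilde{z};x^\infty)] \, = \, \displaystyle{
\sum_{\ell=1}^L
} \, \left[\, e_{k\ell}^+ \, \mathbb{P}(L{\cal Z}_\ell^{\, h}(x,\tilde{z};x^\infty) \geq 0) - e_{k\ell}^- \, \mathbb{P}(L{\cal Z}_\ell^{\, g}(x,\tilde{z};x^\infty) > 0) \, \right],
\]
which is exactly the constraint appearing in statements (ia) and (ib). Moreover, at $x = x^\infty$ the linearizations coincide with the original function, $L{\cal Z}_\ell^{\, g}(x^\infty,z;x^\infty) = {\cal Z}_\ell(x^\infty,z) = L{\cal Z}_\ell^{\, h}(x^\infty,z;x^\infty)$. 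Combining this with the zero-probability hypothesis yields $\mathbb{P}({\cal Z}_\ell(x^\infty,\tilde{z}) \geq 0) = \mathbb{P}({\cal Z}_\ell(x^\infty,\tilde{z}) > 0)$, so that
\[
\mathbb{E}[c_{k,\rm ub}(x^\infty,\tilde{z};x^\infty)] \, = \, \displaystyle{
\sum_{\ell=1}^L
} \, e_{k\ell} \, \mathbb{P}({\cal Z}_\ell(x^\infty,\tilde{z}) \geq 0),
\]
which is precisely the constraint of the original CCP at $x^\infty$. This identity is the pivot of the equivalence between (ib) and (ic).

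For part (i), I would establish the three-way equivalence in a short circular chain. The implication (ia) $\Rightarrow$ (ib) is immediate from Theorem~\ref{th:convergence diminishing gamma}(a), which guarantees that $\mathbb{E}[\wt{c}_k(x^\infty,\tilde{z};x^\infty)] \leq \zeta_k$ for all $k$, i.e.\ feasibility to (\ref{eq:restricted problem at limit diminishing gamma}). The equivalence (ib) $\Leftrightarrow$ (ic) follows from the reduction identity just displayed. Finally, (ic) $\Rightarrow$ (ia) is obtained by choosing $\wh{x} = x^\infty$: the objective inequality then holds trivially as equality, and the constraint in (ia) at $\wh{x} = x^\infty$ reduces via the same identity to the CCP constraint which holds by (ic).

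Part (ii) is then a direct application of Theorem~\ref{th:convergence diminishing gamma}(b): the closure condition stated in the corollary is exactly (\ref{eq:closure condition diminishing gamma}) with $\wt{c}_k$ replaced by $c_{k,\rm ub}$, and the existence of a feasible $\wh{x}$ satisfying (\ref{eq:objective less}) follows from the nonemptiness assumption in the closure condition together with the choice $\wh{x} = x^\infty$ when $x^\infty$ is feasible (which it is, once part (i) is in hand). The main subtlety—really the only one—is making sure that the zero-probability assumption is used only at $x^\infty$, where it is needed to collapse the upper- and lower-indicator reductions, while no such assumption is required at the test point $\wh{x}$ since the constraint functions there are already written asymmetrically as $\mathbb{P}(L{\cal Z}_\ell^h \geq 0)$ versus $\mathbb{P}(L{\cal Z}_\ell^g > 0)$ in accordance with the definition of $c_{k,\rm ub}$.
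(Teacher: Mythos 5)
Your proposal is correct and follows essentially the same route as the paper: the corollary is obtained by specializing Theorem~\ref{th:convergence diminishing gamma} with $\wt{c}_k = c_{k,\rm ub}$ (the limit conditions being supplied by Lemma~\ref{lm:ULLN diminishing usc1} under the zero-probability assumption), and the paper's one-line proof records precisely the pivot you identify, namely that (ic) implies (ia) via $\wh{x} = x^{\infty}$ because the linearizations collapse to ${\cal Z}_{\ell}$ at the reference point. Your fuller spelling-out of the (ia) $\Rightarrow$ (ib) $\Leftrightarrow$ (ic) $\Rightarrow$ (ia) chain and of part (ii) matches the intended argument.
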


\begin{proof} It suffices to observe that (ic) implies (ia) if $\wh{x} = x^{\infty}$.
\end{proof}

\begin{remark} \label{rm:theorem diminishing continued} \rm
Note that
\[ \begin{array}{l}
e^+_{k\ell} \, \mathbb{P}\left( {\cal Z}_{\ell}(x,\tilde{z}) \geq 0 \right) -
e^-_{k\ell} \, \mathbb{P}\left( {\cal Z}_{\ell}(x,\tilde{z}) \geq 0 \right)
\, \leq \,
e^+_{k\ell} \, \mathbb{P}\left( {\cal Z}_{\ell}(x,\tilde{z}) \geq 0 \right) -
e^-_{k\ell} \, \mathbb{P}\left( {\cal Z}_{\ell}(x,\tilde{z}) > 0 \right) \\ [0.15in]
\, \leq \, e^+_{k\ell} \mathbb{P}\left( L{\cal Z}_{\ell}^{\, h}(x,\tilde{z};x^{\infty})
\geq 0 \right) - e^-_{k\ell} \,
\mathbb{P}\left( L{\cal Z}_{\ell}^{\, g}(x,\tilde{z};x^{\infty}) > 0 \right).
\end{array} \]
Thus the feasible set of (\ref{eq:restricted problem at limit diminishing gamma})
is a subset of the feasible set of the CCP (\ref{eq:focus sp_dc_constraint}), which is:
\begin{equation} \label{eq:original cc}
\displaystyle{
\bigcap_{k=1}^K
} \, \left\{ \, x \, \in \, X \, \mid \, \displaystyle{
\sum_{\ell=1}^L
} \, e_{k\ell} \, \mathbb{P}\left( {\cal Z}_{\ell}(x,\tilde{z})
\geq 0 \right) \, \leq \, \zeta_k \, \right\}.
\end{equation}
As such, the B-stationarity of $x^{\infty}$ for the problem
(\ref{eq:restricted problem at limit diminishing gamma}) is weaker than the
B-stationarity of $x^{\infty}$ for the original CCP (\ref{eq:focus sp_dc_constraint}).
This is regrettably the best we can do at this time in this case of
$\gamma_{\nu} \downarrow 0$.  A noteworthy remark about the setting of
Corollary~\ref{co:convergence restriced diminishing gamma} is that it is rather broad;
in particular, there is no restriction on the sign of the coefficients $e_{k\ell}$
and pertains to a fairly
general class of difference-of-convex random functionals ${\cal Z}_{\ell}(\bullet,z)$.

\gap

Continuing from Remark~\ref{rm:theorem diminishing}, we note that without the
zero-probability assumption at $x^{\infty}$, this limit point would satisfy
\[
\displaystyle{
\sum_{\ell=1}^L
} \, \left[ \, e^+_{k\ell} \, \mathbb{P}\left( {\cal Z}_{\ell}(x,\tilde{z}) > 0 \right)
- e^-_{k\ell} \, \mathbb{P}\left( {\cal Z}_{\ell}(x,\tilde{z}) \geq 0 \right)
\, \right] \, \leq \, \zeta_k, \epc \forall \, k \, \in \, [ K ],
\]
which is a relaxation of the chance constraint in (\ref{eq:original cc}); moreover,
for the B-stationarity of $x^{\infty}$ in part (ii) of
Corollary~\ref{co:convergence restriced diminishing gamma} to be valid, one needs
to assume that $x^{\infty}$ is feasible to
(\ref{eq:restricted problem at limit diminishing gamma}).  Hence there is a gap in
the two conclusions of the corollary.  With the zero-probability assumption in place,
this gap disappears.  A noteworthy final remark is that the latter
zero-probability assumption is made only at a limit point of the sequence
$\{ x^{\nu} \}_{\nu \in \kappa}$.  \hfill $\Box$
\end{remark}

\subsection{\textcolor{black}{A summary of the SPSA and its convergence}}

\textcolor{black}{$\bullet $ {\bf Blanket assumptions:}}

--- $X$ is a polytope on which the objective $c_0(\bullet,z)$ is nonnegative for all
$z \in \Xi$;
each scalar $e_{k\ell}$ has the signed decomposition
$e_{k\ell}^+ - e_{k \ell}^-$ for all $(k,\ell) \in [ K ] \times [ L ]$;

--- ($\boldsymbol{\cal Z}$) in
Section~\ref{sec:piecewise probabilistic constraints} for the bivariate functions
${\cal Z}_{\ell}$; and
($\boldsymbol{\Theta}$) in Subsection~\ref{subsec:approx Heaviside} for
the functions $\wh{\theta}_{\rm cvx/cve}$;

--- the objective function $c_0(\bullet,\tilde{z})$ and the
constraint functions $\left\{ c_k(\bullet,\tilde{z};\gamma) \triangleq \displaystyle{
\sum_{\ell=1}^L
} \, c_{k\ell}(\bullet,\tilde{z};\gamma) \right\}_{k \in [ K ]}$
satisfy the blanket assumptions ({\bf A}$_{\rm o}$), ({\bf A}$_{\rm c}$) and
the interchangeability of directional derivatives ({\bf I}$_{\rm dd}$);
moreover, the functions $c_0(\bullet,\tilde{z})$ and
$\left\{ c_k(\bullet,\tilde{z};\bullet) \right\}_{k=1}^K$ have a uniform finite
variance on $X$.

\gap

\textcolor{black}{$\bullet $ {\bf Set-up:}}

Combining the structural assumptions of the random functionals
${\cal Z}_{\ell}(x,\tilde{z})$
with the $\gamma$-approximations of the Heaviside functions
by the truncation of the functions $\wh{\theta}_{\, \rm cvx/cve}$,
we obtain the restricted/relaxed
approximations $c_{k\ell}^{\, \rm rst/rlx}(x,\tilde{z};\gamma)$ of the
probability function $e_{k\ell} \, \mathbb{P}({\cal Z}_{\ell}(x,\tilde{z}) \geq 0)$.
Index-set or subgradient based surrogation functions
$\wh{c}_{k\ell}^{\, \rm rst/rlx}(\bullet,z;\gamma;\bar{x})$
at an arbitrary $\bar{x}$ are
derived for $c_{k\ell}^{\, \rm rst/rlx}(\bullet,\tilde{z};\gamma)$
in terms of the pointwise minima of finitely many convex functions; the surrogation of
the relaxed function $c_{k\ell}^{\, \rm rlx}(\bullet,\tilde{z};\gamma)$ requires
additionally that $\wh{\theta}_{\, \rm cvx/cve}$ be either differentiable or
piecewise affine.

\gap

\textcolor{black}{$\bullet $ {\bf Main computations:}}

Iteratively solve the subproblems
(\ref{eq:proximal Vsubproblem}) for a sequence of parameters
\[
\{ \, N_{\nu}; \lambda_{\nu}; \rho_{\nu}; \gamma_{\nu} \, \}_{\nu=1}^{\infty}
\]
that obey the conditions specified before the description of the SPSA (in particular
(\ref{eq:ratio of rho and lambda})) and also those in
Lemma~\ref{lm:condition on sample sizes}.  Let $\displaystyle{
\lim_{\nu \to \infty}
} \, \lambda_{\nu} = \lambda_{\infty} \in [ 1,\infty ]$.

\gap

\textcolor{black}{$\bullet $ {\bf Convergence:}}

Specialized to the restricted and relaxed problems,
the two general convergence results, Theorem~\ref{th:convergence finite gamma}
(finite $\underline{\gamma}$) and
Theorem~\ref{th:convergence diminishing gamma} (diminishng $\gamma_{\nu} \downarrow 0)$,
assert the feasibility/stationarity of any limit point
$x^{\infty}$ of the sequence $\{ x^{\nu} \}$ such that
\[
x^{\infty} \, = \, \displaystyle{
\lim_{\nu (\in \kappa) \to \infty}
} \, x^{\nu} \, = \, \displaystyle{
\lim_{\nu (\in \kappa) \to \infty}
} \, x^{\nu+1}
\]
for some infinite subset of iteration counters $\kappa \subseteq \{ 1, 2, \cdots \}$.

\gap

--- Fixed parameter $\underline{\gamma}$:
Suppose $\gamma_{\nu} = \underline{\gamma}$ for all $\nu$.
Under the following three conditions:

\gap

$\bullet $ there exists $\wh{x} \in X$ such that
$\displaystyle{
\frac{1}{\lambda_{\infty}}
} \, \mathbb{E}\left[ \, \wh{c}_0(\wh{x},\tilde{z};x^{\infty}) \, \right] \, \leq \,
\displaystyle{
\frac{1}{\lambda_{\infty}}
} \,\mathbb{E}\left[ \, \wh{c}_0(x^{\infty},\tilde{z};x^{\infty}) \, \right]$ and
\[
\wh{c}_k^{\, \rm rst/rlx}(\wh{x};\underline{\gamma};x^{\infty})
\, \leq \, \zeta_k, \epc \forall \, k \, \in \, [ K ];
\]
$\bullet $
$\emptyset \, \neq \, \displaystyle{
\bigcap_{k=1}^K
} \, \left\{ \, x \, \in \, X \, \mid \,
\wh{c}_k^{\, \rm rst/rlx}(x;\underline{\gamma};x^{\infty})
\, \leq \, \zeta_k \, \right\}
\, \subseteq \, \mbox{cl} \left( \, \displaystyle{
\bigcap_{k=1}^K
} \, \left\{ \, x \, \in \, X \, \mid \,
\wh{c}_k^{\, \rm rst/rlx}(x;\underline{\gamma};x^{\infty})
\, < \, \zeta_k \, \right\} \right)$,
which implies that $x^{\infty}$ is feasible to (\ref{eq:new SP}), and

\gap

$\begin{array}{l}
\bullet \displaystyle{
\bigcap_{k \in {\cal A}(x^{\infty})}
} \, \left\{ \, v \, \in \, {\cal T}(x^{\infty};X) \, \mid \,
\bar{c}_k^{\, \rm rst/rlx}(\bullet;\underline{\gamma})^{\prime}(x^{\infty};v)
\, \leq 0 \, \right\} \\ [0.25in]
\hspace{0.2in} \subseteq \mbox{cl}\left( \, \displaystyle{
\bigcap_{k \in {\cal A}\left( x^{\infty};\underline{\gamma} \right)}
} \, \left\{ \, v \, \in \, {\cal T}(x^{\infty};X) \, \mid \,
\bar{c}_k^{\, \rm rst/rlx}(\bullet;\underline{\gamma})^{\prime}(x^{\infty};v)
\, < 0 \, \right\} \, \right)
\end{array}$

\gap

then $x^{\infty}$ is a B-stationary point of (\ref{eq:new SP}).

\gap

--- Diminishing parameter:
Suppose $\mathbb{P}( {\cal Z}_{\ell}(x^{\infty},\tilde{z}) = 0 ) = 0$ for all
$\ell \in [ L ]$.  With the surrogation
functions $\wh{c}_{k\ell}^{\, \rm rst/rlx}(\bullet,z;\gamma;\bar{x})$
defined by (\ref{eq:restricted problem at limit diminishing gamma}),
Corollary~\ref{co:convergence restriced diminishing gamma} holds for the case
$\gamma_{\nu} \downarrow 0$.

\section{Conclusions}

In this paper, we have provided a thorough variational analysis for the affine
chance-constrained stochastic program with nonconvex and nondifferentiable
random functionals.  The discontinuous indicator functions in the probabilistic
constraints are approximated by a general class of parameterized
difference-of-convex functions that are not necessarily smooth.    A practically
implementable convex programming based sampling schemes with incremental
sample batches combined with exact penalization and upper surrogation
is proposed to solve the problem.  Subsequential convergence of the generated
sequences are established under both fixed parametric approximations
and diminishing ones.


\gap

\underline{\bf Appendix 1: Derivation of majorization functions for
$c^{\, \rm rlx}_{k\ell}(\bullet,z;\gamma)$ and
$c^{\, \rm rst}_{k\ell}(\bullet,z;\gamma)$.}

\gap

We show how the structures of the relaxed $c^{\, \rm rlx}_{k\ell}(\bullet,z;\gamma)$
and the restricted $c^{\, \rm rst}_{k\ell}(\bullet,z;\gamma)$ functions
can be used to define a pointwise-minimum of convex functions majorizing these
functions to be used in the sequential sampling algorithm.
\textcolor{black}{These surrogation functions have their origin in the
reference \cite{PangRazaAlvarado16}
for deterministic problems with difference-of-max-convex functions of the
kind (\ref{eq:g and h}); this
initial work is subsequently extended in \cite{CuiPangSen18} to problems with
convex composite such dc functions.  In particular, numerical results in the
latter reference demonstrate the practical viability of the solution method.
Further numerical results with similar surrogation functions for solving related
problems can be found in reference \cite{CuiHePang20} for multi-composite nonconvex
optimization problems arising from deep neural networks with piecewise activation
functions, and in \cite{CuiHePang20b} for solving certain
robustified nonconvex optimization problems.  While the problems in these references
are all deterministic, the paper \cite{LiuPang20} has some numerical
results for a stochastic difference-of-convex algorithm for solving certain
nonconvex risk minimization problems with expectation objectives but not the
difference-of-max-convex structure.}


\gap

{\bf Step 1:}  We start by (a) letting
$t_{\ell} \triangleq {\cal Z}_{\ell}(x,z) = g_{\ell}(x,z) - h_{\ell}(x,z)$,
(b)
substituting this expression in the truncation functions
$\phi_{\rm ub}$ and $\phi_{\rm lb}$, and (c) using the increasing property of
the functions $\wh{\theta}_{\rm cvx}$ and $\wh{\theta}_{\rm cve}$; this yields
\[ \begin{array}{l}
c^{{\, \rm rst}}_{k\ell}(x,z;\gamma) \, \triangleq \, e_{k \ell}^+ \, \phi_{\rm ub}({\cal Z}_{\ell}(x,z ), \gamma) -
e_{k \ell}^- \, \phi_{\rm lb}({\cal Z}_{\ell}(x,z ), \gamma) \\ [0.15in]
= \, e_{k \ell}^+ \, \wh{\theta}_{\rm cvx}\left( \, \min\left\{ \, \max\left( \, 1 + \displaystyle{
\frac{t_{\ell}}{\gamma}
}, \, 0 \, \right), \, 1 \, \right\} \, \right) - e_{k \ell}^- \, \wh{\theta}_{\rm cve}\left( \, \max\left\{ \, \min\left( \, \displaystyle{
\frac{t_{\ell}}{\gamma}
}, \, 1 \, \right), \, 0 \, \right\} \, \right),
\end{array} \]
Similarly, we have
\[ \begin{array}{l}
c^{{\, \rm rlx}}_{k\ell}(x,z;\gamma) \, \triangleq \, e_{k \ell}^+ \,
\phi_{\rm lb}({\cal Z}_{\ell}(x,z ), \gamma) -
e_{k \ell}^- \, \phi_{\rm ub}({\cal Z}_{\ell}(x,z ), \gamma) \\ [0.1in]
= \, -\left[ \, e_{k \ell}^- \, \wh{\theta}_{\rm cvx}\left( \, \min\left\{ \, \max\left( \, 1 + \displaystyle{
\frac{t_{\ell}}{\gamma}
}, \, 0 \, \right), \, 1 \, \right\} \, \right) - e_{k \ell}^+ \, \wh{\theta}_{\rm cve}\left( \, \max\left\{ \, \min\left( \, \displaystyle{
\frac{t_{\ell}}{\gamma}
}, \, 1 \, \right), \, 0 \, \right\} \, \right) \, \right]
\end{array} \]

{\bf Step 2.}  Using the difference-of-convex decomposition of the truncation operator
\[ 
T_{[ \, 0,1 \, ]}(t) \, \triangleq \, \min\left\{ \, \max( t,0 ), \, 1 \, \right\}
\, = \, \max\left\{ \, \min( t,1 ), \, 0 \, \right\} 
\, = \, \max( t,0 ) - \max( t-1, 0), 
\]
we may obtain
\begin{equation} \label{eq:cgamma}
\begin{array}{l}
\left( \begin{array}{l}
c^{\, \rm rst}_{k\ell}(x,z;\gamma) \\ [0.1in]
c^{\, \rm rlx}_{k\ell}(x,z;\gamma)
\end{array} \right) \\ [0.3in]
= \, \pm \, e_{k \ell}^{\pm} \, \wh{\theta}_{\rm cvx}\left( \,
\underbrace{\max\left\{ \, 1 + \displaystyle{
\frac{g_{\ell}(x,z)}{\gamma}
}, \ \displaystyle{
\frac{h_{\ell}(x,z)}{\gamma}
} \, \right\}}_{\mbox{denoted $g_{\ell}^1(x,z;\gamma)$}} - \underbrace{
\max\left\{ \, \displaystyle{
\frac{g_{\ell}(x,z)}{\gamma}
}, \ \displaystyle{
\frac{h_{\ell}(x,z)}{\gamma}
} \, \right\}}_{\mbox{denoted $gh_{\ell}(x,z;\gamma)$}} \right) \\ [0.5in]
\epc \mp \, e_{k \ell}^{\mp} \, \wh{\theta}_{\rm cve}\left( \,
\max\left\{ \, \displaystyle{
\frac{g_{\ell}(x,z)}{\gamma}
}, \ \displaystyle{
\frac{h_{\ell}(x,z)}{\gamma}
} \, \right\} -  \underbrace{\max\left\{ \, \displaystyle{
\frac{g_{\ell}(x,z)}{\gamma}
} - 1, \ \displaystyle{
\frac{h_{\ell}(x,z)}{\gamma}
} \, \right\}}_{\mbox{denoted $h_{\ell}^1(x,z;\gamma)$}} \, \right).
\end{array}
\end{equation}
{\bf Step 3.} By the difference-of-max definition of $g_{\ell}(\bullet,z)$ and
$h_{\ell}(\bullet,z)$ in (\ref{eq:g and h}),
there are two ways to obtain the majorizations, termed {\sl index-set based} and
{\sl subgradient-based}, respectively.  As the terms suggest, the former makes use
of the  pointwise maximum
structure of these functions, whereas the latter uses only the subgradients
$\partial g_{\ell}(\bullet,z)$
and  $\partial h_{\ell}(\bullet,z)$ of these functions.

\gap

{\bf Index-set based majorization}:  First employed in \cite{PangRazaAlvarado16} for
deterministic difference-of-convex programs and
later extended in \cite{CuiPangSen18} to convex composite difference-max programs,
this approach is based on several index sets defined at a given pair $(x,z)$:
\[ \begin{array}{lll}
{\cal A}^{\, g}_{\ell}(x,z) & \triangleq & \displaystyle{
\operatornamewithlimits{\mbox{\bf argmax}}_{1 \leq i \leq I_{\ell}}
} \, g_{i\ell}(x,z) \, = \, \left\{ \, i \, \mid \, g_{i\ell}(x,z) \, = \,
g_{\ell}(x,z) \, \right\}, \
\mbox{where } \ g_{\ell}(x,z) \, \triangleq \,  \displaystyle{
\operatornamewithlimits{\mbox{\bf max}}_{1 \leq i \leq I_{\ell}}
} \, g_{i\ell}(x,z) \\ [0.1in]
{\cal A}^{\, h}_{\ell}(x,z) & \triangleq & \displaystyle{
\operatornamewithlimits{\mbox{\bf argmax}}_{1 \leq j \leq J_{\ell}}
} \, h_{j\ell}(x,z) \, = \, \left\{ \, j \, \mid \, h_{j\ell}(x,z) \, = \,
h_{\ell}(x,z) \, \right\}, \
\mbox{where } \ h_{\ell}(x,z) \, \triangleq \,  \displaystyle{
\operatornamewithlimits{\mbox{\bf max}}_{1 \leq j \leq J_{\ell}}
} \, h_{j\ell}(x,z);
\end{array} \]
moreover let $\wh{\cal A}^{\, g}_{\ell}(x,z)$ and $\wh{\cal A}^{\, h}_{\ell}(x,z)$ be
any subset of
$\left[ \, I_{\ell} \, \right] \times \left[ \, J_{\ell} \, \right]$ such that
\begin{equation} \label{eq:choice of index sets}
\wh{\cal A}^{\, g}_{\ell}(x,z) \, \cap \, {\cal A}^{\, g}_{\ell}(x,z) \, \neq \,
\emptyset \epc \mbox{and} \epc
\wh{\cal A}^{\, h}_{\ell}(x,z) \, \cap \, {\cal A}^{\, h}_{\ell}(x,z) \, \neq \,
\emptyset.
\end{equation}
Let $\wh{\cal A}^{\, gh}_{\ell}(x,z) \triangleq \wh{\cal A}^{\, g}_{\ell}(x,z) \times
\wh{\cal A}^{\, h}_{\ell}(x,z)$.
Notice that all these index sets do not depend on the scalar $\gamma$.  Several
noteworthy choices of such index sets include:
(a) singletons, (b) an $\varepsilon$-argmax for a given $\varepsilon \geq 0$:
\[ \begin{array}{lll}
{\cal A}^{\, g}_{\ell;\varepsilon}(x,z) & \triangleq &\displaystyle{
\operatornamewithlimits{\varepsilon-\mbox{\bf argmax}}_{1 \leq i \leq I_{\ell}}
} \, g_{i\ell}(x,z) \, = \, \left\{ \, i \, \mid \, g_{i\ell}(x,z) \, \geq \,
g_{\ell}(x,z) - \varepsilon \, \right\} \\ [0.1in]
{\cal A}^{\, h}_{\ell;\varepsilon}(x,z) & \triangleq & \displaystyle{
\operatornamewithlimits{\varepsilon-\mbox{\bf argmax}}_{1 \leq j \leq J_{\ell}}
} \, h_{j\ell}(x,z) \, = \, \left\{ \, j \, \mid \, h_{j\ell}(x,z) \, \geq \,
h_{\ell}(x,z) - \varepsilon \, \right\},
\end{array} \]
and (c) the full sets: $\wh{\cal A}^{\, g}_{\ell}(x,z) = [ I_{\ell} ]$ and
$\wh{\cal A}^{\, h}_{\ell}(x,z) = [ J_{\ell} ]$.  The last choice was used in
Subsection~\ref{subsec:gamma to zero}.
The two families
$\left\{ \, {\cal A}^{\, g}_{\ell;\varepsilon}(x,z) \, \right\}_{\varepsilon \geq 0}$
and $\left\{ \, {\cal A}^{\, h}_{\ell;\varepsilon}(x,z) \, \right\}_{\varepsilon \geq 0}$
are nondecreasing in $\varepsilon$ and each member therein
contains the respective sets ${\cal A}^{\, g}_{\ell}(x,z)$ and
${\cal A}^{\, h}_{\ell}(x,z)$ that correspond to $\varepsilon = 0$.
For any fixed but arbitrary vector  $\bar{x}$ and any pair
$(i,j)$ of indices in $[ I_{\ell} ] \times [ J_{\ell} ]$, let
\[ \begin{array}{l}
Lg_{i\ell}(x,z;\bar{x}) \, \triangleq \, g_{i\ell}(\bar{x},z) + \nabla_x g_{i\ell}(\bar{x},z)^{\top}( \, x - \bar{x} \, )
\, \leq \, g_{i\ell}(x,z) \\ [0.1in]
Lh_{j\ell}(x,z;\bar{x}) \, \triangleq \, h_{j\ell}(\bar{x},z) + \nabla_x h_{j\ell}(\bar{x},z)^{\top}( \, x - \bar{x} \, )
\, \leq \, h_{j\ell}(x,z)
\end{array} \]
be the linearizations of $g_{i\ell}(\bullet,z)$ and $h_{j\ell}(\bullet,z)$ at $\bar{x}$ evaluated at $x$, respectively.
It can be shown that
{\small
\begin{equation} \label{eq:bound for restricted}
\begin{array}{lll}
c^{{\, \rm rst}}_{k\ell}(x,z;\gamma) & \leq &
\left\{ \begin{array}{l}
\displaystyle{
\min_{(i,j) \in \wh{\cal A}^{\, gh}_{\ell}(\bar{x},z)}
} \, \min\left( \, \begin{array}{l}
\underbrace{e_{k \ell}^+ \, \wh{\theta}_{\rm cvx}\left( \, g_{\ell}^1(x,z;\gamma) -
\displaystyle{
\frac{Lg_{i\ell}(x,z;\bar{x})}{\gamma}
} \, \right)}_{\mbox{convex in $x$}}, \\ [0.4in]
\underbrace{e_{k \ell}^+ \, \wh{\theta}_{\rm cvx}\left( \, g_{\ell}^1(x,z;\gamma) -
\displaystyle{
\frac{Lh_{j\ell}(x,z;\bar{x})}{\gamma}
} \, \right)}_{\mbox{convex in $x$}}
\end{array} \right) \ + \\ [0.8in]
\epc \displaystyle{
\min_{(i,j) \in \wh{\cal A}^{\, gh}_{\ell}(\bar{x},z)}
} \, \min\left( \, \begin{array}{l}
\underbrace{\left[ -e_{k \ell}^- \, \wh{\theta}_{\rm cve} \right] \circ \left( \,
\displaystyle{
\frac{Lg_{i\ell}(x,z;\bar{x})}{\gamma}
} - h_{\ell}^1(x,z;\gamma) \, \right)}_{\mbox{convex in $x$}}, \\ [0.4in]
\underbrace{\left[ -e_{k \ell}^- \, \wh{\theta}_{\rm cve} \right] \circ \left( \,
\displaystyle{
\frac{Lh_{j\ell}(x,z;\bar{x})}{\gamma}
} - h_{\ell}^1(x,z;\gamma) \, \right)}_{\mbox{convex in $x$}}
\end{array} \right)
\end{array} \right\} \\ [1.4in]
& \triangleq & \wh{c}^{{\, \rm rst}}_{k\ell}(x,z;\gamma;\bar{x}) \\ [5pt]
& = & \mbox{pointwise mimimum of finitely many convex (albeit not necessarily} \\
& & \mbox{differentiable) functions}.
\end{array} \end{equation}
}

\noindent We note that the right-hand bounding function coincides with
$c^{{\, \rm rst}}_{k\ell}(x,z;\gamma)$ at the reference vector $x = \bar{x}$.

\gap

The derivation of a similar pointwise minimum-convex majorization of
$c^{{\, \rm rlx}}_{k\ell}(\bullet,z;\gamma)$
requires the base functions $\wh{\theta}_{\rm cvx}$ and $\wh{\theta}_{\rm cve}$
to be either (continuously) differentiable
or piecewise affine.  Lemma~~\ref{lm:PA of PA} takes care of latter case.
Consider the differentiable case.   We have
{\small
\[ \begin{array}{l}
c^{{\, \rm rlx}}_{k\ell}(x,z;\gamma) \, \leq \underbrace{-e_{k \ell}^- \,
\wh{\theta}_{\rm cvx}\left( \, g_{\ell}^1(\bar{x},z;\gamma) -
gh_{\ell}(\bar{x},z;\gamma) \, \right) + e_{k \ell}^+ \,
\wh{\theta}_{\rm cve}\left( \,gh_{\ell}(\bar{x},z;\gamma) -
h_{\ell}^{\, 1}(\bar{x},z;\gamma) \, \right)}_{\mbox{a constant given
$(\bar{x},z,\gamma)$}} \\ [0.15in]
\epc - \, e_{k \ell}^- \left[ \underbrace{\wh{\theta}_{\rm cvx}^{\, \prime}\left( \,
g_{\ell}^1(\bar{x},z;\gamma) -
gh_{\ell}(\bar{x},z;\gamma) \, \right)}_{\mbox{a nonnegative constant given $\bar{x}$}}
\right] \,
\left[ \underbrace{\left( g_{\ell}^1(x,z;\gamma) - g_{\ell}^1(\bar{x},z;\gamma) \right) -
\left( \, gh_{\ell}(x,z;\gamma) - gh_{\ell}(\bar{x},z;\gamma) \, \right)}_{\mbox{
diff-ptwise max of finitely many cvx fncs in $x$}} \right] \\ [0.4in]
\epc + \, e_{k \ell}^+ \left[ \underbrace{\wh{\theta}_{\rm cve}^{\, \prime}\left( \,
gh_{\ell}(\bar{x},z;\gamma) - h_{\ell}^1(\bar{x},z;\gamma) \right)}_{\mbox{
a nonnegative constant given $\bar{x}$}} \right] \,
\left[ \underbrace{\left( \, gh_{\ell}(x,z;\gamma) - gh_{\ell}(\bar{x},z;\gamma) \right)
- \left( h_{\ell}^1(x,z;\gamma) - h_{\ell}^1(\bar{x},z;\gamma) \right)}_{
\mbox{diff-ptwise max of finitely many cvx fncs in $x$}} \right],
\end{array} \]
}

\noindent which shows that, with $\bar{x}$ given,
$c^{{\, \rm rlx}}_{k\ell}(\bullet,z;\gamma)$ can be upper bounded by a difference
of pointwise maxima of finitely many convex
functions.  By substituting the expressions for the functions
$g_{\ell}^1(\bullet,z;\gamma)$ and $h_{\ell}^1(\bullet,z;\gamma)$,
it can be shown that the latter bounding function can be further upper bounded
by the pointwise minimum of finite many convex functions.
The end result is a bounding function $\wh{c}^{\, \rm rlx}_{k\ell}(x,z;\gamma;\bar{x})$
similar to (\ref{eq:bound for restricted}) obtained by
\[ \begin{array}{l}
\mbox{replacing } g_{\ell}^1(\bullet,z;\gamma) \mbox{ by }
Lg_{\ell}^1(\bullet,z;\gamma;\bar{x}) \, \triangleq \,
\max\left\{ \, 1 + \displaystyle{
\frac{\displaystyle{
\max_{i \in \wh{\cal A}^g_{\ell}(\bar{x},z)}
} \, Lg_{i\ell}(\bullet,z;\bar{x})}{\gamma}
}, \ \displaystyle{
\frac{\displaystyle{
\max_{j \in \wh{\cal A}^h_{\ell}(\bar{x},z)}
} \, Lh_{j\ell}(\bullet,z;\bar{x})}{\gamma}
} \, \right\} \\ [0.4in]
\mbox{replacing } h_{\ell}^1(\bullet,z;\gamma) \mbox{ by }
Lh_{\ell}^1(\bullet,z;\gamma;\bar{x}) \, \triangleq \,
\max\left\{ \, -1 + \displaystyle{
\frac{\displaystyle{
\max_{i \in \wh{\cal A}^g_{\ell}(\bar{x},z)}
} \, Lg_{i\ell}(\bullet,z;\bar{x})}{\gamma}
}, \ \displaystyle{
\frac{\displaystyle{
\max_{j \in \wh{\cal A}^h_{\ell}(\bar{x},z)}
} \, Lh_{j\ell}(\bullet,z;\bar{x})}{\gamma}
} \, \right\}
\end{array} \]
and keeping the function $gh_{\gamma;\ell}(x,z)$ in the expression without upper
bounding it.

\gap

To close the discussion of the index-set based majorization, we make an important
remark when the pair of index sets
$\left( \, \wh{\cal A}^{\, g}_{\ell}(x,z),\wh{\cal A}^{\, h}_{\ell}(x,z) \, \right)$
is chosen to be
$ \left( \, {\cal A}^{\, g}_{\ell;\varepsilon}(x,z),{\cal A}^{\, h}_{\ell;\varepsilon}
(x,z) \, \right)$ for a given $\varepsilon \geq 0$.
Namely, for any such $\varepsilon$, the resulting majorization for the restricted
functions satisfies the directional derivative consistency condition; that is,
\[
\left[ \, c^{\, \rm rst}_{k\ell}(\bullet,z;\gamma) \, \right]^{\, \prime}(\bar{x};v)
\, = \,
\left[ \, \wh{c}^{\, \rm rst}_{k\ell}(\bullet,z;\gamma;\bar{x}) \, \right]^{\, \prime}
(\bar{x};v)
\epc \forall \, (\bar{x},z,v) \in X \times \Xi \times \mathbb{R}^n;
\]
moreover, if $\wh{\theta}_{\rm cvx}$ and $\wh{\theta}_{\rm cve}$ are differentiable,
then the same holds for the relaxed functions; that is,
\[
\left[ \, c^{\, \rm rlx}_{k\ell}(\bullet,z;\gamma) \, \right]^{\, \prime}(\bar{x};v)
\, = \,
\left[ \, \wh{c}^{\, \rm rlx}_{k\ell}(\bullet,z;\gamma;\bar{x}) \, \right]^{\, \prime}
(\bar{x};v)
\epc \forall \, (\bar{x},z,v) \in X \times \Xi \times \mathbb{R}^n.
\]
Nevertheless, the majorization functions
$\wh{c}^{\, \rm rst}_{k\ell}(\bullet,z;\gamma;\bullet)$ and
$\wh{c}^{{\, \rm rlx}}_{k\ell}(\bullet,z;\gamma;\bullet)$ are upper semicontinuous
if $\varepsilon > 0$ and
may not be so if
$\left( \, \wh{\cal A}^{\, g}_{\ell}(x,z),\wh{\cal A}^{\, h}_{\ell}(x,z) \, \right) =
\left( \, {\cal A}^{\, g}_{\ell}(x,z),{\cal A}^{\, h}_{\ell}(x,z) \, \right)$.

\gap

{\bf Subgradient-based majorization}:  This approach has its origin from the early
days of deterministic difference-of-convex (dc)
programming \cite{LeThiPham05}; it is most recently extended in
the study of compound stochastic programs with multiple expectation functions
\cite{LiuCuiPang20}.
The approach provides a generalization to the choice of a single index in defining
the sets
$\wh{\cal A}^g_{\ell}(\bar{x},z)$ and/or $\wh{\cal A}^h_{\ell}(\bar{x},z)$;
it has the computational advantage of
avoiding the pointwise-minimum surrogation when these sets are not singletons.
Specifically, we choose a single surrogation function
from each of the following families:
\[ \begin{array}{lll}
{\cal G}_{\ell}(\bar{x},z) & \triangleq & \left\{ \, G_{\ell}(x,z;\bar{x}) \,
\triangleq \, g_{\ell}(\bar{x},z) +
( \eta_{\ell}^{\, g} )^{\top}( \, x - \bar{x} \, ) - h_{\ell}(x,z) \, : \,
\eta_{\ell}^{\, g} \, \in \, \partial_x g_{\ell}(\bar{x},z) \, \right\} \\ [0.1in]
{\cal H}_{\ell}(\bar{x},z) & \triangleq & \left\{ \, H_{\ell}(x,z;\bar{x}) \,
\triangleq \, g_{\ell}(x,z) - h_{\ell}(\bar{x},z) -
( \eta_{\ell}^{\, h} )^{\top}( \, x - \bar{x} \, ) \, : \, \eta_{\ell}^{\, h}
\, \in \, \partial_x h_{\ell}(\bar{x},z) \, \right\} .
\end{array}
\]
A member $G_{\ell}(x,z;\bar{x}) \in {\cal G}_{\ell}(\bar{x},z)$
will then replace the corresponding pointwise-maximum based surrogation $\displaystyle{
\max_{i \in \wh{\cal A}^g_{\ell}(\bar{x},z)}
} \, Lg_{i\ell}(x,z;\bar{x})$; similarly for the $h$-functions.  The end result is
that we will obtain a single convex function
$\wh{c}_{k\ell}^{\, \rm rst}(\bullet,z;\gamma;\bar{x})$ majorizing
$c_{k\ell}^{\, \rm rst}(\bullet,z;\gamma)$ at the reference vector $\bar{x}$;
and similarly for the relaxed function.
We omit the details of these other surrogation functions.

\gap

\underline{\bf Appendix~2: convex programming for the minimization
of (\ref{eq:proximal Vsubproblem}):}

\gap

\textcolor{black}{
The minimization problem (\ref{eq:proximal Vsubproblem}) is of the following form:
\begin{equation} \label{eq:Vsubproblem repeat}
\displaystyle{
\operatornamewithlimits{\mbox{\bf minimize}}_{x \in X}
} \ \underbrace{\displaystyle{
\frac{1}{N}
} \, \displaystyle{
\sum_{s=1}^N
} \,  \wh{c}_0(x,z^s;\bar{x}) + \displaystyle{
\frac{\rho}{2}
} \, \| x - \bar{x} \|^2}_{\mbox{denoted $\eta(x)$}} + \lambda \, \displaystyle{
\sum_{k=1}^K
} \, \max\left( \, \displaystyle{
\frac{1}{N}
} \, \displaystyle{
\sum_{s=1}^N
} \, \displaystyle{
\sum_{\ell=1}^L
} \, \wh{c}_{k\ell}(x,z^s;\gamma;\bar{x}) - \zeta_k, \, 0 \, \right),
\end{equation}
where, as derived above,
each function $\wh{c}_{k\ell}(\bullet,z^s;\gamma;\bar{x})$
is the pointwise minimum of finitely convex functions
(cf.\ e.g. (\ref{eq:bound for restricted})).
To simplify the discussion, we assume that
$\eta(x)$ is convex, so that we can focus on explaining how a global
minimizer of this problem can be obtained by solving finitely many convex programs, with
a proper manipulation of the second summation term.
For this purpose, we further assume that
\[
\wh{c}_{k\ell}(x,z^s;\gamma;\bar{x}) - \displaystyle{
\frac{\zeta_k}{L}
} \, = \, \displaystyle{
\min_{1 \leq i \leq M_{k\ell}^s}
} \, \chi_{k\ell i}^s(x), \epc ( \, k,\ell,s \, ) \, \in \, [ \, K \, ] \times
[ \, L \, ] \times [ \, S \, ],
\]
for some sample-dependent positive integers $M_{k\ell}^s$, with each $\chi_{k\ell i}^s$
being convex.  We have
\[ \begin{array}{l}
\max\left\{ \, \displaystyle{
\frac{1}{N}
} \, \displaystyle{
\sum_{s=1}^N
} \, \displaystyle{
\sum_{\ell=1}^L
} \, \wh{c}_{k\ell}(x,z^s;\gamma;\bar{x}) - \zeta_k, \, 0 \, \right\} \\ [0.2in]
= \, \max\left\{ \, \displaystyle{
\frac{1}{N}
} \, \min\left( \, \displaystyle{
\sum_{s=1}^N
} \, \displaystyle{
\sum_{\ell=1}^L
} \, \chi_{k\ell{i_{k\ell}^s}}^s(x) \ \left| \ \{ i_{k\ell}^s \}_{s=1}^N \in
\displaystyle{
\prod_{s=1}^N
} \, [ \, M_{k\ell}^s \, ] \, \right. \, \right), \, 0 \, \right\} \\ [0.3in]
= \, \displaystyle{
\frac{1}{N}
} \, \underbrace{\min\left\{ \, \max\left( \,  \displaystyle{
\sum_{s=1}^N
} \, \displaystyle{
\sum_{\ell=1}^L
} \, \chi_{k\ell{i_{k\ell}^s}}^s(x), \, 0 \, \right) \ \left| \
\{ i_{k\ell}^s \}_{s=1}^N \in \displaystyle{
\prod_{s=1}^N
} \, [ \, M_{k\ell}^{\, s} \, ] \, \right. \, \right\}}_{
\mbox{pointwise mininum of finitely many convex functions}}.
\end{array} \]
Hence the problem (\ref{eq:Vsubproblem repeat}) is equivalent to
\[
\underbrace{\min\left[ \, \underbrace{\displaystyle{
\operatornamewithlimits{\mbox{\bf minimize}}_{x \in X}
} \ \underbrace{\left\{ \, \begin{array}{l}
\eta(x) \ + \\ [0.1in]
\displaystyle{
\sum_{k=1}^K
} \, \max\left( \,  \displaystyle{
\sum_{s=1}^N
} \, \displaystyle{
\sum_{\ell=1}^L
} \, \chi_{k\ell{i_{k\ell}^s}}^s(x), \, 0 \, \right)
\end{array} \right\}}_{\mbox{convex function}}}_{\mbox{cvx program for given tuple
$\left\{ \, i_{k\ell}^{\, s} \in [ M_{k\ell}^s ] \, \right\}_{(k,\ell) \in
[ K ] \times [ L ]}^{s \in [ N ]}$}}
\ \left| \	\left\{ \, \{ i_{k\ell}^{\, s} \}_{s=1}^N \in \displaystyle{
\prod_{s=1}^N
} \, [ \, M_{k\ell}^{\, s} \, ] \, \right\}_{k \in [ K ]}^{
\ell \in [ L ]} \right. \, \right]}_{\mbox{finitely many
$\left( \, \displaystyle{
\sum_{k=1}^K
} \, \displaystyle{
\sum_{l=1}^L
} \, \displaystyle{
\sum_{s=1}^N
} \, M_{k\ell}^s \, \right)$ convex programs}}
\]
}

Based on the above derivation, it can be seen that the subgradient-based majorization
leads to simpler workload per iteration in an iterative method for solving the nonconvex
nondifferentiable CCP; nevertheless, the stationarity properties
of the limit points of the iterates produced are typically weaker than those of the
limit points produced by an index-set based surrogation
where multiple convex subprograms are solved.  So the
tradeoff between practical computational efforts and theoretical sharpness of the
computed solutions is something to be recognized in the numerical
solution of the relaxed and/or restricted formulations of the chance-constrained
stochastic programs.  Among the index-set surrogations, some choices
may not yield desirable convergence results while others, at the expense of more (yet
still finite) computational efforts per iteration, would
yield desirable properties of the accumulation points of the iterates produced.  This is
exemplified by the choices
$\wh{c}_k^{\, \rm rlx}(x,z;\gamma;\bar{x})$ and
$\wh{c}_k^{\, \rm rst}(x,z;\gamma;\bar{x})$ in (\ref{eq:ctilde rlx}) for the
convergence analysis of the case $\gamma \downarrow 0$, where
the full index sets $[ \, I \, ]$ and $[ \, J \, ]$ are employed in the linearizations.

\gap

{\bf Acknowledgements.}  The authors thank both referees for their very careful
reading of our manuscript and for their many constructive comments that have helped
to improve its quality.

\end{document}